\DeclareSymbolFont{EulerExtension}{U}{euex}{m}{n}
\DeclareMathSymbol{\euintop}{\mathop} {EulerExtension}{"52}
\DeclareMathSymbol{\euointop}{\mathop} {EulerExtension}{"48}
\def \N{\mathbb{N}}
\def \Z{\mathbb{Z}}
\def \k{\mathbbm{k}}
\def \dim{\operatorname{dim}}
\def \N{\mathbb{N}}
\def \Z{\mathbb{Z}}
\def \la{\lambda}
\def \om{\omega}
\def \Om{\Omega}
\def \ta{\theta}
\def \Zeta{\mathrm{Z}}
\def \Chi{\mathrm{X}}
\def \GKdim{\operatorname{GKdim}}
\def \gldim{\operatorname{gldim}}
\def \Ker{\operatorname{Ker}}
\numberwithin{equation}{section}
\newtheorem{theorem}{Theorem}[section]
\newtheorem{lemma}[theorem]{Lemma}
\newtheorem{proposition}[theorem]{Proposition}
\newtheorem{corollary}[theorem]{Corollary}
\newtheorem{definition}[theorem]{Definition}
\newtheorem{remark}[theorem]{Remark}
\newtheorem{question}[theorem]{Question}
\begin{document}

\title[\tiny{Finite duals of affine prime regular Hopf algebras of GK-dimension one}]{The finite duals of affine prime regular Hopf algebras of GK-dimension one}

\author[K. Li]{Kangqiao Li}
\address{School of Mathematics, Hangzhou Normal University, Hangzhou 311121, China}
\email{kqli@hznu.edu.cn}

\author[G. Liu]{Gongxiang Liu}
\address{Department of Mathematics, Nanjing University, Nanjing 210093, China}
\email{gxliu@nju.edu.cn}

\begin{abstract}
This paper is an attempt to construct a special kind of Hopf pairing $\langle-,-\rangle:H^\bullet\otimes H\rightarrow\k$. Specifically, $H^\bullet$ and $H$ should be both affine, noetherian and of the same GK-dimension. In addition, some properties of them would be dual to each other. We test the ideas in two steps for all the affine prime regular Hopf algebras $H$ of GK-dimension one: 1) We compute the finite duals $H^\circ$ of them, which are given by generators and relations; 2) the Hopf pairings desired are determined by choosing certain Hopf subalgebras $H^\bullet$ of $H^\circ$, where $\langle-,-\rangle$ becomes the evaluation.
\end{abstract}

\thanks{2020 \textit{Mathematics Subject Classification}. 16E05, 16T05 (primary), 16P40, 16S34 (secondary).}
\keywords{GK-dimension one, Hopf algebra, Finite dual, Prime, Regular, Hopf pairing}

\thanks{$^\dag$The first author was supported by Zhejiang Provincial Natural Science Foundation of China under No.LQ23A010003. The second author was supported by NSFC 12271243.}

\maketitle

\section{Introduction}
Throughout this paper, $\k$ is assumed to be an algebraically closed field of characteristic $0$, and all matrices, vector spaces, algebras and Hopf algebras are assumed to be over $\k$.

Roughly speaking, the concept of Hopf algebras is still valid under taking the linear duals. This elementary point of view suggests to us that structures of the dual are effective when we deal with certain problems on a Hopf algebra. The most basic definition of the dual for a Hopf algebra $H$ would be the \textit{finite dual} $H^\circ$ described by Heyneman and Sweedler \cite{HS69}, which is also a Hopf algebra over the same base field. Of course, there are a number of relationships between $H$ and $H^\circ$, and this motivates the authors to consider the structure of duals for varieties of Hopf algebras.

Of course it is known that when $H$ is a finite-dimensional Hopf algebra, its dual Hopf algebra $H^\ast$ has the same dimension as well as similar or dual properties with $H$. Indeed, the structures of $H$ are determined conversely by those of $H^\ast$. However for an infinite-dimensional Hopf algebra $H$, its finite dual $H^\circ$ might be much too large (or small) to be studied easily. This motivates us to consider some kinds of dual Hopf algebras $H^\bullet$ for certain (infinite-dimensional) Hopf algebras $H$, where the original evaluation is replaced by Hopf pairings $H^\bullet\otimes H\rightarrow\k$. Our primary goal is to formulate such Hopf pairings for noetherian affine Hopf algebras $H$ of finite GK-dimensions, such that:
\begin{itemize}
\item[(HP1)] The pairing is non-degenerate;
\item[(HP2)] \;$H^\bullet$ and $H$ are both affine and noetherian;
\item[(HP3)] \;$\GKdim(H^\bullet)=\GKdim(H)$.
\end{itemize}
Meanwhile, we wish that some properties of $H^\bullet$ and $H$ would be dual to each other (such as (HP4) in Section \ref{subsection:HPs}), which should be noted in different situations. Finally, $H^\bullet$ would be better to exist uniquely for any given $H$ under our assumptions. In this paper, we try to establish the desired Hopf pairings for affine prime regular Hopf algebras $H$ of GK-dimension one. For this purpose, the finite dual $H^\circ$ of each $H$ is determined, and afterwards $H^\bullet$ would be chosen as a suitable Hopf subalgebra of $H^\circ$.

However, it is not easy to determine the structure of $H^\circ$ for a Hopf algebra $H$ in general, especially for infinite-dimensional ones. Of course, the most direct way to get $H^{\circ}$ is by definition. For this, recall that $H^{\circ}$ is the Hopf algebra generated by $f\in H^{\ast}$ which vanish on an ideal $I\subseteq H$ of finite codimension. This means that we need a description of \emph{all} finite codimensional ideals which is impossible in general. To the authors' knowledge, there are two other ways to get $H^{\circ}$ if $H$ is good enough.  One is applying the well-known Cartier-Konstant-Milnor-Moore's Theorem (\cite{MM65}) if $H$ happens to be commutative. The related idea and method were generalized further (see \cite[Chapter 9]{Mon93}, \cite{CM94}). Another one is applying representation theoretical way if the representation category Rep-$H$ of finite dimensional modules happens to be very nice (see \cite{Tak92'}).

In the literature, significant progress has been made in classifying infinite dimensional noetherian Hopf algebras of low Gelfand-Kirillov dimensions (GK-dimension for short). The program of classifying Hopf algebras of GK-dimension one was initiated by Lu, Wu and Zhang \cite{LWZ07}. Then the second author found a new class of examples about prime regular Hopf algebras of GK-dimension one \cite{Liu09}. Brown and Zhang \cite{BZ10} made further efforts in this direction and classified all prime regular Hopf algebras of GK-dimension one under an extra hypothesis. In 2016, Wu, Ding and the second author \cite{WLD16} removed this hypothesis and gave a complete classification of prime regular Hopf algebras of GK-dimension one, in which some non-pointed Hopf algebras were found. Recently, the second author classified prime Hopf algebras of GK-dimension one satisfying two certain conditions \cite{Liu20}.
For Hopf algebras $H$ of GK-dimension two, Goodearl and Zhang \cite{GZ10} gave a classification of the case when $H$ is a domain satisfying $\mathrm{Ext}_H^1(\k,\k)\neq 0$. For those with vanishing Ext-groups, some interesting examples were constructed by Wang, Zhang and Zhuang \cite{WZZ13}.
The progress can also be found in a survey by Brown and Zhang \cite{BZ21}. As for Hopf algebras of GK-dimensions three and four, all connected ones were classified by Zhuang \cite{Zhu13} and Wang, Zhang and Zhuang \cite{WZZ15} respectively.

Our main interest in this paper is to deal with the affine prime Hopf algebras of GK-dimension one, whose detailed structures are recalled in Section \ref{subsection2.1}.
An interesting fact is that all affine prime regular Hopf algebras are commutative-by-finite \cite{BC21}, that is, a finite module over a normal commutative Hopf subalgebra. This suggests that we have a chance to get the finite duals of affine prime regular Hopf algebras of GK-dimension one explicitly. Actually, Ge and the second author use this observation together with some other techniques to determine the finite dual of the infinite dihedral group algebra $\k\mathbb{D}_\infty$ recently \cite{GL21}. In addition, after finishing this paper we realized that A. Jahn and M. Couto already considered this question along this line in their theses \cite{Jah15,Cou19}.

This paper is to develop the idea and techniques in \cite{GL21} further and as the main result we get the finite duals of all the affine prime regular Hopf algebras of GK-dimension one in terms of generators and relations. Now let us recall in our notations that there are five kinds of such Hopf algebras listed in \cite{WLD16}:
$$\k[x],\;\;\k[x^{\pm 1}],\;\;\k\mathbb{D}_\infty,\;\;T_\infty(n,v,\xi),\;\;
  B(n,\om,\gamma),\;\;D(m,d,\xi).$$
Since the finite duals of $\k[x],\k[x,x^{-1}]$ and $\k\mathbb{D}_\infty$ are known, it remains for us to determine the latter three kinds of Hopf algebras. Our main result is:
\begin{theorem}
 \emph{(1)} As a Hopf algebra, $T_\infty(n,v,\xi)^\circ$ is isomorphic to the Hopf algebra $T_{\infty^\circ}(n,v,\xi)$ constructed in Subsection \ref{subsection:T1},

\emph{(2)} As a Hopf algebra, $B(n,\om,\gamma)^\circ$ is isomorphic to the Hopf algebra $B_\circ(n,\om,\gamma)$ constructed in Subsection \ref{subsection:B1},

\emph{(3)} As a Hopf algebra, $D(m,d,\xi)^\circ$ is isomorphic to the Hopf algebra $D_\circ(m,d,\xi)$ constructed in Subsection \ref{subsection:D1}.
\end{theorem}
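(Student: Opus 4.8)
All three statements will be proved by one scheme, resting on the fact \cite{BC21} that each $H\in\{T_\infty(n,v,\xi),\,B(n,\om,\gamma),\,D(m,d,\xi)\}$ is commutative-by-finite. The plan is, for each such $H$, to fix the normal commutative Hopf subalgebra $A\subseteq H$ over which $H$ is free of finite rank; since $\GKdim H=1$, $A$ is the coordinate ring of a one-dimensional algebraic group, and in each of our cases it is (a copy of) $\k[x^{\pm1}]$, whose finite dual is already known by the Cartier--Kostant--Milnor--Moore theorem. With $\bar H:=H/A^+H$ a finite-dimensional Hopf algebra, this gives an exact sequence of Hopf algebras $\k\To A\To H\xrightarrow{\ \pi\ }\bar H\To\k$. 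Dualizing, $\pi$ yields a Hopf embedding $\bar H^{*}=\bar H^{\circ}\hookrightarrow H^{\circ}$ and the inclusion $A\hookrightarrow H$ a Hopf map $H^{\circ}\To A^{\circ}$, and the first task is to show that
\[
\k\To \bar H^{*}\To H^{\circ}\To A^{\circ}\To\k
\]
is again exact. Surjectivity of $H^{\circ}\To A^{\circ}$ follows from the freeness of $H$ over $A$: any finite-codimensional ideal $I\subseteq A$ produces the finite-codimensional ideal $IH\subseteq H$ with $IH\cap A=I$, so each $f\in A^{\circ}$ extends to $H^{\circ}$; exactness in the middle is the dual of the (free, hence faithfully flat) extension $A\subseteq H$.

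The second step is to make the three pieces explicit. The Hopf algebra $A^{\circ}$ is presented by generators and relations through Cartier--Kostant--Milnor--Moore (equivalently, it is read off from the already-recorded dual of $\k[x^{\pm1}]$): it is assembled from the group algebra of its grouplikes and one primitive element spanning the Lie algebra of the underlying one-dimensional group. The finite-dimensional $\bar H^{*}$ is computed by hand from a presentation of $\bar H$. What remains is to reconstruct $H^{\circ}$ as the extension of $A^{\circ}$ by $\bar H^{*}$: one must locate inside $H^{\circ}$ (i) the grouplike elements, which are precisely the algebra characters of $H$ and are read off from $H/[H,H]$; (ii) the remaining algebra generators, realized as matrix coefficients of a short list of low-dimensional indecomposable $H$-modules obtained from the generalized eigenspace decomposition of a module along $A$; and (iii) the cross relations between (i) and (ii), which encode the relevant cocycle and into which the discrete invariants enter ($n,v,\xi$ for $T_\infty$; $n,\om,\gamma$ for $B$; $m,d,\xi$ for $D$) --- this is where the three cases genuinely diverge.

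With the pieces in hand, for each $H$ I would define a Hopf algebra homomorphism $\Phi\colon K\To H^{\circ}$ out of the candidate $K$ --- namely $T_{\infty^\circ}(n,v,\xi)$, $B_\circ(n,\om,\gamma)$ or $D_\circ(m,d,\xi)$ as presented by generators and relations in the respective subsections --- by sending each generator of $K$ to the functional on $H$ singled out above. That the defining relations of $K$ are respected is a routine computation with the comultiplication of $H$; the substance is the bijectivity of $\Phi$. Injectivity I would obtain either from non-degeneracy of the induced Hopf pairing $K\times H\To\k$ (equivalently, from the functionals in $\im\Phi$ separating the explicit $\k$-basis of $H$ afforded by its freeness over $A$), or from the five lemma: $K$ is, by its construction, itself an extension $\k\To\bar H^{*}\To K\To A^{\circ}\To\k$, and one checks that $\Phi$ is a morphism of such extensions inducing isomorphisms on the sub Hopf algebra $\bar H^{*}$ and on the quotient $A^{\circ}$, whence $\Phi$ is bijective in the middle. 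Surjectivity is subsumed in this last argument; equivalently, it is the assertion that every finite-codimensional ideal of $H$ contains one of an explicitly understood form --- which is exactly the point at which the commutative-by-finite structure does its work, since such ideals lie over the finite-codimensional ideals of $A$ with controlled fibers.

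The step I expect to be the main obstacle is establishing surjectivity of $\Phi$ uniformly enough to cover all three families, i.e., proving that the generators and relations recorded in the subsections capture \emph{all} of $H^{\circ}$ while imposing no spurious extra relations. This splits into two genuinely case-dependent tasks: first, a sufficiently complete description of the finite-dimensional representations of $H$, obtained by restricting along $A$ and tracking how the finite quotient $\bar H$ permutes and acts on the generalized eigenspaces; and second, the precise determination of the cocycle gluing $A^{\circ}$ to $\bar H^{*}$, which is sensitive to the torsion data and must be carried out separately for $T_\infty(n,v,\xi)$, $B(n,\om,\gamma)$ and $D(m,d,\xi)$.
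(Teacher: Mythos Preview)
Your strategy is sound but genuinely different from the paper's. The paper never invokes the Hopf exact sequence $\k\to A\to H\to\bar H\to\k$ or the five lemma. Instead it works with a mere \emph{subalgebra} $P\subseteq H$ (not a Hopf subalgebra): $P=\k[x]$ for $T_\infty$ and $P=\k[g^{\pm1}]$ for $B$ and $D$. The key lemma is that an ideal $I\lhd H$ is cofinite iff $I\cap P\neq0$, which yields a cofinal family $\mathcal I_0$ of cofinite ideals, each generated by a single explicit polynomial in $P$. For every $I\in\mathcal I_0$ the paper then writes down, by hand, a candidate basis of $(H/I)^\ast$ built from products of a few explicit functionals $\psi_\la,\om,E_1,E_2$ (respectively $\zeta,\chi,E_1,E_2$), and proves it really is a basis by evaluating on a suitable set of elements of $H$ and checking that the resulting square matrix is invertible --- these matrices are Kronecker products of Vandermonde matrices for $T_\infty$ and $B$, and for $D$ one additionally needs the special $2r\times 2r$ matrix from \cite{GL21}. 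Injectivity of the map from the abstract presentation to $H^\circ$ is then reduced to the directness of sums $\sum_\alpha I_\alpha^\perp$ over ideals with coprime polynomial generators, which is elementary.

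So the trade-off is: your route packages the argument via $\bar H^\ast$ and $A^\circ$ and would be conceptually cleaner, but it imports the nontrivial fact that dualizing a faithfully flat Hopf extension preserves exactness, and the cocycle still has to be computed case by case; the paper's route is entirely self-contained and elementary, at the price of longer explicit computations and the matrix-invertibility lemmas. One small correction to your sketch: for $T_\infty(n,v,\xi)$ the relevant commutative piece is a \emph{polynomial} algebra (the normal commutative Hopf subalgebra is $\k[x^m]$ with $m=n/\gcd(n,v)$, and the paper's $P$ is $\k[x]$), not $\k[x^{\pm1}]$; only $B$ and $D$ sit over a Laurent polynomial ring.
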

These three claims are presented respectively as Theorems \ref{thm:T}, \ref{thm:B}, and \ref{thm:D}. In brief, our computation for finite dual $H^\circ$ starts with properties of certain polynomial or Laurent polynomial subalgebra $P$, by which we figure cofinite left ideals of $H$. Then we construct some key elements in $H^\circ$ and determine their relations under Hopf operations. Finally it is proved that these elements generate $H^\circ$ with desired relations.

Using our descriptions of finite duals, we find that for an affine prime regular Hopf algebra $H$ one can always get a Hopf algebra $H^{\bullet}$ such that there is a non-degenerate Hopf pairing (see Majid \cite{Maj90} for the definition) between $H$ and $H^{\bullet}$. The definition can also be found as Definition \ref{def:Hopfpairing} in this paper. Some properties of $H^{\bullet}$ are listed in the following.
\begin{proposition}\label{Prop.1.2}
\emph{(1)} All Hopf algebras $(\k\mathbb{D}_\infty)^\bullet$, $T_\infty(n,v,\xi)^\bullet$, $B(n,\om,\gamma)^\bullet$ and $D(m,d,\xi)^\bullet$ have GK-dimension one and are minimal under inclusion relation;

\emph{(2)} As algebras, $(\k\mathbb{D}_\infty)^\bullet$ is regular while  $T_\infty(n,v,\xi)^\bullet$, $B(n,\om,\gamma)^\bullet$ and $D(m,d,\xi)^\bullet$ are not for $n,m\geq 2$.
\end{proposition}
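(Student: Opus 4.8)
All three statements are read off from the explicit presentation of $H^\circ$ obtained in Theorems \ref{thm:T}, \ref{thm:B} and \ref{thm:D} and from the way $H^\bullet$ is carved out of it. Recall that every $H$ on the list is commutative-by-finite: it has a central polynomial or Laurent polynomial Hopf subalgebra $P$ over which it is free of finite rank, with $\bar H:=H/HP^{+}$ finite-dimensional. Dualizing $\k\to P\to H\to\bar H\to\k$ and replacing $P^{\circ}$ by its smallest point-separating Hopf subalgebra $\mathcal O(\mathbb G_a)=\k[\psi]$ --- $\psi$ being, up to scalar, the unique primitive functional with nonzero restriction to $P$ --- realizes $H^\bullet$ as a Hopf subalgebra of $H^\circ$ sitting in
\[
\k\;\To\;\bar H^{\,*}\;\To\;H^\bullet\;\To\;\mathcal O(\mathbb G_a)\;\To\;\k .
\]
Here $\bar H^{\,*}=\bar H^{\circ}$ is finite-dimensional, $H^\bullet$ is free as a $\bar H^{\,*}$-module (evident from the presentation), and the preimage of the degree filtration on $\mathcal O(\mathbb G_a)$ is a multiplicative filtration $F_{\bullet}H^\bullet$ with $F_0=\bar H^{\,*}$ and finite-dimensional layers $F_m/F_{m-1}\cong\bar H^{\,*}\otimes\k\psi^{\,m}$. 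Thus $\dim F_m$ grows linearly in $m$, whence $\GKdim H^\bullet=\GKdim\mathcal O(\mathbb G_a)=1$, which is the first half of (1).

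For the minimality in (1), let $A\subseteq H^\bullet$ be a Hopf subalgebra with $H\times A\to\k$ still non-degenerate, i.e.\ $A$ separating the points of $H$; the claim is $A=H^\bullet$. As $A$ is infinite-dimensional it is not contained in $\bar H^{\,*}$, and since $\mathcal O(\mathbb G_a)$ has no Hopf subalgebra other than $\k$ and itself (characteristic zero), the composite $A\hookrightarrow H^\bullet\twoheadrightarrow\mathcal O(\mathbb G_a)$ is onto; comparing with the displayed sequence, $A=H^\bullet$ once $A\cap\bar H^{\,*}=\bar H^{\,*}$. To obtain this, restrict to $P$: the image $A|_P$ is a point-separating Hopf subalgebra of $P^{\circ}\cong\k\,\k^{\times}\otimes\mathcal O(\mathbb G_a)$ (resp.\ $\k\,\k^{+}\otimes\mathcal O(\mathbb G_a)$); but the non-commutativity relations of $H$ (a relation $xy=\xi yx$ with $\xi\ne1$, etc.) force every algebra map $H\to\k$ to annihilate the skew-primitive generators, so $G(H^{\circ})$ --- hence $G(H^\bullet)\supseteq G(A)\supseteq G(A|_P)$ --- is a \emph{finite} group of roots of unity, and a point-separating Hopf subalgebra of $P^{\circ}$ with only finitely many grouplikes must contain $\psi$. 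Therefore $A$ contains a lift $\tilde w$ of $\psi$; and from the presentation of $H^\bullet$ one checks that any such lift, the grouplike $c$ it forces into $A$ via $\Delta\tilde w=\tilde w\otimes1+c\otimes\tilde w+(\cdots)$, and the skew-primitive element(s) of $\bar H^{\,*}$ that the straightening relations between $\tilde w$ and $c$ then bring in, already generate $H^\bullet$. Hence $A=H^\bullet$. (For $\k\mathbb{D}_\infty$ this is transparent: $(\k\mathbb{D}_\infty)^\bullet\cong\mathcal O(\mathbb G_a\rtimes\Z/2)$, whose Hopf subalgebras are $\k$, $\mathcal O(\Z/2)$ and the whole algebra --- the images of the quotient groups $\{1\}$, $\Z/2$, $\mathbb G_a\rtimes\Z/2$ --- of which only the last separates points of $\k\mathbb{D}_\infty$; cf.\ \cite{GL21}.) The main obstacle is precisely this middle passage, turning ``$A$ separates $H$'' into ``$A\supseteq\bar H^{\,*}$''; alternatively one argues that if $A\cap\bar H^{\,*}=\bar Q^{\,*}$ for a proper Hopf quotient $\bar H\twoheadrightarrow\bar Q$, then the $A$-corepresentations fail to detect a skew-primitive element of $H$ (e.g.\ $x$), contradicting separation.

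For (2): $(\k\mathbb{D}_\infty)^\bullet\cong\mathcal O(\mathbb G_a\rtimes\Z/2)\cong\k[w]\times\k[w]$ is commutative, affine, Noetherian of global dimension $1$, hence regular. For $T_\infty(n,v,\xi)^\bullet$, $B(n,\om,\gamma)^\bullet$ and $D(m,d,\xi)^\bullet$ with $n,m\geq2$, the finite-dimensional Hopf subalgebra $\bar H^{\,*}$ above is \emph{not} semisimple: passing to $\bar H=H/HP^{+}$ introduces a truncation (such as $x^n=0$), so $\bar H$ is of Taft/Sweedler/bosonization type, non-semisimple precisely for $n,m\geq2$ as the classification in \cite{WLD16} shows, and by the Larson--Radford theorem $\bar H^{\,*}$ is non-semisimple too. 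Hence $\projdim_{\bar H^{\,*}}\k=\infty$; restricting an $H^\bullet$-projective resolution of the trivial module to $\bar H^{\,*}$ --- legitimate since $H^\bullet$ is $\bar H^{\,*}$-free --- yields $\projdim_{H^\bullet}\k\geq\projdim_{\bar H^{\,*}}\k=\infty$, so $\gldim H^\bullet=\infty$ and $H^\bullet$ is not regular. That $H^\bullet$ is moreover non-commutative for $n,m\geq2$, while $(\k\mathbb{D}_\infty)^\bullet$ is commutative, is immediate from the presentations.
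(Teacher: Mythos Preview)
Your argument for (2) non-regularity is correct and genuinely different from the paper's. The paper proceeds case by case: it writes $B(n,\om,\gamma)^\bullet\cong R\#\k\Z_n$ as a smash product, reduces to $\gldim(R)$ via \cite{Liu05}, and then derives a contradiction from hereditarity by showing the left ideal $RE_1$ cannot be projective (no nontrivial idempotents in the graded ring $R$, yet $RE_1$ is unfaithful). Your route is more conceptual: the finite-dimensional Hopf subalgebra $K=\bar H^{*}$ is non-semisimple (Taft type) for $n,m\geq 2$, hence Frobenius with $\gldim K=\projdim_K\k=\infty$; since $H^\bullet$ is free over $K$, restriction of a projective $H^\bullet$-resolution of $\k$ gives a projective $K$-resolution, forcing $\projdim_{H^\bullet}\k=\infty$. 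This avoids the smash-product machinery and the idempotent analysis entirely, and handles all three families uniformly. One point to make explicit: the freeness of $H^\bullet$ over $K$ on the correct side (you say ``evident from the presentation'') does require a short triangular argument using the commutation relations such as $E_1^lE_2=(E_2+\tfrac{l}{n})E_1^l$, but this is routine.

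Your minimality argument in (1), by contrast, is more circuitous than the paper's and has a soft spot. You set up the exact sequence $\k\to\bar H^{*}\to H^\bullet\to\mathcal O(\mathbb G_a)\to\k$, argue that a separating Hopf subalgebra $A$ surjects onto $\mathcal O(\mathbb G_a)$ (since the latter has no proper infinite-dimensional Hopf subalgebras), and then try to force $A\supseteq\bar H^{*}$ via restriction to $P$ and an analysis of grouplikes. The passage ``one checks that any such lift\dots and the skew-primitive element(s)\dots already generate $H^\bullet$'' is where the real work happens, and you do not carry it out. The paper's argument is cleaner and bypasses the exact-sequence formalism: it observes directly that $H^\bullet$ has a finite-dimensional normal Hopf subalgebra $K$ with quotient the polynomial Hopf algebra $\k[\overline{E_2}]$, which is minimal-pointed; hence any infinite-dimensional Hopf subalgebra of $H^\bullet$ must contain $E_2$, and the explicit form of $\Delta(E_2)$ then forces in all of $K$. (Also, be careful with your choice of central Hopf subalgebra $P$: for $T_\infty(n,v,\xi)$ the paper's $\k[x]$ is not a Hopf subalgebra when $v\neq 0$; you need $\k[x^m]$.)
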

Proposition \ref{Prop.1.2} is a direct consequence of Propositions \ref{prop.6.4} and \ref{prop:bulletproperty}. Through the pairing we constructed and this proposition,  we get two observations: 1) For each affine prime regular Hopf algebra $H$ of GK-dimension one, we get a quantum group in the sense of Takeuchi \cite{Tak92} naturally (see Remark \ref{rmk:quantumgrp} for details); 2) We give a counterexample to an infinite-dimensional analogous version of the semisimplicity result by Larson and Radford \cite{LR88} which states that for a finite-dimensional Hopf algebra $H$, $H$ is semisimple if and only if $H^{\ast}$ is so.

After we finished the present paper, we found the nice preprint \cite{BCJ} written by K.A. Brown, M. Couto and A. Jahn. They refined the results in \cite{Jah15,Cou19} and determined the structures of the finite dual of affine commutative-by-finite Hopf algebras. As the Hopf algebras of GK-dimension one considered in this paper are affine commutative-by-finite, the conclusions gotten in \cite{BCJ} can apply to our case directly. We remark that a number of formulas obtained in Section \ref{section3}, \ref{section4} and \ref{section5} in our paper were essentially the same as those in \cite[Section 7.5]{BCJ}. As pointed out in \cite{BCJ}, the results in \cite{BCJ} and the present paper are largely complementary.
Indeed, we can use our results to answer a question appeared in \cite[Remarks 7.1(3)]{BCJ} (see Remark \ref{rmk:cocyletrivial} in detail).

The organization of this paper is as follows: We gather in Section \ref{section2} necessary concepts and techniques, including the classification result for $H$, some equivalent definitions of $H^\circ$ for computation, and invertible matrices used for later proofs. Sections \ref{section3}, \ref{section4} and \ref{section5} are devoted to giving the structures for $T_\infty(n,v,\xi)^\circ$, $B(n,\om,\gamma)^\circ$ and $D(m,d,\xi)^\circ$ respectively. Finally in Section \ref{section6}, constructions and related properties of desired non-degenerate Hopf pairings on $\k\mathbb{D}_\infty$, $T_\infty(n,v,\xi)$, $B(n,\om,\gamma)$ and $D(m,d,\xi)$ are studied. We pose a question at the last section too.

\subsection{Notation}

We end this section by introducing some notations used in this paper. Let $\lfloor\frac{\bullet}{\bullet}\rfloor$ stand for the floor function, that is, for any natural numbers $m,n$, $\lfloor\frac{m}{n}\rfloor$ denotes the biggest integer which is not bigger than $\frac{m}{n}$.

Moreover, let $n$ be a positive integer. We usually denote condition $0\leq i\leq n-1$ simply by ``$i\in n$''. This can be referred to some convention in set theory that $n=\{0,1,\cdots,n-1\}$ as a set. Moreover, the cartesian product of $\{0,1,\cdots,m-1\}$ and $\{0,1,\cdots,n-1\}$ are denoted by $m\times n$, which is always ordered lexicographically.

Another notation which will be used is that: For an algebra $H$, we write $I\triangleleft_l H$ if $I$ is a left ideal of $H$, and denote the principal left ideal generated by $h\in H$ by $(h)$.

\section{Preliminaries}\label{section2}

In this section, we recall the classification result on affine prime regular Hopf algebras of GK-dimension one and give necessary tools and results for the determination of their finite duals. About general background knowledge, the reader is referred to
\cite{Mon93,Swe69} for Hopf algebras and \cite{LWZ07,BZ10} for exposition about noetherian Hopf algebras.
\subsection{The Classification Result}\label{subsection2.1}
At first, let us recall the classification of affine prime regular Hopf algebras of GK-dimension one given in \cite[Theorem 8.3]{WLD16}.

\begin{lemma}
Any prime regular Hopf algebra of GK-dimension one must be isomorphic to one of the following:
\begin{itemize}
  \item[(1)] Connected algebraic groups of dimension one: $\k[x]$ and $\k[x^{\pm 1}]$;
  \item[(2)] Infinite dihedral group algebra $\k\mathbb{D}_\infty$;
  \item[(3)] Infinite dimensional Taft algebras $T_\infty(n,v,\xi)$, where $n,v$ are integers satisfying $0\leq v\leq n-1$, and $\xi$ is a primitive $n$th root of $1$;
  \item[(4)] Generalized Liu's algebras $B(n,\om,\gamma)$, where $n$, $\om$ are positive integers and $\gamma$ is a primitive $n$th  root of $1$;
  \item[(5)] The Hopf algebras $D(m,d,\xi)$, where $m,d$ are positive integers satisfying $(1+m)d$ is even and $\xi$ is a primitive $2m$th root of $1$.
\end{itemize}
\end{lemma}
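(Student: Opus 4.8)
This is \cite[Theorem 8.3]{WLD16}, the culmination of a line of work (\cite{LWZ07,BZ10,WLD16}); I sketch the strategy. Let $H$ be affine, prime, of finite global dimension, with $\GKdim H = 1$. \emph{Step 1 (ring-theoretic reduction).} An affine prime algebra of GK-dimension one is automatically PI and Noetherian, and, by the structure theory of affine PI algebras of GK-dimension one, module-finite over its affine centre $Z$; here $Z$ is a domain of Krull dimension one. Finiteness of the global dimension upgrades, via standard results on Noetherian Hopf algebras, to Auslander regularity and the GK--Cohen--Macaulay property, and one deduces that $Z$ is regular, so $\operatorname{Spec} Z$ is a smooth affine curve. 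If $H$ is commutative then $H = \mathcal{O}(G)$ with $G$ a one-dimensional algebraic group, smooth by regularity and connected by primeness, hence $G \cong \mathbb{G}_{a}$ or $\mathbb{G}_{m}$: this produces $\k[x]$ and $\k[x^{\pm 1}]$.

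\emph{Step 2 (homological integral and a commutative normal Hopf subalgebra).} The decisive new invariant is the left homological integral $\int^{l}_{H} \in \Ext^{1}_{H}(\k,H)$ of \cite{LWZ07}: it is one-dimensional and affords an algebra character $\pi\colon H \to \k$, i.e.\ a grouplike element of $H^{\circ}$. Following Brown--Zhang \cite{BZ10}, $\pi$ has finite order $\ell$, and the left and right winding automorphisms attached to $\pi$ --- together with the $\Z$- or $\Z_{\ell}$-grading they induce --- can be exploited to locate inside $H$ a \emph{normal commutative Hopf subalgebra} $A$ over which $H$ is a finitely generated projective module. Such an $A$ is a commutative Hopf domain of GK-dimension one, hence one of $\k[x]$, $\k[x^{\pm 1}]$, $\k\Z$, and $\bar H := H/A^{+}H$ is finite-dimensional; equivalently, $H$ is commutative-by-finite (\cite{BC21}) and sits in an exact sequence of Hopf algebras
\begin{equation*}
\k \longrightarrow A \longrightarrow H \longrightarrow \bar H \longrightarrow \k .
\end{equation*}

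\emph{Step 3 (classifying the extensions).} It then remains to classify such extensions. Primeness of $H$ forbids nontrivial $A$-stable idempotents and restricts how $\bar H$ acts on and coacts with $A$, while finiteness of the global dimension forces the extension to be essentially cleft with $\bar H$ assembled from cyclic groups (plus a copy of $\Z_{2}$ in the dihedral situation), acting on $A$ by the evident root-of-unity scalars and with trivial or near-trivial cocycle. In the cocommutative case one argues instead through Cartier--Kostant--Milnor--Moore, writing $H = U(\mathfrak{g})\,\#\,\k G$; GK-dimension one, primeness and finiteness of the global dimension then leave only $\k[x]$, $\k[x^{\pm1}] = \k\Z$, $\k\mathbb{D}_{\infty}$ and the cocommutative specializations of family (3). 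Reassembling the (co)action, the grading and the integral data in the remaining cases yields precisely the presentations $T_{\infty}(n,v,\xi)$, $B(n,\om,\gamma)$ and $D(m,d,\xi)$.

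The genuine difficulty is Step 3 in full generality --- deciding \emph{all} admissible numerical data and ruling out further families. Brown--Zhang \cite{BZ10} carried out this analysis only under an extra hypothesis relating the order of the antipode to that of $\pi$; removing that hypothesis, which is the achievement of \cite{WLD16}, calls for a delicate study of the possible PI-degrees of $H$ and of the local behaviour of $H$ over the finitely many ramified points of $\operatorname{Spec} Z$, and it is exactly there that the family $D(m,d,\xi)$ --- absent from the earlier analysis --- is discovered and shown to exhaust what is left.
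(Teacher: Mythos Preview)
The paper does not prove this lemma at all: it is stated purely as a recall of \cite[Theorem~8.3]{WLD16}, with no argument given. Your proposal likewise opens by citing that theorem, which is all that is required here; the subsequent sketch of the strategy from \cite{LWZ07,BZ10,WLD16} is accurate in broad outline but goes well beyond what the paper itself supplies.
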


Detailed structures of $T_\infty(n,v,\xi)$, $B(n,\om,\gamma)$ and $D(m,d,\xi)$ are recalled as follows:

\begin{definition}\label{def:T}
Let $n$ be a positive integer, $0\leq v\leq n-1$, and $\xi$ be a primitive $n$th root of $1$. As an algebra, $T_\infty(n,v,\xi)$ is generated by $g$ and $x$ with relations
$$g^n=1,\;\;xg=\xi gx.$$
Then $T_\infty(n,v,\xi)$ becomes a Hopf algebra with comultiplication, counit and antipode given by
\begin{align*}
& \Delta(g)=g\otimes g,\;\;\Delta(x)=1\otimes x+x\otimes g^v,\;\;
  \varepsilon(g)=1,\;\;\varepsilon(x)=0,  \\
& S(g)=g^{n-1},\;\;S(x)=-\xi^{-v}g^{n-v}x.
\end{align*}
\end{definition}

\begin{definition}\label{def:B}
Let $n$ and $\om$ be positive integers, and $\gamma$ be a primitive $n$th root of $1$. As an algebra, $B(n,\om,\gamma)$ is generated by $x^{\pm1}$, $g$ and $y$ with relations
$$\left\{\begin{array}{l}
    xx^{-1}=x^{-1}x=1,\;\;xg=gx,\;\;xy=yx,  \\
    yg=\gamma gy,  \\
    y^n=1-x^\om=1-g^n.
\end{array}\right.$$
Then $B(n,\om,\gamma)$ becomes a Hopf algebra with comultiplication, counit and antipode given by
\begin{align*}
& \Delta(x)=x\otimes x,\;\;\Delta(g)=g\otimes g,\;\;\Delta(y)=1\otimes y+y\otimes g, \\
& \varepsilon(x)=\varepsilon(g)=1,\;\;\varepsilon(y)=0,  \\
& S(x)=x^{-1},\;\;S(g)=g^{-1},\;\;S(y)=-\gamma^{-1}g^{-1}y.
\end{align*}
\end{definition}

\begin{definition}\label{def:D}
Let $m,d$ be positive integers such that $(1+m)d$ is even and $\xi$ a primitive $2m$th root of unity. Define
$$\om:=md,\;\; \gamma:=\xi^2.$$
As an algebra, $D(m,d,\xi)$ is generated by $x^{\pm1}$, $g$, $y$ and $u_0,u_1,\cdots,u_{m-1}$ with relations
$$\begin{array}{c}
    xx^{-1}=x^{-1}x=1,\;\;gx=xg,\;\;yx=xy,  \\
    yg=\gamma gy,\;\;y^m=1-x^\om=1-g^m, \\
    u_ix=x^{-1}u_i,\;\;yu_i=\phi_iu_{i+1}=\xi x^du_iy,\;\;u_ig=\gamma^ix^{-2d}gu_i,  \\
u_iu_j=
\left\{\begin{array}{ll}
  (-1)^{-j}\xi^{-j}\gamma^\frac{j(j+1)}{2}\frac{1}{m}
    x^{-\frac{1+m}{2}d}\phi_i\phi_{i+1}\cdots\phi_{m-2-j}y^{i+j}g &(i+j\leq m-2)  \\
  (-1)^{-j}\xi^{-j}\gamma^\frac{j(j+1)}{2}\frac{1}{m}x^{-\frac{1+m}{2}d}y^{i+j}g &(i+j=m-1)  \\
  (-1)^{-j}\xi^{-j}\gamma^\frac{j(j+1)}{2}\frac{1}{m}
    x^{-\frac{1+m}{2}d}\phi_i\cdots\phi_{m-1}\phi_0\cdots\phi_{m-2-j}y^{i+j-m}g &(i+j\geq m)  \\
\end{array}
\right.\end{array}$$
where $\phi_i:=1-\gamma^{-i-1}x^d$ and $i,j\in m$.

Then $D(m,d,\xi)$ becomes a Hopf algebra with comultiplication, counit and the antipode given by
$$\begin{array}{c}
    \Delta(x)=x\otimes x,\;\;\Delta(g)=g\otimes g,\;\;\Delta(y)=y\otimes g+1\otimes y, \\
    \Delta(u_i)=\sum_{j=0}^{m-1}\gamma^{j(i-j)}u_j\otimes x^{-jd}g^ju_{i-j};  \\
    \varepsilon(x)=\varepsilon(g)=\varepsilon(u_0)=1,\;\;\varepsilon(y)=\varepsilon(u_l)=0, \\
    S(x)=x^{-1},\;\;S(g)=g^{-1},\;\;
    S(y)=-yg^{-1}=-\gamma^{-1}g^{-1}y,  \\
    S(u_i)=(-1)^i\xi^{-i}\gamma^{-\frac{i(i+1)}{2}}x^{id+\frac{3}{2}(1-m)d}g^{m-i-1}u_i,
\end{array}$$
for $i\in m$ and $1\leq l\leq m-1$.
\end{definition}

\subsection{Finite Dual for Hopf algebras}

In this subsection, we recall the concept of finite dual of a Hopf algebra, and list some lemmas in order to compute finite duals for the cases we are concerned with. The definition of finite dual is well-known and can be found in \cite[Chapter 6]{Swe69} or \cite[Chapter 9]{Mon93}, for example.

\begin{definition}
Let $H$ be a Hopf algebra over $\k$ and denote its dual space by $H^\ast$. The finite dual of $H$ is defined as
$$H^\circ:=\{f\in H^\ast\mid f~\text{vanishes on a cofinite ideal}~I\lhd H\}.$$
\end{definition}

It is well-known that $H^{\circ}$ has a Hopf algebra structure naturally \cite[Theorem 9.1.3]{Mon93}. We want to simplify $H^\circ$ a little. It is not hard to see that $H^\circ$ has the following alternative description: Let $\mathcal{I}$ be the family of all the cofinite left ideals of $H$. Then by \cite[Lemma 9.1.1]{Mon93}, we have $H^\circ=\sum_{I\in\mathcal{I}} I^\perp$, where $I^\perp=\{f\in H^\ast \mid f(I)=0\}.$ Therefore, we have

\begin{lemma}\label{lem:I0}
Let $\mathcal{I}_0$ be a subset of $\mathcal{I}$. Suppose for any $I\in\mathcal{I}$, there exist some $I_1,I_2,\cdots,I_m\in\mathcal{I}_0$ such that $\bigcap_{i=1}^m I_i\subseteq I$. Then
$H^\circ=\sum_{I\in\mathcal{I}_0} I^{\perp}$.
\end{lemma}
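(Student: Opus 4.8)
The plan is to lean on the description $H^\circ=\sum_{I\in\mathcal{I}}I^\perp$ recalled just before the statement, and to cut the sum over \emph{all} cofinite ideals down to the sum over $\mathcal{I}_0$. Since $\mathcal{I}_0\subseteq\mathcal{I}$, the inclusion $\sum_{I\in\mathcal{I}_0}I^\perp\subseteq\sum_{I\in\mathcal{I}}I^\perp=H^\circ$ is immediate, so all the content is in the reverse inclusion $H^\circ\subseteq\sum_{I\in\mathcal{I}_0}I^\perp$.

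First I would record the elementary linear-algebra fact that for finitely many subspaces $W_1,\dots,W_m$ of a vector space $V$ one has $\left(\bigcap_{i=1}^m W_i\right)^\perp=\sum_{i=1}^m W_i^\perp$ inside $V^\ast$. The inclusion $\supseteq$ is trivial. For $\subseteq$, consider the canonical injection $V/\bigcap_i W_i\hookrightarrow\bigoplus_{i=1}^m V/W_i$; dualizing turns it into a surjection $\bigoplus_{i=1}^m (V/W_i)^\ast\twoheadrightarrow\left(V/\bigcap_i W_i\right)^\ast$, and under the identifications $(V/W_i)^\ast\cong W_i^\perp$ and $\left(V/\bigcap_i W_i\right)^\ast\cong\left(\bigcap_i W_i\right)^\perp$ this says precisely that every functional vanishing on $\bigcap_i W_i$ is a sum of functionals vanishing on the individual $W_i$. (Surjectivity of the dual of an injection holds for arbitrary vector spaces; in our application the relevant codimensions are finite, so there is nothing delicate here.)

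Granting this, take any $f\in H^\circ$. By definition $f$ vanishes on some cofinite ideal $I\in\mathcal{I}$, and by hypothesis there exist $I_1,\dots,I_m\in\mathcal{I}_0$ with $\bigcap_{i=1}^m I_i\subseteq I$. Then $f$ vanishes on $\bigcap_{i=1}^m I_i$ as well, i.e.\ $f\in\left(\bigcap_{i=1}^m I_i\right)^\perp$. Applying the linear-algebra fact with $W_i=I_i$ yields $f\in\sum_{i=1}^m I_i^\perp\subseteq\sum_{I\in\mathcal{I}_0}I^\perp$. Hence $H^\circ\subseteq\sum_{I\in\mathcal{I}_0}I^\perp$, and together with the trivial inclusion we obtain equality.

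There is no genuine obstacle in this argument; the only step that warrants a moment's care is the identity $\left(\bigcap_i W_i\right)^\perp=\sum_i W_i^\perp$, which is exactly where the \emph{finiteness} of the intersection (as opposed to an arbitrary one) enters. For completeness one may also observe that each finite intersection $\bigcap_{i=1}^m I_i$ is again a cofinite ideal, hence lies in $\mathcal{I}$, so the hypothesis is internally consistent; but this remark is not needed for the proof above.
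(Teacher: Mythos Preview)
Your proof is correct and follows essentially the same approach as the paper's: both use the identity $\left(\bigcap_{i=1}^m I_i\right)^\perp=\sum_{i=1}^m I_i^\perp$ together with the hypothesis to show $I^\perp\subseteq\sum_{i=1}^m I_i^\perp\subseteq\sum_{J\in\mathcal{I}_0}J^\perp$ for every $I\in\mathcal{I}$, and then sandwich. The only difference is that you spell out a proof of the finite-intersection identity, whereas the paper simply asserts it.
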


\begin{proof}
The condition implies that for any $I\in\mathcal{I}$, there exist some $I_1,I_2,\cdots,I_m\in\mathcal{I}_0$ such that $\sum_{i=1}^m I_i^\perp=\left(\bigcap_{i=1}^m I_i\right)^\perp\supseteq I^\perp$. Thus
$$H^\circ=\sum_{I\in\mathcal{I}}I^\perp\subseteq\sum_{I\in\mathcal{I}_0}I^\perp\subseteq H^\circ,$$
and the claim is then verified.
\end{proof}

Now we turn to a common feature of Hopf algebras in our cases. Specifically:
\begin{itemize}
\item For the remaining of this subsection, we assume that $H$ is a Hopf algebra which is a finitely generated right module over some subalgebra $P$, such that $P$ is a polynomial algebra $\k[x]$ or a Laurent polynomial algebra $\k[x^{\pm1}]$.
\end{itemize}
We would note in subsequent sections that this assumption holds for all affine prime regular Hopf algebras of GK-dimension one. This assumption helps us to figure all the cofinite left ideals of $H$:

\begin{lemma}\label{lem:cofin}
For any left ideal $I\lhd_l H$, $I$ is cofinite in $H$ if and only if $I\cap P\neq 0$.
\end{lemma}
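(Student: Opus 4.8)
The plan is to exploit the hypothesis that $H$ is a finitely generated module over $P$, where $P=\k[x]$ or $\k[x^{\pm1}]$, so that cofiniteness of an ideal $I\lhd H$ can be detected on the one-variable polynomial (or Laurent polynomial) ring $P$. First I would prove the forward direction: if $I$ is cofinite in $H$, then $I\cap P\neq 0$. Suppose for contradiction that $I\cap P=0$. Then the composite $P\hookrightarrow H\twoheadrightarrow H/I$ is injective, so $P$ embeds as a $\k$-subalgebra of the finite-dimensional algebra $H/I$. But $P$ is infinite-dimensional over $\k$, a contradiction. Hence $I\cap P\neq 0$.

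For the converse, suppose $I\cap P\neq 0$, so there is a nonzero $p=p(x)\in I\cap P$. Since $P$ is a principal ideal domain and $p\neq 0$, the quotient $P/(p)$ is finite-dimensional over $\k$ (of dimension $\deg p$ in the polynomial case; for the Laurent case one first clears the lowest power of $x$, writing $p=x^{k}q(x)$ with $q(0)\neq 0$, and notes $x$ is invertible, so $(p)=(q)$ and $P/(p)\cong\k[x]/(q)$ is finite-dimensional). Now I would pass to $H$: the ideal $(p)_H=HpH\subseteq I$ generated by $p$ in $H$ satisfies $(p)_H\supseteq Hp$, and since $p$ is central in $P$ but perhaps not in $H$ one works instead with the left $P$-module structure — because $H$ is a finitely generated $P$-module, say $H=\sum_{j=1}^{r}Ph_j$, we have $H/Hp=\sum_j (P/pP)\,\overline{h_j}$ as a $\k$-vector space, which is spanned by $r\cdot\dim_\k(P/pP)<\infty$ elements. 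Thus $Hp$ is already cofinite as a left ideal, hence a fortiori the two-sided ideal $(p)_H\supseteq Hp$ is cofinite in $H$; since $(p)_H\subseteq I$, the ideal $I$ is cofinite too.

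The main technical point — and the step I expect to require the most care — is the bookkeeping in the converse direction: one must be careful that $p\in I\cap P$ need not be central in $H$, so the quotient $H/Hp$ by the left ideal $Hp$ is only a priori a left $P$-module, not an algebra quotient. The clean way around this is exactly the module-finiteness hypothesis: writing $H$ as a finite sum $\sum_j Ph_j$ of cyclic left $P$-modules immediately bounds $\dim_\k(H/Hp)$ by $r\deg p$ (resp.\ $r\deg q$), and then $I\supseteq (p)_H\supseteq Hp$ forces $\dim_\k(H/I)<\infty$. In the Laurent case the only extra wrinkle is replacing $p$ by its ``primitive part'' $q$ with nonzero constant term, which generates the same ideal of $P$ since $x\in P^\times$; after that the argument is identical. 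One should also record the symmetric statement for right modules, which follows by the same argument applied to $pH$ and a decomposition $H=\sum_j h_j P$.
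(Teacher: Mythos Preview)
Your argument is correct and follows essentially the same route as the paper: both directions use that $P/(I\cap P)$ (or $P/(p)$) is finite-dimensional together with the finite generation of $H$ over $P$. The paper phrases the converse slightly differently, observing that $(P+I)/I\cong P/(I\cap P)$ is a finite-dimensional subalgebra of $H/I$ over which $H/I$ is finitely generated, but the content is the same.

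One small bookkeeping slip to fix: with the \emph{left} $P$-module decomposition $H=\sum_j P h_j$ you should work with the \emph{right} ideal $pH$, not $Hp$. Indeed $pH=\sum_j pP\,h_j=\sum_j (Pp)h_j$ (by commutativity of $P$), so $H/pH$ is spanned by the images of $h_1,\dots,h_r$ over $P/pP$; but $Hp=\sum_j P(h_jp)$ need not equal $\sum_j (Pp)h_j$ since $h_jp$ need not lie in $Ph_j$. This does not affect the conclusion, since $pH\subseteq I$ already gives $\dim_\k(H/I)\leq \dim_\k(H/pH)<\infty$; alternatively, as you note at the end, use a right decomposition $H=\sum_j h_jP$ and then $Hp$ works. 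The paper's formulation via $(P+I)/I$ sidesteps this left/right bookkeeping entirely.
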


\begin{proof}
Note that whenever $P$ is $\k[x]$ or $\k[x^{\pm1}]$, $I\cap P$ is an ideal of $P$ and thus a principal ideal generated by some polynomial $q(x)\in P$. It follows that
$$I\cap P\neq 0\;\;\Longleftrightarrow\;\;q(x)\neq0\;\;
  \Longleftrightarrow\;\;I\cap P~\text{is cofinite in}~P.$$

If $I$ is cofinite in $H$, then the following relations of $\k$-vector spaces
$$H/I\supseteq (P+I)/I \cong P/(I\cap P)$$
implies that $P/(I\cap P)$ must be finite-dimensional. In other words, $I\cap P\neq 0$ holds.

Conversely if $I\cap P\neq 0$, then clearly $P/(I\cap P)$ is finite-dimensional, and so is $(P+I)/I$.
However, the assumption above this lemma implies that $H=\sum_{i=1}^m h_iP$ for some $h_1, h_2,\cdots,h_m\in H$. As $I$ is a left ideal of $H$, we can know that
$(P+I)/I$ is a finite-dimensional subspace of the quotient left $H$-module $H/I$, and
$$H/I=\sum_{i=1}^m h_i(P+I/I)$$
holds. It follows that $H/I$ is also finite-dimensional.
\end{proof}

Combining Lemmas \ref{lem:I0} and \ref{lem:cofin}, in order to compute $H^\circ$, it is sufficient to find enough appropriate polynomials $p(x)$'s, such that each $I\in\mathcal{I}$ must contain some principal left ideal $(p(x))$. Concerning  our cases  of $T_\infty(n,v,\xi)$, $B(n,\om,\gamma)$ and $D(m,d,\xi)$, we can choose such $p(x)$'s respectively as follows.

\begin{lemma}\label{lem:p(x)}
Let $H$ be an example from any of the above three classes of Hopf algebras.
Let $I\in\mathcal{I}$, and $m,n$ be positive integers.
\begin{itemize}
  \item[(1)] There is a polynomial $p(x)\in P=\k[x]$ with form:
    \begin{equation}\label{eqn:p(x)1}
    p(x)=\prod_{\alpha=1}^N (x^m-\lambda_\alpha)^{nr_\alpha}
    \end{equation}
    for some $N\geq 1$, $r_\alpha\in\N$ and distinct $\lambda_\alpha\in\k$, such that $I$ contains the cofinite left ideal generated by $p(x)$ in $H$;
  \item[(2)] There is a polynomial $p(x)\in P=\k[x^{\pm 1}]$ with form:
    \begin{equation}\label{eqn:p(x)2}
    p(x)=\prod_{\alpha=1}^N (x^n-\lambda_\alpha)^{r_\alpha}
    \end{equation}
    for some $N\geq 1$, $r_\alpha\in\N$ and distinct $\lambda_\alpha\in\k^\ast$, such that $I$ contains the cofinite left ideal generated by $p(x)$ in $H$;
  \item[(3)] There is a polynomial $p(x)\in P=\k[x^{\pm 1}]$ with form:
    \begin{equation}\label{eqn:p(x)3}
    p(x)=(x^m-1)^{r'}(x^m+1)^{r''}
           \prod_{\alpha=1}^N (x^m-\lambda_\alpha)^{r_\alpha}(x^m-\lambda_\alpha^{-1})^{r_\alpha}
    \end{equation}
    for some $N\geq 1$, $r',r'',r_\alpha\in\N$, and $\lambda_\alpha\in\k^\ast\setminus\{\pm1\}$ which are distinct and not inverses of each other, such that $I$ contains the cofinite left ideal generated by $p(x)$ in $H$.
\end{itemize}
\end{lemma}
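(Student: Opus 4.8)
The plan is to exploit Lemma \ref{lem:cofin}: since every cofinite ideal $I$ meets $P$ in a nonzero ideal $(q(x))$, it suffices to enlarge $q(x)$ to a polynomial of the prescribed shape that still lies in $I$. Concretely, in each case I would start from the monic generator $q(x)$ of the principal ideal $I\cap P$ and factor it over $\k$ (which is algebraically closed) as a product of linear factors $\prod_\beta (x-\mu_\beta)^{s_\beta}$. The job is then to produce, from this arbitrary $q$, a polynomial of the special form in \eqref{eqn:p(x)1}, \eqref{eqn:p(x)2} or \eqref{eqn:p(x)3} which is divisible by $q$ in $P$ — hence lies in $I\cap P\subseteq I$ — so that the principal ideal it generates in $H$ sits inside $I$. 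By Lemma \ref{lem:cofin} that principal ideal is automatically cofinite in $H$, which is the remaining assertion.

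For part (1), given $q(x)=\prod_\beta (x-\mu_\beta)^{s_\beta}$, I would collect for each root $\mu_\beta$ the value $\lambda=\mu_\beta^{m}$, take $\{\lambda_\alpha\}$ to be the distinct values obtained this way, and set $r_\alpha$ large enough — e.g. $r_\alpha = \max_\beta s_\beta$ — so that $(x^m-\lambda_\alpha)^{nr_\alpha}$ is divisible by $(x-\mu_\beta)^{s_\beta}$ for every $\beta$ with $\mu_\beta^m=\lambda_\alpha$; this works because $x^m-\lambda_\alpha$ is a product of $m$ distinct linear factors (in characteristic $0$, $\lambda_\alpha\neq 0$ being harmless) each to the first power, one of which is $(x-\mu_\beta)$. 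Then $p(x)=\prod_\alpha (x^m-\lambda_\alpha)^{nr_\alpha}$ is a multiple of $q(x)$, so $p(x)\in I$. Part (2) is the same argument with $P=\k[x^{\pm1}]$: here $q(x)$ is a Laurent polynomial whose roots $\mu_\beta$ are nonzero, one groups them by the value $\lambda_\alpha=\mu_\beta^{n}\in\k^\ast$, and takes $r_\alpha$ large; note $\lambda_\alpha\neq 0$ automatically since $\mu_\beta\neq 0$, and one may absorb any monomial unit into the generator.

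Part (3) requires one extra bookkeeping step: the allowed factors are $x^m-1$, $x^m+1$, and \emph{inverse-closed pairs} $(x^m-\lambda_\alpha)(x^m-\lambda_\alpha^{-1})$ with $\lambda_\alpha\notin\{\pm1\}$. So starting from $q(x)=\prod_\beta(x-\mu_\beta)^{s_\beta}$ with $\mu_\beta\in\k^\ast$, I would form the value $\lambda=\mu_\beta^{m}$; if $\lambda=1$ contribute to the $r'$-exponent, if $\lambda=-1$ contribute to $r''$, and otherwise record the unordered pair $\{\lambda,\lambda^{-1}\}$, choosing one representative $\lambda_\alpha$ per pair. Taking all exponents $r',r'',r_\alpha$ to be a common large integer (say $\geq\max_\beta s_\beta$) makes $p(x)$ in \eqref{eqn:p(x)3} divisible by $q(x)$ in $\k[x^{\pm1}]$, for the same reason as before — each factor $x^m-\lambda$ splits into distinct simple linear factors — and the extra factor $(x^m-\lambda_\alpha^{-1})^{r_\alpha}$ is a harmless over-estimate. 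Hence $p(x)\in I\cap P$ and $(p(x))\subseteq I$ is cofinite in $H$ by Lemma \ref{lem:cofin}.

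The only genuinely delicate point, and the one I would write out carefully, is the divisibility claim: that $(x^m-\lambda)^{r}$ (or its $\k[x^{\pm1}]$-analogue) is divisible by $(x-\mu)^{s}$ whenever $\mu^m=\lambda$ and $r\geq s$. This is where algebraic closedness and characteristic $0$ enter — one needs $x^m-\lambda$ to have $\mu$ as a root of multiplicity exactly $1$, so that its $r$-th power contributes multiplicity $r\geq s$ at $\mu$. Everything else is routine: translating "divides in $P$" into "lies in $I\cap P$", and invoking Lemma \ref{lem:cofin} for cofiniteness. I do not expect the reason these specific shapes \eqref{eqn:p(x)1}--\eqref{eqn:p(x)3} are the \emph{right} ones to be proved here; that shape is dictated by the Hopf-algebraic structure of $T_\infty(n,v,\xi)$, $B(n,\om,\gamma)$, $D(m,d,\xi)$ and will be used in the later sections — at this stage one only needs that such $p(x)$ \emph{exists} inside every given $I$.
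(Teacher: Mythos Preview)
Your proposal is correct and follows essentially the same approach as the paper: start from a generator $q(x)$ of $I\cap P$ (nonzero by Lemma~\ref{lem:cofin}), factor it into linear factors over $\k$, and replace each $(x-\mu)$ by the corresponding factor $(x^m-\mu^m)$ or $(x^n-\mu^n)$ so that $q\mid p$ and hence $(p)\subseteq I$. The paper's proof is even terser than yours---it writes $q(x)=(x-\mu_1)\cdots(x-\mu_M)$ with possible repetitions and sets $p(x)=\prod_i(x^n-\mu_i^n)$ directly, leaving the regrouping into distinct $\lambda_\alpha$'s and the divisibility claim implicit---whereas you spell out the multiplicity bookkeeping and the reason $x^m-\lambda$ contributes the right root multiplicity, which is a welcome addition.
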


\begin{proof}
We prove (2) for example, since (1) and (3) hold according to the same reason.
 It can be known from the proof of Lemma \ref{lem:cofin} that $I\cap P$ is a principal ideal of $P$ generated by some non-zero polynomial $q(x)$. Without loss of generality, assume that
      $$q(x)=(x-\mu_1)(x-\mu_2)\cdots(x-\mu_M)$$
      for some $M\geq1$ and $\mu_1,\mu_2,\cdots,\mu_M\in\k^\ast$. Now define a polynomial
      $$p(x)=(x^n-\mu_1^n)(x^n-\mu_2^n)\cdots(x^n-\mu_M^n),$$
      which gives the desired form (\ref{eqn:p(x)2}). Since $q(x)\mid p(x)$, $I$ contains the left ideal of $H$ generated by $p(x)$ in $H$.
\end{proof}

This lemma suggests us to consider the following subfamilies $\mathcal{I}_0\subseteq\mathcal{I}$ respectively.

\begin{corollary}\label{cor:circ}
Let $m,n$ be positive integers.
\begin{itemize}
  \item[(1)] For $P=\k[x]$, denote
      $$\mathcal{I}_0=\{((x^m-\lambda)^{nr})\lhd_l H\mid \lambda\in\k,\;r\in\N\}.$$
      If $\mathcal{I}_0$ satisfies the condition in Lemma \ref{lem:I0}, then
      $H^\circ=\sum_{I\in\mathcal{I}_0} (H/I)^\ast$;
  \item[(2)] For $P=\k[x^{\pm1}]$, denote
      $$\mathcal{I}_0=\{((x^n-\lambda)^r)\lhd_l H\mid \lambda\in\k^\ast,\;r\in\N\}.$$
      If $\mathcal{I}_0$ satisfies the condition in Lemma \ref{lem:I0}, then
      $H^\circ=\sum_{I\in\mathcal{I}_0} (H/I)^\ast$;
  \item[(3)] For $P=\k[x^{\pm1}]$, denote
      $$\mathcal{I}_0=\{((x^m-1)^r),((x^m+1)^r),((x^m-\lambda)^r(x^m-\lambda^{-1})^r)\lhd_l H
        \mid \lambda\in\k^\ast\setminus\{\pm1\},\;r\in\N\}.$$
      If $\mathcal{I}_0$ satisfies the condition in Lemma \ref{lem:I0}, then $H^\circ=\sum_{I\in\mathcal{I}_0} (H/I)^\ast$.
\end{itemize}
\end{corollary}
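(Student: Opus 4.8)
The plan is to deduce all three parts uniformly from Lemmas \ref{lem:I0} and \ref{lem:cofin} together with the standard identification of the annihilator of a cofinite ideal with the dual of the corresponding finite-dimensional quotient; the three cases have the same proof, the only difference being the explicit shape of the generators of the ideals lying in $\mathcal{I}_0$, so I would treat case (1) in detail and note that (2) and (3) are verbatim the same.

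First I would check that $\mathcal{I}_0\subseteq\mathcal{I}$, i.e. that every ideal listed in $\mathcal{I}_0$ is cofinite in $H$. Each such ideal is generated by an element of $P$ — namely $(x^m-\lambda)^{nr}$ in case (1), $(x^n-\lambda)^r$ in case (2), and the two displayed products in case (3) — and for $r\geq 1$ this generator is a nonzero element of $P$, so $I\cap P\neq 0$ and Lemma \ref{lem:cofin} gives cofiniteness; the value $r=0$, if admitted, produces $I=H$, which is trivially cofinite. Hence $\mathcal{I}_0$ is a genuine subfamily of $\mathcal{I}$ and the hypothesis that it satisfies the condition of Lemma \ref{lem:I0} is meaningful.

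Applying Lemma \ref{lem:I0} to this $\mathcal{I}_0$ then yields $H^\circ=\sum_{I\in\mathcal{I}_0}I^{\perp}$, so it only remains to identify each summand with $(H/I)^{\ast}$. For a cofinite ideal $I\lhd H$, the canonical surjection $\pi_I\colon H\twoheadrightarrow H/I$ induces a linear injection $\pi_I^{\ast}\colon(H/I)^{\ast}\hookrightarrow H^{\ast}$, $g\mapsto g\circ\pi_I$, whose image is exactly $I^{\perp}=\{f\in H^{\ast}\mid f(I)=0\}$, since a functional on $H$ vanishes on $I$ precisely when it factors through $H/I$; moreover this image sits inside $H^\circ$ because $I$ is cofinite. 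Identifying $(H/I)^{\ast}$ with its image $I^{\perp}$ in $H^{\ast}$ in the usual way, we obtain $H^\circ=\sum_{I\in\mathcal{I}_0}(H/I)^{\ast}$, as claimed.

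There is no real obstacle inside the corollary itself: it merely repackages the preceding lemmas, and the only points requiring any care are the verification $\mathcal{I}_0\subseteq\mathcal{I}$ and the clean statement of the identification $I^{\perp}\cong(H/I)^{\ast}$. The substantive work is deferred to the later sections: one must still verify, separately for $T_\infty(n,v,\xi)$, $B(n,\om,\gamma)$ and $D(m,d,\xi)$, that the concrete family $\mathcal{I}_0$ above does satisfy the hypothesis of Lemma \ref{lem:I0} — for which Lemma \ref{lem:p(x)} supplies the key input, once combined with a Chinese-remainder argument in $P$ to pass from a single generator $p(x)$ to a finite intersection of the ``elementary'' ideals of $\mathcal{I}_0$ — and then one must actually compute the finite-dimensional duals $(H/I)^{\ast}$ together with their induced Hopf structure; these are the tasks carried out in Sections \ref{section3}--\ref{section5}.
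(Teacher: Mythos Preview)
Your proposal is correct and matches the paper's intent: the paper provides no explicit proof of this corollary, treating it as immediate from Lemma~\ref{lem:I0} together with the standard identification $I^\perp\cong (H/I)^\ast$ for cofinite $I$. Your write-up simply fills in the details the paper leaves implicit, including the check $\mathcal{I}_0\subseteq\mathcal{I}$ via Lemma~\ref{lem:cofin}.
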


We end this subsection by providing the following lemma on the intersection of principal left ideals in $\mathcal{I}$:

\begin{lemma}\label{lem:intersection}
Let $N$ be a positive integer, and let $P$ be $\k[x]$ or $\k[x^{\pm1}]$. Suppose that $q_1(x),q_2(x),\cdots,q_N(x)\in P$ are coprime with each other. Then
$$\bigcap_{\alpha=1}^N\left(q_\alpha(x)\right)
=\Big(\prod_{\alpha=1}^Nq_\alpha(x)\Big)$$
as left ideals in $H$.
\end{lemma}

\begin{proof}
It is sufficient to prove the case when $N=2$. First there exist $p_1(x),p_2(x)\in P$ such that
$$q_1(x)p_1(x)+q_2(x)p_2(x)=1.$$
Now for any $h\in (q_1(x))\cap(q_2(x))$, we have $h=h_1q_1(x)=h_2q_2(x)$ for some $h_1,h_2\in H$.
Then
\begin{eqnarray*}
h_2 &=& h_2(q_1(x)p_1(x)+q_2(x)p_2(x))
~=~ h_2q_1(x)p_1(x)+h_1q_1(x)p_2(x)  \\
&=& (h_2p_1(x)+h_1p_2(x))q_1(x) ,
\end{eqnarray*}
and thus $h=h_2q_2(x)=(h_2p_1(x)+h_1p_2(x))q_1(x)q_2(x)\in(q_1(x)q_2(x))$.
\end{proof}

\subsection{Invertibility of Matrices}

The following primary fact on the Kronecker products is required, which is well-known in matrix theory.

\begin{lemma}\label{lem:Kron}
\begin{itemize}
\item[(1)]
Let $\Gamma_1$ and $\Gamma_2$ be finite totally ordered sets, and their Cartesian product $\Gamma_1\times \Gamma_2$ be ordered lexicographically. Then for any two square matrices
$$A=\left(a_{i,i'}\mid i,i'\in \Gamma_1\right)\;\;\;\;\text{and}
  \;\;\;\;B=\left(b_{j,j'}\mid j,j'\in \Gamma_2\right)$$
over $\k$, $A$ and $B$ are both invertible if and only if their Kronecker product
$$\left(a_{i,i'}b_{j,j'}\mid (i,j),(i',j')\in \Gamma_1\times \Gamma_2\right)$$
is invertible;
\item[(2)]
Let $n_1,\;n_2,\;\cdots,\;n_m$ be positive integers. Then the square matrices
$$A_k=\left(a^{(k)}_{i_k,i'_k}
           \mid i_k,i'_k\in n_k\right)\;\;\;\;(1\leq k\leq m)$$
over $\k$ are all invertible, if and only if their Kronecker product
$$\left(a^{(1)}_{i_1,i'_1}a^{(2)}_{i_2,i'_2}\cdots a^{(m)}_{i_m,i'_m}
  \mid (i_1,i_2,\cdots,i_m),(i'_1,i'_2,\cdots,i'_m)\in n_1\times n_2\times\cdots\times n_m\right)$$
is invertible.
\end{itemize}
\end{lemma}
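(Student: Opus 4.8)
The statement to prove is Lemma~\ref{lem:Kron}, the fact about Kronecker products of square matrices being invertible if and only if each factor is.

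\textbf{Approach.} The plan is to reduce part (2) to part (1) by an immediate induction on $m$, so the real content is part (1). For part (1), I would work over the algebraically closed field $\k$ and exploit the classical identity $\det(A\otimes B)=(\det A)^{n_2}(\det B)^{n_1}$, where $n_1=|\Gamma_1|$ and $n_2=|\Gamma_2|$. The key observation is that, once $\Gamma_1$ and $\Gamma_2$ are given fixed total orders and $\Gamma_1\times\Gamma_2$ is given the lexicographic order, the matrix $\left(a_{i,i'}b_{j,j'}\mid (i,j),(i',j')\in\Gamma_1\times\Gamma_2\right)$ is \emph{literally} the Kronecker product $A\otimes B$ in the usual block form: the $(i,i')$-block is $a_{i,i'}B$, because the lexicographic order lists all pairs with first coordinate $i$ consecutively, in the $\Gamma_2$-order on the second coordinate. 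So no change of basis or permutation is needed to identify the displayed matrix with $A\otimes B$.

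\textbf{Key steps, in order.} First I would fix enumerations $\Gamma_1=\{i_1<\cdots<i_{n_1}\}$ and $\Gamma_2=\{j_1<\cdots<j_{n_2}\}$ and check that, under the lexicographic order on $\Gamma_1\times\Gamma_2$, the $((i_a,j_b),(i_c,j_d))$-entry of the displayed matrix equals $a_{i_a,i_c}b_{j_b,j_d}$, i.e.\ that the displayed matrix is exactly the $n_1\times n_1$ array of $n_2\times n_2$ blocks whose $(a,c)$-block is $a_{i_a,i_c}B$. Second, I would invoke (or quote as standard) the determinant formula $\det(A\otimes B)=(\det A)^{n_2}(\det B)^{n_1}$; since $\k$ is a field, $A\otimes B$ is invertible iff $\det(A\otimes B)\neq 0$ iff $\det A\neq 0$ and $\det B\neq 0$ iff both $A$ and $B$ are invertible. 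That proves (1). Third, for (2) I would induct on $m$: the case $m=1$ is trivial, and for the inductive step set $A=A_1\otimes\cdots\otimes A_{m-1}$ (ordered on $n_1\times\cdots\times n_{m-1}$ lexicographically) and $B=A_m$; associativity and compatibility of the lexicographic order on $n_1\times\cdots\times n_m$ with the lexicographic order on $(n_1\times\cdots\times n_{m-1})\times n_m$ let me apply part (1), with the induction hypothesis handling invertibility of $A$.

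\textbf{Main obstacle.} There is no deep obstacle here; the only thing requiring care is the bookkeeping in the first step---verifying that the lexicographic ordering convention makes the displayed ``mixed-index'' matrix coincide on the nose with the block matrix $A\otimes B$ rather than with a permutation of it (the permuted version, corresponding to the opposite ordering convention, would give $B\otimes A$, which is of course also invertible iff both factors are, so even a sign/order slip is harmless). For (2), the analogous point is the associativity of iterated lexicographic orders, i.e.\ that $(n_1\times\cdots\times n_{m-1})\times n_m$ with the lex order built from the lex order on the first factor agrees with the lex order on $n_1\times\cdots\times n_m$; this is routine. If one prefers to avoid the determinant formula entirely, an alternative is to note directly that $A\otimes B$ is invertible with inverse $A^{-1}\otimes B^{-1}$ when both factors are invertible, and conversely that if, say, $A$ is singular then there is a nonzero row vector $v$ with $vA=0$, whence $(v\otimes w)(A\otimes B)=0$ for any nonzero $w$, so $A\otimes B$ is singular; this is perhaps the cleanest write-up and sidesteps any determinant identity.
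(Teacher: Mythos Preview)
Your proposal is correct. The paper does not actually prove this lemma at all: it is stated as a ``well-known'' fact from matrix theory and used without justification, so your argument (via the determinant identity $\det(A\otimes B)=(\det A)^{|\Gamma_2|}(\det B)^{|\Gamma_1|}$, or alternatively via the direct inverse/singular-vector argument you sketch, followed by induction for part~(2)) supplies more detail than the paper itself provides.
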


A generalized version of above lemma is also required, and we provide a proof for convenience.

\begin{lemma}\label{lem:Kron2}
Let $\Gamma_1$ and $\Gamma_2$ be finite totally ordered sets, and their Cartesian product $\Gamma_1\times \Gamma_2$ be ordered lexicographically. Suppose $A:=\left(a_{i,i'}\mid i,i'\in \Gamma_1\right)$ is invertible. Then the following matrix
$$C:=\left(a_{i,i'}b_{i',j,j'}\mid (i,j),(i',j')\in \Gamma_1\times \Gamma_2\right)$$
is invertible, if and only if $B_{i'}:=\left(b_{i',j,j'}\mid j,j'\in\Gamma_2\right)$ is invertible for each $i'\in\Gamma_1$.
\end{lemma}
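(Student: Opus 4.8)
The plan is to reduce the claim to Lemma \ref{lem:Kron} by exhibiting $C$ as a product of two matrices whose invertibility is controlled: one being a ``block-diagonal'' matrix built from the $B_{i'}$'s, and the other being essentially $A\otimes \Id$. Concretely, write $|\Gamma_2|=t$ and observe that
\begin{equation*}
a_{i,i'}b_{i',j,j'}=\sum_{(i'',j'')\in\Gamma_1\times\Gamma_2}\left(a_{i,i''}\,\delta_{i'',i'}\delta_{j,j''}\right)\left(\delta_{i'',i'}\,b_{i',j'',j'}\right),
\end{equation*}
so that $C=(A\otimes I_t)\cdot D$, where $I_t$ is the $t\times t$ identity and $D=\left(\delta_{i,i'}b_{i,j,j'}\mid (i,j),(i',j')\in\Gamma_1\times\Gamma_2\right)$ is block-diagonal with the blocks $B_{i'}$ along the diagonal (in the lexicographic order on $\Gamma_1\times\Gamma_2$, the index $i'$ runs slowest, so each block $B_{i'}$ occupies a contiguous $t\times t$ diagonal sub-block). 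One must check that the lexicographic ordering is consistent on both sides, i.e.\ that the factorization respects the chosen orderings; this is the only place where the ordering hypothesis is actually used, and it is routine.

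Given this factorization, the argument finishes quickly. Since $A$ is invertible by hypothesis, Lemma \ref{lem:Kron}(1) (applied with $B$ the identity, or simply the fact that $\det(A\otimes I_t)=(\det A)^t\neq0$) shows $A\otimes I_t$ is invertible. Hence $C$ is invertible if and only if $D$ is invertible. But $D$ is block-diagonal, so $\det D=\prod_{i'\in\Gamma_1}\det B_{i'}$, and therefore $D$ is invertible if and only if every $B_{i'}$ is invertible. Combining these two equivalences yields exactly the statement.

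Alternatively, and perhaps more in keeping with the elementary flavour of the preceding lemmas, one can avoid the explicit matrix factorization and argue directly on determinants: expanding $\det C$ by grouping the rows and columns according to their first coordinate $i\in\Gamma_1$ exhibits $C$ as a matrix with $|\Gamma_1|\times|\Gamma_1|$ blocks, the $(i,i')$ block being $a_{i,i'}B_{i'}$. If all $B_{i'}$ are invertible, right-multiplying block-column $i'$ by $B_{i'}^{-1}$ turns $C$ into $A\otimes I_t$ (up to the block structure), which is invertible; conversely, if some $B_{i'_0}$ is singular, pick a nonzero vector $v$ with $B_{i'_0}v=0$ and, using a row vector $w$ with $wA^{-1}$ supported suitably, produce a nonzero vector in the kernel of $C^{\mathrm{T}}$, so $C$ is singular. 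I expect the main (though still minor) obstacle to be purely bookkeeping: making sure the lexicographic order on $\Gamma_1\times\Gamma_2$ is used consistently so that ``fix $i'$, vary $j'$'' really does carve $C$ into the advertised block form; once that is set up, everything reduces to the already-stated Kronecker-product lemma and the determinant of a block-diagonal matrix.
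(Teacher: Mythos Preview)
Your proposal is correct and matches the paper's approach essentially exactly: the paper left-multiplies $C$ by $A^{-1}\otimes I_n$ to obtain the block-diagonal matrix $\left(\delta_{i,i'}B_{i'}\right)$, which is just the inverse of your factorization $C=(A\otimes I_t)\cdot D$. One small cosmetic point: in your displayed identity the extra $\delta_{i'',i'}$ in the first factor makes it depend on $i'$, so it is not literally the $(i,j),(i'',j'')$ entry of $A\otimes I_t$; the clean version is $\sum_{(i'',j'')}\bigl(a_{i,i''}\delta_{j,j''}\bigr)\bigl(\delta_{i'',i'}b_{i'',j'',j'}\bigr)$, after which the factorization $C=(A\otimes I_t)D$ follows on the nose.
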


\begin{proof}
For clarity, denote the cardinalities of sets by $m:=|\Gamma_1|$ and $n:=|\Gamma_2|$. Clearly, $C$ can be divided into blocks of order $m$:
$$C=\left(a_{i,i'}Y_{i'}\mid i,i'\in\Gamma_1\right).$$
Denote $A^{-1}:=(\widetilde{a}_{i,i'}\mid i,i'\in\Gamma_1)$, then Kronecker product of $A^{-1}$ and $I_n$ (the identity matrix of order $n$) can be written as a block matrix
\begin{equation}\label{eqn:Xinv}
\left(\widetilde{a}_{i,i'}I_n\mid i,i'\in\Gamma_1\right),
\end{equation}
which is invertible according to Lemma \ref{lem:Kron}. One can directly verify that the product of (\ref{eqn:Xinv}) and $C$ is equal to
$$\left(\sum_{k\in\Gamma_1}\widetilde{a}_{i,k}a_{k,i'}B_{i'}\mid i,i'\in\Gamma_1\right)=
\left(\delta_{i,i'}B_{i'}\mid i,i'\in\Gamma_1\right),$$
and the claim is followed.
\end{proof}

Within the work of \cite{GL21}, they proved the invertibility of a certain form of matrices. This plays a crucial role on the process of constructing $(\k\mathbb{D}_\infty)^\circ$. Our construction for $D(m,d,\xi)^\circ$ in this paper would also rely on this fact. However, we provide here another form of that result for simplicity.

\begin{lemma}\emph{(}\cite[Corollary 2]{GL21}\emph{)}
Let $r$ be a positive integer, and $\la\in\k^\ast\setminus\{\pm1\}$. The $2r\times2r$ matrix
\begin{equation}\label{eqn:mat0}
\left(\la^{(-1)^e(2s'+e')}(2s'+e')^s\mid (e,s),(e',s')\in 2\times r\right)
\end{equation}
is invertible.
\end{lemma}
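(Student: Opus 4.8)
The plan is to reorganize the $2r\times 2r$ matrix (\ref{eqn:mat0}) into a $2\times 2$ block structure according to the index $e\in 2$, and then factor each block. Fix $\la\in\k^\ast\setminus\{\pm1\}$. For the block with $e=0$ the entries are $\la^{2s'+e'}(2s'+e')^s$, and for the block with $e=1$ they are $\la^{-(2s'+e')}(2s'+e')^s$; in both cases the column index runs over pairs $(e',s')\in 2\times r$, which we may identify with the integers $2s'+e'$ ranging over $\{0,1,\ldots,2r-1\}$. First I would record that, writing $t:=2s'+e'$, each column of the full matrix is determined by $t$, so up to a permutation of columns the matrix becomes
\begin{equation}\label{eqn:mat-reorg}
\begin{pmatrix} \left(\la^{t}\,t^{s}\mid s\in r,\ t\in 2r\right) \\[4pt] \left(\la^{-t}\,t^{s}\mid s\in r,\ t\in 2r\right) \end{pmatrix},
\end{equation}
a $2r\times 2r$ matrix whose rows are indexed by $(e,s)$ with $e\in 2$, $s\in r$.

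Next I would prove (\ref{eqn:mat-reorg}) is invertible by a Vandermonde-type argument. Suppose a row vector $(c_{0,0},\ldots,c_{0,r-1},c_{1,0},\ldots,c_{1,r-1})$ annihilates (\ref{eqn:mat-reorg}) from the left; equivalently, the function
$$
f(t):=\sum_{s=0}^{r-1} c_{0,s}\,t^{s}\,\la^{t}+\sum_{s=0}^{r-1} c_{1,s}\,t^{s}\,\la^{-t}
= g_0(t)\,\la^{t}+g_1(t)\,\la^{-t}
$$
vanishes for all $t\in\{0,1,\ldots,2r-1\}$, where $g_0,g_1$ are polynomials of degree $<r$. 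I want to conclude $g_0=g_1=0$. The clean way is to observe that a nonzero function of the form $g_0(t)\la^t+g_1(t)\la^{-t}$ with $\deg g_i<r$ has at most $2r-1$ zeros in $\Z$ (indeed in $\k$, treating $t$ as a variable): if $\la^2\neq 1$ then $\la$ and $\la^{-1}$ are distinct, and such "quasi-polynomial" expressions $\sum_i g_i(t)\mu_i^{t}$ with distinct nonzero $\mu_i$ and $\deg g_i<r_i$ are linearly independent as functions on any arithmetic progression of length $\sum r_i$; this is the standard fact underlying linear recurrences, provable by induction on the number of distinct bases using the forward difference operator $(\Delta h)(t)=h(t+1)-\mu_1 h(t)$, which lowers $\deg g_1$ by one and multiplies the other term by $(\mu_i-\mu_1)\neq 0$. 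Applying it with $\mu_0=\la$, $\mu_1=\la^{-1}$, $r_0=r_1=r$, and the progression $\{0,1,\ldots,2r-1\}$ of length $2r$, vanishing at all these points forces $g_0=g_1=0$, hence all $c_{e,s}=0$. Therefore (\ref{eqn:mat-reorg}), and with it (\ref{eqn:mat0}), is invertible.

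The main obstacle is the linear-independence statement for quasi-polynomials on a finite arithmetic progression, since one must be careful that the number of sample points exactly matches the total degree count and that the hypothesis $\la\neq\pm 1$ is used precisely to guarantee $\la\neq\la^{-1}$ (if $\la=\la^{-1}$ the two blocks would coincide and the matrix would be singular, so this hypothesis is essential). An alternative, more computational route that avoids the difference-operator induction is to note that $f(t)=0$ for $t\in\{0,\ldots,2r-1\}$ means the sequence $a_t:=f(t)$ satisfies the linear recurrence with characteristic polynomial $(X-\la)^r(X-\la^{-1})^r$ and has $2r$ consecutive zero terms, hence $a_t=0$ for all $t$, and then the general-solution description of such recurrences gives $g_0=g_1=0$; either way one reduces to the same core fact. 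Once invertibility of (\ref{eqn:mat-reorg}) is in hand, tracing back through the column permutation identifying $t\leftrightarrow(e',s')$ completes the proof.
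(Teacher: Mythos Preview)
Your argument is correct and gives a self-contained proof, but it takes a genuinely different route from the paper. The paper does not prove the invertibility directly: it simply quotes \cite[Corollary~2]{GL21} for the invertibility of the explicit matrix (\ref{eqn:matGL0}) and then observes that (\ref{eqn:matGL0}) is obtained from the transpose of (\ref{eqn:mat0}) by elementary row/column operations of type~2 (scaling), so (\ref{eqn:mat0}) is invertible as well. In other words, the paper's proof is a two-line reduction to an external reference.

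Your approach instead reindexes the columns by $t=2s'+e'\in\{0,\dots,2r-1\}$ and recognizes the rows as evaluations of the quasi-polynomial basis $t^s\la^t,\;t^s\la^{-t}$ ($s<r$) at $2r$ consecutive integers; invertibility then follows from the standard fact that a nonzero solution of the linear recurrence with characteristic polynomial $(X-\la)^r(X-\la^{-1})^r$ cannot vanish at $2r$ consecutive points, which uses $\la\neq\la^{-1}$ exactly where needed. This is more conceptual and entirely self-contained, and in fact it is essentially what underlies the result in \cite{GL21} being cited. The paper's version is shorter on the page because it outsources the work; yours explains why the matrix is invertible. One small remark: your description of the difference operator (``multiplies the other term by $(\mu_i-\mu_1)$'') is slightly imprecise---the new polynomial coefficient is $\mu_0 g_0(t{+}1)-\mu_1 g_0(t)$, not a scalar multiple of $g_0$---but its leading coefficient is $(\mu_0-\mu_1)$ times that of $g_0$, which is the nonvanishing you need, so the induction goes through.
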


\begin{proof}
Denote $M:=2r-1$. It follows from \cite[Corollary 2]{GL21} that the $2r\times 2r$ matrix
\begin{equation}\label{eqn:matGL0}
\small{\left(\begin{array}{cccccccc}
  1 & 0 & \cdots & 0 & 1 & 0 & \cdots & 0  \\
  \la & \la & \cdots & \la & \la^{-1} & \la^{-1} & \cdots & \la^{-1}  \\
  \la^2 & 2\la^2 & \cdots & 2^{r-1}\la^2 & \la^{-2} & 2\la^{-2} & \cdots & 2^{r-1}\la^{-2}  \\
  \la^3 & 3\la^3 & \cdots & 3^{r-1}\la^3 & \la^{-3} & 3\la^{-3} & \cdots & 3^{r-1}\la^{-3}  \\
  \vdots & \vdots & \ddots & \vdots & \vdots & \vdots & \ddots \vdots  \\
  \la^M & M\la^M & \cdots & M^{r-1}\la^M & \la^{-M} & M\la^{-M} & \cdots & M^{r-1}\la^{-M}
\end{array}\right)}
\end{equation}
is invertible, which can be obtained from the transpose of (\ref{eqn:mat0}) through elementary operations of type 2. Thus (\ref{eqn:mat0}) is also invertible.
\end{proof}

This result has a slightly generalized version, which is required in this paper as well:

\begin{corollary}\label{cor:mat}
Let $r$ be a positive integer, and $\la\in\k^\ast\setminus\{\pm1\}$. Then:
\begin{itemize}
  \item[(1)] The $2r\times2r$ matrix
    \begin{equation}\label{eqn:mat1}
    \left(\la^{(-1)^e(2s'+e')}(a+2s'+e')^s\mid (e,s),(e',s')\in 2\times r\right)
    \end{equation}
    is invertible for any $a\in\k$;
  \item[(2)] The $2r\times2r$ matrix
    \begin{equation}\label{eqn:mat2}
    \left(\la^{(-1)^e(b+2s'+e')}(a+2s'+e')^s\mid (e,s),(e',s')\in 2\times r\right)
    \end{equation}
    is invertible for any $a\in\k$ and $b\in\mathbb{Q}$.
\end{itemize}
\end{corollary}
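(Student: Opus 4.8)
The plan is to reduce both statements to the already-established invertibility of the matrix \eqref{eqn:mat0} by exhibiting suitable invertible transformations. For part (1), I would fix $a\in\k$ and expand the entry $(a+2s'+e')^s$ by the binomial theorem as $\sum_{t=0}^{s}\binom{s}{t}a^{s-t}(2s'+e')^t$. This exhibits the matrix \eqref{eqn:mat1} as a product: write the row index as $(e,s)$ and observe that
$$\la^{(-1)^e(2s'+e')}(a+2s'+e')^s=\sum_{t=0}^{s}\binom{s}{t}a^{s-t}\,\la^{(-1)^e(2s'+e')}(2s'+e')^t.$$
The coefficients $\binom{s}{t}a^{s-t}$ (for $0\le t\le s$, and $0$ otherwise) form a lower-triangular matrix indexed by $s,t\in r$ with all diagonal entries equal to $1$, hence invertible; tensoring with the $2\times2$ identity on the $e$-index keeps it invertible by Lemma \ref{lem:Kron}. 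Thus \eqref{eqn:mat1} equals this invertible matrix times \eqref{eqn:mat0} (with $s$ replaced by the dummy $t$ there), and is therefore invertible. One small point to check is that the triangular change of basis acts only on the left factor of the lexicographic index $(e,s)$ and respects the block structure; this is exactly the situation covered by Lemma \ref{lem:Kron}(1) applied to the Kronecker product of $I_2$ with the triangular matrix.

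For part (2), I would pull out the extra power of $\la$. Given $a\in\k$ and $b\in\mathbb{Q}$, factor
$$\la^{(-1)^e(b+2s'+e')}=\la^{(-1)^e b}\cdot\la^{(-1)^e(2s'+e')}.$$
Here $\la^{(-1)^e b}$ is a well-defined nonzero scalar once we fix a value of $\la^b$ (any branch will do, since only invertibility is at stake), and it depends only on the row-block index $e$. Multiplying the matrix \eqref{eqn:mat2} on the left by the diagonal matrix $\operatorname{diag}(\la^{-(-1)^e b}\mid (e,s)\in 2\times r)$ removes this factor and returns us to the matrix \eqref{eqn:mat1}, which is invertible by part (1). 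Since left multiplication by an invertible diagonal matrix preserves invertibility, \eqref{eqn:mat2} is invertible as well.

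I do not anticipate a genuine obstacle here: the statement is a routine "decorated" version of the lemma it follows, and the only care needed is bookkeeping with the lexicographic ordering of the double index $(e,s)$, together with the harmless choice of a branch for $\la^b$ when $b$ is a non-integer rational. If one prefers to avoid branch choices entirely, one may instead absorb $\la^{(-1)^e b}$ as in part (2) only after first clearing denominators — writing $b=p/q$ and replacing $\la$ by a fixed $q$-th root — but this is not necessary, since $\k$ is algebraically closed and any fixed value of $\la^b$ suffices for the diagonal rescaling argument.
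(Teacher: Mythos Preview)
Your proposal is correct and follows essentially the same idea as the paper's proof: for (1) the binomial expansion of $(a+2s'+e')^s$ reduces to the $a=0$ case \eqref{eqn:mat0}, and for (2) one factors out $\la^{\pm b}$ to reduce to (1). The only difference is presentational: the paper carries out these reductions as explicit elementary row and column operations on the displayed matrices \eqref{eqn:matGL0}--\eqref{eqn:matGL1}, whereas you package the same operations as a left multiplication by $I_2\otimes L$ (with $L$ unitriangular) in part (1) and by a diagonal matrix in part (2). Your formulation is arguably cleaner; one small wording slip is that the triangular change acts on the second factor $s$ of the index $(e,s)$, not the ``left factor'', but your Kronecker-product description $I_2\otimes L$ is correct.
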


\begin{proof}
(1) Denote $M:=2r-1$. Firstly, (\ref{eqn:matGL0}) can be obtained from the following matrix through elementary column operations of type 3:
\begin{equation}\label{eqn:matGL1}
\footnotesize{\left(\begin{array}{cccccccc}
  1 & a & \cdots & a^{r-1} & 1 & a & \cdots & a^{r-1}  \\
  \la & (a+1)\la & \cdots & (a+1)^{r-1}\la &
    \la^{-1} & (a+1)\la^{-1} & \cdots & (a+1)^{r-1}\la^{-1}  \\
  \la^2 & (a+2)\la^2 & \cdots & (a+2)^{r-1}\la^2 &
    \la^{-2} & (a+2)\la^{-2} & \cdots & (a+2)^{r-1}\la^{-2}  \\
  \la^3 & (a+3)\la^3 & \cdots & (a+3)^{r-1}\la^3 &
    \la^{-3} & (a+3)\la^{-3} & \cdots & (a+3)^{r-1}\la^{-3}  \\
  \vdots & \vdots & \ddots & \vdots & \vdots & \vdots & \ddots \vdots  \\
  \la^M & (a+M)\la^M & \cdots & (a+M)^{r-1}\la^M &
    \la^{-M} & (a+M)\la^{-M} & \cdots & (a+M)^{r-1}\la^{-M}
\end{array}\right).}
\end{equation}
The process is by deleting powers of $\la$ or $\la^{-1}$ with coefficients containing $a$, from the $r$th column to the $2$nd column, and the $2r$th column to the $(r+1)$th column. Thus (\ref{eqn:matGL1}) is also invertible.

However, (\ref{eqn:mat1}) can be obtained from the transpose of (\ref{eqn:matGL1}) through elementary operations of type 2. This implies the invertibility of (\ref{eqn:mat1}).

(2) The similar argument follows that (\ref{eqn:mat2}) can be obtained from the transpose of the following matrix through elementary column operations:
\begin{equation*}\label{eqn:matGL2}
\tiny{\left(\begin{array}{cccccccc}
  \la^b & a\la^b & \cdots & a^{r-1}\la^b &
    \la^{-b} & a\la^{-b} & \cdots & a^{r-1}\la^{-b}  \\
  \la^{b+1} & (a+1)\la^{b+1} & \cdots & (a+1)^{r-1}\la^{b+1} &
    \la^{-(b+1)} & (a+1)\la^{-(b+1} & \cdots & (a+1)^{r-1}\la^{-(b+1)}  \\
  \la^{b+2} & (a+2)\la^{b+2} & \cdots & (a+2)^{r-1}\la^{b+2} &
    \la^{-(b+2)} & (a+2)\la^{-(b+2)} & \cdots & (a+2)^{r-1}\la^{-(b+2)}  \\
  \la^{b+3} & (a+3)\la^{b+3} & \cdots & (a+3)^{r-1}\la^{b+3} &
    \la^{-(b+3)} & (a+3)\la^{-(b+3)} & \cdots & (a+3)^{r-1}\la^{-(b+3)}  \\
  \vdots & \vdots & \ddots & \vdots & \vdots & \vdots & \ddots \vdots  \\
  \la^{b+M} & (a+M)\la^{b+M} & \cdots & (a+M)^{r-1}\la^{b+M} &
    \la^{-(b+M)} & (a+M)\la^{-(b+M)} & \cdots & (a+M)^{r-1}\la^{-(b+M)}
\end{array}\right)}
\end{equation*}
and this matrix has the same determinant as (\ref{eqn:matGL1}).
\end{proof}

\subsection{Combinatorial Notions}

We end this section by listing two other concepts for later use. The one is the well-known \textit{quantum binomial coefficients} for a parameter $q\in\k^\ast$, which is defined as
$$\binom{l}{k}_q:=\frac{l!_q}{k!_q (l-k)!_q}$$
for integers $l\geq k\geq 0$, where $l!_q:=1_q2_q\cdots l_q$ and $l_q:=1+q+\cdots+q^{l-1}$.

The other one is a part of the \textit{second Stirling number} (see (13.13) in \cite{LW01}), written as
\begin{equation}\label{eqn:2Stirling}
\sum_{t=0}^r\dbinom{r}{t}(-1)^{r-t}t^s,
\end{equation}
which is in fact zero if $0\leq s<r$.

\section{The Finite Dual of Infinite-Dimensional Taft Algebra $T_\infty(n,v,\xi)$}\label{section3}

This section is devoted to providing the structure of $T_\infty(n,v,\xi)^\circ$, which is isomorphic to the Hopf algebra $T_{\infty^\circ}(n,v,\xi)$ constructed by generators and relations in Subsection \ref{subsection:T1}. For the purpose, we choose certain elements in $T_\infty(n,v,\xi)^\circ$ and study their properties in Subsection \ref{subsection:T2}. The desired isomorphism is shown in Subsection \ref{subsection:T3} finally.

\subsection{The Hopf Algebra $T_{\infty^\circ}(n,v,\xi)$}\label{subsection:T1}

Let $n$ be a positive integer, $0\leq v\leq n-1$ and $\xi$ be a primitive $n$th root of $1$. For simplicity we denote $m:=\frac{n}{\gcd(n,v)}$. The Hopf algebra $T_{\infty^\circ}(n,v,\xi)$ is constructed as follows. As an algebra, it is generated by $\Psi_\la\;(\la\in\k),\;\Om,\;F_1,\;F_2$ with relations
\begin{align*}
& \Psi_{\la_1}\Psi_{\la_2}=\Psi_{\la_1+\la_2},\;\;\Psi_0=1,\;\;
\Om^n=1,\;\;F_1^m=0,  \\
& \Om\Psi_\la=\Psi_\la\Om,\;\;F_2\Om=\Om F_2,\;\;F_1\Om=\xi^v\Om F_1,  \\
& F_2\Psi_\la=\Psi_\la F_2,\;\;F_1\Psi_\la=\Psi_\la F_1,\;\;F_1F_2=F_2F_1
\end{align*}
for all $\la,\la_1,\la_2\in\k$. The comultiplication, counit and antipode are given by
\begin{align*}
& \Delta(\Om)=\Om\otimes\Om,\;\; \Delta(F_1)=1\otimes F_1+F_1\otimes\Om,  \\
& \Delta(F_2)=1\otimes F_2+F_2\otimes\Om^m+\sum_{k=1}^{m-1}F_1^{[k]}\otimes\Om^kF_1^{[m-k]},  \\
& \Delta(\Psi_\la)=\sum_{c=0}^{(n/m)-1}(\Psi_{\la\xi^{mc}}\otimes\Psi_\la\varsigma_c)
  (1\otimes 1+\la\sum_{k=1}^{m-1}F_1^{[k]}\otimes\Om^kF_1^{[m-k]}),  \\
& \varepsilon(\Om)=\varepsilon(\Psi_\la)=1,\;\;\varepsilon(F_1)=\varepsilon(F_2)=0,  \\
& S(\Om)=\Om^{n-1},\;\;S(F_1)=-\xi^{-v}\Om^{n-1}F_1,\;\;S(F_2)=-F_2,\;\;
  S(\Psi_\la)=\sum_{c=0}^{(n/m)-1}\Psi_{-\la\xi^{-mc}}\varsigma_c,
\end{align*}
for $\la\in\k$, where $F_1^{[k]}:=\frac{1}{k!_{\xi^v}}F_1^k$ for $1\leq k\leq m-1$, and
$$\varsigma_c:=\frac{m}{n}
               (1+\xi^{-mc}\Om^m+\xi^{-2mc}\Om^{2m}+\cdots+\xi^{-(n-m)c}\Om^{n-m})\;\;\;\;(c\in n/m).$$

The fact that $T_{\infty^\circ}(n,v,\xi)$ is a Hopf algebra would be stated and proved in Subsection \ref{subsection:T3} as Theorem \ref{thm:T}(1). Here we note that
$$\{\Psi_\la\Om^j F_2^s F_1^l \mid \la\in\k,\;j\in n,\;s\in\N,\;l\in m\}$$
is a linear basis, due to an application of the Diamond Lemma \cite{Ber78}.

\subsection{Certain Elements in $T_\infty(n,v,\xi)^\circ$}\label{subsection:T2}

Let $n$ be a positive integer, $0\leq v\leq n-1$, and $\xi$ be a primitive $n$th root of $1$.
According to the definition of $T_\infty(n,v,\xi)$ recalled in Definition \ref{def:T}, we know that it has a linear basis $\{g^jx^l\mid j\in n,\;l\in\N\}$. Denote $m:=\frac{n}{\gcd(n,v)}$, and then $\xi^v$ is a primitive $m$th root of $1$. We define following elements in $T_\infty(n,v,\xi)^\ast$:
\begin{align}\label{eqn:Tgenerator}
& \psi_\la:g^jx^l\mapsto \sum_{u=0}^\infty \delta_{l,um}\la^u,\;\;
  \om:g^jx^l\mapsto \delta_{l,0}\xi^j,\;\;E_1:g^jx^l\mapsto \delta_{l,1},\;\;
  E_2:g^jx^l\mapsto \delta_{l,m}
\end{align}
for any $j\in n$, $l\in\N$ and $\la\in\k$. We remark that these definitions make sense for all $j\in\Z$ as well due to a direct computation.

One can verify that $\psi_\la$ vanishes on the principal left ideal $(x^m-\la)$ for any $\la\in\k$; $\om$ vanishes on $(x)$; $E_2$ vanishes on $(x^{m+1})$; $E_1$ vanishes on $(x^2)$. Clearly, these left ideals are cofinite in $T_\infty(n,v,\xi)$, and hence all the elements in \eqref{eqn:Tgenerator}  lie in $T_\infty(n,v,\xi)^\circ$.

\begin{lemma}\label{lem:Talg}
Following equations hold in $T_\infty(n,v,\xi)^\circ$:
\begin{align*}
& \psi_{\la_1}\psi_{\la_2}=\psi_{\la_1+\la_2},\;\;\psi_0=1,\;\;
\om^n=1,\;\;E_1^m=0,  \\
& \om\psi_\la=\psi_\la\om,\;\;E_2\om=\om E_2,\;\;E_1\om=\xi^v\om E_1,  \\
& E_2\psi_\la=\psi_\la E_2,\;\;E_1\psi_\la=\psi_\la E_1,\;\;E_1E_2=E_2E_1
\end{align*}
for all $\la,\la_1,\la_2\in\k$.
\end{lemma}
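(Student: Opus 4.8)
The plan is to verify each of the stated relations by evaluating both sides on the linear basis $\{g^j x^l \mid j\in n,\; l\in\N\}$ of $T_\infty(n,v,\xi)$, using the formula for the product in $T_\infty(n,v,\xi)^\circ$ dual to the comultiplication of $T_\infty(n,v,\xi)$; recall that for $f,f'\in T_\infty(n,v,\xi)^\circ$ one has $(ff')(a) = (f\otimes f')(\Delta(a))$. So the first task is to record the comultiplication on the basis elements. Since $\Delta(g)=g\otimes g$ and $\Delta(x)=1\otimes x + x\otimes g^v$ with $xg=\xi gx$, one computes $\Delta(g^j x^l) = \sum_{k=0}^{l}\binom{l}{k}_{\xi^{v}}\, g^{j}x^{k}\otimes g^{j+vk}x^{l-k}$ (a $q$-binomial expansion with $q=\xi^{v}$, as in the quantum binomial coefficients recalled at the end of Section~\ref{section2}). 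This single formula drives every verification.

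First I would dispose of the relations not involving $\psi_\la$. For $\om^n=1$: evaluating $\om^n$ on $g^jx^l$ picks out the term with all tensor factors having $x$-degree $0$, i.e.\ $l=0$, giving $(\xi^{j})^{n}=1=\varepsilon(g^jx^l)$, so $\om^n=\varepsilon$. For $E_1^m=0$: $E_1^m(g^jx^l)$ is a sum over ways to split $x^l$ into $m$ factors each of $x$-degree exactly $1$, so it vanishes unless $l=m$, and when $l=m$ the value is the $q$-multinomial coefficient $\frac{m!_{\xi^v}}{(1!_{\xi^v})^m}=m!_{\xi^v}$, which is $0$ because $\xi^v$ is a primitive $m$th root of unity (so $m_{\xi^v}=0$). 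Hence $E_1^m=0$. For the commutation relations: $\om\psi_\la$ versus $\psi_\la\om$, $E_2\om$ versus $\om E_2$, $F_1\Om=\xi^v\Om F_1$ i.e.\ $E_1\om$ versus $\xi^v\om E_1$, and $E_1E_2$ versus $E_2E_1$ — each is a short computation using that in the two-fold comultiplication the $x$-degrees add while the $g$-shift produces a power of $\xi^v$ determined by the $x$-degree of the left factor; the relation $E_1\om=\xi^v\om E_1$ is exactly the asymmetry coming from the $g^{vk}$ twist in $\Delta(x^l)$.

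Next, the relations involving $\psi_\la$. The identities $\psi_{\la_1}\psi_{\la_2}=\psi_{\la_1+\la_2}$ and $\psi_0=1=\varepsilon$ require care: $\psi_0(g^jx^l)=\delta_{l,0}=\varepsilon(g^jx^l)$ is immediate, and for the product one evaluates $(\psi_{\la_1}\psi_{\la_2})(g^jx^l)=\sum_{k}\binom{l}{k}_{\xi^v}\psi_{\la_1}(g^jx^k)\psi_{\la_2}(g^{j+vk}x^{l-k})$, which is nonzero only when both $k$ and $l-k$ are multiples of $m$; writing $k=um$, $l-k=u'm$, $l=(u+u')m$, the value is $\binom{(u+u')m}{um}_{\xi^v}\la_1^{u}\la_2^{u'}$. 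The key arithmetic fact is that the $q$-binomial coefficient $\binom{(u+u')m}{um}_{\xi^v}$, with $q=\xi^v$ a primitive $m$th root of unity, equals the ordinary binomial $\binom{u+u'}{u}$ — this is the standard "freshman's dream" for $q$-binomials at a root of unity (a special case of the $q$-Lucas theorem). Granting this, $\sum_{u+u'=w}\binom{w}{u}\la_1^u\la_2^{u'}=(\la_1+\la_2)^w$, which is exactly $\psi_{\la_1+\la_2}(g^jx^{wm})$. The remaining relations $E_2\psi_\la=\psi_\la E_2$ and $E_1\psi_\la=\psi_\la E_1$ are then straightforward splittings of $x^l$ into an $x$-degree-$1$ (resp.\ $x$-degree-$m$) piece and the rest, checking the $q$-binomial coefficients match on both sides; one should note the definitions of $\psi_\la,\om,E_1,E_2$ in \eqref{eqn:Tgenerator} extend consistently to all $j\in\Z$, as already remarked, which makes the bookkeeping of the $g$-exponents painless.

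The main obstacle is the identity $\binom{(u+u')m}{um}_{\xi^v}=\binom{u+u'}{u}$ at a primitive $m$th root of unity; everything else is routine $q$-binomial bookkeeping. I would either cite the $q$-Lucas theorem or give a two-line argument: in $\k[y]$ one has $\prod_{i=0}^{N-1}(1+\xi^{vi}y)=\big(\prod_{i=0}^{m-1}(1+\xi^{vi}y)\big)^{N/m}=(1+y^m)^{N/m}$ whenever $m\mid N$ (using $\prod_{i=0}^{m-1}(1+\zeta^i y)=1+y^m$ for $\zeta$ a primitive $m$th root of unity), and comparing the coefficient of $y^{um}$ on both sides gives $\binom{N}{um}_{\xi^v}=\binom{N/m}{u}$; take $N=(u+u')m$.
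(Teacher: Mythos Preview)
Your proposal is correct and follows essentially the same approach as the paper: both verify each relation by evaluating on the basis $\{g^jx^l\}$ via the comultiplication formula $\Delta(g^jx^l)=\sum_k\binom{l}{k}_{\xi^v}g^jx^k\otimes g^{j+vk}x^{l-k}$, and both rely on the identity $\binom{(u+u')m}{um}_{\xi^v}=\binom{u+u'}{u}$ at a primitive $m$th root of unity. The only difference is cosmetic: the paper writes out each pairing computation line by line (and obtains $\langle E_1^{k'},g^jx^l\rangle=\delta_{l,k'}k'!_{\xi^v}$ by induction on $k'$), whereas you sketch the same computations more tersely and add an explicit generating-function justification of the $q$-Lucas identity that the paper uses without comment.
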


\begin{proof}
Note that
$$\Delta(g^jx^l)=(g\otimes g)^j(1\otimes x+x\otimes g^v)^l
  =\sum_{k=0}^l\binom{l}{k}_{\xi^v} g^jx^k\otimes g^{j+kv}x^{l-k}.$$
We prove the lemma by checking values at the basis elements $g^jx^l\;\;(j\in n,\;l\in\N)$:
\begin{eqnarray*}
\langle\psi_{\la_1}\psi_{\la_2},g^jx^l\rangle
&=& \sum_{u=0}^\infty\delta_{l,um}\sum_{i=0}^u\binom{um}{im}_{\xi^v}
    \langle\psi_{\la_1},g^jx^{im}\rangle\langle\psi_{\la_2},g^{j+imv}x^{(u-i)m}\rangle  \\
&=& \sum_{u=0}^\infty\delta_{l,um}\sum_{i=0}^u\binom{u}{i}\la_1^i\la_2^{u-i}
~=~ \langle\psi_{\la_1+\la_2},g^jx^l\rangle,  \\
\langle\psi_0,g^jx^l\rangle
&=& \sum_{u=0}^\infty\delta_{l,um}0^u
~=~ \delta_{l,0}
~=~ \langle\varepsilon,g^jx^l\rangle,  \\
\langle\om\psi_\la,g^jx^l\rangle
&=& \sum_{u=0}^\infty\delta_{l,um}\langle\om,g^j\rangle\langle\psi_\la,g^jx^{um}\rangle  \\
&=& \sum_{u=0}^\infty\delta_{l,um}\langle\psi_\la,g^jx^{um}\rangle\langle\om,g^{j+umv}\rangle
~=~ \langle\psi_\la\om,g^jx^l\rangle,  \\
\langle E_2\om,g^jx^l\rangle
&=& \delta_{l,m}\langle E_2,g^jx^m\rangle\langle\om,g^{j+mv}\rangle
~=~ \delta_{l,m}\langle\om,g^j\rangle\langle E_2,g^jx^m\rangle
~=~ \langle \om E_2,g^jx^l\rangle,  \\
\langle E_1\om,g^jx^l\rangle
&=& \delta_{l,1}\langle E_1,g^jx\rangle\langle\om,g^{j+v}\rangle
~=~ \delta_{l,1}\xi^v\langle\om,g^j\rangle\langle E_1,g^jx\rangle
~=~ \langle\xi^v\om E_1,g^jx^l\rangle,  \\
\langle E_2\psi_\la,g^jx^l\rangle
&=& \sum_{u=0}^\infty \delta_{l,(u+1)m}\binom{(u+1)m}{m}_{\xi^v}
    \langle E_2,g^jx^m\rangle\langle \psi_\la,g^{j+mv}x^{um}\rangle  \\
&=& \sum_{u=0}^\infty \delta_{l,(u+1)m}\binom{(u+1)m}{um}_{\xi^v}
    \langle \psi_\la,g^jx^{um}\rangle\langle E_2,g^{j+umv}x^m\rangle
~=~ \langle\psi_\la E_2,g^jx^l\rangle,  \\
\langle E_1\psi_\la,g^jx^l\rangle
&=& \sum_{u=0}^\infty \delta_{l,um+1}\binom{um+1}{1}_{\xi^v}
    \langle E_1,g^jx\rangle\langle\psi_\la,g^{j+v}x^{um}\rangle  \\
&=& \sum_{u=0}^\infty \delta_{l,um+1}\binom{um+1}{um}_{\xi^v}
    \langle\psi_\la,g^jx^{um}\rangle\langle E_1,g^{j+umv}x\rangle
~=~ \langle \psi_\la E_1,g^jx^l\rangle.
\end{eqnarray*}
Also, we find by induction on $k\in n$ and $k'\in m$ that
\begin{eqnarray*}
\langle\om^k,g^jx^l\rangle
&=& \delta_{l,0}\langle\om^k,g^j\rangle
~=~ \delta_{l,0}\langle\om,g^j\rangle^k
~=~ \delta_{l,0}\xi^{jk},  \\
\langle E_1^{k'},g^jx^l\rangle
&=& \delta_{l,k'}\binom{k'}{1}_{\xi^v}\langle E_1,g^jx\rangle\langle E_1^{k'-1},g^{j+v}x^{k'-1}\rangle  \\
&=& \delta_{l,k'}\frac{k'!_{\xi^v}}{(k'-1)!_{\xi^v}}\langle E_1^{k'-1},g^{j+v}x^{k'-1}\rangle  \\
&=& \delta_{l,k'}\frac{k'!_{\xi^v}}{(k'-1)!_{\xi^v}}\frac{(k'-1)!_{\xi^v}}{(k'-2)!_{\xi^v}}
    \langle E_1^{k'-2},g^{j+2v}x^{k'-2}\rangle
~=~ \cdots
~=~ \delta_{l,k'}k'!_{\xi^v}.
\end{eqnarray*}
Thus $\om^n=1$ and $E_1^m=0$ hold.
\end{proof}

Furthermore if denote $E_1^{[k']}:=\frac{1}{k'!_{\xi^v}}E_1^{k'}$, we verify that for any $k'\in m$, $k,j\in n$ and $l\in \N$,
\begin{eqnarray*}
\langle\om^k E_1^{[k']},g^jx^l\rangle
&=& \delta_{l,k'}\langle\om^k,g^j\rangle\langle E_1^{[k']},g^jx^{k'}\rangle
~=~ \delta_{l,k'}\xi^{jk},  \\
\langle\psi_\la\om^k E_1^{[k']},g^jx^l\rangle
&=& \sum_{u=0}^\infty \delta_{l,um+k'}\binom{um+k'}{um}_{\xi^v}
    \langle\psi_\la,g^jx^{um}\rangle\langle \om^k E_1^{[k']},g^{j+umv}x^{k'}\rangle  \\
&=& \sum_{u=0}^\infty \delta_{l,um+k'}\la^u\xi^{jk},
\end{eqnarray*}
If we denote $E_2^{[s]}:=\frac{1}{s!}E_2^s$ for $s\geq 1$, followings can be obtained by induction:
\begin{eqnarray*}
\langle E_2^s,g^jx^l\rangle
&=& \delta_{l,sm}
    \binom{sm}{m}_{\xi^v}\langle E_2,g^jx^m\rangle\langle E_2^{s-1},g^{j+mv}x^{(s-1)m}\rangle  \\
&=& \delta_{l,sm}s\langle E_2^{s-1},g^{j+mv}x^{(s-1)m}\rangle  \\
&=& \delta_{l,sm}s\binom{(s-1)m}{m}_{\xi^v}
    \langle E_2,g^{j+mv}x^m\rangle\langle E_2^{s-2},g^{j+2mv}x^{(s-2)m}\rangle  \\
&=& \delta_{l,sm}s(s-1)\langle E_2^{s-2},g^{j+2mv}x^{(s-2)m}\rangle
~=~ \cdots
~=~ \delta_{l,sm}s!,  \\
\langle E_2^{[s]},g^jx^l\rangle
&=& \delta_{l,sn},  \\
\langle E_2^{[s]}E_1^{[k']},g^jx^l\rangle
&=& \delta_{l,sm+k'}\binom{sm+k'}{sm}_{\xi^v}
    \langle E_2^{[s]},g^jx^{sm}\rangle\langle E_1^{[k']},g^{j+smv}x^{k'}\rangle
~=~ \delta_{l,sm+k'},  \\
\langle\om^kE_2^{[s]}E_1^{[k']},g^jx^l\rangle
&=& \delta_{l,sm+k'}\langle\om^k,g^j\rangle\langle E_2^{[s]}E_1^{[k']},g^jx^{sm+k'}\rangle
~=~ \delta_{l,sm+k'}\xi^{jk}.
\end{eqnarray*}

\begin{lemma}
Following equations hold in $T_\infty(n,v,\xi)^\circ$:
\begin{align*}
& \Delta(\om)=\om\otimes\om,\;\; \Delta(E_1)=1\otimes E_1+E_1\otimes\om,  \\
& \Delta(E_2)=1\otimes E_2+E_2\otimes\om^m+\sum_{k=1}^{m-1}E_1^{[k]}\otimes\om^kE_1^{[m-k]},  \\
& \Delta(\psi_\la)=\sum_{c=0}^{(n/m)-1}(\psi_{\la\xi^{mc}}\otimes\psi_\la\sigma_c)
  (1\otimes 1+\la\sum_{k=1}^{m-1}E_1^{[k]}\otimes\om^kE_1^{[m-k]}),  \\
& \varepsilon(\om)=\varepsilon(\psi_\la)=1,\;\;\varepsilon(E_1)=\varepsilon(E_2)=0,  \\
& S(\om)=\om^{n-1},\;\;S(E_1)=-\xi^{-v}\om^{n-1}E_1,\;\;S(E_2)=-E_2,\;\;
  S(\psi_\la)=\sum_{c=0}^{(n/m)-1}\psi_{-\la\xi^{-mc}}\sigma_c,
\end{align*}
for $\la\in\k$, where $E_1^{[k]}:=\frac{1}{k!_{\xi^v}}E_1^k$ for $1\leq k\leq m-1$, and
$$\sigma_c:=\frac{m}{n}
            (1+\xi^{-mc}\om^m+\xi^{-2mc}\om^{2m}+\cdots+\xi^{-(n-m)c}\om^{n-m})\;\;\;\;(c\in n/m).$$
\end{lemma}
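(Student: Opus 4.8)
The plan is to verify every displayed identity by transposing it through the evaluation pairing between $H^\circ$ and $H:=T_\infty(n,v,\xi)$, using the standard description of the Hopf operations on a finite dual: for $f\in H^\circ$, the element $\Delta(f)\in H^\circ\otimes H^\circ$ is characterized by $\langle\Delta(f),a\otimes b\rangle=f(ab)$, while $\varepsilon(f)=f(1)$ and $\langle S(f),a\rangle=f(S_H(a))$. It therefore suffices to evaluate both sides of each equation on the monomial basis $\{g^jx^l\mid j\in n,\ l\in\N\}$ of $H$ (and on the corresponding basis of $H\otimes H$ for the coproducts). One first notes that each right-hand side already lies in $H^\circ$, resp.\ $H^\circ\otimes H^\circ$, being assembled from $\om,E_1,E_2,\psi_\la$ --- shown to lie in $H^\circ$ above --- together with the elements $\sigma_c$, which are polynomials in $\om$. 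The two structural facts driving all the computations are the product rule $(g^jx^l)(g^{j'}x^{l'})=\xi^{lj'}g^{j+j'}x^{l+l'}$ and the coproduct rule $\Delta(g^jx^l)=\sum_{k=0}^l\binom{l}{k}_{\xi^v}g^jx^k\otimes g^{j+kv}x^{l-k}$ already used in Lemma~\ref{lem:Talg}, together with the tables of values of $\om^kE_1^{[k']}$, $E_2^{[s]}$, $E_2^{[s]}E_1^{[k']}$, $\om^kE_2^{[s]}E_1^{[k']}$ and $\psi_\la\om^kE_1^{[k']}$ computed in the paragraphs immediately preceding this lemma.

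For $\om$ and $E_1$ the check is immediate: evaluating $\om\otimes\om$ and $1\otimes E_1+E_1\otimes\om$ on $g^jx^l\otimes g^{j'}x^{l'}$ and comparing with $\langle\om,\xi^{lj'}g^{j+j'}x^{l+l'}\rangle$, resp.\ $\langle E_1,\xi^{lj'}g^{j+j'}x^{l+l'}\rangle$, reduces --- using $l,l'\geq 0$ --- to the fact that on the surviving terms the twist $\xi^{lj'}$ is trivial. For $E_2$ the left-hand value is $\xi^{lj'}\delta_{l+l',m}$, and one matches $1\otimes E_2$, $\sum_{k=1}^{m-1}E_1^{[k]}\otimes\om^kE_1^{[m-k]}$ and $E_2\otimes\om^m$ against the decomposition of the solution set of $l+l'=m$ into $(0,m)$, the pairs $(k,m-k)$ with $1\leq k\leq m-1$, and $(m,0)$, the needed twists $\xi^{kj'}$ and $\xi^{mj'}$ being supplied by the values of $\om^kE_1^{[m-k]}$ and $\om^m$. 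The counit identities are just evaluation at $1=g^0x^0$. For the antipodes of $\om$ and $E_1$ one first records $S_H(g^jx^l)=(-1)^l\xi^{-v\binom{l+1}{2}-lj}\,g^{-j-lv}x^l$, obtained from $S_H(g)=g^{n-1}$, $S_H(x)=-\xi^{-v}g^{n-v}x$ and anti-multiplicativity (the $\binom{l+1}{2}$ arising from moving $x$ repeatedly past powers of $g$), and then pairs this with $\om$, resp.\ $E_1$.

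The substantial part --- and the step I expect to be the main obstacle --- is the pair of identities for $\psi_\la$. Two preliminary remarks reduce it to bookkeeping. First, since $m=n/\gcd(n,v)$ one has $n\mid mv$, hence $\xi^{mv}=1$; this annihilates the twists coming from the factors $g^{kv}$ in $\Delta(g^jx^l)$ whenever the relevant index is a multiple of $m$. Second, as $\xi^m$ is a primitive $(n/m)$-th root of $1$, a finite geometric-sum (discrete Fourier) argument gives $\langle\sigma_c,g^jx^l\rangle=1$ when $l=0$ and $j\equiv c\pmod{n/m}$, and $0$ otherwise; consequently $\langle\psi_\la\sigma_c,g^jx^l\rangle=\la^{l/m}$ when $m\mid l$ and $j\equiv c\pmod{n/m}$, and $0$ otherwise, so that $\sum_{c}\psi_\la\sigma_c$ recovers $\psi_\la$. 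With these in hand one evaluates $\Delta(\psi_\la)$ on $g^jx^l\otimes g^{j'}x^{l'}$: the left-hand value is $\xi^{lj'}\sum_u\delta_{l+l',um}\la^u$, which is nonzero exactly when $m\mid l+l'$. Writing $l\equiv k\pmod m$ with $0\leq k\leq m-1$, the contribution with $k=0$ is reproduced by the leading term $\sum_c\psi_{\la\xi^{mc}}\otimes\psi_\la\sigma_c$ of the expansion of the right-hand side (here $\xi^{mv}=1$ is used to identify the character coming from the $c$-sum with $\xi^{lj'}$), while the contribution with $1\leq k\leq m-1$ is reproduced by $\la\sum_c\psi_{\la\xi^{mc}}E_1^{[k]}\otimes\psi_\la\sigma_c\om^kE_1^{[m-k]}$, using in addition the identity $\binom{sm+k'}{sm}_{\xi^v}=1$ for $k'\in\{1,\dots,m-1\}$ that is already implicit in the tables above. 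The antipode identity $S(\psi_\la)=\sum_c\psi_{-\la\xi^{-mc}}\sigma_c$ is handled in the same spirit: pairing with $g^jx^l$, the left-hand value is $\psi_\la(S_H(g^jx^l))$, supported on $m\mid l$, say $l=am$, with value $(-1)^{am}\xi^{-v\binom{am}{2}-amj}\la^a$; the delicate point is that $(-1)^{am}\xi^{-v\binom{am}{2}}=(-1)^a$, which one verifies by a short parity analysis resting on the fact that a primitive $m$-th root of $1$ raised to the power $m/2$ equals $-1$ when $m$ is even, and $(-1)^a$ is precisely the sign produced by $\sum_c\psi_{-\la\xi^{-mc}}\sigma_c$. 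This matching completes the verification.
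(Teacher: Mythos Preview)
Your approach is identical to the paper's: verify each identity by pairing with basis elements $g^jx^l$ (resp.\ $g^jx^l\otimes g^{j'}x^{l'}$), using the product rule $(g^jx^l)(g^{j'}x^{l'})=\xi^{j'l}g^{j+j'}x^{l+l'}$. The paper writes out the four coproduct checks explicitly and then simply declares ``the expressions of the counit and the antipode are clear''; your sketch of $S(\om)$, $S(E_1)$ and $S(\psi_\la)$ therefore already exceeds what the paper records.

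There is one omission worth flagging: you never address $S(E_2)$. Plugging $l=m$ into your own formula $S_H(g^jx^l)=(-1)^l\xi^{-v\binom{l+1}{2}-lj}g^{-j-lv}x^l$ gives
\[
\langle S(E_2),g^jx^m\rangle=(-1)^m\xi^{-v\binom{m+1}{2}}\,\xi^{-jm}.
\]
A short parity argument (using $vm\equiv0\pmod n$ and $\gcd(m,v/\gcd(n,v))=1$) shows $(-1)^m\xi^{-v\binom{m+1}{2}}=-1$, so $\langle S(E_2),g^jx^m\rangle=-\xi^{-jm}$, whereas $\langle-E_2,g^jx^m\rangle=-1$. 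These agree for all $j$ only when $\xi^m=1$, i.e.\ $m=n$; in general the evaluation actually yields $S(E_2)=-\om^{n-m}E_2$ rather than $-E_2$. The paper glosses over this verification as well, so the gap is shared, but it is a real one and your plan gives you exactly the tools to detect it.
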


\begin{proof}
Note that $(g^jx^l)(g^{j'}x^{l'})=\xi^{j'l}g^{j+j'}x^{l+l'}\;(j,j'\in n,\;l,l'\in\N)$. We also prove the lemma by checking values at each $g^jx^l\otimes g^{j'}x^{l'}$.
\begin{eqnarray*}
\langle\om,\xi^{j'l}g^{j+j'}x^{l+l'}\rangle
&=& \delta_{l+l',0}\xi^{j'l+j+j'}
~=~ \delta_{l,0}\xi^j\delta_{l',0}\xi^{j'}
~=~ \langle\om\otimes\om,g^jx^l\otimes g^{j'}x^{l'}\rangle,  \\
\langle E_1,\xi^{j'l}g^{j+j'}x^{l+l'}\rangle
&=& \delta_{l+l',1}\xi^{j'l}
~=~ \delta_{l,0}\delta_{l',1}+\delta_{l,1}\delta_{l',0}\xi^{j'}  \\
&=& \langle 1\otimes E_1+E_1\otimes\om,g^jx^l\otimes g^{j'}x^{l'}\rangle,  \\
\langle E_2,\xi^{j'l}g^{j+j'}x^{l+l'}\rangle
&=& \delta_{l+l',m}\xi^{j'l}
~=~ \delta_{l,0}\delta_{l',m}+\delta_{l,m}\delta_{l',0}\xi^{j'm}
    +\sum_{k=1}^{m-1}\delta_{l,k}\delta_{l',m-k}\xi^{j'k}  \\
&=& \langle 1\otimes E_2+E_2\otimes\om^m+\sum_{k=1}^{m-1}E_1^{[k]}\otimes\om^kE_1^{[m-k]},
    g^jx^l\otimes g^{j'}x^{l'}\rangle.
\end{eqnarray*}
It is not hard to find that for each $c\in n/m$, $j\in n$ and $l\in\N$:
$$\langle\psi_\la\sigma_c,g^jx^l\rangle=
\left\{\begin{array}{ll}
  \sum_{u=0}^\infty\delta_{l,um}\la^u, & \text{if}\;\;j\equiv c~(\mathrm{mod}~n/m);  \\
  0, & \text{otherwise}.
\end{array}
\right.$$
Now compute that
\begin{eqnarray*}
&& \langle \psi_\la,\xi^{j'l}g^{j+j'}x^{l+l'}\rangle
~=~ \sum_{u=0}^\infty\delta_{l+l',um}\la^u\xi^{j'l}  \\
&=& \sum_{u,u'=0}^\infty (\delta_{l,um}\delta_{l',u'm}\la^{u+u'}\xi^{j'um}
    +\sum_{k=1}^{m-1}\delta_{l,um+k}\delta_{l',u'm+(m-k)}\la^{u+u'+1}\xi^{j'(um+k)})  \\
&=& \sum_{u=0}^\infty \delta_{l,um}(\la\xi^{j'm})^u\sum_{u'=0}^\infty\delta_{l',u'm}\la^{u'}
  +\la\sum_{k=1}^{m-1}\sum_{u=0}^\infty\delta_{l,um+k}(\la\xi^{j'm})^u
      \sum_{u'=0}^\infty\delta_{l',u'm+(m-k)}\la^{u'}\xi^{j'k}  \\
&=& \langle\psi_{\la\xi^{j'm}}\otimes \psi_\la
    +\la\sum_{k=1}^{m-1}\psi_{\la\xi^{j'm}} E_1^{[k]}\otimes\psi_\la\om^kE_1^{[m-k]},
    g^jx^l\otimes g^{j'}x^{l'}\rangle  \\
&=& \langle\sum_{c=0}^{(n/m)-1}(\psi_{\la\xi^{mc}}\otimes\psi_\la\sigma_c
    +\la\sum_{k=1}^{m-1}\psi_{\la\xi^{mc}}E_1^{[k]}\otimes\psi_\la\sigma_c\om^kE_1^{[m-k]}),
    g^jx^l\otimes g^{j'}x^{l'}\rangle  \\
&=& \langle\sum_{c=0}^{(n/m)-1}(\psi_{\la\xi^{mc}}\otimes\psi_\la\sigma_c)
           (1\otimes 1+\la\sum_{k=1}^{m-1}E_1^{[k]}\otimes\om^kE_1^{[m-k]}),
           g^jx^l\otimes g^{j'}x^{l'}\rangle.
\end{eqnarray*}
The expressions of the counit and the antipode are clear.
\end{proof}

\subsection{The Generation Problem}\label{subsection:T3}

\begin{proposition}\label{p.3.3}
As an algebra, $T_\infty(n,v,\xi)^\circ$ is generated by $\psi_\la$, $\om$, $E_2$ and $E_1$ for $\la\in\k$.
\end{proposition}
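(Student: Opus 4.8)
The plan is to show that the subalgebra $A \subseteq T_\infty(n,v,\xi)^\circ$ generated by $\{\psi_\la, \om, E_2, E_1\mid \la\in\k\}$ exhausts $T_\infty(n,v,\xi)^\circ$ by a dimension-counting argument carried out ideal-by-ideal. By Corollary \ref{cor:circ}(1) (with $P=\k[x]$; note that for $T_\infty$ the relevant parameter is $m=n/\gcd(n,v)$, and the cofinite ideals needed are $((x^m-\la)^{r})$ after absorbing the extra power as in Lemma \ref{lem:p(x)}(1)), it suffices to verify that the family $\mathcal{I}_0$ of such ideals satisfies the hypothesis of Lemma \ref{lem:I0}, and then to show that for each $I=((x^m-\la)^r)\lhd T_\infty(n,v,\xi)$ the restriction map $A\to (T_\infty(n,v,\xi)/I)^\ast$ is surjective. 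Since $T_\infty(n,v,\xi)^\circ=\sum_{I\in\mathcal{I}_0}(T_\infty(n,v,\xi)/I)^\ast$, surjectivity onto each summand gives $A=T_\infty(n,v,\xi)^\circ$.

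First I would record, using the basis $\{g^jx^l\}$ and the relation $xg=\xi gx$, that $T_\infty(n,v,\xi)/((x^m-\la)^r)$ has dimension $n\cdot m r$ when $\la\neq 0$ and dimension $n\cdot m r$ as well when $\la = 0$ after the correct bookkeeping (the quotient has basis the images of $g^jx^l$ with $j\in n$ and $l$ in a suitable range of size $mr$). The point is that the functionals built from our generators — explicitly the products $\psi_\la\,\om^k\,E_2^{[s]}\,E_1^{[k']}$ for $k\in n$, $s\in\N$, $k'\in m$, whose values on the basis were computed just before Lemma \ref{lem:Talg} to be $\langle\psi_\la\om^kE_2^{[s]}E_1^{[k']},g^jx^l\rangle=\delta_{l,sm+k'}\xi^{jk}$ (and more generally with $\psi_\la$ inserted, $\sum_u\delta_{l,um+sm+k'}\la^u\xi^{jk}$ up to binomial factors that are nonzero) — separate points of the quotient. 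Concretely, fixing $\la$ and letting $k$ range over $n$, $s$ range over $\{0,1,\dots,r-1\}$, $k'$ range over $m$, and allowing also the translates $\psi_{\la+\mu}$, one gets $nmr$ functionals in $A$ restricting to $(T_\infty/I)^\ast$; I would show these are linearly independent, hence a basis of the dual, by factoring the relevant coefficient matrix as a Kronecker-type product: the $\xi^{jk}$ part is an invertible Vandermonde/Fourier matrix in $(j,k)\in n\times n$, and the $(s,k')$ versus $(l \bmod mr)$ part is invertible because powers $x^l$ are distinguished by $E_2^{[s]}E_1^{[k']}$, with the $\psi$-variable handling the generalized eigenvalue directions via a Vandermonde argument in $\la$. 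Lemma \ref{lem:Kron} (and, for the mixed block, Lemma \ref{lem:Kron2}) is exactly the tool to assemble these.

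The remaining preliminary point — that $\mathcal{I}_0$ satisfies the hypothesis of Lemma \ref{lem:I0} — follows from Lemma \ref{lem:p(x)}(1): an arbitrary $I\in\mathcal{I}$ contains $(p(x))$ with $p(x)=\prod_\alpha(x^m-\lambda_\alpha)^{nr_\alpha}$, and $(p(x))=\bigcap_\alpha ((x^m-\lambda_\alpha)^{nr_\alpha})$ inside $T_\infty(n,v,\xi)$ since the factors $(x^m-\lambda_\alpha)^{nr_\alpha}$ are pairwise comaximal in $\k[x]$ and remain so after inducing up (using that $x^m$ is central, as $xg=\xi gx$ forces $x^m g = \xi^m g x^m = g x^m$). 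Hence each $I$ contains an intersection of members of $\mathcal{I}_0$, as required.

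I expect the main obstacle to be the linear-independence/invertibility step for the case $\la=0$, i.e.\ the ideal $((x^m)^r)=(x^{mr})$: here $\psi_\mu$ does not vary the eigenvalue (all of $\psi_\mu$ restrict to the same functional on $T_\infty/(x^{mr})$, essentially the counit on the $x$-part), so one must produce the full $mr$-dimensional $x$-direction purely from $E_1$ and $E_2$. The claim is that $\{E_2^{[s]}E_1^{[k']}\mid s\in r,\ k'\in m\}$ restricts to a basis of the ``$x$-part'' of $(T_\infty/(x^{mr}))^\ast$, which amounts to the computation $\langle E_2^{[s]}E_1^{[k']},g^jx^l\rangle=\delta_{l,sm+k'}$ done above together with the observation that $\{sm+k'\mid s\in r,\ k'\in m\}=\{0,1,\dots,mr-1\}$ — so the matrix is (block-)triangular with invertible diagonal. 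Combining this with the invertible $\xi^{jk}$ Fourier block via Lemma \ref{lem:Kron} closes the $\la=0$ case, and the $\la\neq 0$ cases are then genuinely easier because the extra $\psi$-directions only enlarge an already-invertible system in a Vandermonde fashion. Once every summand $(T_\infty/I)^\ast$ with $I\in\mathcal{I}_0$ is seen to lie in $A$, Lemma \ref{lem:I0} yields $T_\infty(n,v,\xi)^\circ=A$, proving the proposition.
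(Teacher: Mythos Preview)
Your overall architecture is the paper's: use Corollary~\ref{cor:circ}(1) with $\mathcal I_0=\{((x^m-\la)^r)\}$ (the paper takes exponent $nr$, but $r$ works just as well), then for each such $I$ exhibit $\dim(T_\infty(n,v,\xi)/I)$ elements of the subalgebra $A$ that lie in $I^\perp$ and are linearly independent, so that $I^\perp\subseteq A$. The comaximality argument for $\bigcap_\alpha((x^m-\la_\alpha)^{r_\alpha})=(p(x))$ via centrality of $x^m$ is correct.

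There is, however, a genuine step you omit: you never verify that the functionals $\psi_\la\om^kE_2^{[s]}E_1^{[k']}$ actually lie in $I^\perp=((x^m-\la)^r)^\perp$. This is not automatic; it is the computation
\[
\langle\psi_\la\om^jE_2^{[s]}E_1^{[l]},\,g^{j'}x^{l'}(x^m-\la)^r\rangle
=\delta_{l',l}\,\xi^{j'j}\la^{r-s}\sum_{t=s}^{r}\binom{r-s}{t-s}(-1)^{r-t}=0
\quad(s<r),
\]
which the paper carries out explicitly. Without it, ``restricting to $(T_\infty/I)^\ast$'' has no meaning: there is no restriction map $A\to (T_\infty/I)^\ast$, only the inclusion $I^\perp\hookrightarrow T_\infty^\ast$.

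Your description of the linear-independence step is also muddled. You do not need ``translates $\psi_{\la+\mu}$'' or any Vandermonde in the $\la$-variable: for the fixed ideal $((x^m-\la)^r)$ a \emph{single} $\psi_\la$ suffices, and evaluating $\psi_\la\om^jE_2^{[s]}E_1^{[l]}$ on $g^{j'}x^{l'+s'm}$ gives $\delta_{l,l'}\xi^{jj'}\binom{s'}{s}\la^{s'-s}$. The resulting matrix is the Kronecker product of the Fourier matrix $(\xi^{jj'})$, the upper-triangular matrix $(\binom{s'}{s}\la^{s'-s})_{s,s'\in r}$ with unit diagonal, and the identity $(\delta_{l,l'})$; invertibility follows immediately from Lemma~\ref{lem:Kron}. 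This handles $\la=0$ and $\la\neq0$ uniformly (for $\la=0$ the triangular block degenerates to the identity), so your case split is unnecessary.
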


\begin{proof}
Clearly $T_\infty(n,v,\xi)$ has a polynomial subalgebra $P=\k[x]$. Choose
$$\mathcal{I}_0=\{((x^m-\la)^{nr})\lhd_l T_\infty(n,v,\xi) \mid \la\in\k,\;r\in\N\}.$$
First for any cofinite left ideal $I$ of $T_\infty(n,v,\xi)$, according to Lemma \ref{lem:p(x)}(1), there must be some non-zero polynomial
$$p(x)=\prod_{\alpha=1}^N (x^m-\la_\alpha)^{nr_\alpha}$$
for some $N\geq1$, $r_\alpha\in\N$ and distinct $\la_\alpha\in\k$, which generates a cofinite left ideal contained in $I$.
It can be known by Lemma \ref{lem:intersection} that
$$\bigcap_{\alpha=1}^N((x^m-\la_\alpha)^{nr_\alpha})= (p(x))\subseteq I$$
as left ideals of $T_\infty(n,v,\xi)$,
and thus Lemma \ref{lem:I0} or Corollary \ref{cor:circ}(1) can be applied to obtain that $T_\infty(n,v,\xi)^\circ=\sum_{I\in\mathcal{I}_0}(T_\infty(n,v,\xi)/I)^\ast$.

Now we try to prove that for each $I\in\mathcal{I}_0$, the subspace $(T_\infty(n,v,\xi)/I)^\ast$ can be spanned by some products of $\psi_\la,\om,E_2,E_1\;(\la\in\k)$. Specifically, suppose $I=((x^m-\la)^{nr})$ for some $r\in\N$ and $\la\in\k$, and we aim to show that
\begin{equation}\label{eqn:T}
\{\psi_\la\om^j E_2^s E_1^l \mid j\in n,\;s\in nr,\;l\in m\}
\end{equation}
is a linear basis of $n^2mr$-dimensional space $(T_\infty(n,v,\xi)/I)^\ast$.

Evidently $I=\k\{g^jx^l(x^m-\la)^{nr}\mid j\in n,\;l\in\N\}$. Also, $T_\infty(n,v,\xi)/I$ has a linear basis
$$\{g^jx^l(x^m-\la)^{s}\mid j\in n,\;l\in m,\;s\in nr\},$$
and hence $\dim((T_\infty(n,v,\xi)/I)^\ast)=\dim(T_\infty(n,v,\xi)/I)=n^2mr$. Next we show that all elements in (\ref{eqn:T}) vanish on $I$.
Recall that
$$E_2^{[s]}E_1^{[l]}=\frac{1}{s!\cdot l!_{\xi^v}}E_2^sE_1^l:
  g^{j'}x^{l'}\mapsto\delta_{l',sm+l}\;\;\;(l\in m,\;s,l'\in\N,\;j'\in n).$$
Therefore, for any $j,j'\in n$, $s\in nr$, $l\in m$ and $l'\in\N$, we have
\begin{eqnarray*}
& & \langle\psi_\la\om^j E_2^{[s]} E_1^{[l]},g^{j'}x^{l'}(x^m-\la)^{nr}\rangle  \\
&=& \langle\psi_\la\om^j E_2^{[s]} E_1^{[l]},
           \sum_{t=0}^{nr}\binom{nr}{t}g^{j'}x^{l'+tm}(-\la)^{nr-t}\rangle  \\
&=& \sum_{t=0}^{nr}\binom{nr}{t}(-\la)^{nr-t}
    \langle\psi_\la\otimes\om^jE_2^{[s]}E_1^{[l]},\Delta(g^{j'}x^{l'+tm})\rangle  \\
&=& \sum_{t=s}^{nr}\binom{nr}{t}(-\la)^{nr-t}\delta_{l',l}
    \binom{l+tm}{(t-s)m}_{\xi^v}\langle\psi_\la,g^{j'}x^{(t-s)m}\rangle
    \langle\om^jE_2^{[s]}E_1^{[l]},g^{j'+(t-s)mv}x^{l+sm}\rangle  \\
&=& \sum_{t=s}^{nr}\binom{nr}{t}(-\la)^{nr-t}\delta_{l',l}
    \binom{t}{t-s}\la^{t-s}\xi^{(j'+(t-s)mv)j}  \\
&=& \la^{nr-s}\delta_{l',l}\xi^{j'j}
    \sum_{t=s}^{nr}\binom{nr}{t}(-1)^{nr-t}\binom{t}{s}  \\
&=& \la^{nr-s}\delta_{l',l}\xi^{j'j}
    \sum_{t=s}^{nr}\binom{nr-s}{t-s}(-1)^{nr-t}
~=~ 0.
\end{eqnarray*}
In other words, $\langle\psi_\la\om^jE_2^{[s]}E_1^{[l]},I\rangle=0$ for any $j\in n$, $s\in nr$ and $l\in m$, which follows that the elements in (\ref{eqn:T}) belong to $I^\perp=(T_\infty(n,v,\xi)/I)^\ast$.

Finally we prove that the elements in (\ref{eqn:T}) are linearly independent. Choose a lexicographically ordered set of elements $\{g^{j'}x^{l'+s'm}\in T_\infty(n,v,\xi)\mid (j',s',l')\in n\times nr\times m\}$. Our goal is to show that the $n^2mr\times n^2mr$ square matrix
$$A:=\left(\langle\psi_\la\om^jE_2^{[s]}E_1^{[l]},g^{j'}x^{l'+s'm}\rangle
         \mid (j,s,l),(j',s',l')\in n\times nr\times m\right)$$
is invertible, which would imply the linear independence of (\ref{eqn:T}). We try to compute
\begin{eqnarray*}
\langle\psi_\la\om^jE_2^{[s]}E_1^{[l]},g^{j'}x^{l'+s'm}\rangle
&=&  \langle\psi_\la\otimes\om^jE_2^{[s]}E_1^{[l]},\Delta(g^{j'}x^{l'+s'm})\rangle.
\end{eqnarray*}

One finds that $\langle\psi_\la\om^jE_2^{[s]}E_1^{[l]},g^{j'}x^{l'+s'm}\rangle=0$ if $s>s'$. Otherwise when $s\leq s'$,
\begin{eqnarray*}
&& \langle\psi_\la\om^jE_2^{[s]}E_1^{[l]},g^{j'}x^{l'+s'm}\rangle  \\
&=& \delta_{l',l}\binom{l+s'm}{(s'-s)m}_{\xi^v}\langle\psi_\la,g^{j'}x^{(s'-s)m}\rangle
    \langle\om^jE_2^{[s]}E_1^{[l]}g^{j'+(s'-s)mv}x^{l+sm}\rangle  \\
&=& \delta_{l',l}\binom{s'}{s}\la^{s'-s}\xi^{j'j}.
\end{eqnarray*}
The computation follows that the matrix $A$ is the Kronecker product of following three invertible matrices:
\begin{itemize}
  \item A Vandermonde matrix $(\xi^{j'j}\mid j,j'\in n)$,
  \item An $nr\times nr$ upper triangular matrix with entries $\binom{s'}{s}\la^{s'-s}$ for $s\leq s'$, and
  \item An identity matrix matrix $(\delta_{l',l}\mid l,l'\in m)$.
\end{itemize}
Thus $A$ is invertible by Lemma \ref{lem:Kron}, and hence (\ref{eqn:T}) is a basis of $(T_\infty(n,v,\xi)/I)^\ast$. We conclude that
$$T_\infty(n,v,\xi)^\circ=\sum_{I\in\mathcal{I}_0}(T_\infty(n,v,\xi)/I)^\ast
  =\k\{\psi_\la\om^j E_2^s E_1^l \mid \la\in\k,\;j\in n,\;s\in\N,\;l\in m\}.$$
\end{proof}

\begin{theorem}\label{thm:T}
\begin{itemize}
  \item[(1)] $T_{\infty^\circ}(n,v,\xi)$ constructed in Subsection \ref{subsection:T1} is a Hopf algebra;
  \item[(2)] As a Hopf algebra, $T_\infty(n,v,\xi)^\circ$ is isomorphic to $T_{\infty^\circ}(n,v,\xi)$.
\end{itemize}
\end{theorem}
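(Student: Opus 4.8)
The plan is to exhibit an explicit surjective algebra map $\pi\colon T_{\infty^\circ}(n,v,\xi)\longrightarrow T_\infty(n,v,\xi)^\circ$, show it is bijective, and then obtain part (1) by transporting the Hopf structure of $T_\infty(n,v,\xi)^\circ$ through $\pi$; part (2) is then precisely the statement that $\pi$ is a Hopf isomorphism. To build $\pi$, send $\Psi_\la\mapsto\psi_\la$, $\Om\mapsto\om$, $F_2\mapsto E_2$ and $F_1\mapsto E_1$. Lemma \ref{lem:Talg} is exactly the assertion that these images satisfy the defining relations of the algebra $T_{\infty^\circ}(n,v,\xi)$, so $\pi$ is a well-defined algebra homomorphism, and Proposition \ref{p.3.3} says it is surjective. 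Note also that $\pi(F_1^{[k]})=E_1^{[k]}$ and $\pi(\varsigma_c)=\sigma_c$, so $\pi$ matches the auxiliary elements entering the comultiplication formulas.

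\emph{Injectivity.} By the Diamond Lemma remark in Subsection \ref{subsection:T1}, $\{\Psi_\la\Om^jF_2^sF_1^l\mid\la\in\k,\,j\in n,\,s\in\N,\,l\in m\}$ is a $\k$-basis of $T_{\infty^\circ}(n,v,\xi)$, and its image $\{\psi_\la\om^jE_2^sE_1^l\}$ already spans $T_\infty(n,v,\xi)^\circ$ (last line of the proof of Proposition \ref{p.3.3}); hence it suffices to show this image family is linearly independent. Given a finite relation among the $\psi_\la\om^jE_2^sE_1^l$, let $\la_1,\dots,\la_p$ be the scalars occurring and pick $N$ bounding the $E_2$-exponents. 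Evaluate the relation against the monomials $g^{j'}x^{l'+s'm}$ with $j'\in n$, $l'\in m$, $0\le s'<pN$; by the identity $\langle\psi_\la\om^jE_2^{[s]}E_1^{[l]},g^{j'}x^{l'+s'm}\rangle=\delta_{l',l}\binom{s'}{s}\la^{s'-s}\xi^{jj'}$ (for $s\le s'$, zero otherwise) computed in the proof of Proposition \ref{p.3.3}, the coefficient matrix is, after ordering indices suitably, a Kronecker product (Lemma \ref{lem:Kron}) of an $m\times m$ identity block in the $(l,l')$ variables, an $n\times n$ Vandermonde block $(\xi^{jj'})$ (invertible since $\xi$ is a primitive $n$th root of unity), and a $pN\times pN$ block whose $\bigl((\beta,s),s'\bigr)$ entry is $\binom{s'}{s}\la_\beta^{s'-s}$. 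This last block is a confluent Vandermonde matrix: its $(\beta,s)$ row is $\tfrac{1}{s!}\bigl(\tfrac{d}{dt}\bigr)^s$ of the row $(t^{s'})_{0\le s'<pN}$ evaluated at $t=\la_\beta$, and since the $\la_\beta$ are distinct and the characteristic is zero, the $pN$ functionals given by value and first $N-1$ derivatives at $\la_1,\dots,\la_p$ are linearly independent on polynomials of degree $<pN$, so this block is invertible. Hence the whole matrix is invertible, all coefficients vanish, and $\pi$ is injective, hence an algebra isomorphism.

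\emph{Transporting the Hopf structure.} Since $T_\infty(n,v,\xi)^\circ$ is a Hopf algebra, the maps $\Delta':=(\pi^{-1}\otimes\pi^{-1})\circ\Delta\circ\pi$, $\varepsilon':=\varepsilon\circ\pi$ and $S':=\pi^{-1}\circ S\circ\pi$ endow $T_{\infty^\circ}(n,v,\xi)$ with a Hopf algebra structure for which $\pi$ is an isomorphism. It remains to identify $\Delta',\varepsilon',S'$ with the structure maps written down in Subsection \ref{subsection:T1}. Both $\Delta'$ and the displayed comultiplication are algebra maps out of $T_{\infty^\circ}(n,v,\xi)$, hence determined by their values on the generators $\Psi_\la,\Om,F_2,F_1$; the coalgebra identities for $\om,E_1,E_2,\psi_\la$ established in Subsection \ref{subsection:T2}, together with $\pi(F_1^{[k]})=E_1^{[k]}$ and $\pi(\varsigma_c)=\sigma_c$, show these values coincide, so $\Delta'$ is the displayed comultiplication. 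The same argument applies to $\varepsilon'$ (a character) and to $S'$ (an algebra anti-homomorphism). In particular the displayed $\Delta,\varepsilon,S$ are well defined and make $T_{\infty^\circ}(n,v,\xi)$ a Hopf algebra, which is (1), and $\pi$ is then a Hopf algebra isomorphism, which is (2).

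\emph{Main obstacle.} Constructing $\pi$ and transporting the structure are formal, using only facts already assembled in Section \ref{section3}; the substantive point is the injectivity step, and inside it the invertibility of the confluent Vandermonde block uniformly in the number $p$ of scalars $\la_\beta$ appearing in an arbitrary relation. Once that is secured — via a direct Hermite-interpolation argument or a computation of confluent Vandermonde determinants — the rest of the proof is bookkeeping.
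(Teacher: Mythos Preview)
Your proof is correct and follows the same overall architecture as the paper: build the algebra map on generators via Lemma~\ref{lem:Talg}, get surjectivity from Proposition~\ref{p.3.3}, prove injectivity by showing the monomial family $\{\psi_\la\om^jE_2^sE_1^l\}$ is linearly independent, and then transport the Hopf structure. The one substantive difference is in the injectivity step. The paper argues structurally: for each fixed $\la$ the relevant monomials form a basis of $((x^m-\la)^{nr})^\perp$ (already shown inside Proposition~\ref{p.3.3}), and then the sum $\sum_\alpha((x^m-\la_\alpha)^{nr})^\perp$ is shown to be direct by a short orthogonality/Chinese-remainder computation with the ideals. You instead evaluate a putative relation against enough monomials $g^{j'}x^{l'+s'm}$ and invoke a confluent Vandermonde determinant to get invertibility directly. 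Your route is self-contained linear algebra and avoids the ideal manipulation; the paper's route is shorter once Proposition~\ref{p.3.3} is in hand and reuses its single-$\la$ basis result rather than reproving a larger matrix invertibility. One small phrasing issue: when you write ``both $\Delta'$ and the displayed comultiplication are algebra maps'', the well-definedness of the displayed $\Delta$ as an algebra map is precisely what is being established, so the cleaner logic is to compute $\Delta'$ on generators, observe it matches the displayed formulas, and conclude that those formulas do extend to an algebra map (namely $\Delta'$); your final sentence already says this, so it is only a matter of ordering the clauses.
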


\begin{proof}
Consider the following map
$$\Theta:T_{\infty^\circ}(n,v,\xi)\rightarrow T_\infty(n,v,\xi)^\circ,\;\;
  \Psi_\la\mapsto\psi_\la,\;\Om\mapsto\om,\;
  F_1\mapsto E_1,\;F_2\mapsto E_2,$$
where $\la\in\k.$ This is an epimorphism of algebras by Lemma \ref{lem:Talg} and Proposition \ref{p.3.3}. Furthermore, $\Theta$ would become an isomorphism of Hopf algebras with desired coalgebra structure and antipode, as long as it is injective (since $T_\infty(n,v,\xi)^\circ$ is in fact a Hopf algebra).

In order to show that $\Theta$ is injective, it is enough to show the linear independence of
$$\{\psi_\la\om^j E_2^s E_1^l \mid \la\in\k,\;j\in n,\;s\in\N,\;l\in m\}$$
in $T_\infty(n,v,\xi)^\circ$. By the linear independence of elements in \eqref{eqn:T}, we only need to show that any finite sum of form
\begin{equation}\label{eqn:Tinj}
\sum_{\alpha=1}^N(T_\infty(n,v,\xi)/((x^m-\la_\alpha)^{nr_\alpha}))^\ast
=\sum_{\alpha=1}^N((x^m-\la_\alpha)^{nr_\alpha})^\perp
\end{equation}
is direct as long as $\la_\alpha$'s are distinct from each other. But this is clear:
\begin{eqnarray*}
& &((x^m-\la_{\alpha'})^{nr})^\perp
   \cap[\sum_{\alpha\neq\alpha'}((x^m-\la_\alpha)^{nr})^\perp]
~=~ ((x^m-\la_{\alpha'})^{nr})^\perp
    \cap[\bigcap_{\alpha\neq\alpha'}((x^m-\la_\alpha)^{nr})]^\perp  \\
&=& ((x^m-\la_{\alpha'})^{nr})^\perp
    \cap(\prod_{\alpha\neq\alpha'}(x^m-\la_\alpha)^{nr})^\perp
~=~ [((x^m-\la_{\alpha'})^{nr})
    +(\prod_{\alpha\neq\alpha'}(x^m-\la_\alpha)^{nr})]^\perp  \\
&=& T_\infty(n,v,\xi)^\perp
~=~ 0.
\end{eqnarray*}
\end{proof}

\begin{remark}
\emph{As a special case, the connected algebraic groups of dimension one $H_1=\k[x]$ is equal to $T_\infty(1,0,1)$. Therefore, Theorem \ref{thm:T} gives ${H_1}^\circ$, which is well-known (see \cite[Example 9.1.7]{Mon93} for example).}

\end{remark}

\section{The Finite Dual of Generalized Liu's Algebra $B(n,\om,\gamma)$}\label{section4}

The concept and definition of a generalized Liu's algebra $B(n,\om,\gamma)$ is given in \cite[Section 3.4]{BZ10}. In this section, we use the same procedure as Section \ref{section3} to determine the structure of $B(n,\om,\gamma)^\circ$.

\subsection{The Hopf Algebra $B_\circ(n,\om,\gamma)$}\label{subsection:B1}

Let $n$ and $\om$ be positive integers, and $\gamma$ be a primitive $n$th root of $1$. For convenience, $\la^\frac{1}{m}$ always denotes an $m$th root of $\la\in\k^\ast$ for a positive integer $m$ throughout the paper.

The Hopf algebra $B_\circ(n,\om,\gamma)$ is constructed as follows. As an algebra, it is generated by
$$\Psi_{\la^\frac{1}{\om},\la^\frac{1}{n}},\;F_1,\;F_2\;\;(\la^\frac{1}{\om},\la^\frac{1}{n}\in\k^\ast)$$
(where the parameters of $\Psi$ are meant to range through
$\{(\alpha,\beta)\in\k^\ast\times\k^\ast\mid \alpha^\omega=\beta^n\}$)
with relations
\begin{align*}
& \Psi_{\la_1^\frac{1}{\om},\la_1^\frac{1}{n}}\Psi_{\la_2^\frac{1}{\om},\la_2^\frac{1}{n}}
  =\Psi_{(\la_1^\frac{1}{\om}\la_2^\frac{1}{\om}),(\la_1^\frac{1}{n}\la_2^\frac{1}{n})},\;\;
  \Psi_{1,1}=1,\;\;F_1^n=0,  \\
& F_2\Psi_{\la^\frac{1}{\om},\la^\frac{1}{n}}=\Psi_{\la^\frac{1}{\om},\la^\frac{1}{n}}F_2,\;\;
  F_1\Psi_{\la^\frac{1}{\om},\la^\frac{1}{n}}
  =\la^\frac{1}{n}\Psi_{\la^\frac{1}{\om},\la^\frac{1}{n}}F_1,\;\;
  F_1F_2=F_2F_1+\frac{1}{n}F_1
\end{align*}
for $\la^\frac{1}{\om},\la^\frac{1}{n},\la_1^\frac{1}{\om},\la_1^\frac{1}{n},
\la_2^\frac{1}{\om},\la_2^\frac{1}{n}\in\k^\ast$. The comultiplication, counit and antipode are given by
\begin{align*}
& \Delta(F_1)=1\otimes F_1+F_1\otimes\Psi_{1,\gamma},\;\;
  \Delta(F_2)=1\otimes F_2+F_2\otimes 1
  -\sum_{k=1}^{n-1}F_1^{[k]}\otimes\Psi_{1,\gamma}^kF_1^{[n-k]},  \\
& \Delta(\Psi_{\la^\frac{1}{\om},\la^\frac{1}{n}})
  =(\Psi_{\la^\frac{1}{\om},\la^\frac{1}{n}}\otimes\Psi_{\la^\frac{1}{\om},\la^\frac{1}{n}})
   (1\otimes 1+(1-\la)\sum_{k=1}^{n-1}F_1^{[k]}\otimes\Psi_{1,\gamma}^kF_1^{[n-k]}),  \\
& \varepsilon(\Psi_{\la^\frac{1}{\om},\la^\frac{1}{n}})=1,\;\;\varepsilon(F_1)=\varepsilon(F_2)=0,  \\
& S(F_1)=-\gamma^{n-1}\Psi_{1,\gamma}^{n-1}F_1,\;\;S(F_2)=-F_2,\;\;
  S(\Psi_{\la^\frac{1}{\om},\la^\frac{1}{n}})=\Psi_{\la^\frac{-1}{\om},\la^\frac{-1}{n}}
\end{align*}
for $\la^\frac{1}{\om},\la^\frac{1}{n}\in\k^\ast$, where $F_1^{[k]}:=\frac{1}{k!_\gamma}F_1^k$ for $1\leq k\leq n-1$.

The fact that $B_\circ(n,\om,\gamma)$ is a Hopf algebra would be stated and proved in Subsection \ref{subsection:B3} as Theorem \ref{thm:B}(1). Let $\eta$ be a primitive $\omega$th root of $1$. Here we note that
$$\{\Psi_{\la^\frac{1}{\om},\la^\frac{1}{n}}\Psi_{\eta,1}^i\Psi_{1,\gamma}^jF_2^sF_1^l
  \mid \la^\frac{1}{\om},\la^\frac{1}{n}~\text{are fixed roots of}~\la\in\k^\ast,\;
  i\in\om,\;j\in n,\;s\in\N,\;l\in n\}$$
is a linear basis, due to an application of the Diamond Lemma \cite{Ber78}.

\subsection{Certain Elements in $B(n,\om,\gamma)^\circ$}\label{subsection:B2}

Let $n$ and $\om$ be positive integers, and $\gamma$ be a primitive $n$th root of $1$.
According to the definition of $B(n,\om,\gamma)$ recalled in Definition \ref{def:B}, we know that it has a linear basis $\{x^ig^jy^l\mid i\in\om,\;j\in\Z,\;l\in n\}$. Define following elements in $B(n,\om,\gamma)^\ast$:
\begin{align}\label{eqn:Bgenerator}
& \psi_{\la^\frac{1}{\om},\la^\frac{1}{n}}:
    x^ig^jy^l\mapsto \delta_{l,0}\la^\frac{i}{\om}\la^\frac{j}{n},\;\;E_1:x^ig^jy^l\mapsto \delta_{l,1},\;\;
  E_2:x^ig^jy^l\mapsto \delta_{l,0}(\frac{i}{\om}+\frac{j}{n})
\end{align}
for any $i\in\om$, $j\in\Z$, $l\in n$ and $\la^\frac{1}{\om},\la^\frac{1}{n}\in\k^\ast$. We remark that these definitions make sense for all $i\in\Z$ as well.

One can verify that $\psi_{\la^\frac{1}{\om},\la^\frac{1}{n}}$ vanishes on the principal left ideal $(g^n-\la)$ for any $\la^\frac{1}{\om},\la^\frac{1}{n}\in\k^\ast$; $E_2$ vanishes on $((g^n-1)^2)$; $E_1$ vanishes on $(g^n-1)$. Therefore, these elements lie in $B(n,\om,\gamma)^\circ$.

\begin{lemma}\label{lem:Balg}
Following equations hold in $B(n,\om,\gamma)^\circ$:
\begin{align*}
& \psi_{\la_1^\frac{1}{\om},\la_1^\frac{1}{n}}\psi_{\la_2^\frac{1}{\om},\la_2^\frac{1}{n}}
  =\psi_{(\la_1^\frac{1}{\om}\la_2^\frac{1}{\om}),(\la_1^\frac{1}{n}\la_2^\frac{1}{n})},\;\;
  \psi_{1,1}=1,\;\;E_1^n=0,  \\
& E_2\psi_{\la^\frac{1}{\om},\la^\frac{1}{n}}=\psi_{\la^\frac{1}{\om},\la^\frac{1}{n}}E_2,\;\;
  E_1\psi_{\la^\frac{1}{\om},\la^\frac{1}{n}}
  =\la^\frac{1}{n}\psi_{\la^\frac{1}{\om},\la^\frac{1}{n}}E_1,\;\;
  E_1E_2=E_2E_1+\frac{1}{n}E_1
\end{align*}
for all $\la^\frac{1}{\om},\la^\frac{1}{n},\la_1^\frac{1}{\om},\la_1^\frac{1}{n},
\la_2^\frac{1}{\om},\la_2^\frac{1}{n}\in\k^\ast$.
\end{lemma}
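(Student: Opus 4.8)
The plan is to verify each of the stated relations in $B(n,\om,\gamma)^\circ$ by evaluating both sides on the linear basis $\{x^ig^jy^l \mid i\in\om,\;j\in\Z,\;l\in n\}$ of $B(n,\om,\gamma)$, exactly as was done for the Taft case in Lemma \ref{lem:Talg}. The first thing I would record is the formula for the comultiplication on basis elements. Since $\Delta(x)=x\otimes x$, $\Delta(g)=g\otimes g$, and $\Delta(y)=1\otimes y+y\otimes g$ with $yg=\gamma gy$, one gets
\begin{equation*}
\Delta(x^ig^jy^l)=\sum_{k=0}^l\binom{l}{k}_\gamma x^ig^jy^k\otimes x^ig^{j+k}y^{l-k},
\end{equation*}
the $q$-binomial appearing because $y$ and $g$ $\gamma$-commute. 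This is the engine for computing every product $\langle fh,x^ig^jy^l\rangle=\langle f\otimes h,\Delta(x^ig^jy^l)\rangle$.

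Next I would dispatch the relations one at a time. For $\psi_{\la_1^{1/\om},\la_1^{1/n}}\psi_{\la_2^{1/\om},\la_2^{1/n}}$, since each $\psi$ is supported on the $l=0$ part of the basis, only the $k=0,l-k=0$ term of $\Delta$ survives, giving $\delta_{l,0}\la_1^{i/\om}\la_1^{j/n}\la_2^{i/\om}\la_2^{j/n}$, which is exactly $\langle\psi_{(\la_1^{1/\om}\la_2^{1/\om}),(\la_1^{1/n}\la_2^{1/n})},x^ig^jy^l\rangle$; and $\psi_{1,1}$ is visibly $\e$. For $E_2\psi_{\la^{1/\om},\la^{1/n}}$ versus $\psi_{\la^{1/\om},\la^{1/n}}E_2$: both factors being $l=0$-supported, only the $k=0$ term contributes on each side, and commutativity is immediate because the scalar factors $\delta_{l,0}(\tfrac{i}{\om}+\tfrac{j}{n})$ and $\delta_{l,0}\la^{i/\om}\la^{j/n}$ do not depend on the order (note that on the right-hand factor of $\Delta$ the exponent is $g^{j+0}=g^j$). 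For $E_1\psi_{\la^{1/\om},\la^{1/n}}=\la^{1/n}\psi_{\la^{1/\om},\la^{1/n}}E_1$: here $E_1$ picks out a single $y$, so on the left one needs $k=1$ in $\Delta$ (then the $\psi$ sees $x^ig^{j+1}y^0$, contributing $\la^{(j+1)/n}$, i.e.\ an extra $\la^{1/n}$), while on the right one needs $l-k=1$, $k=0$ (the $\psi$ sees $x^ig^jy^0$); comparing shows the factor $\la^{1/n}$ is exactly the discrepancy, with the $q$-binomial $\binom{l}{1}_\gamma$ versus $\binom{l}{l-1}_\gamma$ matching. The relation $E_1^n=0$ follows by the same telescoping induction as for $E_1^m$ in the proof of Lemma \ref{lem:Talg}: $\langle E_1^{k'},x^ig^jy^l\rangle=\delta_{l,k'}k'!_\gamma$, which vanishes once $k'=n$ since $y^n=1-g^n$ has no $y^n$ term (equivalently $n!_\gamma=0$ as $\gamma$ is a primitive $n$th root of $1$).

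The one genuinely new relation is $E_1E_2=E_2E_1+\tfrac1n E_1$; this is where the $\tfrac1n$ term in the definition of $B_\circ$ comes from, and I expect it to be the main point to get right. Evaluating $\langle E_1E_2,x^ig^jy^l\rangle$ forces a split of the single $y$: the $E_1$-side takes $k=1$ and the $E_2$-side sees $x^ig^{j+1}y^0$, so it evaluates to $\delta_{l,1}\binom{1}{1}_\gamma(\tfrac{i}{\om}+\tfrac{j+1}{n})=\delta_{l,1}(\tfrac{i}{\om}+\tfrac{j}{n}+\tfrac1n)$. On the other side $\langle E_2E_1,x^ig^jy^l\rangle$ takes $k=0$ on the $E_2$-side (seeing $x^ig^jy^0$) and $l-k=1$ on the $E_1$-side, giving $\delta_{l,1}(\tfrac{i}{\om}+\tfrac{j}{n})$; the difference is exactly $\delta_{l,1}\tfrac1n=\langle\tfrac1n E_1,x^ig^jy^l\rangle$. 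The subtlety is purely bookkeeping of which tensor factor carries the shifted $g$-exponent, so no real obstacle remains once the $\Delta$-formula is in hand; I would present the computation in a displayed \texttt{eqnarray*} block checking values at $x^ig^jy^l$ for all $i\in\om$, $j\in\Z$, $l\in n$, mirroring the layout of the proof of Lemma \ref{lem:Talg}.
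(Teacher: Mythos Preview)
Your proposal is correct and follows essentially the same approach as the paper's own proof: record the formula $\Delta(x^ig^jy^l)=\sum_{k=0}^l\binom{l}{k}_\gamma x^ig^jy^k\otimes x^ig^{j+k}y^{l-k}$, then verify every relation by evaluating both sides on the basis $\{x^ig^jy^l\}$, with $E_1^n=0$ deduced from the inductive formula $\langle E_1^{k'},x^ig^jy^l\rangle=\delta_{l,k'}k'!_\gamma$ exactly as in Lemma~\ref{lem:Talg}. The identification of the $\tfrac1n$ term in $E_1E_2-E_2E_1$ via the shift $g^j\mapsto g^{j+1}$ in the right tensor factor is precisely the paper's computation.
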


\begin{proof}
Note that
$$\Delta(x^ig^jy^l)=(x\otimes x)^i(g\otimes g)^j(1\otimes y+y\otimes g)^l
  =\sum_{k=0}^l\binom{l}{k}_\gamma x^ig^jy^k\otimes x^ig^{j+k}y^{l-k}.$$
We prove the lemma through checking their values on the basis $x^ig^jy^l\;\;(i\in\om,\;j\in\Z,\;l\in n)$:
\begin{eqnarray*}
\langle\psi_{\la_1^\frac{1}{\om},\la_1^\frac{1}{n}}\psi_{\la_2^\frac{1}{\om},\la_2^\frac{1}{n}},
       x^ig^jy^l\rangle
&=& \delta_{l,0}\langle\psi_{\la_1^\frac{1}{\om},\la_1^\frac{1}{n}},x^ig^j\rangle
    \langle\psi_{\la_2^\frac{1}{\om},\la_2^\frac{1}{n}},x^ig^j\rangle
~=~ \delta_{l,0}\la_1^\frac{i}{\om}\la_2^\frac{i}{\om}\la_1^\frac{j}{n}\la_2^\frac{j}{n}  \\
&=& \langle\psi_{\la_1^\frac{1}{\om}\la_2^\frac{1}{\om},\la_1^\frac{1}{n}\la_2^\frac{1}{n}},
           x^ig^jy^l\rangle,  \\
\langle\psi_{1,1},x^ig^jy^l\rangle
&=& \delta_{l,0}1^i1^j
~=~ \langle\varepsilon,x^ig^jy^l\rangle,  \\
\langle E_2\psi_{\la^\frac{1}{\om},\la^\frac{1}{n}},x^ig^jy^l\rangle
&=& \delta_{l,0}\langle E_2,x^ig^j\rangle\langle\psi_{\la^\frac{1}{\om},\la^\frac{1}{n}},x^ig^j\rangle
~=~ \delta_{l,0}(\frac{i}{\om}+\frac{j}{n})\la^\frac{i}{\om}\la^\frac{j}{n}  \\
&=& \delta_{l,0}\langle\psi_{\la^\frac{1}{\om},\la^\frac{1}{n}},x^ig^j\rangle\langle E_2,x^ig^j\rangle
~=~ \langle\psi_{\la^\frac{1}{\om},\la^\frac{1}{n}}E_2,x^ig^jy^l\rangle,  \\
\langle E_1\psi_{\la^\frac{1}{\om},\la^\frac{1}{n}},x^ig^jy^l\rangle
&=& \delta_{l,1}\langle E_1,x^ig^jy\rangle
    \langle\psi_{\la^\frac{1}{\om},\la^\frac{1}{n}},x^ig^{j+1}\rangle
~=~ \delta_{l,1}\la^\frac{i}{\om}\la^\frac{j+1}{n}  \\
&=& \la^\frac{1}{n}\delta_{l,1}\la^\frac{i}{\om}\la^\frac{j}{n}
~=~ \la^\frac{1}{n}\delta_{l,1}\langle\psi_{\la^\frac{1}{\om},\la^\frac{1}{n}},x^ig^j\rangle
    \langle E_1,x^ig^jy\rangle  \\
&=& \langle\la^\frac{1}{n}\psi_{\la^\frac{1}{\om},\la^\frac{1}{n}}E_1,x^ig^jy^l\rangle,  \\
\langle E_1E_2,x^ig^jy^l\rangle
&=& \delta_{l,1}\langle E_1,x^ig^jy\rangle\langle E_2,x^ig^{j+1}\rangle
~=~ \delta_{l,1}(\frac{i}{\om}+\frac{j+1}{n})  \\
&=& \delta_{l,1}(\frac{i}{\om}+\frac{j}{n})+\delta_{l,1}\frac{1}{n}  \\
&=& \delta_{l,1}\langle E_2,x^ig^j\rangle\langle E_1,x^ig^jy\rangle
    +\delta_{l,1}\frac{1}{n}\langle E_1,x^ig^jy\rangle  \\
&=& \langle E_2E_1+\frac{1}{n}E_1,x^ig^jy^l\rangle.
\end{eqnarray*}
Similar to the case in Lemma \ref{lem:Talg}, it can be found by induction on $k\in n$ that
$$\langle E_1^k,x^ig^jy^l\rangle=\delta_{l,k}k!_\gamma,$$
and thus $E_1^n=0$.
\end{proof}

Furthermore if denote $E_1^{[k]}:=\frac{1}{k!_\gamma}E_1^k$, we can also verify that for any $\la^\frac{1}{\om},\la^\frac{1}{n}\in\k^\ast$ and $k\in n$,
\begin{equation}\label{eqn:Bmore}
\langle\psi_{\la^\frac{1}{\om},\la^\frac{1}{n}} E_1^{[k]},x^ig^jy^l\rangle
=\delta_{l,k}\langle\psi_{\la^\frac{1}{\om},\la^\frac{1}{n}},x^ig^j\rangle
 \langle E_1^{[k]},x^ig^jy^k\rangle
=\delta_{l,k}\la^\frac{i}{\om}\la^\frac{j}{n}.
\end{equation}

\begin{lemma}
Following equations hold in $B(n,\om,\gamma)^\circ$:
\begin{align*}
& \Delta(E_1)=1\otimes E_1+E_1\otimes\psi_{1,\gamma},\;\;
  \Delta(E_2)=1\otimes E_2+E_2\otimes 1
  -\sum_{k=1}^{n-1}E_1^{[k]}\otimes\psi_{1,\gamma}^kE_1^{[n-k]},  \\
& \Delta(\psi_{\la^\frac{1}{\om},\la^\frac{1}{n}})
  =(\psi_{\la^\frac{1}{\om},\la^\frac{1}{n}}\otimes\psi_{\la^\frac{1}{\om},\la^\frac{1}{n}})
   (1\otimes 1+(1-\la)\sum_{k=1}^{n-1}E_1^{[k]}\otimes\psi_{1,\gamma}^kE_1^{[n-k]}),  \\
& \varepsilon(\psi_{\la^\frac{1}{\om},\la^\frac{1}{n}})=1,\;\;\varepsilon(E_1)=\varepsilon(E_2)=0,  \\
& S(E_1)=-\gamma^{n-1}\psi_{1,\gamma}^{n-1}E_1,\;\;S(E_2)=-E_2,\;\;
  S(\psi_{\la^\frac{1}{\om},\la^\frac{1}{n}})=\psi_{\la^\frac{-1}{\om},\la^\frac{-1}{n}}
\end{align*}
for $\la^\frac{1}{\om},\la^\frac{1}{n}\in\k^\ast$, where $E_1^{[k]}:=\frac{1}{k!_\gamma}E_1^k$ for $1\leq k\leq n-1$.
\end{lemma}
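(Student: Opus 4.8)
The plan is to verify each claimed identity by evaluating both sides on the linear basis
$\{x^ig^jy^l \mid i\in\om,\; j\in\Z,\; l\in n\}$ of $B(n,\om,\gamma)$, using the fact that
$\langle \phi\otimes\psi, a\rangle = \langle \phi\otimes\psi,\Delta(a)\rangle$ together with the
formula $(x^ig^jy^l)(x^{i'}g^{j'}y^{l'}) = \gamma^{?}\, x^{i+i'}g^{j+j'}y^{l+l'}$ for products in
$B(n,\om,\gamma)$ and the comultiplication expansion already recorded in the proof of
Lemma \ref{lem:Balg}:
$$\Delta(x^ig^jy^l)=\sum_{k=0}^l\binom{l}{k}_\gamma x^ig^jy^k\otimes x^ig^{j+k}y^{l-k}.$$
The counit formulas are immediate from the definitions in \eqref{eqn:Bgenerator}, since
$\varepsilon$ is evaluation at the unit $1 = x^0g^0y^0$. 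For the antipode relations I would use
$\langle S(f),a\rangle = \langle f, S(a)\rangle$ with $S(x^ig^jy^l)$ computed from the antipode of
$B(n,\om,\gamma)$; in particular $S(g^n)=g^{-n}$ and $S(y) = -\gamma^{-1}g^{-1}y$, so the
identities $S(E_2) = -E_2$ and $S(\psi_{\la^{1/\om},\la^{1/n}}) = \psi_{\la^{-1/\om},\la^{-1/n}}$
fall out directly, while $S(E_1) = -\gamma^{n-1}\psi_{1,\gamma}^{n-1}E_1$ requires recording that
$\langle \psi_{1,\gamma}^{n-1}E_1, x^ig^jy^l\rangle = \delta_{l,1}\gamma^{j+1}$ (using
\eqref{eqn:Bmore} with $\la=1$) and matching it against $\langle E_1, S(x^ig^jy)\rangle$.

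The main computational content is in the comultiplication formulas. For $\Delta(E_1)$ one evaluates
$\langle E_1, (x^ig^jy^l)(x^{i'}g^{j'}y^{l'})\rangle$, which picks out the term with total
$y$-degree $1$; the two contributions (from $y^1$ in the left factor and $y^0$ in the right, and
vice versa) reproduce $\langle 1\otimes E_1 + E_1\otimes\psi_{1,\gamma}, -\rangle$ after accounting
for the commutation constant $\gamma$. For $\Delta(E_2)$ one needs the value of
$E_2$ on $x^ig^jy^l$, namely $\delta_{l,0}(\tfrac{i}{\om}+\tfrac{j}{n})$; expanding
$\langle E_2, (x^ig^jy^l)(x^{i'}g^{j'}y^{l'})\rangle = \delta_{l+l',0}\big(\tfrac{i+i'}{\om}+\tfrac{j+j'}{n}\big)$
and splitting the additive factor $\tfrac{i+i'}{\om}+\tfrac{j+j'}{n}$ into
$(\tfrac{i}{\om}+\tfrac{j}{n}) + (\tfrac{i'}{\om}+\tfrac{j'}{n})$ produces the primitive-like part
$1\otimes E_2 + E_2\otimes 1$; the correction term
$-\sum_{k=1}^{n-1}E_1^{[k]}\otimes\psi_{1,\gamma}^kE_1^{[n-k]}$ should arise from the
cross-terms where the $y$-degrees are $k$ and $n-k$ with $k\in\{1,\dots,n-1\}$, for which one uses
$\langle E_1^{[k]}E_1^{[n-k]}, \cdot\rangle$-type bookkeeping and the relation $y^n = 1-g^n$ to see
that the $\delta_{l+l',n}$ contribution indeed collapses to the stated expression; here the quantum
binomial coefficients $\binom{n}{k}_\gamma$ vanish for $\gamma$ a primitive $n$th root, which is
exactly why $\Delta$ of $y^n$ behaves as a group-like plus corrections. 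The formula for
$\Delta(\psi_{\la^{1/\om},\la^{1/n}})$ is handled the same way: $\psi$ is $\delta_{l,0}$ times the
multiplicative character $\la^{i/\om}\la^{j/n}$, the $\delta_{l+l',0}$ part gives
$\psi\otimes\psi$, and the $\delta_{l+l',n}$ part — present because $y^n=1-g^n$ forces a value on
$y^n$ — produces the $(1-\la)\sum E_1^{[k]}\otimes\psi_{1,\gamma}^kE_1^{[n-k]}$ correction, the
factor $(1-\la)$ coming precisely from evaluating $\psi$ on $1-g^n$.

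I expect the main obstacle to be the careful handling of the ``wrap-around'' terms induced by the
defining relation $y^n = 1-g^n = 1-x^\om$: when multiplying two basis elements whose $y$-degrees sum
to $n$ (or more), the product re-expresses $y^n$ via $1-g^n$, and this is what couples the naive
primitive behaviour to the $E_1^{[k]}$ correction sums and, for $\psi$, injects the factor
$(1-\la)$. Getting the signs, the powers of $\gamma$ from moving $y$ past $g$, and the combinatorial
identities among $\binom{l}{k}_\gamma$ and $\binom{n}{k}_\gamma$ (the latter being $0$) to line up
exactly with the stated right-hand sides is the delicate part; everything else is a bookkeeping
exercise paralleling Lemma \ref{lem:Balg}. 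Once all pairings agree on the basis, the identities hold
in $B(n,\om,\gamma)^\circ$ since $B(n,\om,\gamma)^\circ \hookrightarrow B(n,\om,\gamma)^\ast$
separates points.
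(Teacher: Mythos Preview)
Your approach is essentially the same as the paper's: both verify the identities by evaluating on the basis $x^ig^jy^l\otimes x^{i'}g^{j'}y^{l'}$ using the product formula $(x^ig^jy^l)(x^{i'}g^{j'}y^{l'})=\gamma^{j'l}x^{i+i'}g^{j+j'}y^{l+l'}$, and both isolate the key wrap-around contribution from $y^n=1-g^n$ (the $\delta_{l+l',n}$ case) as the source of the $E_1^{[k]}\otimes\psi_{1,\gamma}^kE_1^{[n-k]}$ correction terms and the $(1-\la)$ factor for $\psi$. The paper in fact dispatches the counit and antipode with the single phrase ``the expressions of the counit and the antipode are clear,'' so your more explicit treatment of $S(E_1)$ goes slightly beyond what is written there; note however that your stated value $\langle\psi_{1,\gamma}^{n-1}E_1,x^ig^jy^l\rangle=\delta_{l,1}\gamma^{j+1}$ is off (from \eqref{eqn:Bmore} it is $\delta_{l,1}\gamma^{(n-1)j}=\delta_{l,1}\gamma^{-j}$), though this does not affect the overall argument.
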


\begin{proof}
Note that $(x^ig^jy^l)(x^{i'}g^{j'}y^{l'})=\gamma^{j'l}x^{i+i'}g^{j+j'}y^{l+l'}\;(i,i'\in\om,\;j,j'\in\Z,\;l,l'\in n)$. We also prove the lemma through checking their values on the basis $x^ig^jy^l\otimes x^{i'}g^{j'}y^{l'}$.
\begin{eqnarray*}
&& \langle E_1,\gamma^{j'l}x^{i+i'}g^{j+j'}y^{l+l'}\rangle  \\
&=& \delta_{l+l',1}\gamma^{j'l}\langle E_1,x^{i+i'}g^{j+j'}y\rangle
    +\delta_{l+l',n+1}\gamma^{j'l}\langle E_1,x^{i+i'}g^{j+j'}y^{n+1}\rangle  \\
&=& \delta_{l+l',1}\gamma^{j'l}
~=~ \delta_{l,0}\delta_{l',1}+\delta_{l,1}\delta_{l',0}\gamma^{j'}  \\
&=& \langle 1\otimes E_1+E_1\otimes\psi_{1,\gamma},x^ig^jy^l\otimes x^{i'}g^{j'}y^{l'}\rangle,  \\
&& \langle E_2,\gamma^{j'l}x^{i+i'}g^{j+j'}y^{l+l'}\rangle  \\
&=& \gamma^{j'l}(\delta_{l+l',0}\langle E_2,x^{i+i'}g^{j+j'}\rangle
    +\delta_{l+l',n}\langle E_2,x^{i+i'}g^{j+j'}-x^{i+i'}g^{j+j'+n}\rangle)  \\
&=& \delta_{l,0}\delta_{l',0}(\frac{i+i'}{\om}+\frac{j+j'}{n})  \\
& & +\delta_{l+l',n}\gamma^{j'l}(\frac{i+i'}{\om}+\frac{j+j'}{n}-\frac{i+i'}{\om}-\frac{j+j'+n}{n})  \\
&=& \delta_{l,0}\delta_{l',0}(\frac{i+i'}{\om}+\frac{j+j'}{n})
    +\sum_{k=1}^{n-1}\delta_{l,k}\delta_{l',n-k}\gamma^{j'k}(-1)  \\
&=& \delta_{l,0}\delta_{l',0}(\frac{i'}{\om}+\frac{j'}{n})
    +\delta_{l,0}(\frac{i}{\om}+\frac{j}{n})\delta_{l',0}
    -\sum_{k=1}^{n-1}\delta_{l,k}\delta_{l',n-k}\gamma^{j'k}  \\
&=& \langle 1\otimes E_2+E_2\otimes 1-\sum_{k=1}^{n-1}E_1^{[k]}\otimes\psi_{1,\gamma}^kE_1^{[n-k]},
    x^ig^jy^l\otimes x^{i'}g^{j'}y^{l'}\rangle,
\end{eqnarray*}
and
\begin{eqnarray*}
& & \langle\psi_{\la^\frac{1}{\om},\la^\frac{1}{n}},\gamma^{j'l}x^{i+i'}g^{j+j'}y^{l+l'}\rangle  \\
&=& \delta_{l+l',0}\langle\psi_{\la^\frac{1}{\om},\la^\frac{1}{n}},x^{i+i'}g^{j+j'}\rangle
    +\delta_{l+l',n}\gamma^{j'l}
     \langle\psi_{\la^\frac{1}{\om},\la^\frac{1}{n}},x^{i+i'}g^{j+j'}-x^{i+i'}g^{j+j'+n}\rangle)  \\
&=& \delta_{l+l',0}\la^\frac{i+i'}{\om}\la^\frac{j+j'}{n}
    +\delta_{l+l',n}\gamma^{j'l}
     (\la^\frac{i+i'}{\om}\la^\frac{j+j'}{n}-\la^\frac{i+i'}{\om}\la^\frac{j+j'+n}{n})  \\
&=& \delta_{l,0}\delta_{l',0}\la^\frac{i+i'}{\om}\la^\frac{j+j'}{n}
    +\delta_{l+l',n}\gamma^{j'l}(1-\la)\la^\frac{i+i'}{\om}\la^\frac{j+j'}{n}  \\
&=& (\delta_{l,0}\la^\frac{i}{\om}\la^\frac{j}{n})(\delta_{l',0}\la^\frac{i'}{\om}\la^\frac{j'}{n})
    +(1-\la)\sum_{k=1}^{n-1}
     (\delta_{l,k}\la^\frac{i}{\om}\la^\frac{j}{n})
     (\delta_{l',n-k}\la^\frac{i'}{\om}\la^\frac{j'}{n}\gamma^{j'k})  \\
&=& \langle\psi_{\la^\frac{1}{\om},\la^\frac{1}{n}}\otimes\psi_{\la^\frac{1}{\om},\la^\frac{1}{n}}
    +(1-\la)\sum_{k=1}^{n-1}\psi_{\la^\frac{1}{\om},\la^\frac{1}{n}}E_1^{[k]}
     \otimes\psi_{\la^\frac{1}{\om},\la^\frac{1}{n}}\psi_{1,\gamma}^kE_1^{[n-k]},
     x^ig^jy^l\otimes x^{i'}g^{j'}y^{l'}\rangle.
\end{eqnarray*}
The expressions of the counit and the antipode are clear.
\end{proof}

\subsection{The Generation Problem}\label{subsection:B3}

\begin{proposition}\label{prop:B}
As an algebra, $B(n,\om,\gamma)^\circ$ is generated by $\psi_{\la^\frac{1}{\om},\la^\frac{1}{n}}$, $E_1$ and $E_2$ for $\la^\frac{1}{\om},\la^\frac{1}{n}\in\k^\ast$.
\end{proposition}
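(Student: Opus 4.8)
The plan is to mirror the structure of the argument for $T_\infty(n,v,\xi)^\circ$ in Proposition \ref{p.3.3}. Observe first that $B(n,\om,\gamma)$ contains the Laurent polynomial subalgebra $P=\k[g^{\pm 1}]$ (here $g^n=x^\om$, so this is consistent with the relation $y^n=1-g^n$), over which $B(n,\om,\gamma)$ is a finitely generated module with basis $\{x^ig^jy^l\mid i\in\om,\;0\le j\le n-1,\;l\in n\}$ — actually a module over $\k[g^{\pm 1}]$ of rank $\om n$. Thus Lemmas \ref{lem:I0}, \ref{lem:cofin} and \ref{lem:p(x)}(2) apply with $P=\k[x^{\pm 1}]$ in the role of the variable $g$. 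Take
$$\mathcal{I}_0=\{((g^n-\la)^r)\lhd B(n,\om,\gamma)\mid \la\in\k^\ast,\;r\in\N\}.$$
For an arbitrary cofinite ideal $I$, Lemma \ref{lem:p(x)}(2) produces a polynomial $p(g)=\prod_{\alpha=1}^N(g^n-\la_\alpha)^{r_\alpha}$ with distinct $\la_\alpha\in\k^\ast$ generating a cofinite ideal inside $I$; since the factors $(g^n-\la_\alpha)^{r_\alpha}$ are pairwise coprime in $\k[g^{\pm 1}]$, one checks $\bigcap_{\alpha}((g^n-\la_\alpha)^{r_\alpha})=(p(g))\subseteq I$ exactly as in Proposition \ref{p.3.3}. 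Hence Corollary \ref{cor:circ}(2) gives $B(n,\om,\gamma)^\circ=\sum_{I\in\mathcal{I}_0}(B(n,\om,\gamma)/I)^\ast$.

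Next I would fix $I=((g^n-\la)^r)$ and show that
$$\{\psi_{\la^{1/\om},\la^{1/n}}\,\psi_{\eta,1}^i\,\psi_{1,\gamma}^j\,E_2^s\,E_1^l\mid i\in\om,\;j\in n,\;s\in r,\;l\in n\}$$
(with $\eta$ a fixed primitive $\om$th root of $1$, or more precisely $\psi_{\eta,1}$ the character detecting the $x$-grading modulo $\om$, and $\psi_{1,\gamma}$ detecting the $g$-grading modulo $n$) is a linear basis of $(B(n,\om,\gamma)/I)^\ast$. The dimension count is immediate: $B(n,\om,\gamma)/I$ has basis $\{x^ig^{j+sn}y^l(g^n-\la)^r\text{-cosets}\}$, so $\dim=\om\cdot rn\cdot n=\om n^2 r$, matching the displayed set. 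That these functionals vanish on $I$ is a Stirling-number computation of the same flavour as in Proposition \ref{p.3.3}: expanding $\Delta$ on a basis element of $I$ and pairing against $\psi\cdot E_2^{[s]}E_1^{[l]}$ collapses, using \eqref{eqn:Bmore}, to a sum $\sum_{t}\binom{r}{t-s}(-\la)^{r-t}(\text{poly in }t)$ which is annihilated by the second-Stirling-number identity \eqref{eqn:2Stirling} since $E_2^{[s]}$ contributes a factor $\binom{t}{s}$ while the $r$th difference kills any polynomial of degree $<r$. (Here one uses that $E_2$ acts as the derivation $\tfrac{i}{\om}+\tfrac jn$ in the group-like variables, so $E_2^{[s]}$ produces a binomial $\binom{t}{s}$-type coefficient after the appropriate normalization.) For linear independence I would pair the $\om n^2 r$ functionals against the ordered family $\{x^{i'}g^{j'+s'n}y^{l'}\}$ and recognize the resulting Gram matrix as a Kronecker product of: a Vandermonde matrix $(\eta^{i'i})$ of size $\om$ in the $x$-variable, a Vandermonde-type matrix $(\gamma^{j'j})$ of size $n$ in the $g$-variable, an $r\times r$ unitriangular matrix with entries $\binom{s'}{s}\la^{s'-s}$, and an $n\times n$ identity in the $y$-variable; invertibility then follows from Lemma \ref{lem:Kron}. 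This establishes that $(B(n,\om,\gamma)/I)^\ast$ is spanned by products of $\psi_{\la^{1/\om},\la^{1/n}},E_1,E_2$ (note $\psi_{\eta,1}$ and $\psi_{1,\gamma}$ are special instances of $\psi_{\la^{1/\om},\la^{1/n}}$), and summing over $\mathcal{I}_0$ yields that these generate $B(n,\om,\gamma)^\circ$ as an algebra.

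The main obstacle is the bookkeeping in the vanishing-on-$I$ step: unlike the Taft case, $B(n,\om,\gamma)$ has the extra relation $y^n=1-g^n$, so powers of $y$ beyond $n-1$ must be rewritten, and $E_2$ acts by the non-integer-valued "logarithmic" derivation $\tfrac{i}{\om}+\tfrac jn$ rather than by a clean grading; one must verify carefully that the comultiplication of $(g^n-\la)^r$ interacts with $E_2^{[s]}$ to still produce the binomial coefficient $\binom{t}{s}$ needed to invoke \eqref{eqn:2Stirling}. A secondary subtlety is the consistent choice of roots $\la^{1/\om},\la^{1/n}$ so that the multiplication rule $\psi_{\la_1^{1/\om},\la_1^{1/n}}\psi_{\la_2^{1/\om},\la_2^{1/n}}=\psi_{\la_1^{1/\om}\la_2^{1/\om},\la_1^{1/n}\la_2^{1/n}}$ of Lemma \ref{lem:Balg} is respected when decomposing $\psi_{\la^{1/\om},\la^{1/n}}$ as $\psi_{\mu,\nu}\psi_{\eta,1}^i\psi_{1,\gamma}^j$ for a reference pair $(\mu,\nu)$; this is routine but must be stated cleanly so that the basis display above is unambiguous. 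Everything else is a direct transcription of the Taft argument.
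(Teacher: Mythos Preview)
Your overall strategy matches the paper's exactly: same $\mathcal{I}_0$, same dimension count, same candidate basis, same Stirling-type vanishing argument. However, the linear-independence step contains a concrete error that you yourself flag as an obstacle but then resolve incorrectly.

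You claim the Gram matrix against $\{x^{i'}g^{j'+s'n}y^{l'}\}$ is a pure Kronecker product with an $r\times r$ unitriangular block $\binom{s'}{s}\la^{s'-s}$, invoking Lemma~\ref{lem:Kron}. This is the Taft-case structure and it does \emph{not} transpose. Here $E_2$ is not a shift operator but the ``logarithmic'' functional $x^ig^j\mapsto\tfrac{i}{\om}+\tfrac{j}{n}$, so $E_2^s$ on $x^{i'}g^{j'+s'n}$ gives $(\tfrac{i'}{\om}+\tfrac{j'}{n}+s')^s$, not $\binom{s'}{s}$. After stripping the $\la$-powers the entry is
\[
\delta_{l,l'}\,\eta^{ii'}\gamma^{jj'}\Bigl(\tfrac{i'}{\om}+\tfrac{j'}{n}+s'\Bigr)^s,
\]
and the $(s,s')$-block is a Vandermonde matrix whose nodes depend on $(i',j')$. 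This is precisely the situation of Lemma~\ref{lem:Kron2}, not Lemma~\ref{lem:Kron}: the matrix $(\delta_{l,l'}\eta^{ii'}\gamma^{jj'})$ is invertible by Lemma~\ref{lem:Kron}, and for each fixed $(i',j')$ the block $\bigl((\tfrac{i'}{\om}+\tfrac{j'}{n}+s')^s\bigr)_{s,s'\in r}$ is an invertible Vandermonde, so Lemma~\ref{lem:Kron2} finishes. The same mis-expectation affects your vanishing sketch: what appears is $(\tfrac{i'}{\om}+\tfrac{j'}{n}+t)^s$, a degree-$s$ polynomial in $t$, and one expands it binomially in $t$ before applying \eqref{eqn:2Stirling} termwise---there is no $\binom{t}{s}$ factor. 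The conclusion still holds, but the mechanism you describe is not the one that works.
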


\begin{proof}
Clearly $B(n,\om,\gamma)$ has a Laurent polynomial subalgebra $P=\k[g^{\pm1}]$. Choose
$$\mathcal{I}_0=\{((g^n-\la)^r)\lhd_l B(n,\om,\gamma) \mid \la\in\k^\ast,\;r\in\N\}.$$
At first for any cofinite left ideal $I$ of $B(n,\om,\gamma)$, according to Lemma \ref{lem:p(x)}(2), there must be some non-zero polynomial
$$p(g)=\prod_{\alpha=1}^N(g^n-\la_\alpha)^{r_\alpha}$$
for some $N\geq1$, $r_\alpha\in\N$ and distinct $\la_\alpha\in\k^\ast$, which generates a cofinite left ideal contained in $I$.
It can be known by Lemma \ref{lem:intersection} that
$$\bigcap_{\alpha=1}^N((g^n-\la_\alpha)^{r_\alpha})= (p(g)) \subseteq I$$
as left ideals of $B(n,\om,\gamma)$,
and thus Lemma \ref{lem:I0} or Corollary \ref{cor:circ}(2) can be applied to obtain that $B(n,\om,\gamma)^\circ=\sum_{I\in\mathcal{I}_0}(B(n,\om,\gamma)/I)^\ast$.

Now we try to prove that for each $I\in\mathcal{I}_0$, the subspace $(B(n,\om,\gamma)/I)^\ast$ can be spanned by some products of
$\psi_{\la^\frac{1}{\om},\la^\frac{1}{n}},E_2,E_1\;(\la^\frac{1}{\om},\la^\frac{1}{n}\in\k^\ast)$.

For the purpose, suppose $I=((g^n-\la)^r)$ for some $r\in\N$ and $\la\in\k^\ast$. Let $\eta$ be a primitive $\omega$th root of $1$, and $\la^\frac{1}{\om},\la^\frac{1}{n}$ be fixed roots of $\la$ respectively. We aim to show that
\begin{equation}\label{eqn:B}
\{\psi_{\la^\frac{1}{\om},\la^\frac{1}{n}}\psi_{\eta,1}^i\psi_{1,\gamma}^jE_2^sE_1^l
  \mid i\in\om,\;j\in n,\;s\in r,\;l\in n\}
\end{equation}
is a linear basis of $\om n^2r$-dimensional space $(B(n,\om,\gamma)/I)^\ast$.

Evidently $I=\k\{x^ig^j(g^n-\la)^ry^l\mid i\in\om,\;j\in\Z,\;l\in n\}$, since $g^n$ is central. Also, $B(n,\om,\gamma)/I$ has a linear basis
$$\{x^ig^j(g^n-\la)^sy^l\mid i\in\om,\;j\in n,\;s\in r,\;l\in n\},$$
and hence $\dim((B(n,\om,\gamma)/I)^\ast)=\dim(B(n,\om,\gamma)/I)=\om n^2r$. Next we show that all elements in (\ref{eqn:B}) vanish on $I$.
Recall that
$$E_1^{[l]}=\frac{1}{l!_\gamma}E_1^l:x^{i'}g^{j'}y^{l'}\mapsto\delta_{l',l}\;\;\;(i',j'\in\Z,\;l,l'\in n).$$
Therefore, for any $s\in r$, $l,l'\in n$, $i'\in\om$ and $j'\in\Z$, and for arbitrary roots $\la^\frac{1}{\om}$ and $\la^\frac{1}{n}$ of $\la$, we have
\begin{eqnarray}\label{eqn:Bvanish}
&& \langle\psi_{\la^\frac{1}{\om},\la^\frac{1}{n}}E_2^sE_1^{[l]},x^{i'}g^{j'}(g^n-\la)^ry^{l'}\rangle  \\
&=& \langle\psi_{\la^\frac{1}{\om},\la^\frac{1}{n}}E_2^sE_1^{[l]},
           \sum_{t=0}^r\binom{r}{t}x^{i'}g^{j'+nt}(-\la)^{r-t}y^{l'}\rangle  \nonumber  \\
&=& \sum_{t=0}^r\binom{r}{t}(-\la)^{r-t}
    \langle\psi_{\la^\frac{1}{\om},\la^\frac{1}{n}}E_2^s\otimes E_1^{[l]},
           \Delta(x^{i'}g^{j'+nt}y^{l'})\rangle  \nonumber  \\
&=& \delta_{l',l}\sum_{t=0}^r\binom{r}{t}(-\la)^{r-t}
    \langle\psi_{\la^\frac{1}{\om},\la^\frac{1}{n}}E_2^s,x^{i'}g^{j'+nt}\rangle
    \langle E_1^{[l]},x^{i'}g^{j'+nt}y^{l}\rangle  \nonumber  \\
&=& \delta_{l',l}\sum_{t=0}^r\binom{r}{t}(-\la)^{r-t}
    \la^\frac{i'}{\om}\la^\frac{j'+nt}{n}(\frac{i'}{\om}+\frac{j'+nt}{n})^s  \nonumber  \\
&=& \delta_{l',l}\la^r\la^\frac{i'}{\om}\la^\frac{j'}{n}
    \sum_{t=0}^r\binom{r}{t}(-1)^{r-t}
    \sum_{u=0}^s\binom{s}{u}(\frac{i'}{\om}+\frac{j'}{n})^ut^{s-u}  \nonumber  \\
&=& \delta_{l',l}\la^r\la^\frac{i'}{\om}\la^\frac{j'}{n}
    \sum_{u=0}^s\binom{s}{u}(\frac{i'}{\om}+\frac{j'}{n})^u
    \sum_{t=0}^r\binom{r}{t}(-1)^{r-t}t^{s-u}  \nonumber  \\
&=& 0,  \nonumber
\end{eqnarray}
since $\sum_{t=0}^r\binom{r}{t}(-1)^{r-t}t^{s-u}=0$ when $s-u<r$ is a part of the second Stirling number (\ref{eqn:2Stirling}).

In other words, $\langle\psi_{\la^\frac{1}{\om},\la^\frac{1}{n}}E_2^sE_1^{[l]},I\rangle=0$ for any $s\in r$, $l\in n$, and arbitrary roots $\la^\frac{1}{\om},\la^\frac{1}{n}$ of $\la$. It follows that the elements in (\ref{eqn:B}) belong to $I^\perp=(B(n,\om,\gamma)/I)^\ast$, as $\la^\frac{1}{\om}\eta^i$ and $\la^\frac{1}{n}\gamma^j$ are still roots of $\la$.

Finally we prove that (\ref{eqn:B}) are linearly independent. Choose a lexicographically ordered set of elements $\{x^{i'}g^{j'+s'n}y^{l'}\in B(n,\om,\gamma)\mid (i',j',s',l')\in \om\times n\times r\times n\}$. Our goal is to show that the $n^2r\times n^2r$ square matrix
$$A:=\left(\langle\psi_{\la^\frac{1}{\om},\la^\frac{1}{n}}\psi_{\eta,1}^i\psi_{1,\gamma}^jE_2^sE_1^{[l]},
           x^{i'}g^{j'+s'n}y^{l'}\rangle
           \mid (i,j,s,l),(i,'j',s',l')\in \om\times n\times r\times n \right)$$
is invertible for fixed $\la^\frac{1}{\om},\la^\frac{1}{n}$, which would imply the linear independence of (\ref{eqn:B}). We try to compute
\begin{eqnarray*}
\langle\psi_{\la^\frac{1}{\om},\la^\frac{1}{n}}\psi_{\eta,1}^i\psi_{1,\gamma}^jE_2^sE_1^{[l]},
  x^{i'}g^{j'+s'n}y^{l'}\rangle
&=& \langle\psi_{\la^\frac{1}{\om}\eta^i,\la^\frac{1}{n}\gamma^j}E_2^s\otimes E_1^{[l]},
    \Delta(x^{i'}g^{j'+s'n}y^{l'})\rangle  \\
&=& \delta_{l',l}\langle\psi_{\la^\frac{1}{\om}\eta^i,\la^\frac{1}{n}\gamma^j}E_2^s,
    x^{i'}g^{j'+s'n}\rangle\langle E_1^{[l]},x^{i'}g^{j'+s'n}y^l\rangle  \\
&=& \delta_{l',l}(\la^\frac{1}{\om}\eta^i)^{i'}(\la^\frac{1}{n}\gamma^j)^{j'+s'n}
    (\frac{i'}{\om}+\frac{j'+s'n}{n})^s  \\
&=& \delta_{l',l}\la^{\frac{i'}{\om}+\frac{j'}{n}+s'}\eta^{ii'}\gamma^{jj'}
    (\frac{i'}{\om}+\frac{j'}{n}+s')^s,
\end{eqnarray*}
and observe that $\det(A)$ equals to
$$(\la^\frac{1}{\om})^{nr\frac{(\om-1)\om}{2}}(\la^\frac{1}{n})^{\om r\frac{(n-1)n}{2}}
  \la^{\om n\frac{(r-1)r}{2}}
  \det\left(\delta_{l',l}\eta^{ii'}\gamma^{jj'}(\frac{i'}{\om}+\frac{j'}{n}+s')^s\right).$$
However, there are following two facts:
\begin{itemize}
  \item The matrix $(\delta_{l',l}\eta^{ii'}\gamma^{jj'}\mid (i,j,l),(i',j',l')\in \om\times n\times n)$ is invertible by Lemma \ref{lem:Kron}, since it is the Kronecker product of three invertible matrices;
  \item For each $(i',j')\in \om\times n$, the matrix
      $((\frac{i'}{\om}+\frac{j'}{n}+s')^s\mid s,s'\in r)$
      is an invertible Vandermonde matrix.
\end{itemize}
Thus
$(\delta_{l',l}\eta^{ii'}\gamma^{jj'}(\frac{i'}{\om}+\frac{j'}{n}+s')^s
  \mid (i,j,s,l),(i,'j',s',l')\in \om\times n\times r\times n)$
is invertible according to Lemma \ref{lem:Kron2}, which follows that $A$ is invertible. As a consequence, (\ref{eqn:B}) is a basis of $(B(n,\om,\gamma)/I)^\ast$.

Therefore,
$$B(n,\om,\gamma)^\circ=\sum_{I\in\mathcal{I}_0}(B(n,\om,\gamma)/I)^\ast
  =\k\{\psi_{\la^\frac{1}{\om},\la^\frac{1}{n}}E_2^sE_1^l
       \mid \la^\frac{1}{\om},\la^\frac{1}{n}\in\k^\ast,\;s\in\N,\;l\in n\}.$$
\end{proof}

\begin{theorem}\label{thm:B}
\begin{itemize}
  \item[(1)] $B_\circ(n,\om,\gamma)$ constructed in Subsection \ref{subsection:B1} is a Hopf algebra;
  \item[(2)] As a Hopf algebra, $B(n,\om,\gamma)^\circ$ is isomorphic to $B_\circ(n,\om,\gamma)$.
\end{itemize}
\end{theorem}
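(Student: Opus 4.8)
The plan is to mirror the structure of the proof of Theorem~\ref{thm:T}, using the preparatory computations already assembled for $B(n,\om,\gamma)$. For part~(1), it suffices to exhibit a surjective algebra map from $B_\circ(n,\om,\gamma)$ onto $B(n,\om,\gamma)^\circ$ and show it is injective; since $B(n,\om,\gamma)^\circ$ is a priori a Hopf algebra, transporting its structure along this bijection shows $B_\circ(n,\om,\gamma)$ is a Hopf algebra, simultaneously proving~(2). So the real content is a single isomorphism statement.

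First I would define $\Theta\colon B_\circ(n,\om,\gamma)\to B(n,\om,\gamma)^\circ$ on generators by $\Psi_{\la^{1/\om},\la^{1/n}}\mapsto\psi_{\la^{1/\om},\la^{1/n}}$, $F_1\mapsto E_1$, $F_2\mapsto E_2$. That this respects the algebra relations is exactly Lemma~\ref{lem:Balg}, and surjectivity is Proposition~\ref{prop:B}; that it is a coalgebra and antipode map on generators is the preceding lemma computing $\Delta$, $\varepsilon$, $S$ on $\psi,E_1,E_2$. Hence $\Theta$ will be a Hopf algebra map once injectivity is established, and for injectivity it suffices to show that
\begin{equation*}
\{\psi_{\la^{1/\om},\la^{1/n}}\psi_{\eta,1}^i\psi_{1,\gamma}^jE_2^sE_1^l\mid \la^{1/\om},\la^{1/n}\in\k^\ast,\ i\in\om,\ j\in n,\ s\in\N,\ l\in n\}
\end{equation*}
is linearly independent in $B(n,\om,\gamma)^\circ$. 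Within a single block $(B(n,\om,\gamma)/((g^n-\la)^r))^\ast$ this is the basis~\eqref{eqn:B} already produced in Proposition~\ref{prop:B}, so what remains is to show the sum $\sum_{\alpha=1}^N ((g^n-\la_\alpha)^{r_\alpha})^\perp$ over distinct $\la_\alpha\in\k^\ast$ is direct. This I would do exactly as in the proof of Theorem~\ref{thm:T}: for fixed $\alpha'$,
\begin{equation*}
((g^n-\la_{\alpha'})^{r})^\perp\cap\Bigl[\sum_{\alpha\neq\alpha'}((g^n-\la_\alpha)^{r})^\perp\Bigr]
=\Bigl[(g^n-\la_{\alpha'})^{r}+\prod_{\alpha\neq\alpha'}(g^n-\la_\alpha)^{r}\Bigr]^\perp=B(n,\om,\gamma)^\perp=0,
\end{equation*}
using that the polynomials $(g^n-\la_{\alpha'})^r$ and $\prod_{\alpha\neq\alpha'}(g^n-\la_\alpha)^r$ in $\k[g^{\pm1}]$ are coprime and that $\k[g^{\pm1}]$ generates the needed ideal in $B(n,\om,\gamma)$.

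The main obstacle is bookkeeping rather than a genuine difficulty: one must be careful that $\Theta$ is well defined, i.e.\ that the assignment on the \emph{continuum} of generators $\Psi_{\la^{1/\om},\la^{1/n}}$ is consistent with all imposed relations and with the choices of roots $\la^{1/\om},\la^{1/n}$; this is handled by noting $\psi_{\la^{1/\om}\eta^i,\la^{1/n}\gamma^j}=\psi_{\eta,1}^i\psi_{1,\gamma}^j\psi_{\la^{1/\om},\la^{1/n}}$ and $\eta^\om=\gamma^n=1$ match the relations in $B_\circ(n,\om,\gamma)$, so the map factors correctly. A secondary point to verify is that the coalgebra/antipode formulas, which the earlier lemma checked only on the specific generators $\psi_{\la^{1/\om},\la^{1/n}},E_1,E_2$, indeed force $\Theta$ to be a coalgebra morphism on all of $B_\circ(n,\om,\gamma)$; this is automatic once $\Theta$ is a bijective algebra map matching the Hopf structure on generators, since the generators generate and the structure maps are algebra morphisms. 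With these routine checks in place, the proof is complete.
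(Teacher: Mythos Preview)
Your proposal is correct and follows essentially the same approach as the paper: define $\Theta$ on generators, invoke Lemma~\ref{lem:Balg} and Proposition~\ref{prop:B} for the algebra epimorphism, and reduce injectivity to the directness of $\sum_\alpha((g^n-\la_\alpha)^r)^\perp$ via the same coprimality computation. One small caution: the displayed set you claim is linearly independent should index over \emph{fixed} roots $\la^{1/\om},\la^{1/n}$ for each $\la\in\k^\ast$ (as in the basis~\eqref{eqn:B}), not over all pairs in $\k^\ast$ together with all $i,j$, since otherwise the relation $\psi_{\la^{1/\om},\la^{1/n}}\psi_{\eta,1}^i\psi_{1,\gamma}^j=\psi_{\la^{1/\om}\eta^i,\la^{1/n}\gamma^j}$ produces repetitions.
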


\begin{proof}
Consider the following map
$$\Theta:B_\circ(n,\om,\gamma)\rightarrow B(n,\om,\gamma)^\circ,\;\;
  \Psi_{\la^\frac{1}{\om},\la^\frac{1}{n}}\mapsto\psi_{\la^\frac{1}{\om},\la^\frac{1}{n}},\;
  \;F_1\mapsto E_1,\;F_2\mapsto E_2,$$
where $\la^\frac{1}{\om},\la^\frac{1}{n}\in\k^\ast$. This is an epimorphism of algebras by Lemma \ref{lem:Balg} and Proposition \ref{prop:B}. Furthermore, $\Theta$ would become an isomorphism of Hopf algebras with desired coalgebra structure and antipode, as long as it is injective (since $B(n,\om,\gamma)^\circ$ is in fact a Hopf algebra).

In order to show that $\Theta$ is injective, we aim to show the linear independence of
$$\{\psi_{\la^\frac{1}{\om},\la^\frac{1}{n}}\psi_{\eta,1}^i\psi_{1,\gamma}^jE_2^sE_1^l
  \mid \la\in\k^\ast,\;i\in\om,\;j\in n,\;s\in\N,\;l\in n\}$$
in $B(n,\om,\gamma)^\circ$, where $\la^\frac{1}{\om}$ and $\la^\frac{1}{n}$ are fixed roots of each $\la\in\k^\ast$. By linear independence of elements in \eqref{eqn:B}, we only need to show that any finite sum of form
\begin{equation}\label{eqn:Binj}
\sum_{\alpha=1}^N(B(n,\om,\gamma)/((g^n-\la_\alpha)^r))^\ast
=\sum_{\alpha=1}^N((g^n-\la_\alpha)^r))^\perp
\end{equation}
is direct, as long as $\la_\alpha$'s are distinct from each other. But this is clear:
\begin{eqnarray}\label{eqn:Binj2}
& &((g^n-\la_{\alpha'})^r)^\perp\cap[\sum_{\alpha\neq\alpha'}((g^n-\la_\alpha)^r)^\perp]  \\
&=& ((g^n-\la_{\alpha'})^r)^\perp
    \cap[\bigcap_{\alpha\neq\alpha'}((g^n-\la_\alpha)^r)]^\perp  \nonumber \\
&=& ((g^n-\la_{\alpha'})^r)^\perp
    \cap(\prod_{\alpha\neq\alpha'}(g^n-\la_\alpha)^r)^\perp
~=~ [((g^n-\la_{\alpha'})^r)+(\prod_{\alpha\neq\alpha'}(g^n-\la_\alpha)^r))]^\perp  \nonumber  \\
&=& B(n,\om,\gamma)^\perp
~=~ 0.  \nonumber
\end{eqnarray}
\end{proof}

\begin{remark}
\emph{As a special case, the connected algebraic groups of dimension one $H_2=\k[g^{\pm1}]$ is equal to $B(1,1,1)$. Therefore, Theorem \ref{thm:B} gives ${H_2}^\circ$ which is well-known too (see \cite[Example 9.2.7]{Mon93} for example).}
\end{remark}

\begin{remark}
\emph{
Let us contrast the structure of $B(n,\omega,\gamma)^\circ$ with the Hopf algebra introduced in \cite{ACE15}: Choose a set
$I:=\{(\alpha,\beta)\in\k^\ast\times\k^\ast\mid \alpha^\omega=\beta^n\}$. Then a particular case of the Hopf algebra $$\mathfrak{D}:=\mathfrak{D}(n,\gamma,(\la^\frac{1}{n})_{(\la^\frac{1}{\om},\la^\frac{1}{n})\in I},1)$$
defined in \cite[Theorem 3.4]{ACE15} is generated by
$u,\;x,\;a_{\la^\frac{1}{\om},\la^\frac{1}{n}}^{\pm1}\;\;(\la^\frac{1}{\om},\la^\frac{1}{n}\in \k^\ast)$ with relations
\begin{align*}
& u^n=1,\;\;x^n=0,\;\;
a_{\la^\frac{1}{\om},\la^\frac{1}{n}}^{\pm1}a_{\la^\frac{1}{\om},\la^\frac{1}{n}}^{\mp1}=1,
\;\;ux=\gamma xu,\;\;  \\
& ua_{\la^\frac{1}{\om},\la^\frac{1}{n}}=a_{\la^\frac{1}{\om},\la^\frac{1}{n}}u,\;\;
a_{\la^\frac{1}{\om},\la^\frac{1}{n}}x=\la^\frac{1}{n}xa_{\la^\frac{1}{\om},\la^\frac{1}{n}},\;\;  \\
& a_{\la_1^\frac{1}{\om},\la_1^\frac{1}{n}}a_{\la_2^\frac{1}{\om},\la_2^\frac{1}{n}}
=a_{\la_2^\frac{1}{\om},\la_2^\frac{1}{n}}a_{\la_1^\frac{1}{\om},\la_1^\frac{1}{n}}
\end{align*}
as an algebra for $\la^\frac{1}{\om},\la^\frac{1}{n},\la_1^\frac{1}{\om},\la_1^\frac{1}{n},
\la_2^\frac{1}{\om},\la_2^\frac{1}{n}\in\k^\ast$. The comultiplication, counit and antipode on $\mathfrak{D}$ is defined by
\begin{align*}
& \Delta(u)=u\otimes u,\;\;\Delta(x)=u\otimes x+x\otimes 1,  \\
& \Delta(a_{\la^\frac{1}{\om},\la^\frac{1}{n}}^{\pm1})
=(1\otimes 1
+(1-\la^{\pm1})\sum_{k=1}^{n-1}
  \frac{1}{k!_\gamma(n-k)!_\gamma}x^{n-k}u^k\otimes x^k)
  (a_{\la^\frac{1}{\om},\la^\frac{1}{n}}^{\pm1}\otimes a_{\la^\frac{1}{\om},\la^\frac{1}{n}}^{\pm1})  \\
& \varepsilon(u)=1,\;\;\varepsilon(x)=0,\;\;
\varepsilon(a_{\la^\frac{1}{\om},\la^\frac{1}{n}}^{\pm1})=1,\;\;  \\
& S(u)=u^{n-1},\;\;S(x)=-u^{n-1}x,\;\;
S(a_{\la^\frac{1}{\om},\la^\frac{1}{n}}^{\pm1})=a_{\la^\frac{1}{\om},\la^\frac{1}{n}}^{\mp1}
\end{align*}
for $\la^\frac{1}{\om},\la^\frac{1}{n}\in\k^\ast$.
Denote by $\overline{\mathfrak{D}}$ the quotient Hopf algebra of $\mathfrak{D}$ modulo the ideal generated by
$$
a_{1,1}-1,\;\;u-a_{1,\gamma},\;\;
a_{\la_1^\frac{1}{\om},\la_1^\frac{1}{n}}a_{\la_2^\frac{1}{\om},\la_2^\frac{1}{n}}
-a_{\la_1^\frac{1}{\om}\la_2^\frac{1}{\om},\la_1^\frac{1}{n}\la_2^\frac{1}{n}}\;\;
(\la_1^\frac{1}{\om},\la_1^\frac{1}{n},\la_2^\frac{1}{\om},\la_2^\frac{1}{n}\in\k^\ast).
$$
It can be found that $\overline{\mathfrak{D}}^{\mathrm{op\;cop}}$ is isomorphic to the Hopf subalgebra of $B(n,\omega,\gamma)^\circ$ generated by
$\psi_{\la^\frac{1}{\om},\la^\frac{1}{n}},\;E_1\;(\la^\frac{1}{\om},\la^\frac{1}{n}\in\k^\ast)$ via the isomorphism
$$\psi_{\la^\frac{1}{\om},\la^\frac{1}{n}}\mapsto a_{\la^\frac{1}{\om},\la^\frac{1}{n}},\;\;
x\mapsto E_1\;\;(\la^\frac{1}{\om},\la^\frac{1}{n}\in\k^\ast).$$
}
\end{remark}

\section{The Finite Dual of $D(m,d,\xi)$}\label{section5}

We apply the same way used Sections \ref{section3} and \ref{section4} to give structure of $D(m,d,\xi)^\circ$ in this section.

\subsection{The Hopf Algebra $D_\circ(m,d,\xi)$}\label{subsection:D1}

Let $m$ and $d$ be positive integers such that $(1+m)d$ is even, and $\xi$ be a primitive $2m$th root of $1$. Define
$$\om:=md,\;\;\gamma:=\xi^2.$$
The Hopf algebra $D_\circ(m,d,\xi)$ is constructed as follows. As an algebra, it is generated by
$$\Zeta_{\la^\frac{1}{\om},\la^\frac{1}{m}},\;\Chi_{\la^\frac{1}{\om},\la^\frac{1}{m}},\;
  F_1,\;F_2,\;\;(\la^\frac{1}{\om},\la^\frac{1}{m}\in\k^\ast)$$
(where the parameters of $\Zeta$ and $\Chi$ are meant to range through
$\{(\alpha,\beta)\in\k^\ast\times\k^\ast\mid \alpha^\omega=\beta^m\}$)
with relations
\begin{align*}
& \Zeta_{\la_1^\frac{1}{\om},\la_1^\frac{1}{m}}\Zeta_{\la_2^\frac{1}{\om},\la_2^\frac{1}{m}}
  =\Zeta_{(\la_1^\frac{1}{\om}\la_2^\frac{1}{\om}),(\la_1^\frac{1}{m}\la_2^\frac{1}{m})},\;\;
  \Chi_{\la_1^\frac{1}{\om},\la_1^\frac{1}{m}}\Chi_{\la_2^\frac{1}{\om},\la_2^\frac{1}{m}}
  =\Chi_{(\la_1^\frac{1}{\om}\la_2^\frac{1}{\om}),(\la_1^\frac{1}{m}\la_2^\frac{1}{m})},  \\
& \Zeta_{\la_1^\frac{1}{\om},\la_1^\frac{1}{m}}\Chi_{\la_2^\frac{1}{\om},\la_2^\frac{1}{m}}
  =\Chi_{\la_1^\frac{1}{\om},\la_1^\frac{1}{m}}\Zeta_{\la_2^\frac{1}{\om},\la_2^\frac{1}{m}}=0,\;\;
  \Zeta_{1,1}+\Chi_{1,1}=1,\;\;
  F_1^m=\frac{1}{(1-\gamma)^m}\Chi_{1,1},  \\
& F_2\Zeta_{\la^\frac{1}{\om},\la^\frac{1}{m}}=\Zeta_{\la^\frac{1}{\om},\la^\frac{1}{m}}F_2,\;\;
  F_1\Zeta_{\la^\frac{1}{\om},\la^\frac{1}{m}}
  =\la^\frac{1}{m}\Zeta_{\la^\frac{1}{\om},\la^\frac{1}{m}}F_1,  \\
& F_2\Chi_{\la^\frac{1}{\om},\la^\frac{1}{m}}
  =\Chi_{\la^\frac{1}{\om},\la^\frac{1}{m}}F_2,\;\;
  F_1\Chi_{\la^\frac{1}{\om},\la^\frac{1}{m}}
  =\la^\frac{-d}{\om}\la^\frac{1}{m}\Chi_{\la^\frac{1}{\om},\la^\frac{1}{m}}F_1,  \\
& F_1F_2=F_2F_1+\frac{1}{m}\Zeta_{1,1}F_1
\end{align*}
for $\la^\frac{1}{\om},\la^\frac{1}{m},\la_1^\frac{1}{\om},\la_1^\frac{1}{m},
\la_2^\frac{1}{\om},\la_2^\frac{1}{m}\in\k^\ast$. Denote $F_1^{[k]}:=\frac{1}{k!_\gamma}F_1^k$ for $1\leq k\leq m-1$. The comultiplication is given by:
\begin{align*}
&\Delta(F_1)=1\otimes F_1+F_1\otimes(\Zeta_{1,\gamma}+\xi\Chi_{1,\gamma}),  \\
&\Delta(F_2)
  =(\Zeta_{1,1}-\Chi_{1,1})\otimes F_2+F_2\otimes 1
   -\sum_{k=1}^{m-1}
    (\Zeta_{1,1}-\Chi_{1,1})F_1^{[k]}\otimes(\Zeta_{1,\gamma}+\xi\Chi_{1,\gamma})^{-m+k}
    F_1^{[m-k]},\\
&\Delta(\Zeta_{\la^\frac{1}{\om},\la^\frac{d}{\om}})
= \Zeta_{\la^\frac{1}{\om},\la^\frac{d}{\om}}\otimes\Zeta_{\la^\frac{1}{\om},\la^\frac{d}{\om}}
    +(1-\la)\sum_{k=1}^{m-1}\Zeta_{\la^\frac{1}{\om},\la^\frac{d}{\om}}F_1^{[k]}
                \otimes\Zeta_{\la^\frac{1}{\om},\la^\frac{d}{\om}}\Zeta_{1,\gamma}^kF_1^{[m-k]}  \\
&\;\;\;\;\;\;\;\;\;\;\;\;\;\;\;\;\;\;\;\;\;\;+\la^\frac{(1-m)d/2}{\om}(1-\la)(
     \ta_0^{-1}\Chi_{\la^\frac{1}{\om},\la^\frac{d}{\om}}\otimes
     \Chi_{\la^\frac{-1}{\om},\la^\frac{-d}{\om}}\\
    &\;\;\;\;\;\;\;\;\;\;\;\;\;\;\;\;\;\;\;\;\;\;\;
    \;\;\;\;\;\;\;\;\;\;\;\;\;\;\;\;\;\;\;\;\;
    \;\;\;\;\;\;\;\;+\sum_{k=1}^{m-1}\ta_{m-k}^{-1}
      \Chi_{\la^\frac{1}{\om},\la^\frac{d}{\om}}F_1^{[k]}
      \otimes\Chi_{\la^\frac{-1}{\om},\la^\frac{-d}{\om}}\xi^k
      \Chi_{1,\gamma}^kF_1^{[m-k]})\\
&\Delta(\Chi_{\la^\frac{1}{\om},\la^\frac{d}{\om}})= \Zeta_{\la^\frac{1}{\om},\la^\frac{d}{\om}}\otimes\Chi_{\la^\frac{1}{\om},\la^\frac{d}{\om}}
      -\ta_0\sum_{k=1}^{m-1}\ta_1\cdots\ta_{k-1}
         \Zeta_{\la^\frac{1}{\om},\la^\frac{d}{\om}}E_1^{[k]}
           \otimes\Chi_{\la^\frac{1}{\om},\la^\frac{d}{\om}}\xi^k\Chi_{1,\gamma}^kF_1^{[m-k]}  \\& \;\;\;\;\;\;\;\;\;\;\;\;\;\;\;\;\;\;\;\;\;\; +\Chi_{\la^\frac{1}{\om},\la^\frac{d}{\om}}\otimes\Zeta_{\la^\frac{-1}{\om},\la^\frac{-d}{\om}}  \\
&\;\;\;\;\;\;\;\;\;\;\;\;\;\;\;\;\;\;\;\;\;\;  -\ta_0\sum_{k=1}^{m-1}\la^\frac{-(m-k)d}{\om}\ta_1\cdots\ta_{m-k-1}
           \Chi_{\la^\frac{1}{\om},\la^\frac{d}{\om}}E_1^{[k]}
             \otimes\Zeta_{\la^\frac{-1}{\om},\la^\frac{-d}{\om}}\Zeta_{1,\gamma}^kF_1^{[m-k]}.
\end{align*}
where $\la^\frac{1}{\om}\in\k^\ast$ and $\theta_0=\frac{1-\la^\frac{d}{\om}}{1/m},\;
  \theta_k=\frac{1-\gamma^k\la^\frac{d}{\om}}{1-\gamma^k}\;(1\leq k\leq m-1).$ Note that $\theta_0\theta_1\cdots\theta_{m-1}=1-\la$ holds (see \cite[Proposition IV.2.7]{Kas95}) and thus $(1-\la)\theta_k^{-1}$ for $0\leq k\leq m-1$ is well-defined.
The coproducts on $\Zeta_{\lambda^\frac{1}{\om},\lambda^\frac{1}{m}}$ and $\Chi_{\lambda^\frac{1}{\om},\lambda^\frac{1}{m}}$ for arbitrary $\lambda^\frac{1}{\om}$ and $\lambda^\frac{1}{m}$ are given by $$\Delta(\Zeta_{\lambda^\frac{1}{\om},\lambda^\frac{1}{m}})=\Delta(\Zeta_{\lambda^\frac{1}{\om},\lambda^\frac{d}{\om}})
\Delta(\Zeta_{1,\gamma}+\xi\Chi_{1,\gamma})^k,$$
$$\Delta(\Chi_{\lambda^\frac{1}{\om},\lambda^\frac{1}{m}})=\xi^{-k}\Delta(\Chi_{\lambda^\frac{1}{\om},\lambda^\frac{d}{\om}})
\Delta(\Zeta_{1,\gamma}+\xi\Chi_{1,\gamma})^k,$$ where $k$ is a non-negative integer such that $\la^\frac{1}{m}=\la^\frac{d}{\om}\gamma^k$ and $\Zeta_{1,\gamma}+\xi\Chi_{1,\gamma}$ is defined to be a group-like element.

The counit is given by
\begin{align*}
& \varepsilon(\Zeta_{\lambda^\frac{1}{\om},\lambda^\frac{1}{m}})=1,\;\;
\varepsilon(\Chi_{\lambda^\frac{1}{\om},\lambda^\frac{1}{m}})=0,\;\;
\varepsilon(F_1)=\varepsilon(F_2)=0.
\end{align*}
The antipode is given by
\begin{align*}
& S(F_1)=-\gamma^{-1}(\Zeta_{1,\gamma^{-1}}+\xi^{-1}\Chi_{1,\gamma^{-1}})F_1,  \\
& S(F_2)=-\Zeta_{1,1}F_2+\Chi_{1,1}F_2+\frac{1-m}{2m}\Chi_{1,1},  \\
& S(\Zeta_{\la^\frac{1}{\om},\la^\frac{1}{m}})=\Zeta_{\la^\frac{-1}{\om},\la^\frac{-1}{m}},  \\
& S(\Chi_{\la^\frac{1}{\om},\la^\frac{1}{m}})
  =\la^\frac{(1-m)d/2}{\om}\gamma^{-k}\Chi_{\la^\frac{1}{\om},\la^\frac{d}{\om}\gamma^{-k}}\;\;
   \text{when}\;\;\la^\frac{1}{m}=\la^\frac{d}{\om}\gamma^k.
\end{align*}

The fact that $D_\circ(m,d,\xi)$ is a Hopf algebra would be stated and proved in Subsection \ref{subsection:D3} as Theorem \ref{thm:D}(1). Let $\eta$ be an primitive $\om$th root of $1$. Here we note that
\begin{align*}
& \{\Zeta_{\la^\frac{1}{\om},\la^\frac{1}{m}}\Zeta_{\eta,1}^i\Zeta_{1,\gamma}^jF_2^sF_1^l,\;
    \Chi_{\la^\frac{1}{\om},\la^\frac{1}{m}}\Chi_{\eta,1}^i\Chi_{1,\gamma}^jF_2^sF_1^l
       \\
& \;\;\mid  \la^\frac{1}{\om},\la^\frac{1}{n}~\text{are fixed roots of}~\la\in\k^\ast,\;
    i\in\om,\;j\in m,\;s\in\N,\;l\in m\}
\end{align*}
is a linear basis, due to an application of the Diamond Lemma \cite{Ber78}.

\begin{remark}\label{rmk:cocyletrivial}
\emph{It is asked in \cite[Remarks 7.1(3)]{BCJ} whether the cocycle $\sigma$ non-trivial in the formula of \cite[Proposition 7.2(V)]{BCJ}:
$$D(m,d,\xi)^\circ=(\k C_2\#_\sigma T_f(m,1,\xi^2))\# (\k[f]\otimes\k(\k^\times)).$$
Our results can be used to find that $\sigma$ is trivial in fact.}
\end{remark}

\subsection{Certain Elements in $D(m,d,\xi)^\circ$}\label{subsection:D2}

Let $m,d$ be positive integers such that $(1+m)d$ is even and $\xi$ a primitive $2m$th root of unity. Define
$$\om:=md,\;\; \gamma:=\xi^2.$$
According to the definition of $D(m,d,\xi)$ recalled in Definition \ref{def:D}, we know that it has a linear basis $\{x^ig^jy^l,\;x^ig^ju_l\mid i\in\om,\;j\in\Z,\;l\in m\}$ by \cite[Lemma 3.3]{Wu16} and \cite[Equation 4.7]{WLD16}. Define following elements in $D(m,d,\xi)^\ast$:
\begin{align}\label{eqn:Dgenerator}
& \zeta_{\lambda^\frac{1}{\om},\lambda^\frac{1}{m}}:\left\{\begin{array}{ll}
    x^ig^jy^l\mapsto \delta_{l,0}\lambda^\frac{i}{\om}\lambda^\frac{j}{m}  \\  x^ig^ju_l\mapsto 0
  \end{array}\right.,\;\;
  \chi_{\lambda^\frac{1}{\om},\lambda^\frac{1}{m}}:\left\{\begin{array}{ll}
    x^ig^jy^l\mapsto 0  \\  x^ig^ju_l\mapsto \delta_{l,0}\lambda^\frac{i}{\om}\lambda^\frac{j}{m}
  \end{array}\right.,  \nonumber  \\
& E_1:\left\{\begin{array}{ll}
    x^ig^jy^l\mapsto \delta_{l,1}  \\ x^ig^ju_l\mapsto \frac{\xi}{1-\gamma^{-1}}\delta_{l,1}
  \end{array}\right.,\;\;
  E_2:\left\{\begin{array}{ll}
    x^ig^jy^l\mapsto \delta_{l,0}(\frac{i}{\om}+\frac{j}{m})  \\
    x^ig^ju_l\mapsto \delta_{l,0}(\frac{i}{\om}+\frac{j}{m})
  \end{array}\right.,
\end{align}
for any $i\in\om$, $j\in\Z$, $l\in m$ and $\la^\frac{1}{\om},\la^\frac{1}{m}\in\k^\ast$. We remark that these definitions make sense for all $i\in\Z$ as well, due to direct computations.

One can verify that $\zeta_{\la^\frac{1}{\om},\la^\frac{1}{m}}$ and $\chi_{\la^\frac{1}{\om},\la^\frac{1}{m}}$ both vanish on the principal left ideal $((g^m-\la)(g^m-\la^{-1}))$ for any $\la^\frac{1}{\om},\la^\frac{1}{m}\in\k^\ast$; $E_2$ vanishes on $((g^m-1)^2)$; $E_1$ vanishes on $(g^m-1)$. Therefore, these elements belong to $D(m,d,\xi)^\circ$.

\begin{lemma}\label{lem:Dalg}
Following equations hold in $D(m,d,\xi)^\circ$:
\begin{align*}
& \zeta_{\la_1^\frac{1}{\om},\la_1^\frac{1}{m}}\zeta_{\la_2^\frac{1}{\om},\la_2^\frac{1}{m}}
  =\zeta_{(\la_1^\frac{1}{\om}\la_2^\frac{1}{\om}),(\la_1^\frac{1}{m}\la_2^\frac{1}{m})},\;\;
  \chi_{\la_1^\frac{1}{\om},\la_1^\frac{1}{m}}\chi_{\la_2^\frac{1}{\om},\la_2^\frac{1}{m}}
  =\chi_{(\la_1^\frac{1}{\om}\la_2^\frac{1}{\om}),(\la_1^\frac{1}{m}\la_2^\frac{1}{m})},  \\
& \zeta_{\la_1^\frac{1}{\om},\la_1^\frac{1}{m}}\chi_{\la_2^\frac{1}{\om},\la_2^\frac{1}{m}}
  =\chi_{\la_1^\frac{1}{\om},\la_1^\frac{1}{m}}\zeta_{\la_2^\frac{1}{\om},\la_2^\frac{1}{m}}=0,\;\;
  \zeta_{1,1}+\chi_{1,1}=1,\;\;
  E_1^m=\frac{1}{(1-\gamma)^m}\chi_{1,1},  \\
& E_2\zeta_{\la^\frac{1}{\om},\la^\frac{1}{m}}=\zeta_{\la^\frac{1}{\om},\la^\frac{1}{m}}E_2,\;\;
  E_1\zeta_{\la^\frac{1}{\om},\la^\frac{1}{m}}
  =\la^\frac{1}{m}\zeta_{\la^\frac{1}{\om},\la^\frac{1}{m}}E_1,  \\
& E_2\chi_{\la^\frac{1}{\om},\la^\frac{1}{m}}
  =\chi_{\la^\frac{1}{\om},\la^\frac{1}{m}}E_2,\;\;
  E_1\chi_{\la^\frac{1}{\om},\la^\frac{1}{m}}
  =\la^\frac{-d}{\om}\la^\frac{1}{m}\chi_{\la^\frac{1}{\om},\la^\frac{1}{m}}E_1,  \\
& E_1E_2=E_2E_1+\frac{1}{m}\zeta_{1,1}E_1
\end{align*}
for all $\la^\frac{1}{\om},\la^\frac{1}{m},\la_1^\frac{1}{\om},\la_1^\frac{1}{m},
\la_2^\frac{1}{\om},\la_2^\frac{1}{m}\in\k^\ast$.
\end{lemma}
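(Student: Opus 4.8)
The plan is to verify each of the claimed identities in $D(m,d,\xi)^\circ$ by evaluating both sides on the linear basis $\{x^ig^jy^l,\;x^ig^ju_l\mid i\in\om,\;j\in\Z,\;l\in m\}$ of $D(m,d,\xi)$, exactly as was done in Lemmas \ref{lem:Talg} and \ref{lem:Balg}. The first step is to record the comultiplication formulas needed: using $\Delta(x)=x\otimes x$, $\Delta(g)=g\otimes g$, $\Delta(y)=1\otimes y+y\otimes g$ and $\Delta(u_i)=\sum_{j}\gamma^{j(i-j)}u_j\otimes x^{-jd}g^ju_{i-j}$ together with the commutation relations, one expands $\Delta(x^ig^jy^l)$ and $\Delta(x^ig^ju_l)$. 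For the $y$-part this is the usual $q$-binomial expansion $\sum_{k=0}^l\binom{l}{k}_\gamma x^ig^jy^k\otimes x^ig^{j+k}y^{l-k}$; for the $u$-part one gets $\Delta(x^ig^ju_l)=\sum_{k}\gamma^{k(l-k)}x^ig^ju_k\otimes x^{i-kd}g^{j+k}u_{l-k}$.

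**Key steps in order.** First I would settle the multiplicative relations among $\zeta,\chi$: since $\zeta_{\la^\frac{1}{\om},\la^\frac{1}{m}}$ is supported on the $y^0$-layer and $\chi_{\la^\frac{1}{\om},\la^\frac{1}{m}}$ on the $u_0$-layer, a product $fh$ evaluated at a basis element picks out the $\Delta$-components landing in the correct layers; the multiplicativity of $\la\mapsto\la^{i/\om}\la^{j/m}$ in each variable gives $\zeta_{\la_1}\zeta_{\la_2}=\zeta_{\la_1\la_2}$ and $\chi_{\la_1}\chi_{\la_2}=\chi_{\la_1\la_2}$, while $\zeta\chi=\chi\zeta=0$ follows because no component of $\Delta$ of a basis vector lies simultaneously in the $y^0$-row of one tensor factor and the $u_0$-row of the other (the $u_j\otimes\cdots u_{l-j}$ terms always carry a $u$ in \emph{both} slots, and the $y$-terms a $y$ in both). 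The identity $\zeta_{1,1}+\chi_{1,1}=1$ holds since on $x^ig^jy^l$ the left side is $\delta_{l,0}$ and on $x^ig^ju_l$ it is $\delta_{l,0}$, matching $\varepsilon$. Next, the commutation relations $E_2\zeta=\zeta E_2$, $E_1\zeta=\la^{1/m}\zeta E_1$, $E_2\chi=\chi E_2$, $E_1\chi=\la^{-d/\om}\la^{1/m}\chi E_1$ and $E_1E_2=E_2E_1+\frac1m\zeta_{1,1}E_1$ are checked by the same bookkeeping used for $B(n,\om,\gamma)$, now paying attention to the $u$-layer: on $x^ig^ju_l$, $E_1$ reads off $\frac{\xi}{1-\gamma^{-1}}\delta_{l,1}$ and the shift $g^j\mapsto g^{j+1}$ under $\Delta(y)$ versus $u_l$'s interaction with $g$ (the relation $u_ig=\gamma^ix^{-2d}gu_i$, equivalently $yu_i=\xi x^du_iy$) produces precisely the extra factor $\la^{-d/\om}$ relative to the $\zeta$ case. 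Finally, $E_1^m=\frac{1}{(1-\gamma)^m}\chi_{1,1}$: one shows by induction that $\langle E_1^k,x^ig^jy^l\rangle=\delta_{l,k}k!_\gamma$ and $\langle E_1^k,x^ig^ju_l\rangle=(\tfrac{\xi}{1-\gamma^{-1}})^k k!_\gamma\,\delta_{l,k}$ for $k<m$, and for $k=m$ the $y$-layer vanishes ($y^m=1-g^m$ has no $y^m$ term, so $l$ ranges only over $m$) while on the $u$-layer only $\langle E_1^m,x^ig^ju_0\rangle$ survives, equal to $(\tfrac{\xi}{1-\gamma^{-1}})^m m!_\gamma$; using $\xi^2=\gamma$, $\xi^m = \pm1$ appropriately, and the standard evaluation $m!_\gamma = \prod_{k=1}^{m-1}\frac{1-\gamma^k}{1-\gamma}$, one checks this equals $\frac{1}{(1-\gamma)^m}$, matching $\langle\chi_{1,1},x^ig^ju_0\rangle=1$.

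**Main obstacle.** The routine checks ($\zeta,\chi$ multiplicativity, the commutation relations) are mechanical. The genuinely delicate step is the normalization in $E_1^m=\frac{1}{(1-\gamma)^m}\chi_{1,1}$: one must pin down $\langle E_1^m, x^ig^ju_0\rangle$ exactly, which requires correctly iterating $\Delta(u_l)$, tracking the phases $\gamma^{k(l-k)}$ and $\xi$'s coming from each application of $E_1$ on the $u$-layer, and then simplifying the resulting product $(\tfrac{\xi}{1-\gamma^{-1}})^m\cdot m!_\gamma$ to $(1-\gamma)^{-m}$ via the identities $\xi^2=\gamma$ and $\prod_{k=1}^{m-1}(1-\gamma^k)=m$ (so that $m!_\gamma=m/(1-\gamma)^{m-1}$) together with $\xi^m\cdot(1-\gamma^{-1})^{-m}\cdot m = (1-\gamma)^{-m}$ after clearing $\gamma^{-1}=\xi^{-2}$. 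Getting every sign and root-of-unity factor right here is where the care is needed; everything else parallels the $T_\infty$ and $B$ cases verbatim.
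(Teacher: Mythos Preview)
Your overall strategy is exactly the paper's: evaluate both sides on the basis $\{x^ig^jy^l,\,x^ig^ju_l\}$ using the explicit formulas for $\Delta(x^ig^jy^l)$ and $\Delta(x^ig^ju_l)$. The checks for the $\zeta,\chi$ multiplicativity, the orthogonality $\zeta\chi=\chi\zeta=0$, the unit relation, and the commutation relations with $E_1,E_2$ all go through as you describe and parallel the paper line by line.

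There is, however, a concrete error in your treatment of $E_1^m$. Your inductive formula
\[
\langle E_1^k,\,x^ig^ju_l\rangle \;=\; \Bigl(\tfrac{\xi}{1-\gamma^{-1}}\Bigr)^k k!_\gamma\,\delta_{l,k}
\]
is incorrect: the coproduct $\Delta(u_l)=\sum_k\gamma^{k(l-k)}u_k\otimes x^{-kd}g^ku_{l-k}$ carries \emph{phase factors} $\gamma^{k(l-k)}$, not $q$-binomial coefficients as $\Delta(y^l)$ does. Iterating correctly (picking off $u_1$ in the left tensor slot at each step) gives
\[
\langle E_1^k,\,x^ig^ju_l\rangle \;=\; \delta_{l,k}\,\gamma^{(k-1)+(k-2)+\cdots+1}\Bigl(\tfrac{\xi}{1-\gamma^{-1}}\Bigr)^k \;=\; \delta_{l,k}\,\frac{\xi^{k^2}}{(1-\gamma^{-1})^k},
\]
which is what the paper obtains. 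Your formula already fails at $k=2$: it gives $\frac{\xi^2(1+\gamma)}{(1-\gamma^{-1})^2}$ rather than $\frac{\xi^4}{(1-\gamma^{-1})^2}$. Worse, your final value $(\tfrac{\xi}{1-\gamma^{-1}})^m m!_\gamma$ is literally zero, since $m!_\gamma=0$ when $\gamma$ is a primitive $m$th root of unity (your expression $\prod_{k=1}^{m-1}\frac{1-\gamma^k}{1-\gamma}$ is $(m-1)!_\gamma$, not $m!_\gamma$). With the correct phase one gets $\frac{\xi^{m^2}}{(1-\gamma^{-1})^m}=\frac{(-1)^m}{(1-\gamma^{-1})^m}=\frac{1}{(1-\gamma)^m}$ via $\xi^m=-1$ and $(1-\gamma^{-1})^m=(-1)^m(1-\gamma)^m$, which is the paper's one-line simplification. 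Fix the induction on the $u$-layer and the rest of your argument stands.
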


\begin{proof}
Note that
\begin{align*}
& \Delta(x^ig^jy^l)=\sum_{k=0}^l\binom{l}{k}_\gamma x^ig^jy^k\otimes x^ig^{j+k}y^{l-k},  \\
& \Delta(x^ig^ju_l)=\sum_{k=0}^{m-1}\gamma^{k(l-k)}x^ig^ju_k\otimes x^{i-kd}g^{j+k}u_{l-k}.
\end{align*}
We prove the lemma through checking their values on the basis
$x^ig^jy^l,\;x^ig^ju_l\;\;(i\in\om,\;j\in\Z,\;l\in m)$:
\begin{eqnarray*}
\langle\zeta_{\la_1^\frac{1}{\om},\la_1^\frac{1}{m}}\zeta_{\la_2^\frac{1}{\om},\la_2^\frac{1}{m}},
       x^ig^jy^l\rangle
&=& \delta_{l,0}\langle\zeta_{\la_1^\frac{1}{\om},\la_1^\frac{1}{m}},x^ig^j\rangle
    \langle\zeta_{\la_2^\frac{1}{\om},\la_2^\frac{1}{m}},x^ig^j\rangle
~=~ \delta_{l,0}\la_1^\frac{i}{\om}\la_2^\frac{i}{\om}\la_1^\frac{j}{m}\la_2^\frac{j}{m}  \\
&=& \langle\zeta_{\la_1^\frac{1}{\om}\la_2^\frac{1}{\om},\la_1^\frac{1}{m}\la_2^\frac{1}{m}},
           x^ig^jy^l\rangle,  \\
\langle\zeta_{\la_1^\frac{1}{\om},\la_1^\frac{1}{m}}\zeta_{\la_2^\frac{1}{\om},\la_2^\frac{1}{m}},
       x^ig^ju_l\rangle
&=& 0
~=~ \langle\zeta_{\la_1^\frac{1}{\om}\la_2^\frac{1}{\om},\la_1^\frac{1}{m}\la_2^\frac{1}{m}},
           x^ig^ju_l\rangle,  \\
\langle\chi_{\la_1^\frac{1}{\om},\la_1^\frac{1}{m}}\chi_{\la_2^\frac{1}{\om},\la_2^\frac{1}{m}},
       x^ig^jy^l\rangle
&=& 0
~=~ \langle\chi_{\la_1^\frac{1}{\om}\la_2^\frac{1}{\om},\la_1^\frac{1}{m}\la_2^\frac{1}{m}},
           x^ig^jy^l\rangle,  \\
\langle\chi_{\la_1^\frac{1}{\om},\la_1^\frac{1}{m}}\chi_{\la_2^\frac{1}{\om},\la_2^\frac{1}{m}},
       x^ig^ju_l\rangle
&=& \delta_{l,0}\langle\chi_{\la_1^\frac{1}{\om},\la_1^\frac{1}{m}},x^ig^ju_0\rangle
    \langle\chi_{\la_2^\frac{1}{\om},\la_2^\frac{1}{m}},x^ig^ju_0\rangle
~=~ \delta_{l,0}\la_1^\frac{i}{\om}\la_2^\frac{i}{\om}\la_1^\frac{j}{m}\la_2^\frac{j}{m}  \\
&=& \langle\zeta_{\la_1^\frac{1}{\om}\la_2^\frac{1}{\om},\la_1^\frac{1}{m}\la_2^\frac{1}{m}},
           x^ig^ju_l\rangle,  \\
\langle\zeta_{\la_1^\frac{1}{\om},\la_1^\frac{1}{m}}\chi_{\la_2^\frac{1}{\om},\la_2^\frac{1}{m}},
       x^ig^jy^l\rangle
&=& \langle\zeta_{\la_1^\frac{1}{\om},\la_1^\frac{1}{m}}\chi_{\la_2^\frac{1}{\om},\la_2^\frac{1}{m}},
       x^ig^ju_0\rangle
~=~ 0,  \\
\langle\chi_{\la_1^\frac{1}{\om},\la_1^\frac{1}{m}}\zeta_{\la_2^\frac{1}{\om},\la_2^\frac{1}{m}},
       x^ig^jy^l\rangle
&=& \langle\chi_{\la_1^\frac{1}{\om},\la_1^\frac{1}{m}}\zeta_{\la_2^\frac{1}{\om},\la_2^\frac{1}{m}},
       x^ig^ju_0\rangle
~=~ 0,  \\
\langle\zeta_{1,1}+\chi_{1,1},x^ig^jy^l\rangle
&=& \delta_{l,0}1^i1^j
~=~ \langle\varepsilon,x^ig^jy^l\rangle,  \\
\langle\zeta_{1,1}+\chi_{1,1},x^ig^ju_l\rangle
&=& \delta_{l,0}1^i1^j
~=~ \langle\varepsilon,x^ig^ju_l\rangle,  \\
\langle E_2\zeta_{\la^\frac{1}{\om},\la^\frac{1}{m}},x^ig^jy^l\rangle
&=& \delta_{l,0}\langle E_2,x^ig^j\rangle\langle\zeta_{\la^\frac{1}{\om},\la^\frac{1}{m}},x^ig^j\rangle
~=~ \delta_{l,0}(\frac{i}{\om}+\frac{j}{m})\la^\frac{i}{\om}\la^\frac{j}{m}  \\
&=& \delta_{l,0}\langle\zeta_{\la^\frac{1}{\om},\la^\frac{1}{m}},x^ig^j\rangle\langle E_2,x^ig^j\rangle
~=~ \langle\zeta_{\la^\frac{1}{\om},\la^\frac{1}{m}}E_2,x^ig^jy^l\rangle,  \\
\langle E_2\zeta_{\la^\frac{1}{\om},\la^\frac{1}{m}},x^ig^ju_l\rangle
&=& 0
~=~ \langle\zeta_{\la^\frac{1}{\om},\la^\frac{1}{m}}E_2,x^ig^ju_l\rangle,  \\
\langle E_2\chi_{\la^\frac{1}{\om},\la^\frac{1}{m}},x^ig^jy^l\rangle
&=& 0
~=~ \langle\chi_{\la^\frac{1}{\om},\la^\frac{1}{m}}E_2,x^ig^jy^l\rangle,  \\
\langle E_2\chi_{\la^\frac{1}{\om},\la^\frac{1}{m}},x^ig^ju_l\rangle
&=& \delta_{l,0}\langle E_2,x^ig^ju_0\rangle
    \langle\zeta_{\la^\frac{1}{\om},\la^\frac{1}{m}},x^ig^ju_0\rangle
~=~ \delta_{l,0}(\frac{i}{\om}+\frac{j}{m})\la^\frac{i}{\om}\la^\frac{j}{m}  \\
&=& \delta_{l,0}\langle\zeta_{\la^\frac{1}{\om},\la^\frac{1}{m}},x^ig^ju_0\rangle
    \langle E_2,x^ig^ju_0\rangle
~=~ \langle\zeta_{\la^\frac{1}{\om},\la^\frac{1}{m}}E_2,x^ig^ju_l\rangle,  \\
\langle E_1\zeta_{\la^\frac{1}{\om},\la^\frac{1}{m}},x^ig^jy^l\rangle
&=& \delta_{l,1}\langle E_1,x^ig^jy\rangle
    \langle\zeta_{\la^\frac{1}{\om},\la^\frac{1}{m}},x^ig^{j+1}\rangle
~=~ \delta_{l,1}\la^\frac{i}{\om}\la^\frac{j+1}{m}  \\
&=& \la^\frac{1}{m}\delta_{l,1}\la^\frac{i}{\om}\la^\frac{j}{m}
~=~ \la^\frac{1}{m}\delta_{l,1}\langle\zeta_{\la^\frac{1}{\om},\la^\frac{1}{m}},x^ig^j\rangle
    \langle E_1,x^ig^jy\rangle  \\
&=& \langle\la^\frac{1}{m}\zeta_{\la^\frac{1}{\om},\la^\frac{1}{m}}E_1,x^ig^jy^l\rangle,  \\
\langle E_1\zeta_{\la^\frac{1}{\om},\la^\frac{1}{m}},x^ig^ju_l\rangle
&=& 0
~=~ \langle \la^\frac{1}{m}\zeta_{\la^\frac{1}{\om},\la^\frac{1}{m}}E_1,x^ig^ju_l\rangle,  \\
\langle E_1\chi_{\la^\frac{1}{\om},\la^\frac{1}{m}},x^ig^jy^l\rangle
&=& 0
~=~ \langle\la^\frac{-d}{\om}\la^\frac{1}{m}\chi_{\la^\frac{1}{\om},\la^\frac{1}{m}}E_1,
           x^ig^jy^l\rangle, \\
\langle E_1\chi_{\la^\frac{1}{\om},\la^\frac{1}{m}},x^ig^ju_l\rangle
&=& \delta_{l,1}\langle E_1,x^ig^ju_1\rangle
    \langle\chi_{\la^\frac{1}{\om},\la^\frac{1}{m}},x^{i-d}g^{j+1}u_0\rangle  \\
&=& \delta_{l,1}\frac{\xi}{1-\gamma^{-1}}\la^\frac{i-d}{\om}\la^\frac{j+1}{m}  \\
&=& \la^\frac{-d}{\om}\la^\frac{1}{m}
    \delta_{l,1}\la^\frac{i}{\om}\la^\frac{j}{m}\frac{\xi}{1-\gamma^{-1}}  \\
&=& \la^\frac{-d}{\om}\la^\frac{1}{m}\delta_{l,1}
    \langle\chi_{\la^\frac{1}{\om},\la^\frac{1}{m}},x^ig^ju_0\rangle\langle E_1,x^ig^ju_1\rangle  \\
&=& \langle\la^\frac{-d}{\om}\la^\frac{1}{m}\chi_{\la^\frac{1}{\om},\la^\frac{1}{m}}E_1,
           x^ig^ju_l\rangle, \\
\langle E_1E_2,x^ig^jy^l\rangle
&=& \delta_{l,1}\langle E_1,x^ig^jy\rangle\langle E_2,x^ig^{j+1}\rangle
~=~ \delta_{l,1}(\frac{i}{\om}+\frac{j+1}{m})  \\
&=& \delta_{l,1}(\frac{i}{\om}+\frac{j}{m})+\delta_{l,1}\frac{1}{m}  \\
&=& \delta_{l,1}\langle E_2,x^ig^j\rangle\langle E_1,x^ig^jy\rangle
    +\delta_{l,1}\frac{1}{m}\langle E_1,x^ig^jy\rangle  \\
&=& \langle E_2E_1+\frac{1}{m}E_1,x^ig^jy^l\rangle
~=~ \langle E_2E_1+\frac{1}{m}\zeta_{1,1}E_1,x^ig^jy^l\rangle,  \\
\langle E_1E_2,x^ig^ju_l\rangle
&=& \delta_{l,1}\langle E_1,x^ig^ju_1\rangle\langle E_2,x^{i-d}g^{j+1}u_0\rangle  \\
&=& \delta_{l,1}\frac{\xi}{1-\gamma^{-1}}(\frac{i-d}{\om}+\frac{j+1}{m})  \\
&=& \delta_{l,1}(\frac{i}{\om}+\frac{j}{m})\frac{\xi}{1-\gamma^{-1}}
~=~ \delta_{l,1}\langle E_2,x^ig^ju_0\rangle\langle E_1,x^ig^ju_1\rangle  \\
&=& \langle E_2E_1,x^ig^ju_l\rangle
~=~ \langle E_2E_1+\frac{1}{m}\zeta_{1,1}E_1,x^ig^ju_l\rangle,
\end{eqnarray*}
Similarly to the case in Lemmas \ref{lem:Talg} and \ref{lem:Balg}, it can be found by induction on $k\in n$ that $\langle E_1^k,x^ig^jy^l\rangle=\delta_{l,k}k!_\gamma$ and
\begin{eqnarray*}
\langle E_1^k,x^ig^ju_l\rangle
&=& \delta_{l,k}\gamma^{k-1}\langle E_1,x^ig^ju_1\rangle\langle E_1^{k-1},x^{i-d}g^{j+1}u_{k-1}\rangle  \\  &=& \delta_{l,k}\gamma^{k-1}\frac{\xi}{1-\gamma^{-1}}\langle E_1^{k-1},x^{i-d}g^{j+1}u_{k-1}\rangle  \\
&=& \delta_{l,k}\gamma^{(k-1)+(k-2)}(\frac{\xi}{1-\gamma^{-1}})^2
    \langle E_1^{k-2},x^{i-2d}g^{j+2}u_{k-2}\rangle
~=~ \cdots  \\
&=& \delta_{l,k}\gamma^{(k-1)+(k-2)+\cdots+1}(\frac{\xi}{1-\gamma^{-1}})^{k-1}
    \langle E_1,x^{i-(k-1)d}g^{j+(k-1)}u_1\rangle  \\
&=& \delta_{l,k}\gamma^{(k-1)+(k-2)+\cdots+1}(\frac{\xi}{1-\gamma^{-1}})^k
~=~ \delta_{l,k}\frac{\xi^{k^2}}{(1-\gamma^{-1})^k}.
\end{eqnarray*}
Thus $E_1^m=\frac{\xi^{m^2}}{(1-\gamma^{-1})^m}\chi_{1,1}=\frac{(-1)^m}{(1-\gamma^{-1})^m}\chi_{1,1}
=\frac{1}{(1-\gamma)^m}\chi_{1,1}$.
\end{proof}

It can be found in the proof above that
$$E_1^{[k]}:=\frac{1}{k!_\gamma}E_1^k:\left\{\begin{array}{ll}
    x^ig^jy^l\mapsto \delta_{l,k}  \\
    x^ig^ju_l\mapsto \frac{1}{k!_\gamma}\frac{\xi^{k^2}}{(1-\gamma^{-1})^k}\delta_{l,k}
                     =\frac{\xi^k}{(1-\gamma^{-1})(1-\gamma^{-2})\cdots(1-\gamma^{-k})}\delta_{l,k}
  \end{array}\right..$$
Similarly to Equation (\ref{eqn:Bmore}), we can also verify that for any $\la^\frac{1}{\om},\la^\frac{1}{m}\in\k^\ast$ and $k\in m$,
\begin{eqnarray*}
\zeta_{\la^\frac{1}{\om},\la^\frac{1}{m}}E_1^{[k]} &:&
  \left\{\begin{array}{ll}
    x^ig^jy^l\mapsto \delta_{l,k}\la^\frac{i}{\om}\la^\frac{j}{m}  \\
    x^ig^ju_l\mapsto 0
  \end{array}\right.,\;  \\
\chi_{\la^\frac{1}{\om},\la^\frac{1}{m}}E_1^{[k]} &:&
  \left\{\begin{array}{ll}
    x^ig^jy^l\mapsto 0  \\
    x^ig^ju_l\mapsto \frac{\xi^k}{(1-\gamma^{-1})(1-\gamma^{-2})\cdots(1-\gamma^{-k})}
                     \delta_{l,k}\la^\frac{i}{\om}\la^\frac{j}{m}
  \end{array}\right..
\end{eqnarray*}

\begin{lemma}
Following equations hold in $D(m,d,\xi)^\circ$:
\begin{align*}
& \Delta(E_1)=1\otimes E_1+E_1\otimes(\zeta_{1,\gamma}+\xi\chi_{1,\gamma}),  \\
& \Delta(E_2)
  =(\zeta_{1,1}-\chi_{1,1})\otimes E_2+E_2\otimes 1
   -\sum_{k=1}^{m-1}
    (\zeta_{1,1}-\chi_{1,1})E_1^{[k]}\otimes(\zeta_{1,\gamma}+\xi\chi_{1,\gamma})^{-m+k}E_1^{[m-k]},  \\
& \varepsilon(E_1)=\varepsilon(E_2)=0,
\end{align*}
where $E_1^{[k]}:=\frac{1}{k!_\gamma}E_1^k$ for $1\leq k\leq m-1$. We remark that
$\zeta_{1,1}-\chi_{1,1}=(\zeta_{1,\gamma}+\xi\chi_{1,\gamma})^m$.
\end{lemma}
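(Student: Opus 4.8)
The plan is to verify every stated identity by the same device used for $T_\infty(n,v,\xi)$ and $B(n,\om,\gamma)$: evaluate both sides against $a\otimes b$, where $a,b$ range over the PBW-type basis $\{x^ig^jy^l,\ x^ig^ju_l\mid i\in\om,\ j\in\Z,\ l\in m\}$ of $D(m,d,\xi)$, and use the adjunction $\langle\Delta(f),a\otimes b\rangle=\langle f,ab\rangle$ in the finite dual. First I would record the multiplication rules of $D(m,d,\xi)$ on basis elements, namely the four products $(x^ig^jy^l)(x^{i'}g^{j'}y^{l'})$, $(x^ig^jy^l)(x^{i'}g^{j'}u_{l'})$, $(x^ig^ju_l)(x^{i'}g^{j'}y^{l'})$ and $(x^ig^ju_l)(x^{i'}g^{j'}u_{l'})$, obtained from $yg=\gamma gy$, $u_ig=\gamma^ix^{-2d}gu_i$, $yu_i=\xi x^du_iy$, $u_ix=x^{-1}u_i$, the given piecewise formula for $u_iu_j$, and the relation $y^m=1-x^\om=1-g^m$ used to reduce any power $y^{l+l'}$ with $l+l'\geq m$; since $l,l'\in m$ at most one such reduction is ever needed, and, exactly as in the $B(n,\om,\gamma)$ computation, that extra term contributes $0$ to the pairing with $E_1$.

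For $\Delta(E_1)$ I would compute $\langle E_1,ab\rangle$ in each of the four cases, keeping only the $y^1$- and $u_1$-components of $ab$ (as $E_1$ vanishes elsewhere), and match with
$$\langle 1\otimes E_1+E_1\otimes(\zeta_{1,\gamma}+\xi\chi_{1,\gamma}),a\otimes b\rangle
=\varepsilon(a)\langle E_1,b\rangle+\langle E_1,a\rangle\langle\zeta_{1,\gamma}+\xi\chi_{1,\gamma},b\rangle,$$
using $\varepsilon(x^ig^jy^l)=\varepsilon(x^ig^ju_l)=\delta_{l,0}$, the values $\langle\zeta_{1,\gamma}+\xi\chi_{1,\gamma},x^ig^jy^l\rangle=\delta_{l,0}\gamma^j$ and $\langle\zeta_{1,\gamma}+\xi\chi_{1,\gamma},x^ig^ju_l\rangle=\delta_{l,0}\xi\gamma^j$, and the explicit value $E_1^{[k]}(x^ig^ju_l)=\frac{\xi^k}{(1-\gamma^{-1})\cdots(1-\gamma^{-k})}\delta_{l,k}$ recorded right before the lemma. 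For $\Delta(E_2)$ I would run the same argument on the $y^0$- and $u_0$-components of $ab$, now comparing against
$$\langle(\zeta_{1,1}-\chi_{1,1})\otimes E_2+E_2\otimes 1-\textstyle\sum_{k=1}^{m-1}(\zeta_{1,1}-\chi_{1,1})E_1^{[k]}\otimes(\zeta_{1,\gamma}+\xi\chi_{1,\gamma})^{-m+k}E_1^{[m-k]},\ a\otimes b\rangle,$$
where the twist $\langle\zeta_{1,1}-\chi_{1,1},x^ig^jy^l\rangle=\delta_{l,0}$ but $\langle\zeta_{1,1}-\chi_{1,1},x^ig^ju_l\rangle=-\delta_{l,0}$ is essential. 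The counit values $\varepsilon(E_1)=\varepsilon(E_2)=0$ are immediate from \eqref{eqn:Dgenerator}, and the remark $\zeta_{1,1}-\chi_{1,1}=(\zeta_{1,\gamma}+\xi\chi_{1,\gamma})^m$ follows at once from $\gamma^m=1$ and $\xi^m=-1$.

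I expect the $u\cdot u$ case of $\Delta(E_2)$ to be the main obstacle. The product $u_iu_j$ is piecewise in whether $i+j$ is $\leq m-2$, $=m-1$, or $\geq m$, carries an extra factor $g$, powers of $\xi$ and $\gamma$, and a product of the $\phi$'s, where $\phi_i=1-\gamma^{-i-1}x^d$ mixes $x$-exponents differing by $d$; since $E_2$ depends additively on the $x$-exponent (through $i/\om$, with $d/\om=1/m$) while $E_1^{[k]}$ does not, these $x$-shifts must be tracked carefully. The factor $g$ appearing in $u_iu_j$, combined with $\xi^m=-1$, is precisely the source of the sign in $\zeta_{1,1}-\chi_{1,1}$; and since $\zeta_{1,1}-\chi_{1,1}$ is an involution ($\zeta_{1,1}^2=\zeta_{1,1}$, $\chi_{1,1}^2=\chi_{1,1}$, $\zeta_{1,1}\chi_{1,1}=0$, $\zeta_{1,1}+\chi_{1,1}=1$), one has $(\zeta_{1,\gamma}+\xi\chi_{1,\gamma})^{-m+k}=(\zeta_{1,1}-\chi_{1,1})(\zeta_{1,\gamma}+\xi\chi_{1,\gamma})^k$, which is the form in which it enters the matching. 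The $y\cdot y$, $y\cdot u$ and $u\cdot y$ cases are routine and parallel the $B(n,\om,\gamma)$ computation almost verbatim.
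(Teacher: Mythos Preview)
Your proposal is correct and follows essentially the same approach as the paper's own proof: the paper likewise records the four products $(x^ig^jy^l)(x^{i'}g^{j'}y^{l'})$, $(x^ig^ju_l)(x^{i'}g^{j'}u_{l'})$, $(x^ig^jy^l)(x^{i'}g^{j'}u_{l'})$, $(x^ig^ju_l)(x^{i'}g^{j'}y^{l'})$ and then checks $\langle E_1,ab\rangle$ and $\langle E_2,ab\rangle$ in each case against the right-hand sides, with the $u\cdot u$ case for $E_2$ indeed being the lengthiest computation. Your anticipation that the extra $g$ in $u_iu_j$ together with $\xi^m=-1$ produces the sign in $\zeta_{1,1}-\chi_{1,1}$, and your rewriting $(\zeta_{1,\gamma}+\xi\chi_{1,\gamma})^{-m+k}=(\zeta_{1,1}-\chi_{1,1})(\zeta_{1,\gamma}+\xi\chi_{1,\gamma})^k$, match the paper's manipulations exactly.
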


\begin{proof}
Note that for each $i,i'\in\om$, $j,j'\in\Z$ and $l,l'\in m$,
\begin{eqnarray*}
(x^ig^jy^l)(x^{i'}g^{j'}y^{l'}) &=& \gamma^{j'l}x^{i+i'}g^{j+j'}y^{l+l'},  \\
(x^ig^ju_l)(x^{i'}g^{j'}u_{l'}) &=& \gamma^{j'l}x^{i-i'-2dj'}g^{j+j'}u_lu_{l'},  \\
(x^ig^jy^l)(x^{i'}g^{j'}u_{l'})
&=& \gamma^{j'l}x^{i+i'}\phi_{l'}\phi_{l'+1}\cdots\phi_{l'+l-1}g^{j+j'}u_{l+l'},  \\
(x^ig^ju_l)(x^{i'}g^{j'}y^{l'})
&=& \xi^{-l'}\gamma^{j'l}x^{i-i'-2dj'-dl'}\phi_l\phi_{l+1}\cdots\phi_{l+l'-1}g^{j+j'}u_{l+l'}.
\end{eqnarray*}
We also prove the lemma by through checking their values on the basis.
\begin{eqnarray*}
&& \langle E_1,(x^ig^jy^l)(x^{i'}g^{j'}y^{l'})\rangle  \\
&=& \langle E_1,\gamma^{j'l}x^{i+i'}g^{j+j'}y^{l+l'}\rangle  \\
&=& \delta_{l+l',1}\gamma^{j'l}\langle E_1,x^{i+i'}g^{j+j'}y\rangle
    +\delta_{l+l',m+1}\gamma^{j'l}\langle E_1,x^{i+i'}g^{j+j'}y^{m+1}\rangle  \\
&=& \delta_{l+l',1}\gamma^{j'l}
~=~ \delta_{l,0}\delta_{l',1}+\delta_{l,1}\delta_{l',0}\gamma^{j'},  \\
&=& \langle\zeta_{1,1}\otimes \zeta_{1,1}E_1+\zeta_{1,1}E_1\otimes\zeta_{1,\gamma},
           x^ig^jy^l\otimes x^{i'}g^{j'}y^{l'}\rangle,  \\
&& \langle E_1,(x^ig^ju_l)(x^{i'}g^{j'}u_{l'})\rangle  \\
&=& \langle E_1,\gamma^{j'l}x^{i-i'-2dj'}g^{j+j'}u_lu_{l'}\rangle
~=~ \delta_{l+l',1}\gamma^{j'l}\langle E_1,x^{i-i'-2dj'}g^{j+j'}u_lu_{l'}\rangle  \\
&=& \delta_{l,0}\delta_{l',1}\langle E_1,x^{i-i'-2dj'}g^{j+j'}u_0u_1\rangle
    +\delta_{l,1}\delta_{l',0}\gamma^{j'}\langle E_1,x^{i-i'-2dj'}g^{j+j'}u_1u_0\rangle  \\
&=& \delta_{l,0}\delta_{l',1}\langle E_1,u_0u_1\rangle
    +\delta_{l,1}\delta_{l',0}\gamma^{j'}\langle E_1,u_1u_0\rangle  \\
&=& \delta_{l,0}\delta_{l',1}\langle E_1,\frac{-\xi}{m}\phi_0\phi_1\cdots\phi_{m-3}yg\rangle
    +\delta_{l,1}\delta_{l',0}\gamma^{j'}\langle E_1,\frac{1}{m}\phi_1\phi_2\cdots\phi_{m-2}yg\rangle  \\
&=& \delta_{l,0}\delta_{l',1}\frac{-\xi\gamma}{m}
      (1-\gamma^{-1})(1-\gamma^{-2})\cdots(1-\gamma^{-(m-2)})  \\
& & +\delta_{l,1}\delta_{l',0}\gamma^{j'}\frac{\gamma}{m}
      (1-\gamma^{-2})(1-\gamma^{-3})\cdots(1-\gamma^{-(m-1)})  \\
&=& \delta_{l,0}\delta_{l',1}\frac{-\xi\gamma}{1-\gamma}
    +\delta_{l,1}\delta_{l',0}\gamma^{j'}\frac{\gamma}{1-\gamma^{-1}}  \\
&=& \delta_{l,0}(\frac{\xi}{1-\gamma^{-1}}\delta_{l',1})
    +\xi(\frac{\xi}{1-\gamma^{-1}}\delta_{l,1})(\delta_{l',0}\gamma^{j'})  \\
&=& \langle\chi_{1,1}\otimes \chi_{1,1}E_1+\chi_{1,1}E_1\otimes\xi\chi_{1,\gamma},
           x^ig^ju_l\otimes x^{i'}g^{j'}u_{l'}\rangle,  \\
&& \langle E_1,(x^ig^jy^l)(x^{i'}g^{j'}u_{l'})\rangle  \\
&=& \langle E_1,\gamma^{j'l}x^{i+i'}\phi_{l'}\phi_{l'+1}\cdots\phi_{l'+l-1}g^{j+j'}u_{l+l'}\rangle  \\
&=& \delta_{l,0}\delta_{l',1}\langle E_1,x^{i+i'}g^{j+j'}u_1\rangle
    +\delta_{l,1}\delta_{l',0}\gamma^{j'}\langle E_1,x^{i+i'}\phi_0g^{j+j'}u_1\rangle  \\
&=& \delta_{l,0}\delta_{l',1}\langle E_1,u_1\rangle
    +\delta_{l,1}\delta_{l',0}\gamma^{j'}\langle E_1,\phi_0u_1\rangle  \\
&=& \delta_{l,0}\delta_{l',1}\frac{\xi}{1-\gamma^{-1}}
    +\delta_{l,1}\delta_{l',0}\gamma^{j'}(1-\gamma^{-1})\frac{\xi}{1-\gamma^{-1}}  \\
&=& \delta_{l,0}(\frac{\xi}{1-\gamma^{-1}}\delta_{l',1})
    +\xi\delta_{l,1}(\delta_{l',0}\gamma^{j'})  \\
&=& \langle\zeta_{1,1}\otimes \chi_{1,1}E_1+\zeta_{1,1}E_1\otimes\xi\chi_{1,\gamma},
           x^ig^jy^l\otimes x^{i'}g^{j'}u_{l'}\rangle,  \\
&& \langle E_1,(x^ig^ju_l)(x^{i'}g^{j'}y^{l'})\rangle  \\
&=& \langle E_1,\xi^{-l'}\gamma^{j'l}x^{i-i'-2dj'-dl'}
                \phi_l\phi_{l+1}\cdots\phi_{l+l'-1}g^{j+j'}u_{l+l'}\rangle  \\
&=& \delta_{l,0}\delta_{l',1}\xi^{-1}\langle E_1,x^{i-i'-2dj'-d}\phi_0g^{j+j'}u_1\rangle
    +\delta_{l,1}\delta_{l',0}\gamma^{j'}\langle E_1,x^{i-i'-2dj'}g^{j+j'}u_1\rangle  \\
&=& \delta_{l,0}\delta_{l',1}\xi^{-1}\langle E_1,\phi_0u_1\rangle
    +\delta_{l,1}\delta_{l',0}\gamma^{j'}\langle E_1,u_1\rangle  \\
&=& \delta_{l,0}\delta_{l',1}\xi^{-1}(1-\gamma^{-1})\frac{\xi}{1-\gamma^{-1}}
    +\delta_{l,1}\delta_{l',0}\gamma^{j'}\frac{\xi}{1-\gamma^{-1}}  \\
&=& \delta_{l,0}\delta_{l',1}+(\frac{\xi}{1-\gamma^{-1}}\delta_{l,1})(\delta_{l',0}\gamma^{j'})  \\
&=& \langle\chi_{1,1}\otimes\zeta_{1,1}E_1+\chi_{1,1}E_1\otimes\zeta_{1,\gamma},
           x^ig^ju_l\otimes x^{i'}g^{j'}y^{l'}\rangle.
\end{eqnarray*}
It can be concluded that $\Delta(E_1)=1\otimes E_1+E_1\otimes(\zeta_{1,\gamma}+\xi\chi_{1,\gamma})$.
\begin{eqnarray*}
&& \langle E_2,(x^ig^jy^l)(x^{i'}g^{j'}y^{l'})\rangle  \\
&=& \langle E_2,\gamma^{j'l}x^{i+i'}g^{j+j'}y^{l+l'}\rangle  \\
&=& \gamma^{j'l}(\delta_{l+l',0}\langle E_2,x^{i+i'}g^{j+j'}\rangle
    +\delta_{l+l',m}\langle E_2,x^{i+i'}g^{j+j'}-x^{i+i'}g^{j+j'+m}\rangle)  \\
&=& \delta_{l,0}\delta_{l',0}(\frac{i+i'}{\om}+\frac{j+j'}{m})  \\
& & +\delta_{l+l',m}\gamma^{j'l}(\frac{i+i'}{\om}+\frac{j+j'}{m}-\frac{i+i'}{\om}-\frac{j+j'+m}{m})  \\
&=& \delta_{l,0}\delta_{l',0}(\frac{i+i'}{\om}+\frac{j+j'}{m})
    +\sum_{k=1}^{m-1}\delta_{l,k}\delta_{l',m-k}\gamma^{j'k}(-1)  \\
&=& \delta_{l,0}\delta_{l',0}(\frac{i'}{\om}+\frac{j'}{m})
    +\delta_{l,0}(\frac{i}{\om}+\frac{j}{m})\delta_{l',0}
    -\sum_{k=1}^{m-1}\delta_{l,k}\delta_{l',m-k}\gamma^{j'k}  \\
&=& \langle\zeta_{1,1}\otimes\zeta_{1,1}E_2+\zeta_{1,1}E_2\otimes\zeta_{1,1}
    -\sum_{k=1}^{m-1}\zeta_{1,1}E_1^{[k]}\otimes\zeta_{1,\gamma}^kE_1^{[m-k]},
           x^ig^jy^l\otimes x^{i'}g^{j'}y^{l'}\rangle,  \\
&& \langle E_2,(x^ig^ju_l)(x^{i'}g^{j'}u_{l'})\rangle  \\
&=& \langle E_2,\gamma^{j'l}x^{i-i'-2dj'}g^{j+j'}u_lu_{l'}\rangle  \\
&=& \delta_{l,0}\delta_{l',0}
    \langle E_2,\frac{1}{m}x^{i-i'-2dj'-\frac{(1+m)}{2}d}\phi_0\phi_1\cdots\phi_{m-2}g^{j+j'+1}\rangle  \\
& & +\sum_{k=1}^{m-1}\delta_{l,k}\delta_{l',m-k}\gamma^{j'k}  \\
& &  \;\;\;\;\;\;\;\;\;\;\langle E_2,(-1)^{-m+k}\xi^{(m-k)^2}\frac{1}{m}x^{i-i'-2dj'-\frac{(1+m)}{2}d}
            \phi_k\cdots\phi_{m-1}\phi_0\cdots\phi_{k-2}g^{j+j'+1}\rangle  \\
&=& \delta_{l,0}\delta_{l',0}\frac{1}{m}
    [(\frac{i-i'-2dj'}{\om}-\frac{(1+m)}{2\om}d+\frac{j+j'+1}{m})m+\frac{m-1}{2}]  \\
& & +\sum_{k=1}^{m-1}\delta_{l,k}\delta_{l',m-k}\gamma^{j'k} (-1)^{-m+k}\xi^{(m-k)^2}\frac{1}{m}  \\
& & \;\;\;\;\;\;\;\;\;\;
         [-\frac{1}{m}(1-\gamma^{-k-1})\cdots(1-\gamma^{-m+1})(1-\gamma^{-1})\cdots(1-\gamma^{-k+1})]  \\
&=& \delta_{l,0}\delta_{l',0}(\frac{i-i'}{\om}+\frac{j-j'}{m})  \\
& & +\sum_{k=1}^{m-1}\delta_{l,k}\delta_{l',m-k}\gamma^{j'k} (-1)^{-m+k}\xi^{(m-k)^2}\frac{1}{m}
     (-\frac{1}{m}\frac{m}{1-\gamma^{-k}})  \\
&=& -\delta_{l,0}\delta_{l',0}(\frac{i'}{\om}+\frac{j'}{m})
    +\delta_{l,0}(\frac{i}{\om}+\frac{j}{m})\delta_{l',0}  \\
& & -\sum_{k=1}^{m-1}
     \frac{\xi^k}{(1-\gamma^{-1})\cdots(1-\gamma^{-k})}\delta_{l,k}
     \xi^k\frac{\xi^{m-k}}{(1-\gamma^{-1})\cdots(1-\gamma^{-(m-k)})}\delta_{l',m-k}\gamma^{j'k}  \\
&=& \langle-\chi_{1,1}\otimes\chi_{1,1}E_2+\chi_{1,1}E_2\otimes\chi_{1,1}
    -\sum_{k=1}^{m-1}\chi_{1,1}E_1^{[k]}\otimes\xi^k\chi_{1,\gamma}^kE_1^{[m-k]},
      x^ig^ju_l\otimes x^{i'}g^{j'}u_{l'}\rangle,  \\
&& \langle E_2,(x^ig^jy^l)(x^{i'}g^{j'}u_{l'})\rangle  \\
&=& \langle E_2,\gamma^{j'l}x^{i+i'}\phi_{l'}\phi_{l'+1}\cdots\phi_{l'+l-1}g^{j+j'}u_{l+l'}\rangle  \\
&=& \delta_{l,0}\delta_{l',0}\langle E_2,x^{i+i'}g^{j+j'}u_0\rangle  \\
& & +\sum_{k=1}^{m-1}\delta_{l,k}\delta_{l',m-k}\gamma^{j'k}
     \langle E_2,x^{i+i'}\phi_{m-k}\phi_{m-k+1}\cdots\phi_{m-1}g^{j+j'}u_0\rangle  \\
&=& \delta_{l,0}\delta_{l',0}(\frac{i+i'}{\om}+\frac{j+j'}{m})  \\
& & +\sum_{k=1}^{m-1}\delta_{l,k}\delta_{l',m-k}\gamma^{j'k}
     (-\frac{1}{m})(1-\gamma^{k-1})(1-\gamma^{k-2})\cdots(1-\gamma)  \\
&=& \delta_{l,0}\delta_{l',0}(\frac{i+i'}{\om}+\frac{j+j'}{m})  \\
& & -\sum_{k=1}^{m-1}\delta_{l,k}\delta_{l',m-k}\gamma^{j'k}\frac{1}{m}
     \frac{m}{(1-\gamma^{-1})(1-\gamma^{-2})\cdots(1-\gamma^{-(m-k)})}  \\
&=& \delta_{l,0}\delta_{l',0}(\frac{i'}{\om}+\frac{j'}{m})
    +\delta_{l,0}(\frac{i}{\om}+\frac{j}{m})\delta_{l',0}  \\
& & +\sum_{k=1}^{m-1}\xi^k\delta_{l,k}
    [\frac{\xi^{m-k}}{(1-\gamma^{-1})(1-\gamma^{-2})\cdots(1-\gamma^{-(m-k)})}\delta_{l',m-k}\gamma^{j'k}]\\
&=& \langle\zeta_{1,1}\otimes\chi_{1,1}E_2+\zeta_{1,1}E_2\otimes\chi_{1,1}
    +\sum_{k=1}^{m-1}\zeta_{1,1}E_1^{[k]}\otimes\xi^k\chi_{1,\gamma}^kE_1^{[m-k]},
      x^ig^jy^l\otimes x^{i'}g^{j'}u_{l'}\rangle,  \\
&& \langle E_2,(x^ig^ju_l)(x^{i'}g^{j'}y^{l'})\rangle  \\
&=& \langle E_2,
        \xi^{-l'}\gamma^{j'l}x^{i-i'-2dj'-dl'}\phi_l\phi_{l+1}\cdots\phi_{l+l'-1}g^{j+j'}u_{l+l'}\rangle \\
&=& \delta_{l,0}\delta_{l',0}\langle E_2,x^{i-i'-2dj'}g^{j+j'}u_0\rangle  \\
& & +\sum_{k=1}^{m-1}\delta_{l,k}\delta_{l',m-k}\xi^{-(m-k)}\gamma^{j'k}
     \langle E_2,x^{i-i'-2dj'-d(m-k)}\phi_k\phi_{k+1}\cdots\phi_{m-1}g^{j+j'}u_0\rangle  \\
&=& \delta_{l,0}\delta_{l',0}(\frac{i-i'-2dj'}{\om}+\frac{j+j'}{m})  \\
& & +\sum_{k=1}^{m-1}\delta_{l,k}\delta_{l',m-k}\xi^{-(m-k)}\gamma^{j'k}
     (-\frac{1}{m})(1-\gamma^{m-k-1})(1-\gamma^{m-k-2})\cdots(1-\gamma)  \\
&=& \delta_{l,0}\delta_{l',0}(\frac{i-i'}{\om}+\frac{j-j'}{m})  \\
& & -\sum_{k=1}^{m-1}\delta_{l,k}\delta_{l',m-k}\xi^{-(m-k)}\gamma^{j'k}\frac{1}{m}
     \frac{m}{(1-\gamma^{-1})(1-\gamma^{-2})\cdots(1-\gamma^{-k})}  \\
&=& -\delta_{l,0}\delta_{l',0}(\frac{i'}{\om}+\frac{j'}{m})
    +\delta_{l,0}(\frac{i}{\om}+\frac{j}{m})\delta_{l',0}  \\
& & +\sum_{k=1}^{m-1}
     [\frac{\xi^k}{(1-\gamma^{-1})(1-\gamma^{-2})\cdots(1-\gamma^{-k})}\delta_{l,k}]
     (\delta_{l',m-k}\gamma^{j'k})  \\
&=& \langle-\chi_{1,1}\otimes\zeta_{1,1}E_2+\chi_{1,1}E_2\otimes\zeta_{1,1}
           +\sum_{k=1}^{m-1}\chi_{1,1}E_1^{[k]}\otimes\zeta_{1,\gamma}^kE_1^{[m-k]},
             x^ig^ju_l\otimes x^{i'}g^{j'}y^{l'}\rangle.
\end{eqnarray*}
It can be concluded that
$$\Delta(E_2)=(\zeta_{1,1}-\chi_{1,1})\otimes E_2+E_2\otimes 1
  -\sum_{k=1}^{m-1}
   (\zeta_{1,1}-\chi_{1,1})E_1^{[k]}\otimes(\zeta_{1,\gamma}+\xi\chi_{1,\gamma})^{-m+k}E_1^{[m-k]}.$$

The counit is clear.
\end{proof}

\begin{lemma}\label{lem:D.comult}
Following equations hold in $D(m,d,\xi)^\circ$:
$$\Delta(\zeta_{1,\gamma}+\xi\chi_{1,\gamma})
  =(\zeta_{1,\gamma}+\xi\chi_{1,\gamma})\otimes(\zeta_{1,\gamma}+\xi\chi_{1,\gamma}),\;\;
  \varepsilon(\zeta_{1,\gamma}+\xi\chi_{1,\gamma})=1,$$
and
\begin{align*}
& \Delta(\zeta_{\la^\frac{1}{\om},\la^\frac{d}{\om}})
= \zeta_{\la^\frac{1}{\om},\la^\frac{d}{\om}}\otimes\zeta_{\la^\frac{1}{\om},\la^\frac{d}{\om}}
    +(1-\la)\sum_{k=1}^{m-1}\zeta_{\la^\frac{1}{\om},\la^\frac{d}{\om}}E_1^{[k]}
                \otimes\zeta_{\la^\frac{1}{\om},\la^\frac{d}{\om}}\zeta_{1,\gamma}^kE_1^{[m-k]}  \\
&  \;\;\;\;\;\;\;\;\;\;\;\;\;\;\;\;\;\;\;\;\;
   +\la^\frac{(1-m)d/2}{\om}(1-\la)
   (\ta_0^{-1}\chi_{\la^\frac{1}{\om},\la^\frac{d}{\om}}\otimes\chi_{\la^\frac{-1}{\om},\la^\frac{-d}{\om}}\\
&  \;\;\;\;\;\;\;\;\;\;\;\;\;\;\;\;\;\;\;\;\;\;\;\;\;\;\;\;\;\;
   \;\;\;\;\;\;\;\;\;\;\;\;\;\;\;\;\;\;\;\;\;\;
    +\sum_{k=1}^{m-1}\ta_{m-k}^{-1}
      \chi_{\la^\frac{1}{\om},\la^\frac{d}{\om}}E_1^{[k]}
      \otimes\chi_{\la^\frac{-1}{\om},\la^\frac{-d}{\om}}\xi^k\chi_{1,\gamma}^kE_1^{[m-k]})  \\
& \Delta(\chi_{\la^\frac{1}{\om},\la^\frac{d}{\om}})
= \zeta_{\la^\frac{1}{\om},\la^\frac{d}{\om}}\otimes\chi_{\la^\frac{1}{\om},\la^\frac{d}{\om}}
      -\ta_0\sum_{k=1}^{m-1}\ta_1\cdots\ta_{k-1}
         \zeta_{\la^\frac{1}{\om},\la^\frac{d}{\om}}E_1^{[k]}
           \otimes\chi_{\la^\frac{1}{\om},\la^\frac{d}{\om}}\xi^k\chi_{1,\gamma}^kE_1^{[m-k]}  \\
&  \;\;\;\;\;\;\;\;\;\;\;\;\;\;\;\;\;\;\;\;\;\;
   +\chi_{\la^\frac{1}{\om},\la^\frac{d}{\om}}\otimes\zeta_{\la^\frac{-1}{\om},\la^\frac{-d}{\om}}  \\
&  \;\;\;\;\;\;\;\;\;\;\;\;\;\;\;\;\;\;\;\;\;\;
   -\ta_0\sum_{k=1}^{m-1}\la^\frac{-(m-k)d}{\om}\ta_1\cdots\ta_{m-k-1}
           \chi_{\la^\frac{1}{\om},\la^\frac{d}{\om}}E_1^{[k]}
             \otimes\zeta_{\la^\frac{-1}{\om},\la^\frac{-d}{\om}}\zeta_{1,\gamma}^kE_1^{[m-k]},  \\
& \varepsilon(\zeta_{\la^\frac{1}{\om},\la^\frac{d}{\om}})=1,\;\;
  \varepsilon(\chi_{\la^\frac{1}{\om},\la^\frac{d}{\om}})=0,
\end{align*}
where $\la^\frac{1}{\om}\in\k^\ast$, $E_1^{[k]}:=\frac{1}{k!_\gamma}E_1^k$, and $\theta_0=\frac{1-\la^\frac{d}{\om}}{1/m},\;
  \theta_k=\frac{1-\gamma^k\la^\frac{d}{\om}}{1-\gamma^k}\;(1\leq k\leq m-1).$
\end{lemma}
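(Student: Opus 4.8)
The plan is to verify every identity by evaluating both sides against the PBW-type basis $\{x^ig^jy^l,\;x^ig^ju_l\mid i\in\om,\;j\in\Z,\;l\in m\}$ of $D(m,d,\xi)$, in exactly the spirit of Lemma \ref{lem:Dalg} and the comultiplication lemma preceding this one. Concretely, for $f$ equal to $\zeta_{1,\gamma}+\xi\chi_{1,\gamma}$, to $\zeta_{\la^\frac{1}{\om},\la^\frac{d}{\om}}$, or to $\chi_{\la^\frac{1}{\om},\la^\frac{d}{\om}}$, it suffices to check $\langle\Delta(f),a\otimes b\rangle=\langle f,ab\rangle$ as $a,b$ run over basis elements, which splits into the four cases where $(a,b)$ is $(y\text{-element},y\text{-element})$, $(u,u)$, $(y,u)$, or $(u,y)$. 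In every case the product $ab$ has already been recorded (the four product formulas at the start of the proof of the preceding lemma), so the task reduces to evaluating $\zeta_\bullet$ and $\chi_\bullet$ on monomials $x^ag^by^c$ and $x^ag^bu_c$: the functional $\zeta_{\la^\frac{1}{\om},\la^\frac{d}{\om}}$ reads off the $y^0$-component and then performs the substitution $x\mapsto\la^\frac{1}{\om}$, $g\mapsto\la^\frac{d}{\om}$, while $\chi_{\la^\frac{1}{\om},\la^\frac{d}{\om}}$ reads off the $u_0$-component and performs the same substitution. The counit values are immediate from $\varepsilon(h)=\langle h,1\rangle$ and the definitions (\ref{eqn:Dgenerator}).

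For the grouplike claim on $\zeta_{1,\gamma}+\xi\chi_{1,\gamma}$, the $(y,y)$ case goes as in Section \ref{section4}, using $y^m=1-g^m$ together with $\gamma^m=1$ so that the overflow term $y^{l+l'-m}$ contributes nothing; the $(u,u)$, $(y,u)$ and $(u,y)$ cases collapse, after inserting the relations for $u_iu_j$, $yu_i$ and $u_ig$, to evaluating products of the $\phi_i=1-\gamma^{-i-1}x^d$ at $x\mapsto1$. It is convenient to record once the elementary identities used throughout: with $t:=\la^\frac{d}{\om}$, so that $t^m=\la$,
\[
\prod_{j=1}^{m-1}\bigl(1-\gamma^{-j}t\bigr)=\frac{1-t^m}{1-t}=\frac{1-\la}{1-\la^\frac{d}{\om}},
\qquad
\prod_{j=k}^{m-1}\bigl(1-\gamma^{-j}\bigr)=\frac{m}{\prod_{j=1}^{k-1}(1-\gamma^{-j})},
\]
together with $\theta_0\theta_1\cdots\theta_{m-1}=1-\la$ from \cite[Proposition IV.2.7]{Kas95}. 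These are precisely what convert the prefactors of $u_iu_j$, and the partial $\phi$-products arising in the $(y,u)$ and $(u,y)$ products, into the coefficients $(1-\la)\theta_0^{-1}$, $(1-\la)\theta_{m-k}^{-1}$, $\theta_0\theta_1\cdots\theta_{k-1}$ and $\la^\frac{-(m-k)d}{\om}\theta_0\theta_1\cdots\theta_{m-k-1}$ that appear on the right-hand sides.

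For $\Delta(\zeta_{\la^\frac{1}{\om},\la^\frac{d}{\om}})$, the $(y,y)$ pairing reproduces, just as for $B(n,\om,\gamma)$ in Section \ref{section4}, the first line $\zeta\otimes\zeta+(1-\la)\sum_{k=1}^{m-1}\zeta E_1^{[k]}\otimes\zeta\zeta_{1,\gamma}^kE_1^{[m-k]}$; the $(u,u)$ pairing, after one substitutes the three-branch formula for $u_lu_{l'}$, yields exactly the $\chi\otimes\chi$ and $\chi E_1^{[k]}\otimes\chi\xi^k\chi_{1,\gamma}^kE_1^{[m-k]}$ terms, the decisive point being that the monomial $x^{-\frac{1+m}{2}d}g$ inside $u_iu_j$ becomes $\la^\frac{(1-m)d/2}{\om}$ after the substitution; and the $(y,u)$ and $(u,y)$ pairings both vanish against $\zeta$, since each such product is a scalar multiple of a single $u_c$, which is consistent with the absence of mixed terms in the $\zeta$-formula. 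Symmetrically, for $\Delta(\chi_{\la^\frac{1}{\om},\la^\frac{d}{\om}})$ the $(y,u)$ pairing produces the $\zeta\otimes\chi$ and $\zeta E_1^{[k]}\otimes\chi\xi^k\chi_{1,\gamma}^kE_1^{[m-k]}$ terms, the $(u,y)$ pairing produces the $\chi\otimes\zeta$ and $\chi E_1^{[k]}\otimes\zeta\zeta_{1,\gamma}^kE_1^{[m-k]}$ terms (the power $\la^\frac{-(m-k)d}{\om}$ coming from the factor $x^{-dl'}$ in the $(u,y)$ product), while the $(y,y)$ and $(u,u)$ pairings vanish against $\chi$.

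The only genuine obstacle I anticipate is bookkeeping: carrying the three-branch description of $u_iu_j$ through every pairing, identifying the numerous $\phi$-products with partial products of the $\theta_k$, and confirming that the exponents $\frac{(1-m)d/2}{\om}$ and $\frac{-(m-k)d}{\om}$ are precisely what the monomials $x^{-\frac{1+m}{2}d}g$ and $x^{-dl'}$ produce after substitution. No new idea is required beyond the displayed identities and the routine ``check on a basis'' technique; in particular coassociativity and compatibility of the antipode are not proved here but are deduced afterwards from the fact, known a priori, that $D(m,d,\xi)^\circ$ is a Hopf algebra, exactly as in the proofs of Theorems \ref{thm:T} and \ref{thm:B}.
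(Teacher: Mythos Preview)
Your proposal is correct and follows essentially the same approach as the paper: a direct verification of each comultiplication formula by evaluating both sides on products of basis elements, split into the four cases $(y,y)$, $(u,u)$, $(y,u)$, $(u,y)$, and reducing the resulting $\phi$-products via the displayed cyclotomic identities. The paper carries out these computations in full detail rather than summarizing them, but the structure, the key identities, and the vanishing pattern you describe (mixed cases vanish for $\zeta$, pure cases vanish for $\chi$) are exactly what appear there.
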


\begin{proof}
We also prove the lemma through checking their values on the basis.
\begin{eqnarray*}
&&\langle\zeta_{1,\gamma}+\xi\chi_{1,\gamma},(x^ig^jy^l)(x^{i'}g^{j'}y^{l'})\rangle  \\
&=& \langle\zeta_{1,\gamma},\gamma^{j'l}x^{i+i'}g^{j+j'}y^{l+l'}\rangle  \\
&=& \delta_{l+l',0}\langle\zeta_{1,\gamma},x^{i+i'}g^{j+j'}\rangle
    +\delta_{l+l',m}\langle\zeta_{1,\gamma},\gamma^{j'l}x^{i+i'}g^{j+j'}-x^{i+i'}g^{j+j'+m}\rangle  \\
&=& \delta_{l,0}\delta_{l',0}\gamma^{j+j'}
~=~ (\delta_{l,0}\gamma^j)(\delta_{l',0}\gamma^{j'})  \\
&=& \langle\zeta_{1,\gamma}\otimes\zeta_{1,\gamma},x^ig^jy^l\otimes x^{i'}g^{j'}y^{l'}\rangle,  \\
&&\langle\zeta_{1,\gamma}+\xi\chi_{1,\gamma},(x^ig^ju_l)(x^{i'}g^{j'}u_{l'})\rangle  \\
&=& \langle\zeta_{1,\gamma},\gamma^{j'l}x^{i-i'-2dj'}g^{j+j'}u_lu_{l'}\rangle  \\
&=& \delta_{l,0}\delta_{l',0}\langle\zeta_{1,\gamma},
      \frac{1}{m}x^{i-i'-2dj'-\frac{(1+m)}{2}d}\phi_0\phi_1\cdots\phi_{m-2}g^{j+j'+1}\rangle  \\
& & +\delta_{l+l',m}\gamma^{j'l}
     \langle\zeta_{1,\gamma},(-1)^{-l'}\xi^{{l'}^2}\frac{1}{m}x^{i-i'-2dj'-\frac{(1+m)}{2}d}
            \phi_l\cdots\phi_{m-1}\phi_0\cdots\phi_{l-2}g^{j+j'+1}\rangle  \\
&=& \delta_{l,0}\delta_{l',0}\frac{1}{m}\langle\zeta_{1,\gamma},
      x^{i-i'-2dj'-\frac{(1+m)}{2}d}\phi_0\phi_1\cdots\phi_{m-2}g^{j+j'+1}\rangle  \\
&=& \delta_{l,0}\delta_{l',0}\gamma^{j+j'+1}
~=~ (\xi\delta_{l,0}\gamma^j)(\xi\delta_{l',0}\gamma^{j'})  \\
&=& \langle\xi\chi_{1,\gamma}\otimes\xi\chi_{1,\gamma},x^ig^ju_l\otimes x^{i'}g^{j'}u_{l'}\rangle,  \\
&&\langle\zeta_{1,\gamma}+\xi\chi_{1,\gamma},(x^ig^jy^l)(x^{i'}g^{j'}u_{l'})\rangle  \\
&=& \langle\xi\chi_{1,\gamma},
           \gamma^{j'l}x^{i+i'}\phi_{l'}\phi_{l'+1}\cdots\phi_{l'+l-1}g^{j+j'}u_{l+l'}\rangle  \\
&=& \delta_{l,0}\delta_{l',0}\xi\langle\chi_{1,\gamma},x^{i+i'}g^{j+j'}u_0\rangle
    +\delta_{l+l',m}\xi\langle\chi_{1,\gamma},
                       \gamma^{j'l}x^{i+i'}\phi_{l'}\phi_{l'+1}\cdots\phi_{m-1}g^{j+j'}u_m\rangle  \\
&=& \delta_{l,0}\delta_{l',0}\xi\gamma^{j+j'}
~=~ (\delta_{l,0}\gamma^j)(\xi\delta_{l',0}\gamma^{j'})  \\
&=& \langle\zeta_{1,\gamma}\otimes\xi\chi_{1,\gamma},x^ig^jy^l\otimes x^{i'}g^{j'}u_{l'}\rangle,  \\
&&\langle\zeta_{1,\gamma}+\xi\chi_{1,\gamma},(x^ig^ju_l)(x^{i'}g^{j'}y^{l'})\rangle  \\
&=& \langle\xi\chi_{1,\gamma},
    \xi^{-l'}\gamma^{j'l}x^{i-i'-2dj'-dl'}\phi_l\phi_{l+1}\cdots\phi_{l+l'-1}g^{j+j'}u_{l+l'}\rangle  \\
&=& \delta_{l,0}\delta_{l',0}\xi\langle\chi_{1,\gamma},x^{i-i'-2dj'}g^{j+j'}u_0\rangle  \\
& & +\delta_{l+l',m}\xi\langle\chi_{1,\gamma},
    \xi^{-l'}\gamma^{j'l}x^{i-i'-2dj'-dl'}\phi_l\phi_{l+1}\cdots\phi_{m-1}g^{j+j'}u_m\rangle  \\
&=& \delta_{l,0}\delta_{l',0}\xi\gamma^{j+j'}
~=~ (\xi\delta_{l,0}\gamma^j)(\delta_{l',0}\gamma^{j'})  \\
&=& \langle\xi\chi_{1,\gamma}\otimes\zeta_{1,\gamma},x^ig^ju_l\otimes x^{i'}g^{j'}y^{l'}\rangle.
\end{eqnarray*}
It can be concluded that
$$\Delta(\zeta_{1,\gamma}+\xi\chi_{1,\gamma})
  =(\zeta_{1,\gamma}+\xi\chi_{1,\gamma})\otimes(\zeta_{1,\gamma}+\xi\chi_{1,\gamma}).$$

Now we compute that for any $\la^\frac{1}{\om}\in\k^\ast$,
\begin{eqnarray*}
&& \langle\zeta_{\la^\frac{1}{\om},\la^\frac{d}{\om}},(x^ig^jy^l)(x^{i'}g^{j'}y^{l'})\rangle  \\
&=& \langle\zeta_{\la^\frac{1}{\om},\la^\frac{d}{\om}},\gamma^{j'l}x^{i+i'}g^{j+j'}y^{l+l'}\rangle  \\
&=& \delta_{l,0}\delta_{l',0}\langle\zeta_{\la^\frac{1}{\om},\la^\frac{d}{\om}},x^{i+i'}g^{j+j'}\rangle
    +\sum_{k=1}^{m-1}\delta_{l,k}\delta_{l',m-k}\langle\zeta_{\la^\frac{1}{\om},\la^\frac{d}{\om}},
         \gamma^{j'k}(x^{i+i'}g^{j+j'}-x^{i+i'}g^{j+j'+m})\rangle  \\
&=& \delta_{l,0}\delta_{l',0}\la^\frac{i+i}{\om}\la^\frac{(j+j')d}{\om}
    +\sum_{k=1}^{m-1}\delta_{l,k}\delta_{l',m-k}\gamma^{j'k}
         (\la^\frac{i+i'}{\om}\la^\frac{(j+j')d}{\om}-\la^\frac{i+i'}{\om}\la^\frac{(j+j'+m)d}{\om})  \\
&=& \delta_{l,0}\delta_{l',0}\la^\frac{i+i'}{\om}\la^\frac{(j+j')d}{\om}
    +(1-\la)\sum_{k=1}^{m-1}\delta_{l,k}\delta_{l',m-k}\gamma^{j'k}
     \la^\frac{i+i'}{\om}\la^\frac{(j+j')d}{\om}  \\
&=& (\delta_{l,0}\la^\frac{i}{\om}\la^\frac{jd}{\om})(\delta_{l',0}\la^\frac{i'}{\om}\la^\frac{j'd}{\om})
    +(1-\la)\sum_{k=1}^{m-1}(\delta_{l,k}\la^\frac{i}{\om}\la^\frac{jd}{\om})
     (\delta_{l',m-k}\gamma^{j'k}\la^\frac{i'}{\om}\la^\frac{j'd}{\om})  \\
&=& \langle\zeta_{\la^\frac{1}{\om},\la^\frac{d}{\om}}\otimes\zeta_{\la^\frac{1}{\om},\la^\frac{d}{\om}}
           +(1-\la)\sum_{k=1}^{m-1}\zeta_{\la^\frac{1}{\om},\la^\frac{d}{\om}}E_1^{[k]}
                \otimes\zeta_{\la^\frac{1}{\om},\la^\frac{d}{\om}}\zeta_{1,\gamma}^kE_1^{[m-k]},
           x^ig^jy^l\otimes x^{i'}g^{j'}y^{l'}\rangle,  \\
&& \langle\zeta_{\la^\frac{1}{\om},\la^\frac{d}{\om}},(x^ig^ju_l)(x^{i'}g^{j'}u_{l'})\rangle  \\
&=& \langle\zeta_{\la^\frac{1}{\om},\la^\frac{d}{\om}},\gamma^{j'l}x^{i-i'-2dj'}g^{j+j'}u_lu_{l'}\rangle  \\
&=& \delta_{l,0}\delta_{l',0}\langle\zeta_{\la^\frac{1}{\om},\la^\frac{d}{\om}},
      \frac{1}{m}x^{i-i'-2dj'-\frac{(1+m)}{2}d}\phi_0\phi_1\cdots\phi_{m-2}g^{j+j'+1}\rangle  \\
& & +\sum_{k=1}^{m-1}\delta_{l,k}\delta_{l',m-k}\gamma^{j'k}  \\
& & \;\;\;\;\;\;\;\;\;\;
    \langle\zeta_{\la^\frac{1}{\om},\la^\frac{1}{m}},
           (-1)^{-(m-k)}\xi^{(m-k)^2}\frac{1}{m}x^{i-i'-2dj'-\frac{(1+m)}{2}d}
            \phi_k\cdots\phi_{m-1}\phi_0\cdots\phi_{k-2}g^{j+j'+1}\rangle  \\
&=& \delta_{l,0}\delta_{l',0}\frac{1}{m}\la^{\frac{i-i'-2dj'}{\om}-\frac{(1+m)d}{2\om}}
      (1-\gamma^{-1}\la^\frac{d}{\om})(1-\gamma^{-2}\la^\frac{d}{\om})
      \cdots(1-\gamma^{-(m-1)}\la^\frac{d}{\om})\la^\frac{(j+j'+1)d}{\om}  \\
& & +\sum_{k=1}^{m-1}\delta_{l,k}\delta_{l',m-k}\gamma^{j'k}
      (-1)^{-(m-k)}\xi^{(m-k)^2}\frac{1}{m}\la^{\frac{i-i'-2dj'}{\om}-\frac{(1+m)d}{2\om}}  \\
& &   \;\;\;\;\;\;\;\;\;\;(1-\gamma^{-(k+1)}\la^\frac{d}{\om})\cdots(1-\gamma^{-m}\la^\frac{d}{\om})
      (1-\gamma^{-1}\la^\frac{d}{\om})\cdots(1-\gamma^{-(k-1)}\la^\frac{d}{\om})\la^\frac{(j+j'+1)d}{\om}\\
&=& \delta_{l,0}\delta_{l',0}\frac{1}{m}\la^\frac{i-i'}{\om}\la^\frac{(j-j')d}{\om}\la^\frac{(1-m)d/2}{\om}
      \frac{1-\la}{1-\la^\frac{d}{\om}}  \\
& & +\sum_{k=1}^{m-1}\delta_{l,k}\delta_{l',m-k}\gamma^{j'k}
      (-1)^{-(m-k)}\xi^{(m-k)^2}\frac{1}{m}\la^\frac{i-i'}{\om}\la^\frac{(j-j')d}{\om}
      \la^\frac{(1-m)d/2}{\om}\frac{1-\la}{1-\gamma^{-k}\la^\frac{d}{\om}}  \\
&=& (1-\la)\la^\frac{(1-m)d/2}{\om}
    [\frac{1/m}{1-\la^\frac{d}{\om}}\delta_{l,0}\delta_{l',0}\la^\frac{i-i'}{\om}\la^\frac{(j-j')d}{\om}  \\
& & \;\;\;\;\;\;\;\;\;\;\;\;\;\;\;\;\;\;\;\;\;\;\;\;\;\;
    +\sum_{k=1}^{m-1}\frac{1-\gamma^{-k}}{1-\gamma^{-k}\la^\frac{d}{\om}}
         \frac{\xi^k}{(1-\gamma^{-1})\cdots(1-\gamma^{-k})}\delta_{l,k}  \\
& & \;\;\;\;\;\;\;\;\;\;\;\;\;\;\;\;\;\;\;\;\;\;\;\;\;\;\;\;\;\;\;\;\;\;\;\;
         \frac{\xi^{m-k}}{(1-\gamma^{-1})\cdots(1-\gamma^{-(m-k)})}\delta_{l',m-k}
         \la^\frac{i-i'}{\om}\la^\frac{(j-j')d}{\om}\xi^k\gamma^{j'k}]  \\
&=& \langle\la^\frac{(1-m)d/2}{\om}(1-\la)(\theta_0^{-1}
     \chi_{\la^\frac{1}{\om},\la^\frac{d}{\om}}\otimes\chi_{\la^\frac{-1}{\om},\la^\frac{-d}{\om}}  \\
&& \;\;\;\;\;\;\;\;\;\;\;\;\;\;\;\;\;\;\;\;\;\;\;\;\;\;\;\;+\sum_{k=1}^{m-1}\theta_{m-k}^{-1}
      \chi_{\la^\frac{1}{\om},\la^\frac{d}{\om}}E_1^{[k]}
      \otimes\chi_{\la^\frac{-1}{\om},\la^\frac{-d}{\om}}\xi^k\chi_{1,\gamma}^kE_1^{[m-k]}),  \\
& &\;\;  x^ig^ju_l\otimes x^{i'}g^{j'}u_{l'}\rangle,  \\
&& \langle\zeta_{\la^\frac{1}{\om},\la^\frac{d}{\om}},(x^ig^jy^l)(x^{i'}g^{j'}u_{l'})\rangle  \\
&=& \langle\zeta_{\la^\frac{1}{\om},\la^\frac{d}{\om}},
           \gamma^{j'l}x^{i+i'}\phi_{l'}\phi_{l'+1}\cdots\phi_{l'+l-1}g^{j+j'}u_{l+l'}\rangle
~=~ 0,  \\
&& \langle\zeta_{\la^\frac{1}{\om},\la^\frac{d}{\om}},(x^ig^ju_l)(x^{i'}g^{j'}y^{l'})\rangle  \\
&=& \langle\zeta_{\la^\frac{1}{\om},\la^\frac{d}{\om}},
           \xi^{-l'}\gamma^{j'l}x^{i-i'-2dj'-dl'}\phi_l\phi_{l+1}\cdots\phi_{l+l'-1}g^{j+j'}u_{l+l'}\rangle
~=~ 0.
\end{eqnarray*}
It can be concluded that
\begin{align*}
& \Delta(\zeta_{\la^\frac{1}{\om},\la^\frac{d}{\om}})
= \zeta_{\la^\frac{1}{\om},\la^\frac{d}{\om}}\otimes\zeta_{\la^\frac{1}{\om},\la^\frac{d}{\om}}
    +(1-\la)\sum_{k=1}^{m-1}\zeta_{\la^\frac{1}{\om},\la^\frac{d}{\om}}E_1^{[k]}
                \otimes\zeta_{\la^\frac{1}{\om},\la^\frac{d}{\om}}\zeta_{1,\gamma}^kE_1^{[m-k]}  \\
&  \;\;\;\;\;\;\;\;\;\;\;\;\;\;\;\;\;\;\;\;\;
   +\la^\frac{(1-m)d/2}{\om}(1-\la)
   (\ta_0^{-1}\chi_{\la^\frac{1}{\om},\la^\frac{d}{\om}}\otimes\chi_{\la^\frac{-1}{\om},\la^\frac{-d}{\om}}\\
&  \;\;\;\;\;\;\;\;\;\;\;\;\;\;\;\;\;\;\;\;\;\;\;\;\;\;\;\;\;\;
   \;\;\;\;\;\;\;\;\;\;\;\;\;\;\;\;\;\;\;\;\;\;
    +\sum_{k=1}^{m-1}\ta_{m-k}^{-1}
      \chi_{\la^\frac{1}{\om},\la^\frac{d}{\om}}E_1^{[k]}
      \otimes\chi_{\la^\frac{-1}{\om},\la^\frac{-d}{\om}}\xi^k\chi_{1,\gamma}^kE_1^{[m-k]}).
\end{align*}

On the other hand,
\begin{eqnarray*}
&& \langle\chi_{\la^\frac{1}{\om},\la^\frac{d}{\om}},(x^ig^jy^l)(x^{i'}g^{j'}y^{l'})\rangle
~=~ 0,  \\
&& \langle\chi_{\la^\frac{1}{\om},\la^\frac{d}{\om}},(x^ig^ju_l)(x^{i'}g^{j'}u_{l'})\rangle
~=~ 0,  \\
&& \langle\chi_{\la^\frac{1}{\om},\la^\frac{d}{\om}},(x^ig^jy^l)(x^{i'}g^{j'}u_{l'})\rangle  \\
&=& \langle\chi_{\la^\frac{1}{\om},\la^\frac{d}{\om}},
           \gamma^{j'l}x^{i+i'}\phi_{l'}\phi_{l'+1}\cdots\phi_{l'+l-1}g^{j+j'}u_{l+l'}\rangle  \\
&=& \delta_{l,0}\delta_{l',0}
    \langle\chi_{\la^\frac{1}{\om},\la^\frac{d}{\om}},x^{i+i'}g^{j+j'}u_0\rangle  \\
& & +\sum_{k=1}^{m-1}\delta_{l,k}\delta_{l',m-k}\langle\chi_{\la^\frac{1}{\om},\la^\frac{d}{\om}},
         \gamma^{j'k}x^{i+i'}\phi_{m-k}\phi_{m-k+1}\cdots\phi_{m-1}g^{j+j'}u_0\rangle  \\
&=& \delta_{l,0}\delta_{l',0}\la^\frac{i+i'}{\om}\la^\frac{(j+j')d}{\om}
    +\sum_{k=1}^{m-1}\delta_{l,k}\delta_{l',m-k}\gamma^{j'k}\la^\frac{i+i'}{\om}
         (1-\gamma^{k-1}\la^\frac{d}{\om})(1-\gamma^{k-2}\la^\frac{d}{\om})\cdots(1-\la^\frac{d}{\om})
         \la^\frac{(j+j')d}{m}  \\
&=& \delta_{l,0}\delta_{l',0}\la^\frac{i+i'}{\om}\la^\frac{(j+j')d}{\om}  \\
& & -\sum_{k=1}^{m-1}
     (1-\la^\frac{d}{\om})(1-\gamma\la^\frac{d}{\om})\cdots(1-\gamma^{k-1}\la^\frac{d}{\om})
     \frac{m}{(1-\gamma)\cdots(1-\gamma^{k-1})}  \\
& &  \;\;\;\;\;\;\;\;\;\;
     \delta_{l,k}\frac{\xi^{m-k}}{(1-\gamma^{-1})(1-\gamma^{-2})\cdots(1-\gamma^{-(m-k)})}
     \delta_{l',m-k}\xi^k\gamma^{j'k}\la^\frac{i+i'}{\om}\la^\frac{(j+j')d}{\om}  \\
&=& \langle\zeta_{\la^\frac{1}{\om},\la^\frac{d}{\om}}\otimes\chi_{\la^\frac{1}{\om},\la^\frac{d}{\om}}
      -\ta_0\sum_{k=1}^{m-1}\ta_1\cdots\ta_{k-1}
      \zeta_{\la^\frac{1}{\om},\la^\frac{d}{\om}}E_1^{[k]}
           \otimes\chi_{\la^\frac{1}{\om},\la^\frac{d}{\om}}\xi^k\chi_{1,\gamma}^kE_1^{[m-k]},  \\
& & \;\;     x^ig^jy^l\otimes x^{i'}g^{j'}u_{l'}\rangle,  \\
&& \langle\chi_{\la^\frac{1}{\om},\la^\frac{d}{\om}},(x^ig^ju_l)(x^{i'}g^{j'}y^{l'})\rangle  \\
&=& \langle\chi_{\la^\frac{1}{\om},\la^\frac{d}{\om}},
      \xi^{-l'}\gamma^{j'l}x^{i-i'-2dj'-dl'}\phi_l\phi_{l+1}\cdots\phi_{l+l'-1}g^{j+j'}u_{l+l'}\rangle  \\
&=& \delta_{l,0}\delta_{l',0}
    \langle\chi_{\la^\frac{1}{\om},\la^\frac{d}{\om}},x^{i-i'-2dj'}g^{j+j'}u_0\rangle  \\
& & +\sum_{k=1}^{m-k}\delta_{l,k}\delta_{l',m-k}\langle\chi_{\la^\frac{1}{\om},\la^\frac{d}{\om}},
       \xi^{-(m-k)}\gamma^{j'k}x^{i-i'-2dj'-d(m-k)}\phi_k\phi_{k+1}\cdots\phi_{m-1}g^{j+j'}u_0\rangle  \\
&=& \delta_{l,0}\delta_{l',0}\la^\frac{i-i'-2dj'}{\om}\la^\frac{(j+j')d}{\om}
    +\sum_{k=1}^{m-k}\delta_{l,k}\delta_{l',m-k}
       \xi^{-(m-k)}\gamma^{j'k}\la^\frac{i-i'-2dj'-d(m-k)}{\om}  \\
& & \;\;\;\;\;\;\;\;\;\;\;\;\;\;\;\;\;\;\;\;\;\;\;\;\;\;\;\;\;\;\;\;\;\;\;\;\;\;\;\;\;\;\;\;\;
       (1-\gamma^{m-k-1}\la^\frac{d}{\om})(1-\gamma^{m-k-2}\la^\frac{d}{\om})\cdots(1-\la^\frac{d}{\om})
       \la^\frac{(j+j')d}{\om}  \\
&=& \delta_{l,0}\delta_{l',0}\la^\frac{i-i'}{\om}\la^\frac{(j-j')d}{\om}
    +\sum_{k=1}^{m-k}\delta_{l,k}\delta_{l',m-k}
       \xi^{-(m-k)}\gamma^{j'k}\la^\frac{i-i'}{\om}\la^\frac{(j-j')d}{\om}\la^\frac{-(m-k)d}{\om}  \\
& & \;\;\;\;\;\;\;\;\;\;\;\;\;\;\;\;\;\;\;\;\;\;\;\;\;\;\;\;\;\;\;\;\;\;\;\;\;\;\;
       (1-\gamma^{m-k-1}\la^\frac{d}{\om})(1-\gamma^{m-k-2}\la^\frac{d}{\om})\cdots(1-\la^\frac{d}{\om})  \\
&=& \delta_{l,0}\delta_{l',0}\la^\frac{i-i'}{\om}\la^\frac{(j-j')d}{\om}  \\
& & -\sum_{k=1}^{m-k}\la^\frac{-(m-k)d}{\om}
       (1-\la^\frac{d}{\om})(1-\gamma\la^\frac{d}{\om})\cdots(1-\gamma^{m-k-1}\la^\frac{d}{\om})
       \frac{m}{(1-\gamma)\cdots(1-\gamma^{m-k-1})}  \\
& &    \;\;\;\;\;\;\;\;\;\;
       \frac{\xi^k}{(1-\gamma^{-1})\cdots(1-\gamma^{-k})}
       \delta_{l,k}\delta_{l',m-k}\gamma^{j'k}\la^\frac{i-i'}{\om}\la^\frac{(j-j')d}{\om}  \\
&=& \langle\chi_{\la^\frac{1}{\om},\la^\frac{d}{\om}}\otimes\zeta_{\la^\frac{-1}{\om},\la^\frac{-d}{\om}}
      -\ta_0\sum_{k=1}^{m-1}\la^\frac{-(m-k)d}{\om}\ta_1\cdots\ta_{m-k-1}
             \chi_{\la^\frac{1}{\om},\la^\frac{d}{\om}}E_1^{[k]}
             \otimes\zeta_{\la^\frac{-1}{\om},\la^\frac{-d}{\om}}\zeta_{1,\gamma}^kE_1^{[m-k]},  \\
& & \;\;  x^ig^ju_l\otimes x^{i'}g^{j'}y^{l'}\rangle,  \\
\end{eqnarray*}
It can be concluded that
\begin{align*}
& \Delta(\chi_{\la^\frac{1}{\om},\la^\frac{d}{\om}})
= \zeta_{\la^\frac{1}{\om},\la^\frac{d}{\om}}\otimes\chi_{\la^\frac{1}{\om},\la^\frac{d}{\om}}
      -\ta_0\sum_{k=1}^{m-1}\ta_1\cdots\ta_{k-1}
         \zeta_{\la^\frac{1}{\om},\la^\frac{d}{\om}}E_1^{[k]}
           \otimes\chi_{\la^\frac{1}{\om},\la^\frac{d}{\om}}\xi^k\chi_{1,\gamma}^kE_1^{[m-k]}  \\
&  \;\;\;\;\;\;\;\;\;\;\;\;\;\;\;\;\;\;\;\;\;\;
   +\chi_{\la^\frac{1}{\om},\la^\frac{d}{\om}}\otimes\zeta_{\la^\frac{-1}{\om},\la^\frac{-d}{\om}}  \\
&  \;\;\;\;\;\;\;\;\;\;\;\;\;\;\;\;\;\;\;\;\;\;
   -\ta_0\sum_{k=1}^{m-1}\la^\frac{-(m-k)d}{\om}\ta_1\cdots\ta_{m-k-1}
           \chi_{\la^\frac{1}{\om},\la^\frac{d}{\om}}E_1^{[k]}
             \otimes\zeta_{\la^\frac{-1}{\om},\la^\frac{-d}{\om}}\zeta_{1,\gamma}^kE_1^{[m-k]}.
\end{align*}

The counit is clear.
\end{proof}

\begin{lemma}
Following equations hold in $D(m,d,\xi)^\circ$:
\begin{eqnarray*}
S(\zeta_{1,\gamma}+\xi\chi_{1,\gamma})
&=& \zeta_{1,\gamma^{-1}}+\xi^{-1}\chi_{1,\gamma^{-1}},  \\
S(E_1)
&=& -\gamma^{-1}(\zeta_{1,\gamma^{-1}}+\xi^{-1}\chi_{1,\gamma^{-1}})E_1,  \\
S(E_2)
&=& -\zeta_{1,1}E_2+\chi_{1,1}E_2+\frac{1-m}{2m}\chi_{1,1},  \\
S(\zeta_{\la^\frac{1}{\om},\la^\frac{1}{m}})
&=& \zeta_{\la^\frac{-1}{\om},\la^\frac{-1}{m}},  \\
S(\chi_{\la^\frac{1}{\om},\la^\frac{1}{m}})
&=& \la^\frac{(1-m)d/2}{\om}\gamma^{-k}\chi_{\la^\frac{1}{\om},\la^\frac{d}{\om}\gamma^{-k}}\;\;
  \text{when}\;\;\la^\frac{1}{m}=\la^\frac{d}{\om}\gamma^k.
\end{eqnarray*}
\end{lemma}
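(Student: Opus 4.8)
Since $D(m,d,\xi)^\circ$ is a Hopf algebra, its antipode $S$ is forced: it is precomposition with the antipode $S_H$ of $H:=D(m,d,\xi)$, i.e.\ $\langle S(f),h\rangle=\langle f,S_H(h)\rangle$ for all $f\in H^\circ$ and $h\in H$. So the plan, exactly as in the previous lemmas, is to establish each of the five identities by evaluating both sides on the linear basis $\{x^ig^jy^l,\ x^ig^ju_l\mid i\in\om,\ j\in\Z,\ l\in m\}$ of $H$; the only new ingredient needed is the explicit form of $S_H$ on these basis elements.

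\textbf{First step: $S_H$ on the basis.} Since $S_H$ is an algebra anti-homomorphism, $S_H(x^ig^jy^l)=\big(-\gamma^{-1}g^{-1}y\big)^lg^{-j}x^{-i}$ and $S_H(x^ig^ju_l)=S_H(u_l)\,g^{-j}x^{-i}$ with $S_H(u_l)$ as given in the description of $D(m,d,\xi)$. Using $yg^{-1}=\gamma^{-1}g^{-1}y$ one collapses $(g^{-1}y)^l=\gamma^{-\binom{l}{2}}g^{-l}y^l$, so $S_H(x^ig^jy^l)$ is an explicit scalar (a monomial in $\gamma$, equal to $1$ when $l=0$) times $x^{-i}g^{-j-l}y^l$. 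For the $u$-part, using $u_lx=x^{-1}u_l$ and $u_lg=\gamma^lx^{-2d}gu_l$ one finds $S_H(x^ig^ju_l)$ equals an explicit scalar (a monomial in $\xi$ and $\gamma$) times $x^{c(i,j,l)}g^{m-1-l-j}u_l$ with $c(i,j,l)$ linear in $i,j,l$; in particular $S_H(x^ig^ju_0)=x^{\frac{3}{2}(1-m)d+2dj+i}g^{m-1-j}u_0$. These are all routine straightenings using the relations already listed.

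\textbf{The five verifications, and the main obstacle.} With the basis images of $S_H$ in hand, I would simply pair them against the functionals $\zeta,\chi,E_1,E_2$. For the group-like element $\zeta_{1,\gamma}+\xi\chi_{1,\gamma}$ (group-like by Lemma \ref{lem:D.comult}) and for $\zeta_{\la^\frac{1}{\om},\la^\frac{1}{m}}$, the claimed $S$-values are the obvious inverses and the pairing confirms them immediately; for $E_1$ the check is just as short (only the $l=0,1$ components survive, and the answer is also consistent with $E_1$ being $(\zeta_{1,\gamma}+\xi\chi_{1,\gamma})$-skew-primitive together with the relation $E_1(\zeta_{1,\gamma}+\xi\chi_{1,\gamma})=\gamma(\zeta_{1,\gamma}+\xi\chi_{1,\gamma})E_1$). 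For $E_2$, evaluating $\langle E_2,S_H(x^ig^ju_0)\rangle$ and substituting $\om=md$ produces exactly $\tfrac{i}{\om}+\tfrac{j}{m}+\tfrac{1-m}{2m}$, so that the extra summand $\tfrac{1-m}{2m}\chi_{1,1}$ in the statement is precisely the trace of the shift $x^{\frac{3}{2}(1-m)d}$ carried by $S_H(u_0)$. The one identity that needs genuine care is that for $\chi_{\la^\frac{1}{\om},\la^\frac{1}{m}}$: here one gets $\langle\chi_{\la^\frac{1}{\om},\la^\frac{1}{m}},S_H(x^ig^ju_0)\rangle=(\la^\frac{1}{\om})^{\frac{3}{2}(1-m)d+2dj+i}(\la^\frac{1}{m})^{m-1-j}$ and must reorganise this into $\la^{\frac{(1-m)d/2}{\om}}\gamma^{-k}(\la^\frac{1}{\om})^i(\la^\frac{d}{\om}\gamma^{-k})^j$, using $\la^\frac{1}{m}=\la^\frac{d}{\om}\gamma^k$, $\gamma^m=1$ and $\om=md$ to collapse the three exponents simultaneously; keeping this bookkeeping straight — and seeing that what is left over is exactly the scalar $\la^{\frac{(1-m)d/2}{\om}}\gamma^{-k}$ of the statement — is the main obstacle. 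As an optional consistency check one may verify the antipode axiom $S*\id=\id*S=\eta\varepsilon$ on the generators using the coproduct formulas of Lemma \ref{lem:D.comult}.
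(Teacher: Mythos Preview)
Your proposal is correct and follows essentially the same approach as the paper: both verify the identities by using $\langle S(f),h\rangle=\langle f,S_H(h)\rangle$, computing $S_H$ explicitly on the basis elements $x^ig^jy^l$ and $x^ig^ju_l$, and then evaluating each functional on the result. The paper carries out exactly the pairings you outline (including the same delicate bookkeeping for $\chi_{\la^{1/\om},\la^{1/m}}$), so your plan matches it step for step; the optional consistency check via $S*\id=\eta\varepsilon$ you mention is not in the paper but is harmless.
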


\begin{proof}
We also prove the lemma through checking their values on the basis.
\begin{eqnarray*}
\langle S(E_1),x^ig^jy^l\rangle
&=& \delta_{l,1}\langle E_1,-\gamma^{-j-1}x^{-i}g^{-j-1}y\rangle  \\
&=& -\delta_{l,1}\gamma^{-j-1}
~=~ \langle-\gamma^{-1}\zeta_{1,\gamma^{-1}}E_1,x^ig^jy^l\rangle,  \\
\langle S(E_1),x^ig^ju_l\rangle
&=& \delta_{l,1}\langle E_1,-\xi^{-1}\gamma^{-j-1}x^{i+d+\frac{3}{2}(1-m)d+2dj}g^{m-2-j}u_1\rangle  \\
&=& -\xi^{-1}\gamma^{-j-1}\frac{\xi}{1-\gamma^{-1}}\delta_{l,1}
~=~ \langle-\gamma^{-1}\xi^{-1}\chi_{1,\gamma^{-1}}E_1,x^ig^ju_l\rangle.
\end{eqnarray*}
It can be concluded that
$$S(E_1)=-\gamma^{-1}(\zeta_{1,\gamma^{-1}}+\xi^{-1}\chi_{1,\gamma^{-1}})E_1.$$
Also,
\begin{eqnarray*}
\langle S(E_2),x^ig^jy^l\rangle
&=& \delta_{l,0}\langle E_2,x^{-i}g^{-j}\rangle  \\
&=& -\delta_{l,0}(\frac{i}{\om}+\frac{j}{m})
~=~ \langle -\zeta_{1,1}E_2,x^ig^jy^l\rangle,  \\
\langle S(E_2),x^ig^ju_l\rangle
&=& \delta_{l,0}\langle E_2,x^{i+2dj+\frac{3}{2}(1-m)d}g^{m-1-j}u_0\rangle  \\
&=& \delta_{l,0}(\frac{i+2dj}{\om}+\frac{3(1-m)d/2}{\om}+\frac{m-1-j}{m})  \\
&=& \delta_{l,0}(\frac{i}{\om}+\frac{j}{m}+\frac{1-m}{2m})
~=~ \langle\chi_{1,1}E_2+\frac{1-m}{2m}\chi_{1,1},x^ig^ju_l\rangle.
\end{eqnarray*}
It can be concluded that
$$S(E_2)=-\zeta_{1,1}E_2+\chi_{1,1}E_2+\frac{1-m}{2m}\chi_{1,1}.$$
Moreover,
\begin{eqnarray*}
\langle S(\zeta_{\la^\frac{1}{\om},\la^\frac{1}{m}}),x^ig^jy^l\rangle
&=& \delta_{l,0}\langle\zeta_{\la^\frac{1}{\om},\la^\frac{1}{m}},x^{-i}g^{-j}\rangle
~=~ \delta_{l,0}\la^\frac{-i}{\om}\la^\frac{-j}{m}  \\
&=& \langle\zeta_{\la^\frac{-1}{\om},\la^\frac{-1}{m}},x^ig^jy^l\rangle,  \\
\langle S(\zeta_{\la^\frac{1}{\om},\la^\frac{1}{m}}),x^ig^ju_l\rangle
&=& 0.
\end{eqnarray*}
When $\la^\frac{1}{m}=\la^\frac{d}{\om}\gamma^k$ holds where $k$ is a non-negative integer,
\begin{eqnarray*}
\langle S(\chi_{\la^\frac{1}{\om},\la^\frac{1}{m}}),x^ig^jy^l\rangle
&=& 0,  \\
\langle S(\chi_{\la^\frac{1}{\om},\la^\frac{1}{m}}),x^ig^ju_l\rangle
&=& \delta_{l,0}
    \langle\chi_{\la^\frac{1}{\om},\la^\frac{1}{m}},x^{i+2dj+\frac{3}{2}(1-m)d}g^{m-1-j}u_0\rangle  \\
&=& \delta_{l,0}\la^\frac{i+2dj}{\om}\la^\frac{3(1-m)d/2}{\om}\la^\frac{m-1-j}{m}  \\
&=& \delta_{l,0}\la^\frac{i+2dj}{\om}\la^\frac{3(1-m)d/2}{\om}(\la^\frac{d}{\om}\gamma^k)^{m-1-j}  \\
&=& \la^\frac{(1-m)d/2}{\om}\gamma^{-k}\delta_{l,0}\la^\frac{i}{\om}(\la^\frac{d}{\om}\gamma^{-k})^j  \\
&=& \langle\la^\frac{(1-m)d/2}{\om}\gamma^{-k}\chi_{\la^\frac{1}{\om},\la^\frac{d}{\om}\gamma^{-k}},
           x^ig^ju_l\rangle.
\end{eqnarray*}
It can be concluded that
\begin{eqnarray*}
S(\zeta_{\la^\frac{1}{\om},\la^\frac{1}{m}})
&=& \zeta_{\la^\frac{-1}{\om},\la^\frac{-1}{m}},  \\
S(\chi_{\la^\frac{1}{\om},\la^\frac{1}{m}})
&=& \la^\frac{(1-m)d/2}{\om}\gamma^{-k}\chi_{\la^\frac{1}{\om},\la^\frac{d}{\om}\gamma^{-k}}\;\;
  \text{when}\;\;\la^\frac{1}{m}=\la^\frac{d}{\om}\gamma^k.
\end{eqnarray*}
\end{proof}

\subsection{The Generation Problem}\label{subsection:D3}

\begin{proposition}\label{p.5.5}
As an algebra, $D(m,d,\xi)^\circ$ is generated by $\zeta_{\la^\frac{1}{\om},\la^\frac{1}{m}}$, $\chi_{\la^\frac{1}{\om},\la^\frac{1}{m}}$, $E_1$ and $E_2$ for $\la^\frac{1}{\om},\la^\frac{1}{m}\in\k^\ast$.
\end{proposition}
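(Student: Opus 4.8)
The plan is to repeat the scheme of Propositions~\ref{p.3.3} and~\ref{prop:B}, this time with the Laurent polynomial subalgebra $P=\k[g^{\pm1}]$ and with the three families of cofinite ideals furnished by Lemma~\ref{lem:p(x)}(3) and Corollary~\ref{cor:circ}(3). First I would note that, by the basis $\{x^ig^jy^l,\,x^ig^ju_l\}$ recalled above, $D(m,d,\xi)$ is a finitely generated $\k[g^{\pm1}]$-module (module generators $\{x^iy^l,\,x^iu_l:i\in\om,\ l\in m\}$), so Lemmas~\ref{lem:cofin},~\ref{lem:p(x)}(3) and Corollary~\ref{cor:circ}(3) apply: any cofinite ideal $I$ contains the ideal generated by some $p(g)=(g^m-1)^{r'}(g^m+1)^{r''}\prod_\alpha(g^m-\lambda_\alpha)^{r_\alpha}(g^m-\lambda_\alpha^{-1})^{r_\alpha}$. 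Because an element $g^m-\mu$ is normal in $D(m,d,\xi)$ (one has $u_l(g^m-\mu)=(g^{-m}-\mu)u_l$ with $g^{-m}$ a unit, while $x,g,y$ commute with $g^m-\mu$), the three kinds of principal ideals $((g^m-1)^{r'})$, $((g^m+1)^{r''})$, $((g^m-\lambda_\alpha)^{r_\alpha}(g^m-\lambda_\alpha^{-1})^{r_\alpha})$ are two-sided, pairwise comaximal, and their intersection is $(p(g))\subseteq I$; hence $D(m,d,\xi)^\circ=\sum_{I\in\mathcal{I}_0}(D(m,d,\xi)/I)^\ast$ by Corollary~\ref{cor:circ}(3).

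Next, for each $I\in\mathcal{I}_0$ I would exhibit a spanning set of $(D(m,d,\xi)/I)^\ast$ built from products of $\zeta_{\la^{1/\om},\la^{1/m}}$, $\chi_{\la^{1/\om},\la^{1/m}}$, $E_2$ and $E_1$. Normality of $g^m-\mu$ makes $D(m,d,\xi)/I$ free over the corresponding quotient of $\k[g^{\pm1}]$, so for $I=((g^m-1)^r)$ (resp.\ $((g^m+1)^r)$) the set $\{x^{i'}g^{j'+s'm}y^{l'},\,x^{i'}g^{j'+s'm}u_{l'}:i'\in\om,\ j'\in m,\ s'\in r,\ l'\in m\}$ is a basis, and for $I=((g^m-\lambda)^r(g^m-\lambda^{-1})^r)$ the same with $s'\in 2r$. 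The candidate dual elements are $\zeta_{\mu^{1/\om}\eta^i,\mu^{1/m}\gamma^j}E_2^sE_1^l$ and $\chi_{\mu^{1/\om}\eta^i,\mu^{1/m}\gamma^j}E_2^sE_1^l$ with $i\in\om$, $j\in m$, $s\in r$, $l\in m$, where $\mu$ runs over $\{1\}$, $\{-1\}$, $\{\lambda,\lambda^{-1}\}$ respectively; a cardinality check matches $\dim(D(m,d,\xi)/I)$. That these functionals annihilate $I$ is proved exactly as in~\eqref{eqn:Bvanish}: expand with $\Delta$, insert the values from~\eqref{eqn:Dgenerator} together with the $E_1^{[k]}$-formulas established just after Lemma~\ref{lem:Dalg}, and reduce to $\sum_t\binom{r}{t}(-1)^{r-t}t^{s-u}=0$ for $s-u<r$, i.e.\ the partial second Stirling number~\eqref{eqn:2Stirling}.

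The main work is the linear independence of each candidate set, equivalently the invertibility of its evaluation matrix against the chosen basis of $D(m,d,\xi)/I$. Here one uses that $\zeta_{\cdot,\cdot}E_2^sE_1^l$ kills every $u$-monomial of $D(m,d,\xi)$ --- because in $\langle\zeta_{\cdot,\cdot}E_2^sE_1^l,a\rangle=\langle\zeta_{\cdot,\cdot}\otimes E_2^sE_1^l,\Delta a\rangle$ the first tensor slot of $\Delta a$ is again a $u$-monomial --- and dually $\chi_{\cdot,\cdot}E_2^sE_1^l$ kills every $y$-monomial; so the matrix splits into a ``$\zeta$-against-$y$'' block and a ``$\chi$-against-$u$'' block. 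A direct expansion shows $\langle\zeta_{\mu^{1/\om}\eta^i,\mu^{1/m}\gamma^j}E_2^sE_1^{[l]},x^{i'}g^{j'+s'm}y^{l'}\rangle=\delta_{l',l}\,\mu^{i'/\om+j'/m+s'}\eta^{ii'}\gamma^{jj'}\big(\tfrac{i'}{\om}+\tfrac{j'}{m}+s'\big)^s$, and the $\chi$-against-$u$ entry is the same up to the invertible scalar $\xi^l/\prod_{t=1}^l(1-\gamma^{-t})$. Peeling off the Vandermonde $(\eta^{ii'})$ and then $(\gamma^{jj'})$ via Lemma~\ref{lem:Kron2}, and the identity block $(\delta_{l',l})$ via Lemma~\ref{lem:Kron}, reduces invertibility to that of, for fixed $i',j'$, the $2r\times2r$ matrix $\big(\la^{(-1)^e(2s'+e')}(a+2s'+e')^s\mid(e,s),(e',s')\in 2\times r\big)$ with $a=\tfrac{i'}{\om}+\tfrac{j'}{m}\in\mathbb{Q}$, which is~\eqref{eqn:mat1} and is invertible by Corollary~\ref{cor:mat}(1) since $\lambda\notin\{\pm1\}$; for $I=((g^m\mp1)^r)$ the index $e$ collapses and this block is an ordinary Vandermonde. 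I expect the delicate part to be the bookkeeping that confirms the twisting scalars $\phi_i,\xi^k$ occurring in the $u_iu_j$-relations and in $\Delta(\zeta_{\cdot,\cdot}),\Delta(\chi_{\cdot,\cdot})$ contribute only invertible diagonal factors and an at-worst-rational shift of $a$ (covered by Corollary~\ref{cor:mat}(2)), leaving the $2r\times2r$ coupling block in the stated form. Invertibility gives that each candidate set is a basis of $(D(m,d,\xi)/I)^\ast$; summing over $\mathcal{I}_0$ yields $D(m,d,\xi)^\circ=\k\{\zeta_{\la^{1/\om},\la^{1/m}}\zeta_{\eta,1}^i\zeta_{1,\gamma}^jE_2^sE_1^l,\ \chi_{\la^{1/\om},\la^{1/m}}\chi_{\eta,1}^i\chi_{1,\gamma}^jE_2^sE_1^l\}$, which is precisely the claimed generation.
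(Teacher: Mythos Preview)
Your argument is correct and in fact slightly streamlines the paper's own proof. The overall skeleton --- reduce to $\mathcal{I}_0$ via Corollary~\ref{cor:circ}(3), verify vanishing on each $I\in\mathcal{I}_0$ by the Stirling identity~\eqref{eqn:2Stirling}, and then prove linear independence by inverting an evaluation matrix with Lemma~\ref{lem:Kron2} and Corollary~\ref{cor:mat} --- is identical. The genuine difference is in the choice of basis for $(D(m,d,\xi)/I)^\ast$. You work directly with the separated families $\zeta_{\mu^{1/\om}\eta^i,\mu^{1/m}\gamma^j}E_2^sE_1^l$ and $\chi_{\mu^{1/\om}\eta^i,\mu^{1/m}\gamma^j}E_2^sE_1^l$, exploit the observation that $\zeta$-elements kill $u$-monomials and $\chi$-elements kill $y$-monomials, and obtain a block-diagonal evaluation matrix whose two blocks differ only by the row scalar $\xi^l/\prod_{t\le l}(1-\gamma^{-t})$; after peeling off $(\eta^{ii'})$, $(\gamma^{jj'})$, $(\delta_{l,l'})$ and the row scaling $\la^{(-1)^ea}$, each block reduces to the matrix~\eqref{eqn:mat1}, so Corollary~\ref{cor:mat}(1) suffices. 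The paper instead passes to the linearly equivalent combined basis~\eqref{eqn:D1} built from the group-like $(\zeta_{1,\gamma}+\xi\chi_{1,\gamma})^k$ with $k\in 2m$, pairs it against carefully rescaled test vectors $h_{e',i',k',s',l'}$ mixing $y$- and $u$-monomials according to the parity of $k'$, and lands on the matrix~\eqref{eqn:mat2}, requiring Corollary~\ref{cor:mat}(2). Your route avoids the ad~hoc $h_{e',i',k',s',l'}$ and the rational exponent shift $b$ altogether. One small remark: your hedge about ``delicate bookkeeping'' with $\phi_i,\xi^k$ is unnecessary --- those constants enter only through $\Delta(u_l)$ in~$D(m,d,\xi)$, which is clean, and not through any product $u_iu_j$; the evaluation $\langle\chi_{\cdot}E_2^sE_1^{[l]},x^{i'}g^{j'}u_{l'}\rangle$ uses only the $k=0$ term of $\Delta(x^{i'}g^{j'}u_{l'})$.
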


\begin{proof}
Clearly $D(m,d,\xi)$ has a Laurent polynomial subalgebra $P=\k[g^{\pm1}]$. Choose
$$\mathcal{I}_0=\{((g^m-1)^r),\;((g^m+1)^r),\;((g^m-\la)^r(g^m-\la^{-1})^r)\lhd_l D(m,d,\xi)
  \mid \la\in\k^\ast\setminus\{\pm1\},\;r\in\N\}.$$
At first for any cofinite left ideal $I$ of $D(m,d,\xi)$, according to Lemma \ref{lem:p(x)}(3), there must be some non-zero polynomial
$$p(g)=(g^m-1)^{r'}(g^m+1)^{r''}
      \prod_{\alpha=1}^N(g^m-\la_\alpha)^{r_\alpha}(g^m-\la_\alpha^{-1})^{r_\alpha}$$
for some $N\geq1$, $r',r'',r_\alpha\in\N$ and distinct $\la_\alpha\in\k^\ast\setminus\{\pm1\}$ which are not inverses of each other, such that $p(g)$ generates a cofinite left ideal contained in $I$.
It can be known by Lemma \ref{lem:intersection} that
$$((g^m-1)^{r'})\cap((g^m+1)^{r''})\cap\left[\bigcap_{\alpha=1}^N
  ((g^m-\la_\alpha)^{r_\alpha}(g^m-\la_\alpha^{-1})^{r_\alpha})\right]= (p(g)) \subseteq I$$
as left ideals of $D(m,d,\xi)$,
and thus Lemma \ref{lem:I0} or Corollary \ref{cor:circ}(3) can be applied to obtain that $D(m,d,\xi)^\circ=\sum_{I\in\mathcal{I}_0}(D(m,d,\xi)/I)^\ast$.

Now we try to prove that for each $I\in\mathcal{I}_0$, the subspace $(D(m,d,\xi)/I)^\ast$ can be spanned by some products of
$\zeta_{\la^\frac{1}{\om},\la^\frac{1}{m}},\chi_{\la^\frac{1}{\om},\la^\frac{1}{m}},
 E_2,E_1\;(\la^\frac{1}{\om},\la^\frac{1}{m}\in\k^\ast)$.
For simplicity, we always denote
$$\psi_{\la^\frac{1}{\om},\la^\frac{1}{m}}
  :=\zeta_{\la^\frac{1}{\om},\la^\frac{1}{m}}+\chi_{\la^\frac{1}{\om},\la^\frac{1}{m}}:
  \left\{\begin{array}{ll}
    x^ig^jy^l\mapsto \delta_{l,0}\lambda^\frac{i}{\om}\lambda^\frac{j}{m}  \\
    x^ig^ju_l\mapsto \delta_{l,0}\lambda^\frac{i}{\om}\lambda^\frac{j}{m}
  \end{array}\right.,\;\;$$
where $\la^\frac{1}{\om},\la^\frac{1}{m}\in\k^\ast$. Note that
$\psi_{\la_1^\frac{1}{\om},\la_1^\frac{1}{m}}\psi_{\la_2^\frac{1}{\om},\la_2^\frac{1}{m}}
=\psi_{\la_1^\frac{1}{\om}\la_2^\frac{1}{\om},\la_1^\frac{1}{m}\la_2^\frac{1}{m}}$ always holds. Moreover, let $\eta$ be an primitive $\om$th root of $1$. 

We continue to show the spanning by a discussion on the form of $I\in\mathcal{I}_0$:

\textbf{Case 1.} Suppose $I=((g^m-\la)^r(g^m-\la^{-1})^r)$ for some $r\in\N$ and $\la\in\k^\ast\setminus\{\pm1\}$. Let $\la^\frac{1}{\om},\la^\frac{1}{m}$ be some fixed roots of $\la$ respectively. 
In this case we aim to show that
\begin{equation}\label{eqn:D1}
\{\psi_{\la^\frac{(-1)^e}{\om},\la^\frac{(-1)^e}{m}}(\zeta_{\eta,1}+\chi_{\eta,1})^i
  (\zeta_{1,\gamma}+\xi\chi_{1,\gamma})^kE_2^sE_1^l
  \mid e\in 2,\;i\in\om,\;k\in 2m,\;s\in r,\;l\in m\}
\end{equation}
is a linear basis of $4\om m^2r$-dimensional space $(D(m,d,\xi)/I)^\ast$. We remark that (\ref{eqn:D1}) is linearly equivalent to
\begin{equation}\label{eqn:D1'}
\{\zeta_{\la^\frac{(-1)^e}{\om},\la^\frac{(-1)^e}{m}}\zeta_{\eta,1}^i\zeta_{1,\gamma}^jE_2^sE_1^l,\;
  \chi_{\la^\frac{(-1)^e}{\om},\la^\frac{(-1)^e}{m}}\chi_{\eta,1}^i\chi_{1,\gamma}^jE_2^sE_1^l
  \mid e\in 2,\;i\in\om,\;j\in m,\;s\in r,\;l\in m\}.
\end{equation}

Evidently
$$I=\k\{x^ig^j(g^m-\la)^r(g^m-\la^{-1})^ry^l,\;x^ig^j(g^m-\la)^r(g^m-\la^{-1})^ru_l
       \mid i\in\om,\;j\in\Z,\;l\in m\},$$
since $yg^m=g^my$ and $u_ig^m=g^{-m}u_i$ for each $i\in m$.
Also, $D(m,d,\xi)/I$ has a linear basis
$$\{x^ig^j(g^m-\la)^s(g^m-\la^{-1})^sy^l,\;x^ig^j(g^m-\la)^s(g^m-\la^{-1})^su_l
       \mid i\in\om,\;j\in 2m,\;s\in r,\;l\in m\},$$
and hence $\dim((D(m,d,\xi)/I)^\ast)=\dim(D(m,d,\xi)/I)=4\om m^2r$. Next we show that all elements in (\ref{eqn:D1'}) vanish on $I$.
We make computations for any $s\in r$, $l,l'\in m$ and $j'\in\Z$, especially for arbitrary roots $\la^\frac{1}{\om}$ and $\la^\frac{1}{m}$ of $\la$: Clearly
\begin{eqnarray}\label{eqn:Dvanish1}
\langle\zeta_{\la^\frac{1}{\om},\la^\frac{1}{m}}E_2^sE_1^l,x^{i'}g^{j'}(g^m-\la)^ru_{l'}\rangle&=&0,  \\
\langle\chi_{\la^\frac{1}{\om},\la^\frac{1}{m}}E_2^sE_1^l,x^{i'}g^{j'}(g^m-\la)^ry^{l'}\rangle&=&0,
\end{eqnarray}
and
\begin{eqnarray}
\langle\zeta_{\la^\frac{1}{\om},\la^\frac{1}{m}}E_2^sE_1^l,x^{i'}g^{j'}(g^m-\la)^ry^{l'}\rangle&=&0
\end{eqnarray}
due to the same reason as Equation (\ref{eqn:Bvanish}). Similarly,
\begin{eqnarray}\label{eqn:Dvanish4}
& & \langle\chi_{\la^\frac{1}{\om},\la^\frac{1}{m}}E_2^sE_1^l,x^{i'}g^{j'}(g^m-\la)^ru_{l'}\rangle  \\
&=& \langle\chi_{\la^\frac{1}{\om},\la^\frac{1}{m}}E_2^sE_1^l,
           \sum_{t=0}^r\binom{r}{t}x^{i'}g^{j'+mt}(-\la)^{r-t}u_{l'}\rangle  \nonumber  \\
&=& \sum_{t=0}^r\binom{r}{t}(-\la)^{r-t}
    \langle\chi_{\la^\frac{1}{\om},\la^\frac{1}{m}}E_2^s\otimes E_1^l,
           \Delta(x^{i'}g^{j'+nt}u_{l'})\rangle  \nonumber  \\
&=& \delta_{l',l}\sum_{t=0}^r\binom{r}{t}(-\la)^{r-t}
    \langle\chi_{\la^\frac{1}{\om},\la^\frac{1}{m}}E_2^s,x^{i'}g^{j'+mt}u_0\rangle
    \langle E_1^l,x^{i'}g^{j'+nt}u_l\rangle  \nonumber  \\
&=& \delta_{l',l}\sum_{t=0}^r\binom{r}{t}(-\la)^{r-t}
    \la^\frac{i'}{\om}\la^\frac{j'+mt}{m}(\frac{i'}{\om}+\frac{j'+mt}{m})^s
    \frac{\xi^{l^2}}{(1-\gamma^{-1})^l}  \nonumber  \\
&=& \delta_{l',l}\frac{\xi^{l^2}}{(1-\gamma^{-1})^l}\la^r\la^\frac{i'}{\om}\la^\frac{j'}{m}
    \sum_{t=0}^r\binom{r}{t}(-1)^{r-t}
    \sum_{u=0}^s\binom{s}{u}(\frac{i'}{\om}+\frac{j'}{m})^ut^{s-u}  \nonumber  \\
&=& \delta_{l',l}\frac{\xi^{l^2}}{(1-\gamma^{-1})^l}\la^r\la^\frac{i'}{\om}\la^\frac{j'}{m}
    \sum_{u=0}^s\binom{s}{u}(\frac{i'}{\om}+\frac{j'}{m})^u
    \sum_{t=0}^r\binom{r}{t}(-1)^{r-t}t^{s-u}  \nonumber  \\
&=& 0,  \nonumber
\end{eqnarray}
since $\sum_{t=0}^r\binom{r}{t}(-1)^{r-t}t^{s-u}=0$ when $s-u<r$ is a part of the second Stirling number (\ref{eqn:2Stirling}).

Of course, these equations still hold if we substitute $\la^\frac{1}{\om}$ and $\la^\frac{1}{m}$ by $\la^\frac{-1}{\om}$ and $\la^\frac{-1}{m}$ respectively. As a conclusion,
all elements in (\ref{eqn:D1'}) as well as (\ref{eqn:D1}) belong to $I^\perp=(D(m,d,\xi)/I)^\ast$.

Finally we prove that (\ref{eqn:D1}) are linearly independent. Choose a lexicographically ordered set of elements
$\{h_{e',i',k',s',l'}\in D(m,d,\xi)\mid (e',i',k',s',l')\in 2\times\om\times 2m\times r\times m\}$,
where
$$h_{e',i',k',s',l'}:=\left\{\begin{array}{ll}
  \frac{1}{l'!_\gamma}x^{i'}g^{\frac{k'}{2}+2s'm+e'm}y^{l'},
  & \text{when}\;\; 2\mid k'; \\
  \frac{(1-\gamma^{-1})^{l'}}{\xi^{{l'}^2}}x^{i'}g^{\frac{k'-1}{2}+2s'm+e'm}u_{l'},
  & \text{when}\;\; 2\nmid k'.
\end{array}\right.$$

Our goal is to show that the $4\om m^2r\times 4\om m^2r$ square matrix
\begin{eqnarray*}
A&:=& (\langle\psi_{\la^\frac{(-1)^e}{\om},\la^\frac{(-1)^e}{m}}(\zeta_{\eta,1}+\chi_{\eta,1})^i
          (\zeta_{1,\gamma}+\xi\chi_{1,\gamma})^kE_2^sE_1^l,
           h_{e',i',k',s',l'}\rangle  \\
& &   \;\;\mid (e,i,k,s,l),(e',i',k',s',l')\in 2\times\om\times 2m\times r\times m )
\end{eqnarray*}
is invertible for fixed $\la^\frac{1}{\om},\la^\frac{1}{n}$, which would imply the linear independence of (\ref{eqn:D1}) as well as (\ref{eqn:D1'}). We make following computations: When $2\mid k'$,
\begin{eqnarray*}
& & \langle\psi_{\la^\frac{(-1)^e}{\om},\la^\frac{(-1)^e}{m}}(\zeta_{\eta,1}+\chi_{\eta,1})^i
           (\zeta_{1,\gamma}+\xi\chi_{1,\gamma})^kE_2^sE_1^l,h_{e',i',k',s',l'}\rangle  \\
&=& \langle\zeta_{\la^\frac{(-1)^e}{\om},\la^\frac{(-1)^e}{m}}\zeta_{\eta,1}^i\zeta_{1,\gamma}^kE_2^sE_1^l,
           \frac{1}{l'!_\gamma}x^{i'}g^{\frac{k'}{2}+2s'm+e'm}y^{l'}\rangle  \\
&=& \delta_{l',l}\langle\zeta_{\la^\frac{(-1)^e}{\om}\eta^i,\la^\frac{(-1)^e}{m}\gamma^k}E_2^s,
           x^{i'}g^{\frac{k'}{2}+2s'm+e'm}\rangle
    \frac{1}{l!_\gamma}\langle E_1^l,x^{i'}g^{\frac{k'}{2}+2s'm+e'm}y^l\rangle  \\
&=& \delta_{l',l}\la^{(-1)^e(\frac{i'}{\om}+\frac{k'}{2m}+2s'+e')}\eta^{ii'}\xi^{kk'}
    (\frac{i'}{\om}+\frac{k'}{2m}+2s'+e')^s;
\end{eqnarray*}
When $2\nmid k'$,
\begin{eqnarray*}
& & \langle\psi_{\la^\frac{(-1)^e}{\om},\la^\frac{(-1)^e}{m}}(\zeta_{\eta,1}+\chi_{\eta,1})^i
           (\zeta_{1,\gamma}+\xi\chi_{1,\gamma})^kE_2^sE_1^l,h_{e',i',k',s',l'}\rangle  \\
&=& \langle\chi_{\la^\frac{(-1)^e}{\om},\la^\frac{(-1)^e}{m}}\chi_{\eta,1}^i(\xi\chi_{1,\gamma})^k
           E_2^sE_1^l,
           \frac{(1-\gamma^{-1})^{l'}}{\xi^{{l'}^2}}x^{i'}g^{\frac{k'-1}{2}+2s'm+e'm}u_{l'}\rangle  \\
&=& \delta_{l',l}\langle\chi_{\la^\frac{(-1)^e}{\om}\eta^i,\la^\frac{(-1)^e}{m}\gamma^k}\xi^kE_2^s,
                        x^{i'}g^{\frac{k'-1}{2}+2s'm+e'm}u_0\rangle
    \frac{(1-\gamma^{-1})^l}{\xi^{l^2}}\langle E_1^l,x^{i'}g^{\frac{k'-1}{2}+2s'm+e'm}u_l\rangle  \\
&=& \delta_{l',l}\la^{(-1)^e(\frac{i'}{\om}+\frac{k'-1}{2m}+2s'+e')}\eta^{ii'}\xi^{kk'}
    (\frac{i'}{\om}+\frac{k'-1}{2m}+2s'+e')^s.
\end{eqnarray*}
One concludes that
\begin{eqnarray*}
A&:=& (\delta_{l',l}\la^{(-1)^e(\frac{i'}{\om}+\lfloor\frac{k'}{2}\rfloor\frac{1}{m}+2s'+e')}
         \eta^{ii'}\xi^{kk'}(\frac{i'}{\om}+\lfloor\frac{k'}{2}\rfloor\frac{1}{m}+2s'+e')^s  \\
& &    \;\;\mid (e,i,k,s,l),(e',i',k',s',l')\in 2\times\om\times 2m\times r\times m).
\end{eqnarray*}
However, there are following two facts:
\begin{itemize}
  \item The matrix $(\delta_{l',l}\eta^{ii'}\xi^{kk'}\mid (i,k,l),(i',k',l')\in \om\times 2m\times m)$ is invertible by Lemma \ref{lem:Kron}, since it is the Kronecker product of three invertible matrices;
  \item For each $(i',k')\in \om\times 2m$, the matrix
      $$\left(\la^{(-1)^e(\frac{i'}{\om}+\lfloor\frac{k'}{2}\rfloor\frac{1}{m}+2s'+e')}
              (\frac{i'}{\om}+\lfloor\frac{k'}{2}\rfloor\frac{1}{m}+2s'+e')^s
         \mid (e,s),(e',s')\in 2\times r\right)$$
      is invertible by Corollary \ref{cor:mat}(2).
\end{itemize}
Thus $A$ is invertible according to Lemma \ref{lem:Kron2}, and (\ref{eqn:D1}) or (\ref{eqn:D1'}) is a basis of $(D(m,d,\xi)/I)^\ast$.

\textbf{Case 2.} Suppose $I=((g^m-1)^r)$ for some $r\in\N$. In this case we aim to show that
\begin{equation}\label{eqn:D2}
\{(\zeta_{\eta,1}+\chi_{\eta,1})^i(\zeta_{1,\gamma}+\xi\chi_{1,\gamma})^kE_2^sE_1^l
  \mid i\in\om,\;k\in 2m,\;s\in r,\;l\in m\}
\end{equation}
is a linear basis of $2\om m^2r$-dimensional space $(D(m,d,\xi)/I)^\ast$. We remark that (\ref{eqn:D2}) is linearly equivalent to
\begin{equation}\label{eqn:D2'}
\{\zeta_{\eta,1}^i\zeta_{1,\gamma}^jE_2^sE_1^l,\;\chi_{\eta,1}^i\chi_{1,\gamma}^jE_2^sE_1^l
  \mid i\in\om,\;j\in m,\;s\in r,\;l\in m\}.
\end{equation}

Evidently
$$I=\k\{x^ig^j(g^m-1)^ry^l,\;x^ig^j(g^m-1)^ru_l \mid i\in\om,\;j\in\Z,\;l\in m\},$$
since $yg^m=g^my$ and $u_ig^m=g^{-m}u_i$ for each $i\in m$.
Also, $D(m,d,\xi)/I$ has a linear basis
$$\{x^ig^j(g^m-1)^sy^l,\;x^ig^j(g^m-1)^su_l \mid i\in\om,\;j\in m,\;s\in r,\;l\in m\},$$
and hence $\dim((D(m,d,\xi)/I)^\ast)=\dim(D(m,d,\xi)/I)=2\om m^2r$. Similar computations to Equations (\ref{eqn:Dvanish1}) to (\ref{eqn:Dvanish4}) follows that all elements in (\ref{eqn:D2'}) vanish on $I$, and hence all elements in (\ref{eqn:D2'}) as well as (\ref{eqn:D2}) belong to $I^\perp=(D(m,d,\xi)/I)^\ast$.

Finally we prove that (\ref{eqn:D2}) are linearly independent. Our goal is to show that the $2\om m^2r\times 2\om m^2r$ square matrix
\begin{eqnarray*}
A&:=& (\langle(\zeta_{\eta,1}+\chi_{\eta,1})^i(\zeta_{1,\gamma}+\xi\chi_{1,\gamma})^kE_2^sE_1^l,
            h_{0,i',k',s',l'}\rangle  \\
& & \;\;\mid (i,k,s,l),(i',k',s',l')\in \om\times 2m\times r\times m)  \\
&=& \left(\delta_{l',l}\eta^{ii'}\xi^{kk'}(\frac{i'}{\om}+\lfloor\frac{k'}{2}\rfloor\frac{1}{m}+2s')^s
          \mid (i,k,s,l),(i',k',s',l')\in \om\times 2m\times r\times m\right)
\end{eqnarray*}
is invertible. This is true due to the same argument in the proof of Proposition \ref{prop:B} that $A$ is invertible there.

\textbf{Case 3.} Suppose $I=((g^m+1)^r)$ for some $r\in\N$. Let $(-1)^\frac{1}{\om},(-1)^\frac{1}{m}$ be some fixed roots of $-1$ respectively. In this case we aim to show that
\begin{equation}\label{eqn:D3}
\{\psi_{(-1)^\frac{1}{\om},(-1)^\frac{1}{m}}(\zeta_{\eta,1}+\chi_{\eta,1})^i
  (\zeta_{1,\gamma}+\xi\chi_{1,\gamma})^kE_2^sE_1^l \mid i\in\om,\;k\in 2m,\;s\in r,\;l\in m\}
\end{equation}
is a linear basis of $2\om m^2r$-dimensional space $(D(m,d,\xi)/I)^\ast$. We remark that (\ref{eqn:D3}) is linearly equivalent to
\begin{equation}\label{eqn:D3'}
\{\zeta_{(-1)^\frac{1}{\om},(-1)^\frac{1}{m}}\zeta_{\eta,1}^i\zeta_{1,\gamma}^jE_2^sE_1^l,\;
  \chi_{(-1)^\frac{1}{\om},(-1)^\frac{1}{m}}\chi_{\eta,1}^i\chi_{1,\gamma}^jE_2^sE_1^l
  \mid i\in\om,\;j\in m,\;s\in r,\;l\in m\}.
\end{equation}

Evidently
$$I=\k\{x^ig^j(g^m+1)^ry^l,\;x^ig^j(g^m+1)^ru_l \mid i\in\om,\;j\in\Z,\;l\in m\},$$
since $yg^m=g^my$ and $u_ig^m=g^{-m}u_i$ for each $i\in m$.
Also, $D(m,d,\xi)/I$ has a linear basis
$$\{x^ig^j(g^m+1)^sy^l,\;x^ig^j(g^m+1)^su_l \mid i\in\om,\;j\in m,\;s\in r,\;l\in m\},$$
and hence $\dim((D(m,d,\xi)/I)^\ast)=\dim(D(m,d,\xi)/I)=2\om m^2r$. Similar computations to Equations (\ref{eqn:Dvanish1}) to (\ref{eqn:Dvanish4}) follows that all elements in (\ref{eqn:D3'}) vanish on $I$, and hence all elements in (\ref{eqn:D3'}) as well as (\ref{eqn:D3}) belong to $I^\perp=(D(m,d,\xi)/I)^\ast$.

Finally we prove that (\ref{eqn:D3}) are linearly independent. Our goal is to show that the $2\om m^2r\times 2\om m^2r$ square matrix
\begin{eqnarray*}
A&:=& (\langle\psi_{(-1)^\frac{1}{\om},(-1)^\frac{1}{m}}
              (\zeta_{\eta,1}+\chi_{\eta,1})^i(\zeta_{1,\gamma}+\xi\chi_{1,\gamma})^kE_2^sE_1^l,
              h_{0,i',k',s',l'}\rangle  \\
& &   \;\;\mid (i,k,s,l),(i',k',s',l')\in \om\times 2m\times r\times m) \\
&=& ((-1)^{\frac{i'}{\om}+\lfloor\frac{k'}{2}\rfloor\frac{1}{m}+2s'}
          \delta_{l',l}\eta^{ii'}\xi^{kk'}(\frac{i'}{\om}+\lfloor\frac{k'}{2}\rfloor\frac{1}{m}+2s')^s  \\
& & \;\;  \mid (i,k,s,l),(i',k',s',l')\in \om\times 2m\times r\times m)
\end{eqnarray*}
is invertible. This is true due to the same argument in the proof of Proposition \ref{prop:B} that $A$ is invertible there.

We summarize three cases above in a result that
\begin{eqnarray*}
D(m,d,\xi)^\circ
&=& \sum_{I\in\mathcal{I}_0}(D(m,d,\xi)/I)^\ast  \\
&=& \k\{\zeta_{\la^\frac{1}{\om},\la^\frac{1}{m}}E_2^sE_1^l,\;
        \chi_{\la^\frac{1}{\om},\la^\frac{1}{m}}E_2^sE_1^l
        \mid \la^\frac{1}{\om},\la^\frac{1}{m}\in\k^\ast,\;s\in\N,\;l\in m\}.
\end{eqnarray*}
\end{proof}

\begin{theorem}\label{thm:D}
\begin{itemize}
  \item[(1)] $D_\circ(m,d,\xi)$ constructed in Subsection \ref{subsection:D1} is a Hopf algebra;
  \item[(2)] As a Hopf algebra, $D(m,d,\xi)^\circ$ is isomorphic to $D_\circ(m,d,\xi)$.
\end{itemize}
\end{theorem}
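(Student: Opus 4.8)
The plan is to follow the template of the proofs of Theorems \ref{thm:T} and \ref{thm:B}. First I would introduce the algebra map
$$\Theta\colon D_\circ(m,d,\xi)\longrightarrow D(m,d,\xi)^\circ,\quad \Zeta_{\la^{1/\om},\la^{1/m}}\mapsto\zeta_{\la^{1/\om},\la^{1/m}},\ \Chi_{\la^{1/\om},\la^{1/m}}\mapsto\chi_{\la^{1/\om},\la^{1/m}},\ F_1\mapsto E_1,\ F_2\mapsto E_2,$$
which is well defined by Lemma \ref{lem:Dalg}, since the defining relations of $D_\circ(m,d,\xi)$ are exactly the identities verified there among $\zeta,\chi,E_1,E_2$, and which is surjective by Proposition \ref{p.5.5}. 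The comultiplication, counit and antipode of $D_\circ(m,d,\xi)$ were written down precisely so as to reproduce the formulas computed in Lemma \ref{lem:D.comult} and in the two unlabeled lemmas of Subsection \ref{subsection:D2} (the one for $\Delta(E_1),\Delta(E_2)$ and the one for $S$), with the coproducts on the non-normalized $\Zeta_{\la^{1/\om},\la^{1/m}},\Chi_{\la^{1/\om},\la^{1/m}}$ obtained by multiplying the normalized ones by powers of the group-like element $\Zeta_{1,\gamma}+\xi\Chi_{1,\gamma}$ and using that $\Delta$ is an algebra map. Consequently, as soon as $\Theta$ is shown to be bijective it is automatically an isomorphism of Hopf algebras, and part (1) follows for free: $D(m,d,\xi)^\circ$ is a Hopf algebra by \cite[Thm.~9.1.3]{Mon93}, so its structure transports along $\Theta^{-1}$ to $D_\circ(m,d,\xi)$ and, by the lemmas just cited, coincides with the stated one.

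Everything thus reduces to injectivity of $\Theta$, i.e.\ to linear independence in $D(m,d,\xi)^\circ$ of the full family
$$\{\,\Zeta_{\la^{1/\om},\la^{1/m}}\Zeta_{\eta,1}^i\Zeta_{1,\gamma}^jF_2^sF_1^l,\ \Chi_{\la^{1/\om},\la^{1/m}}\Chi_{\eta,1}^i\Chi_{1,\gamma}^jF_2^sF_1^l\,\}$$
over $\la\in\k^\ast$ (with fixed roots), $i\in\om$, $j\in m$, $s\in\N$, $l\in m$. Inside each single $(D(m,d,\xi)/I)^\ast$ with $I\in\mathcal{I}_0$ this independence is exactly Cases 1--3 of Proposition \ref{p.5.5}, proved there via the invertibility statements of Corollary \ref{cor:mat} and Lemmas \ref{lem:Kron}, \ref{lem:Kron2}. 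What remains is to show that for pairwise distinct $I_1,\dots,I_N\in\mathcal{I}_0$ the sum $\sum_{\alpha}(D(m,d,\xi)/I_\alpha)^\ast=\sum_\alpha I_\alpha^\perp$ is direct. As in Theorems \ref{thm:T} and \ref{thm:B}, it suffices to check that $I_{\alpha'}$ and $\prod_{\alpha\ne\alpha'}I_\alpha$ are comaximal in $D(m,d,\xi)$; since $D(m,d,\xi)$ is a finite module over $P=\k[g^{\pm1}]$, this reduces to comaximality of the corresponding monic polynomials in $g$, which holds because the factors $g^m-1$, $g^m+1$ and the pairs $(g^m-\la_\alpha)(g^m-\la_\alpha^{-1})$ indexing $\mathcal{I}_0$ consist of coprime linear factors in $g$ — the $\la_\alpha\in\k^\ast\setminus\{\pm1\}$ being chosen distinct and not inverses of one another.

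The part I expect to require real care is the bookkeeping of overlaps rather than any structural difficulty: because $\zeta_{\la^{1/\om},\la^{1/m}}$ and $\chi_{\la^{1/\om},\la^{1/m}}$ depend on a choice of $\om$th and $m$th roots, the ``same'' functional can appear inside several of the $(D(m,d,\xi)/I)^\ast$, so one must confirm that the three families of bases exhibited in Cases 1--3 of Proposition \ref{p.5.5} fit together consistently under the comaximality reductions above. This is exactly where the orthogonal-idempotent relations $\zeta_{1,1}+\chi_{1,1}=1$ and $\zeta_{1,1}\chi_{1,1}=0$ do the work, splitting $D(m,d,\xi)^\circ$ into a ``$\zeta$-part'' and a ``$\chi$-part'' on which the two halves of the spanning family act separately. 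Once directness is in hand, $\Theta$ is injective, hence an isomorphism of Hopf algebras, and both claims of the theorem follow.
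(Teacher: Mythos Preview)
Your proposal is correct and follows essentially the same route as the paper's own proof: define $\Theta$ on generators, use Lemma \ref{lem:Dalg} and Proposition \ref{p.5.5} for surjectivity, then reduce injectivity to the directness of the sum $\sum_\alpha I_\alpha^\perp$ via comaximality of the defining polynomials, exactly as in \eqref{eqn:Binj2}. Your final paragraph about bookkeeping and the orthogonal idempotent split is more than the paper actually invokes---the elements for $\la$ and $\la^{-1}$ already sit together inside the single basis of Case~1 of Proposition \ref{p.5.5}, so no additional $\zeta$/$\chi$ decomposition is needed once the directness of $\sum_\alpha I_\alpha^\perp$ is established.
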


\begin{proof}
Consider the following map
\begin{eqnarray*}
\Theta &:& D_\circ(m,d,\xi)\rightarrow D(m,d,\xi)^\circ,  \\
& & \Zeta_{\la^\frac{1}{\om},\la^\frac{1}{m}}\mapsto\zeta_{\la^\frac{1}{\om},\la^\frac{1}{m}},\;
    \Chi_{\la^\frac{1}{\om},\la^\frac{1}{m}}\mapsto\chi_{\la^\frac{1}{\om},\la^\frac{1}{m}},\;
    F_1\mapsto E_1,\;F_2\mapsto E_2,
\end{eqnarray*}
where $\la^\frac{1}{\om},\la^\frac{1}{m}\in\k^\ast$. This is a epimorphism of algebras by Lemma \ref{lem:Dalg} and Proposition \ref{p.5.5}. Furthermore, $\Theta$ would become an isomorphism of Hopf algebras with desired coalgebra structure and antipode, as long as it is injective (since $D(m,d,\xi)^\circ$ is in fact a Hopf algebra).

In order to show that $\Theta$ is injective, we aim to show the linear independence of
$$\{\zeta_{\la^\frac{1}{\om},\la^\frac{1}{m}}\zeta_{\eta,1}^i\zeta_{1,\gamma}^jE_2^sE_1^l,\;
    \chi_{\la^\frac{1}{\om},\la^\frac{1}{m}}\chi_{\eta,1}^i\chi_{1,\gamma}^jE_2^sE_1^l
    \mid \la\in\k^\ast,\;i\in\om,\;j\in m,\;s\in\N,\;l\in m\}$$
in $D(m,d,\xi)^\circ$, where $\la^\frac{1}{\om}$ and $\la^\frac{1}{m}$ are fixed roots of each $\la\in\k^\ast$. By linear independence of elements in \eqref{eqn:D1'}, \eqref{eqn:D2'} and \eqref{eqn:D3'}, we only need to show that

This is due to the fact that any finite sum of form
\begin{eqnarray*}
& & (D(m,d,\xi)/((g^m-1)^r))^\ast+(D(m,d,\xi)/((g^m+1)^r))^\ast  \\
& & +\sum_{\alpha=1}^N(D(m,d,\xi)/((g^m-\la_\alpha)^r(g^m-\la_\alpha^{-1})^r)^\ast  \\
&=& ((g^m-1)^r)^\perp+((g^m+1)^r)^\perp
    +\sum_{\alpha=1}^N((g^m-\la_\alpha)^r(g^m-\la_\alpha^{-1})^r)^\perp
\end{eqnarray*}
is direct, as long as $\la_\alpha$'s are distinct and not inverses of each other. The reason is the same as Equations (\ref{eqn:Binj2}).
\end{proof}

\begin{remark}
\emph{The infinite dihedral group $\mathbb{D}_\infty$ is generated by $g$ and $x$ with relations
$$x^2=1,\;\;xgx=g^{-1},$$
and the finite dual of $\k\mathbb{D}_\infty$ is established by \cite{GL21}. Note that $\k\mathbb{D}_\infty=D(1,1,-1)$ and thus we cover the presentation of $(\k\mathbb{D}_\infty)^\circ$. For our convenience, we use our notion to write $(\k\mathbb{D}_\infty)^\circ$ out: It is generated by
$$\zeta_\la:g^jx^k\mapsto \delta_{k,0}\la^j,\;\;\chi_\la:g^jx^k\mapsto \delta_{k,1}\la^j,\;\;
E_2:g^jx^k\mapsto j\;\;\;(j\in\Z,\;k\in2)$$
for $\la\in\k^\ast$, with relations
\begin{align*}
& \zeta_{\la_1}\zeta_{\la_2}=\zeta_{\la_1\la_2},\;\;\chi_{\la_1}\chi_{\la_2}=\chi_{\la_1\la_2},\;\;
  \zeta_{\la_1}\chi_{\la_2}=\chi_{\la_1}\zeta_{\la_2}=0,\;\;\chi_1+\zeta_1=1,  \\
& E_2\zeta_\la=\zeta_\la E_2,\;\;E_2\chi_\la=\chi_\la E_2,
\end{align*}
and
\begin{align*}
& \Delta(\zeta_\la)=\zeta_\la\otimes\zeta_\la+\chi_\la\otimes\chi_{\la^{-1}},\;\;
  \Delta(\chi_\la)=\zeta_\la\otimes\chi_\la+\chi_\la\otimes\zeta_{\la^{-1}},  \\
& \Delta(E_2)=(\zeta_1-\chi_1)\otimes E_2+E_2\otimes 1,  \\
& \varepsilon(\zeta_\la)=1,\;\;\varepsilon(\chi_\la)=\varepsilon(E_2)=0,\;\;  \\
& S(\zeta_\la)=\zeta_{\la^{-1}},\;\;S(\chi_\la)=\chi_{\la^{-1}},\;\;S(E_2)=-(\zeta_1-\chi_1)E_2.
\end{align*}
This presentation is equivalent to that in \cite[Section 3.1]{GL21} and the equivalence  is given through:
$$\zeta_\la\mapsto \frac{1}{2}(\phi_\la+\psi_\la),\;\;
  \chi_\la\mapsto \frac{1}{2}(\phi_\la-\psi_\la)\;\;(\la\in\k^\ast),\;\;
  E_2\mapsto F,$$
  where we used notions of \cite{GL21} freely.}
\end{remark}

\section{Hopf Pairing and Consequences}\label{section6}

With the help of Sections \ref{section3}, \ref{section4} and \ref{section5}, we show that there always is a non-degenerate Hopf pairing on each affine prime regular Hopf algebra of GK-dimension one. Using such Hopf pairing, we attempt to consider a infinite-dimensional version of some conclusions which are valid in the finite-dimensional situation.

\subsection{Non-Degenerate Hopf Pairing}\label{subsection:bullet}

As mentioned in the introduction, the notion of pairing of a bialgebra or a Hopf algebra was given by Majid  \cite{Maj90}:

\begin{definition}\label{def:Hopfpairing}
Let $H$ and $H^\bullet$ be Hopf algebras.
A linear map $\langle-,-\rangle:H^\bullet\otimes H\rightarrow\k$ is called a Hopf pairing (on $H$), if
$$\begin{array}{ll}
\mathrm{(i)}\;\;\;\;\langle ff',h\rangle=\sum\langle f,h_{(1)}\rangle\langle f',h_{(2)}\rangle,
& \mathrm{(ii)}\;\;\;\;\langle f,hh'\rangle=\sum\langle f_{(1)},h\rangle\langle f_{(2)},h'\rangle,  \\
\mathrm{(iii)}\;\;\langle 1,h\rangle=\varepsilon(h),
& \mathrm{(iv)}\;\;\;\langle f,1\rangle=\varepsilon(f),  \\
\mathrm{(v)}\;\;\;\langle f,S(h)\rangle=\langle S(f),h\rangle
\end{array}$$
hold for all $f,f'\in H^\bullet$ and $h,h'\in H$.
Moreover, it is said to be non-degenerate, if for any $f\in H^\bullet$ and any $h\in H$,
\center{$\langle f,H\rangle=0$ implies $f=0$, and $\langle H^\bullet,h\rangle=0$ implies $h=0$.}
\end{definition}

Clearly, the definition follows that there are linear maps
$$\alpha:H^\bullet\rightarrow H^\ast,\;f\mapsto \langle f,-\rangle\;\;\;\text{and}\;\;\;\beta:H\rightarrow H^\bullet{}^\ast,\;h\mapsto \langle-,h\rangle.$$
Furthermore, we know by (ii) in Definition \ref{def:Hopfpairing} that for any $f\in H^\bullet$,
$$M^\ast(\alpha(f))=\sum\alpha(f_{(1)})\otimes\alpha(f_{(2)})\in H^\ast\otimes H^\ast,$$
where $M$ denotes the multiplication on $H$, and hence the image of $\alpha$ is in fact contained in $H^\circ$ by \cite[Lemma 9.1.1]{Mon93}.

As a conclusion, Definition \ref{def:Hopfpairing} follows that there are two maps of Hopf algebras
$$\alpha:H^\bullet\rightarrow H^\circ,\;f\mapsto \langle f,-\rangle\;\;\;\text{and}\;\;\;\beta:H\rightarrow H^\bullet{}^\circ,\;h\mapsto \langle-,h\rangle,$$
which are both injective if and only if the Hopf pairing $\langle-,-\rangle$ is non-degenerate. Thus we might be able to construct non-degenerate Hopf pairing on $H$, as long as the structure of finite dual $H^\circ$ is determined.

As examples, non-degenerate Hopf pairing on $\k\mathbb{D}_\infty$, $T_\infty(n,v,\xi)$, $B(n,\om,\gamma)$ and $D(m,d,\xi)$ are constructed as follows, respectively.

\begin{proposition}\label{prop:bullet}
For each affine prime regular Hopf algebra $H$ of GK-dimension one, we can construct a Hopf algebra $H^\bullet$ and a non-degenerate Hopf pairing  $\langle-,-\rangle:H^\bullet\otimes H\rightarrow\k$ as follows: Specifically, keeping the notions used in Sections $3,4$ and $5$, we have
\begin{itemize}
\item[(1)]
The evaluation $\langle-,-\rangle:(\k\mathbb{D}_\infty)^\bullet\otimes\k\mathbb{D}_\infty\rightarrow\k$ is a non-degenerate Hopf pairing, where
\begin{eqnarray*}
(\k\mathbb{D}_\infty)^\bullet
&=& \k\{\zeta_1E_2^s,\;\chi_1E_2^s\mid s\in\N\}  \\
&=& \k\{(\zeta_1-\chi_1)^kE_2^s\mid k\in2,\;s\in\N\}
\;\subseteq\;(\k\mathbb{D}_\infty)^\circ.
\end{eqnarray*}

\item[(2)]
The evaluation $\langle-,-\rangle:T_\infty(n,v,\xi)^\bullet\otimes T_\infty(n,v,\xi)\rightarrow\k$ is a non-degenerate Hopf pairing, where
\begin{eqnarray*}
T_\infty(n,v,\xi)^\bullet
&=& \k\{\omega^jE_2^sE_1^l \mid j\in n,\;s\in\N,\;l\in m\}  \\
&=& \k\{\omega^jE_2^{[s]}E_1^{[l]} \mid j\in n,\;s\in\N,\;l\in m\}
\;\subseteq\; T_\infty(n,v,\xi)^\circ,
\end{eqnarray*}
and $m=\frac{n}{\gcd(n,v)}$, $E_2^{[s]}E_1^{[l]}=\frac{1}{s!\cdot l!_{\xi^v}}E_2^sE_1^l$.

\item[(3)]
The evaluation $\langle-,-\rangle:B(n,\omega,\gamma)^\bullet\otimes B(n,\omega,\gamma)\rightarrow\k$ is a non-degenerate Hopf pairing, where
\begin{eqnarray*}
B(n,\omega,\gamma)^\bullet
&=& \k\{\psi_{1,\gamma}^jE_2^sE_1^l\mid j\in n,\;s\in\N,\;l\in n\}  \\
&=& \k\{\psi_{1,\gamma}^jE_2^sE_1^{[l]}\mid j\in n,\;s\in\N,\;l\in n\}
\;\subseteq\; B(n,\omega,\gamma)^\circ,
\end{eqnarray*}
and $E_1^{[l]}=\frac{1}{l!_\gamma}E_1^l$.

\item[(4)]
The evaluation $\langle-,-\rangle:D(m,d,\xi)^\bullet\otimes D(m,d,\xi)\rightarrow\k$ is a non-degenerate Hopf pairing, where
\begin{eqnarray*}
D(m,d,\xi)^\bullet
&=& \k\{\zeta_{1,\gamma}^jE_2^sE_1^l,\;\chi_{1,\gamma}^jE_2^sE_1^l
        \mid i\in\om,\;j\in m,\;s\in\N,\;l\in m\}  \\
&=& \k\{(\zeta_{1,\gamma}+\xi\chi_{1,\gamma})^kE_2^sE_1^l
        \mid k\in 2m,\;s\in\N,\;l\in m\}
\subseteq D(m,d,\xi)^\circ.
\end{eqnarray*}
\end{itemize}
\end{proposition}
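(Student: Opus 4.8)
The plan is to treat the four cases uniformly: in each case $H^\bullet$ is declared to be the $\k$-subspace of $H^\circ$ spanned by the exhibited set of elements, so the first task is to check that $H^\bullet$ is in fact a Hopf subalgebra of $H^\circ$, and the second is to verify non-degeneracy of the evaluation pairing. For the subalgebra claim, I would invoke the defining relations already established in Lemmas \ref{lem:Talg}, \ref{lem:Balg}, \ref{lem:Dalg} (and the corresponding ones for $\k\mathbb{D}_\infty$ recorded in the Remark): these show that the products of the chosen generators close up to $\k$-linear combinations of the listed basis monomials — e.g.\ $\om^n=1$, $E_1^m=0$ in the Taft case, $E_1E_2=E_2E_1+\tfrac1n E_1$ in the $B$ case, $\zeta_{1,1}+\chi_{1,1}=1$, $E_1^m=\tfrac1{(1-\gamma)^m}\chi_{1,1}$ in the $D$ case, and the idempotent relations $(\zeta_{1,\gamma}+\xi\chi_{1,\gamma})^m=\zeta_{1,1}-\chi_{1,1}$ — so each $H^\bullet$ is a subalgebra. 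Closure under $\Delta$ follows from the coproduct formulas computed in the lemmas of Sections \ref{section3}, \ref{section4}, \ref{section5}: in every displayed $\Delta$ the tensor factors involve only $\om$ (or $\zeta_{1,\gamma},\chi_{1,\gamma}$), powers of $E_1$, $E_2$ and the group-like $\psi_{1,\gamma}$ type elements, all of which lie in $H^\bullet$; closure under $S$ is read off the antipode formulas in the same way. One must also note the alternative spanning sets (with $E_1^{[l]}$, $E_2^{[s]}$, or the $(\zeta_{1,\gamma}+\xi\chi_{1,\gamma})^k$ reparametrisation) describe the same subspace, which is immediate since the two sets are related by an invertible change of coordinates.

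The real content is non-degeneracy. Since the evaluation factors through the canonical inclusion $H^\bullet\hookrightarrow H^\circ\subseteq H^\ast$, the map $\alpha:H^\bullet\to H^\circ$ is automatically injective, so the left-radical condition ``$\langle f,H\rangle=0\Rightarrow f=0$'' is clear. Thus the only thing to prove is that the right radical vanishes: if $h\in H$ satisfies $\langle H^\bullet,h\rangle=0$, then $h=0$. Equivalently, I must show that the elements of $H^\bullet$, evaluated against the PBW-type basis of $H$ recalled at the start of each subsection, separate points of $H$. For this I would pick an arbitrary $h=\sum c_{\,\cdot}\,(\text{basis monomial})$, restrict attention to the finite set of basis monomials actually occurring, and exhibit enough functionals in $H^\bullet$ to force all $c_{\,\cdot}=0$. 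The key computational input is already available: the matrices computed in Propositions \ref{p.3.3}, \ref{prop:B}, \ref{p.5.5} — Vandermonde blocks in $\om$, upper-triangular blocks in $E_2$, identity blocks in $E_1$, together with the invertibility results of Lemmas \ref{lem:Kron}, \ref{lem:Kron2} and Corollary \ref{cor:mat} — show precisely that the pairing between $H^\bullet$ and the span of any finite family of basis monomials of $H$ is given by an invertible matrix after grouping the monomials by their $g$- (resp.\ $x^{\pm1}$-) exponent classes. Concretely, one enlarges $H^\bullet$ only by the group-like scalars $\psi_\lambda$ (resp.\ $\zeta_{\lambda^{1/\om},\lambda^{1/m}},\chi_{\lambda^{1/\om},\lambda^{1/m}}$) used in those propositions to separate different $\lambda$-eigenvalues; but since those appear only through $H^\circ$, for the \emph{right} radical it suffices that distinct basis monomials are separated within each fixed ideal $H/I$, which is exactly the invertibility already proved.

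The main obstacle will be the fourth case, $D(m,d,\xi)^\bullet$, where the ground ring of the PBW basis splits into a $y$-part and a $u$-part, and the coproducts $\Delta(\zeta),\Delta(\chi),\Delta(E_1),\Delta(E_2)$ mix the two; one has to be careful that the chosen generators $\zeta_{1,\gamma},\chi_{1,\gamma},E_1,E_2$ really do generate a subcoalgebra — this uses the identity $\zeta_{1,1}-\chi_{1,1}=(\zeta_{1,\gamma}+\xi\chi_{1,\gamma})^m$ from Lemma \ref{lem:D.comult} and the fact that $E_1^{[k]}$ lies in $H^\bullet$ for all $k$, so that the sums $\sum_{k=1}^{m-1}(\cdots)E_1^{[k]}\otimes(\cdots)E_1^{[m-k]}$ stay inside $H^\bullet\otimes H^\bullet$. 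For non-degeneracy in this case the relevant invertibility is Corollary \ref{cor:mat}(2), applied exactly as in Case 1 of Proposition \ref{p.5.5}: the functionals $(\zeta_{1,\gamma}+\xi\chi_{1,\gamma})^k E_2^s E_1^l$ with $k\in 2m$, $s\in\N$, $l\in m$ pair with the basis monomials $x^ig^jy^l$ and $x^ig^ju_l$ to give, after fixing the $x$-exponent $i$, a Kronecker product of a Vandermonde-in-$\xi$ matrix, an invertible ``generalized Vandermonde'' matrix in the $(e,s)$-indices supplied by Corollary \ref{cor:mat}(2), and an identity in $l$ — whence invertibility and hence the vanishing of the right radical. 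The remaining three cases follow the same template but with the simpler matrices of Propositions \ref{p.3.3} and \ref{prop:B}, so I would present the $D$ case in detail and indicate the others as analogous.
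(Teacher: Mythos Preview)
Your overall architecture matches the paper: verify that $H^\bullet$ is a Hopf subalgebra of $H^\circ$ (so $\alpha$ is the inclusion, hence injective), and then show the right radical vanishes by proving that the functionals in $H^\bullet$ separate the PBW basis of $H$. The Hopf-subalgebra part is fine as you outline it.

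The gap is in your plan for the right radical. The invertible matrices in Propositions~\ref{p.3.3}, \ref{prop:B}, \ref{p.5.5} are built from functionals such as $\psi_\la$, $\psi_{\la^{1/\om},\la^{1/n}}\psi_{\eta,1}^i$, $\psi_{\la^{(-1)^e/\om},\la^{(-1)^e/m}}$, which do \emph{not} lie in $H^\bullet$; they were used there precisely to supply the extra rows needed to fill out a basis of $(H/I)^\ast$. Your sentence ``for the right radical it suffices that distinct basis monomials are separated within each fixed ideal $H/I$'' does not rescue this: the right radical is $\{h\in H:\langle H^\bullet,h\rangle=0\}$, and you must separate points of $H$ using only elements of $H^\bullet$. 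In particular, in the $D$ case your appeal to Corollary~\ref{cor:mat}(2) is misplaced: that result concerns matrices with entries $\la^{(-1)^e(\cdots)}$ for a scalar $\la\neq\pm1$, and no such $\la$ is available inside $D(m,d,\xi)^\bullet$, which contains no $\psi_\la$-type element for generic $\la$.

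What the paper actually does (and what you need) is to observe that $E_2$ alone already carries enough separating power to replace the missing group-likes. For $D(m,d,\xi)$ one has $\langle E_2^s,x^ig^j\rangle=(\tfrac{i}{\om}+\tfrac{j}{m})^s$, and after using $(\zeta_{1,\gamma}+\xi\chi_{1,\gamma})^{k}$ to fix the residue of $j$ mod $m$ together with the $y/u$ type, the remaining rational values $\tfrac{i}{\om}+\lfloor\tfrac{k'}{2}\rfloor\tfrac{1}{m}+s$ (for $i\in\om$, $s\in\Z$) are \emph{pairwise distinct}. Hence ordinary Vandermonde matrices in these values, combined via Lemma~\ref{lem:Kron2} with the Vandermonde matrix $(\xi^{kk'})$ and the identity block in $l$, give the invertibility you need. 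The paper makes this explicit by pairing $(\zeta_{1,\gamma}+\xi\chi_{1,\gamma})^{k'}E_2^{i'+s'\om+N\om}E_1^{l'}$ against the basis monomials and reducing to a single large Vandermonde determinant; no use of Corollary~\ref{cor:mat} is required. The cases of $T_\infty$, $B$ and $\k\mathbb D_\infty$ follow the same pattern with simpler matrices.
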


\begin{proof}
It is not hard to see that all $H^{\bullet}$ given above are Hopf subalgebras of the corresponding $H^{\circ}$ and the evaluation gives a Hopf pairing. Thus the remaining task is to show that the evaluation is non-degenerate. We only prove the non-degeneracy in (4) since the others can be proved similarly.

Clearly, the Hopf pairing $\langle-,-\rangle:D(m,d,\xi)^\bullet\otimes D(m,d,\xi)\rightarrow\k$ gives two Hopf algebra maps
$$\alpha:D(m,d,\xi)^\bullet\rightarrow D(m,d,\xi)^\circ\;\;\text{and}\;\;
  \beta:D(m,d,\xi)\rightarrow D(m,d,\xi)^\bullet{}^\circ,$$
and the map $\alpha$ is just an inclusion. Therefore, to show that $\langle-,-\rangle$  is non-degenerate, we only need to prove the injectivity of $\beta$. Recall that in Section \ref{section5} that
$D(m,d,\xi)=\k\{x^ig^jy^l,\;x^ig^ju_l\mid i\in\om,\;j\in\Z,\;l\in m\}$. Also,
\begin{eqnarray*}
\langle(\zeta_{1,\gamma}+\xi\chi_{1,\gamma})^{k'}E_2^{s'}E_1^{l'},
       x^ig^jy^l\rangle
&=& l!_\gamma \cdot \delta_{l,l'} \xi^{2jk'}(\frac{i}{\om}+\frac{j}{m})^{s'},  \\
\langle(\zeta_{1,\gamma}+\xi\chi_{1,\gamma})^{k'}E_2^{s'}E_1^{l'},
       x^ig^ju_l\rangle
&=& \frac{\xi^{l^2}}{(1-\gamma^{-1})^l} \cdot
    \delta_{l,l'} \xi^{(2j+1)k'}(\frac{i}{\om}+\frac{j}{m})^{s'}
\end{eqnarray*}
hold for any $k'\in 2m,\;s'\in\N,\;l'\in m$ and $i\in\om,\;j\in\Z,\;l\in m$. This means that we only need to show the linear independence of
$$\{\beta(x^ig^jy^l),\;\beta(x^ig^ju_l)\mid i\in\om,\;j\in\Z,\;l\in m\}
  \subseteq D(m,d,\xi)^\bullet{}^\circ.$$

Choose a lexicographically ordered set of elements for any positive integer $N$:
$\{h_{i,k,s,l}\in D(m,d,\xi)\mid (i,k,s,l)\in \om\times 2m\times \{-N,\cdots,N\}\times m\}$,
where
$$h_{i,k,s,l}:=\left\{\begin{array}{ll}
  \frac{1}{l!_\gamma}x^ig^{\frac{k}{2}+sm}y^l,
  & \text{when}\;\; 2\mid k; \\
  \frac{(1-\gamma^{-1})^l}{\xi^{l^2}}x^ig^{\frac{k-1}{2}+sm}u_l,
  & \text{when}\;\; 2\nmid k.
\end{array}\right.$$
The desired linear independence would be implied by the invertibility of the following $2\om m^2(2N+1)\times2\om m^2(2N+1)$ matrix
\begin{eqnarray*}
A: &=& (\langle\beta(h_{i,k,s,l}),
         (\zeta_{1,\gamma}+\xi\chi_{1,\gamma})^{k'}E_2^{i'+s'\om+N\om}E_1^{l'}\rangle  \\
   & &  \;\;\mid (i,k,s,l),(i',k',s',l')\in \om\times 2m\times \{-N,\cdots,N\}\times m)
\end{eqnarray*}
for any positive integer $N$. However,
\begin{eqnarray*}
A
&=& (\langle(\zeta_{1,\gamma}+\xi\chi_{1,\gamma})^{k'}E_2^{i'+s'\om+N\om}E_1^{l'},
            h_{i,k,s,l}\rangle  \\
&&  \;\;\mid (i,k,s,l),(i',k',s',l')\in \om\times 2m\times \{-N,\cdots,N\}\times m)  \\
&=& (\delta_{l,l'}\xi^{kk'}(\frac{i}{\om}+\lfloor\frac{k}{2}\rfloor\frac{1}{m}+s)^{i'+s'\om+N\om}
      \mid (i,k,s,l),(i',k',s',l')\in \om\times 2m\times \{-N,\cdots,N\}\times m)
\end{eqnarray*}
is similar to the matrix
\begin{eqnarray*}
&&  (\delta_{l,l'}\xi^{kk'}(\lfloor\frac{k}{2}\rfloor\frac{1}{m}+\frac{s\om+i}{\om})^{s'\om+i'+N\om}
      \mid (k,l,s,i),(k',l',s',i',)\in 2m\times m\times\{-N,\cdots,N\}\times\om)  \\
&=& (\delta_{l,l'}\xi^{kk'}(\lfloor\frac{k}{2}\rfloor\frac{1}{m}+\frac{t}{\om})^{t'+N\om}
      \mid (k,l,t),(k',l',t')\in 2m\times m\times\{-N\om,\cdots,(N+1)\om-1\}),
\end{eqnarray*}
which is invertible according to Lemma \ref{lem:Kron2}, because of the invertibility of
$$(\delta_{l,l'}\xi^{kk'}\mid (k,l),(k',l')\in 2m\times m)$$
and
$$((\lfloor\frac{k}{2}\rfloor\frac{1}{m}+\frac{t}{\om})^{t'+N\om}
    \mid t,t'\in \{-N\om,\cdots,(N+1)\om-1\})$$
for each $k\in 2m$.
\end{proof}

For a general infinite-dimensional Hopf algebra $H$, there may be \emph{no} $H^{\bullet}$ such that there is a non-degenerate Hopf pairing between $H$ and $H^{\bullet}$ (see \cite[Example 9.1.5(1)]{Mon93} for example).
In fact, the existence of non-degenerate Hopf pairings over a Hopf algebra $H$ is equivalent to require that $H$ is residually finite-dimensional. Recall that an algebra $H$ is said to be \textit{residually finite-dimensional}, if there is a family $\{\pi_i\}$ of finite-dimensional $\k$-representations of $H$ such that $\bigcap_i \Ker(\pi_i)=0$. See \cite[Definition 9.2.8]{Mon93} for example.

\begin{lemma}
Let $H$ be a Hopf algebra over an arbitrary field $\k$. Then there is a non-degenerate Hopf pairing $\langle-,-\rangle:H^\bullet\otimes H\rightarrow\k$, if and only if $H$ is residually finite-dimensional.
\end{lemma}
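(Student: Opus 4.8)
The plan is to funnel both implications through the elementary equivalence: $H$ is residually finite-dimensional if and only if $\bigcap_{I\in\mathcal{I}}I=0$, where $\mathcal{I}$ is the family of all cofinite ideals of $H$ as in Lemma \ref{lem:I0}; equivalently, $H^\circ$ separates the points of $H$. Given the description $H^\circ=\sum_{I\in\mathcal{I}}I^\perp$ recalled just before Lemma \ref{lem:I0}, the condition that $H^\circ$ separates points is literally the statement that for every $0\ne h\in H$ some cofinite ideal omits $h$, i.e. $\bigcap_{I\in\mathcal{I}}I=0$. To connect this with finite-dimensional representations, to each $I\in\mathcal{I}$ one attaches the left regular representation $\pi_I\colon H\to\End_\k(H/I)$ on the finite-dimensional space $H/I$; its kernel $\{a\in H\mid aH\subseteq I\}$ lies inside $I$, so $\bigcap_{I}\ker\pi_I\subseteq\bigcap_{I} I$, while conversely the kernel of any finite-dimensional representation is a cofinite ideal. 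Hence $\{\pi_I\}_{I\in\mathcal{I}}$ witnesses residual finite-dimensionality exactly when $\bigcap_{I} I=0$. I would either spell this out in two lines or quote it from \cite[Chapter 9]{Mon93}.

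Granting this equivalence, the ``if'' direction is immediate: when $H$ is residually finite-dimensional, take $H^\bullet:=H^\circ$ together with the evaluation map $\langle f,h\rangle:=f(h)$. Axioms (i)--(v) of a Hopf pairing hold because the product, coproduct, counit and antipode of $H^\circ$ are by construction the ones dual to those of $H$; the pairing is non-degenerate in the $H$-slot since $H^\circ$ separates points, and it is trivially non-degenerate in the $H^\circ$-slot because $f(H)=0$ forces $f=0$.

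For the ``only if'' direction, let $\langle-,-\rangle\colon H^\bullet\otimes H\to\k$ be a non-degenerate Hopf pairing. Axiom (ii) states that for each $f\in H^\bullet$, with $\Delta_{H^\bullet}(f)=\sum f_{(1)}\otimes f_{(2)}$ a \emph{finite} sum, one has $\langle f,hh'\rangle=\sum\langle f_{(1)},h\rangle\langle f_{(2)},h'\rangle$; writing $\alpha(f):=\langle f,-\rangle\in H^\ast$, this says that $\alpha(f)$ applied to a product factors through the finite-dimensional span of the $\alpha(f_{(i)})$, i.e. $\alpha(f)\in H^\circ$. (This recovers the map $\alpha\colon H^\bullet\to H^\circ$ already mentioned in this subsection, using axiom (ii) alone.) Non-degeneracy in the $H$-slot now says precisely that the subspace $\alpha(H^\bullet)\subseteq H^\circ$ separates the points of $H$; a fortiori $H^\circ$ does, so by the equivalence of the first paragraph $H$ is residually finite-dimensional.

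The only point requiring care is the bookkeeping: matching the two non-degeneracy conditions of the pairing to injectivity of $\alpha\colon H^\bullet\to H^\circ$ and of $\beta\colon H\to(H^\bullet)^\circ$ respectively, and noticing that it is axiom (ii) by itself --- not the whole Hopf-pairing package --- that already places $\alpha(f)$ in $H^\circ$. None of the argument uses algebraic closedness or characteristic zero, so the lemma is valid over an arbitrary field as stated; in particular it retroactively explains why the explicit pairings of Proposition \ref{prop:bullet} could exist at all.
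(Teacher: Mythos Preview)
Your argument is correct. The paper's own proof is a one-line citation of \cite[Proposition 9.2.10]{Mon93}, so there is no independent argument to compare against; what you have written is essentially an unpacking of that cited result. Your key observations---that residual finite-dimensionality is equivalent to $H^\circ$ separating points, that the evaluation $H^\circ\otimes H\to\k$ gives the pairing in the ``if'' direction, and that axiom~(ii) alone forces $\alpha(f)\in H^\circ$ in the ``only if'' direction---are exactly the ingredients one finds behind the Montgomery reference. One small sharpening: in your first paragraph, since each $I\in\mathcal{I}$ is already a two-sided ideal, the kernel of the regular representation $\pi_I$ is actually equal to $I$ (take $h=1$ for one inclusion, use that $I$ is a right ideal for the other), not merely contained in it; this does not affect the argument but makes the equivalence cleaner.
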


\begin{proof}
This is followed by \cite[Proposition 9.2.10]{Mon93}.
\end{proof}

However, our constructions for $H^\bullet$ in Proposition \ref{prop:bullet} satisfy further conditions, which might be crucial in a sense. Before that, we recall in \cite[Definition 15.4.4]{Rad12} that a non-cosemisimple pointed Hopf algebra $H$ is called \textit{minimal-pointed}, if each of its proper Hopf subalgebras must be cosemisimple.

\begin{proposition}\label{prop.6.4}
Suppose that $H$ is any of $\k\mathbb{D}_\infty$, $T_\infty(n,v,\xi)$, $B(\om,n,\gamma)$ and $D(m,d,\xi)$, and $H^\bullet$ denotes the Hopf algebra constructed in Proposition \ref{prop:bullet}. Then $H^\bullet$ is a minimal (under inclusion) Hopf subalgebra of $H^\circ$ such that the evaluation $\langle-,-\rangle:H^\bullet\otimes H\rightarrow\k$ is a non-degenerate Hopf pairing.
\end{proposition}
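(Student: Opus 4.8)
The plan is to prove minimality by showing that \emph{no proper} Hopf subalgebra $K\subseteq H^\bullet$ admits a non-degenerate pairing with $H$; together with Proposition~\ref{prop:bullet} this is exactly the assertion. Two reductions come first. (a) For any Hopf subalgebra $K\subseteq H^\bullet\subseteq H^\circ$, the half of non-degeneracy asserting $\langle f,H\rangle=0\Rightarrow f=0$ is automatic (as $K\subseteq H^\ast$), so non-degeneracy of $\langle-,-\rangle\colon K\otimes H\to\k$ is precisely injectivity of $\beta_K\colon H\to K^\circ$, i.e. $K$ separating the points of $H$. (b) If $\dim K<\infty$ then $\dim K^\circ<\infty$ while $\dim H=\infty$, so such a $K$ cannot separate the points of $H$. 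Hence it suffices to prove the purely structural statement: \emph{every infinite-dimensional Hopf subalgebra of $H^\bullet$ equals $H^\bullet$.}

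I will use, uniformly in the three families, that $H^\bullet$ is pointed with coradical $\k G$ for the finite cyclic group $G=G(H^\bullet)$ generated by $\om$ (resp. $\psi_{1,\gamma}$, resp. $\zeta_{1,\gamma}+\xi\chi_{1,\gamma}$), that $H^\bullet$ is generated as an algebra by $G,E_1,E_2$ (Propositions~\ref{p.3.3},~\ref{prop:B},~\ref{p.5.5}), and that it is a finitely generated module over $\k[E_2]$. The $E_2$-degree is a bialgebra filtration with $\operatorname{gr}H^\bullet=A[\bar E_2]$, $\bar E_2$ central, where $A$ — the degree-zero part, a finite-dimensional pointed (generalized-Taft) algebra generated by $G$ and $E_1$ — carries $\operatorname{gr}\Delta(\bar E_2)=1\otimes\bar E_2+\bar E_2\otimes\bar g$ or $\bar g\otimes\bar E_2+\bar E_2\otimes1$, for an explicit group-like $\bar g\in A$ (namely $\om^m$, $1$, $\zeta_{1,1}-\chi_{1,1}$). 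Let $K$ be an infinite-dimensional Hopf subalgebra; $\operatorname{gr}K$ is an infinite-dimensional homogeneous sub-bialgebra of $A[\bar E_2]$. Since the bidegree-$(1,s-1)$ part of $\operatorname{gr}\Delta$ of a homogeneous degree-$s$ element of $\operatorname{gr}H^\bullet$ is again a nonzero tensor whose second factors have degree $s-1$, a homogeneous element of $\operatorname{gr}K$ of least positive degree has degree $1$; inspecting $\operatorname{gr}\Delta$ there shows $\{a\in A:a\bar E_2\in\operatorname{gr}K\}$ is a nonzero subcoalgebra of $A$, hence (pointedness of $A$) contains a group-like $g$, so $\bar E_2=g^{-1}(g\bar E_2)\in\operatorname{gr}K$ and also $\bar g\in\operatorname{gr}K$.

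To lift this back: from $\bar E_2\in\operatorname{gr}K$ choose $\tilde E_2\in K$ with $\tilde E_2\equiv E_2\ (\mathrm{mod}\ A)$; since the relevant group-likes ($1$ and $\bar g$) lie in $K$, the auxiliary element $\Phi$ obtained by subtracting from $\Delta(\tilde E_2)$ the ``group-like part'' $\bar g_1\otimes\tilde E_2+\tilde E_2\otimes\bar g_2$ of the coproduct (where $\{\bar g_1,\bar g_2\}=\{1,\bar g\}$) lies in $K\otimes K$, and a direct computation puts $\Phi\in A\otimes A$, so $\Phi\in(K\otimes K)\cap(A\otimes A)=(K\cap A)\otimes(K\cap A)$. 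The key point is that the ``cross term'' $\sum_{k=1}^{m-1}(\cdots)E_1^{[k]}\otimes(\cdots)E_1^{[m-k]}$ of $\Delta(E_2)$ has coradical degree $m$, strictly larger than the coradical degree of the skew-coboundary of any element of $A$; since Hopf subalgebras of (generalized-)Taft algebras are coradically graded, passing to the leading coradical symbol of $\Phi$ isolates that cross term inside $\operatorname{gr}(K\cap A)\otimes\operatorname{gr}(K\cap A)$ and yields $\bar E_1\in\operatorname{gr}(K\cap A)$, hence an element $E_1+a_0\in K$ with $a_0\in\k G$. Running the same leading-symbol analysis on $\Delta(E_1+a_0)-1\otimes(E_1+a_0)-(E_1+a_0)\otimes1$ forces the generator of $G$ into $K$; then $G\subseteq K$, so $a_0\in K$ and $E_1\in K$, so $A=\langle G,E_1\rangle\subseteq K$, and finally $E_2=\tilde E_2-(\tilde E_2-E_2)\in K$ since $\tilde E_2-E_2\in A\subseteq K$. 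Thus $K\supseteq\langle G,E_1,E_2\rangle=H^\bullet$. The small degenerate parameter values ($m=1$, $v=0$, $n=1$), where $E_1$ either already lies in $\k G$ or equals $E_2$, and the case $H=\k\mathbb D_\infty=D(1,1,-1)$, are simpler instances of the same scheme.

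The main obstacle is the extraction of generators in the third paragraph, from an \emph{arbitrary} infinite-dimensional Hopf subalgebra: a Hopf subalgebra is only a filtered — not graded — subobject, so one cannot project elements of $K$ onto homogeneous components, and the argument has to be conducted through the auxiliary elements ($\Phi$ and its $E_1$-analogue) living in the finite-dimensional ``body'' $A$, through the subspace identity $(U_1\otimes U_2)\cap(W_1\otimes W_2)=(U_1\cap W_1)\otimes(U_2\cap W_2)$, and through the fact that Hopf subalgebras of Taft-type algebras are coradically graded; moreover the order in which generators are recovered ($E_1$, then the generator of $G$, then $A$, then $E_2$) is forced, because the congruence error $\tilde E_2-E_2$ cannot be absorbed into $K$ until $A\subseteq K$ is known. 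A secondary but genuine technical point is the careful verification that $H^\bullet$ is pointed with $\operatorname{gr}$ of the stated shape — straightforward for $T_\infty$ and $B$, but for $D(m,d,\xi)$ requiring one to track the coradical $\k\Z_{2m}$ through the span of the orthogonal idempotents $\zeta_{1,1},\chi_{1,1}$ and the group-like $\zeta_{1,\gamma}+\xi\chi_{1,\gamma}$, in particular that the relation $E_1^{m}=\frac{1}{(1-\gamma)^m}\chi_{1,1}$ is lower order and becomes $\bar E_1^{\,m}=0$ in $\operatorname{gr}$.
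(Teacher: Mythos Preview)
Your proof is essentially correct and takes a route genuinely different from the paper's. Both begin with the same reduction to ``every infinite-dimensional Hopf subalgebra of $H^\bullet$ equals $H^\bullet$''. The paper then quotients by the finite normal Hopf subalgebra $A$ (your degree-zero part), observes that $H^\bullet/A^+H^\bullet\cong\k[\overline{E_2}]$ is minimal-pointed in Radford's sense (citing \cite{Rad12,Rad99}), and concludes that any infinite-dimensional $K$ must hit $\overline{E_2}$; the passage from ``contains a lift of $\overline{E_2}$'' to ``contains all generators'' is then dispatched by the phrase ``by the coproduct of $E_2$''. Your approach instead filters by $E_2$-degree, extracts $\bar E_2$ (and $\bar g$) from $\operatorname{gr}K$ by a coideal argument, lifts, and then uses the coradical filtration of $A$ to recover $E_1$ and the group generator in a specific order. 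What the paper's route buys is brevity and an off-the-shelf lemma; what yours buys is self-containment (no appeal to \cite{Rad12,Rad99}) and, arguably, a more honest treatment of the lifting step, which the paper leaves implicit.

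Two imprecisions worth tightening. First, the set $V=\{a\in A:a\bar E_2\in\operatorname{gr}K\}$ is a left coideal of $A$ (from the bidegree-$(0,1)$ part of $\operatorname{gr}\Delta$), not obviously a subcoalgebra; but since $A$ is pointed, a nonzero left coideal already contains a group-like $g$ (any simple left $A$-subcomodule has one-dimensional coefficient coalgebra $\k g$, forcing the subcomodule to be $\k g$), and then $g\otimes g=\Delta(g)\in(\operatorname{gr}K)_0\otimes V$ gives $g\in(\operatorname{gr}K)_0$, so $\bar E_2=g^{-1}(g\bar E_2)\in\operatorname{gr}K$ as you want. Second, the clause ``Hopf subalgebras of (generalized-)Taft algebras are coradically graded'' is both unnecessary and, for the $D$ case (where $E_1^m$ is a nonzero coradical element, so $A$ itself is not coradically graded as a Hopf algebra), misleading. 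Your symbol argument only needs the coradical \emph{filtration}: since $\Phi\in(K\cap A)\otimes(K\cap A)$ and the cross term is the exact total-degree-$m$ part of $\Phi$ in the tensor coradical filtration of $A\otimes A$, its image lies in $\bigoplus_{i+j=m}\operatorname{gr}^c_i(K\cap A)\otimes\operatorname{gr}^c_j(K\cap A)$, and the bidegree-$(1,m-1)$ component already yields $\bar E_1\in\operatorname{gr}^c_1(K\cap A)$ with no gradedness hypothesis on $K\cap A$.
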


\begin{proof}
Since $H$ is infinite-dimensional, it is sufficient to show that $H^\bullet$ is the only infinite-dimensional Hopf subalgebra of itself.

(1) For the case when $H=\k\mathbb{D}_\infty$, we find that $(\k\mathbb{D}_\infty)^\bullet$ is a minimal-pointed Hopf algebra (see \cite[Proposition 15.4.6]{Rad12} and \cite[Proposition 2]{Rad99}) with the finite-dimensional coradical. Thus $(\k\mathbb{D}_\infty)^\bullet$ is the only infinite-dimensional Hopf subalgebra as desired.

(2) For the case when $H=T_\infty(n,v,\xi)$, note that $K:=\k\{\om^jE_1^l\mid j,l\in n\}$ is a finite-dimensional normal Hopf subalgebra of $H^\bullet=T_\infty(n,v,\xi)^\bullet$. The quotient Hopf algebra $H^\bullet/K^+H^\bullet=\k[\overline{E_2}]$ is the polynomial Hopf algebra, which is minimal-pointed too. Therefore, any infinite-dimensional Hopf subalgebra of $T_\infty(n,v,\xi)^\bullet$ must contain the element $E_2$. By the coproduct of $E_2$, it follows that $T_\infty(n,v,\xi)^\bullet$ is a minimal infinite-dimensional Hopf algebra.

(3) The case when $H=B(n,\om,\gamma)$ is completely similar to the case $T_\infty(n,v,\xi)$, due to the facts that $K:=\k\{\psi_{1,\gamma}^jE_1^l\mid j,l\in n\}$ is a finite-dimensional normal Hopf subalgebra, and the quotient $H^\bullet/K^+H^\bullet$ is the polynomial Hopf algebra.

(4) The case when $H=D(m,d,\xi)$ is similar, too. Note that the finite-dimensional normal Hopf subalgebra of $D(m,d,\xi)^\bullet$ is chosen as a generalized Taft algebra $\k\{(\zeta_{1,\gamma}+\xi\chi_{1,\gamma})^kE_1^l\mid k\in 2m,\;l\in m\}$, and the responding quotient is also the polynomial Hopf algebra.
\end{proof}

\subsection{Properties of $H^\bullet$ and Consequences}\label{subsection:HPs}

For possible future applications, we might consider Hopf pairings $H^\bullet\otimes H\rightarrow\k$ with additional requirements (including the non-degeneracy), which are listed as follows:
\begin{itemize}
\item[(HP1)] The pairing is non-degenerate;
\item[(HP2)] \;$H^\bullet$ and $H$ are both affine and noetherian;
\item[(HP3)] \;$\GKdim(H^\bullet)=\GKdim(H)$.
\end{itemize}
Furthermore, we might also wish $H^\bullet$ would have certain properties dual to some of $H$, for examples in this paper:
\begin{itemize}
\item[(HP4)] \;$H^\bullet$ is indecomposable as a coalgebra, when $H$ is prime as an algebra.
\end{itemize}

These considerations motivate us to discuss some properties of $H^\bullet$ in the following. Recall that a coalgebra is indecomposable if and only if it is \emph{link-indecomposable} (see \cite[Corollary 2.2]{Mon95}). Besides, a ring is called \emph{regular} if it has finite global dimension.

\begin{proposition}\label{prop:bulletproperty}
For affine prime regular Hopf algebras $H$ of GK-dimension one, consider Hopf algebras $H^\bullet$ constructed in Proposition \ref{prop:bullet}. We have
\begin{itemize}
\item[(1)]
All the Hopf algebras $H^\bullet$ have GK-dimension one.
\item[(2)]
All the Hopf algebras $H^\bullet$ are pointed and (link-)indecomposable as coalgebras.
\item[(3)]
All the Hopf algebras $H^\bullet$ are noetherian.
\item[(4)]
The Hopf algebra $(\k\mathbb{D}_\infty)^\bullet$ is regular while $T_\infty(n,v,\xi)^\bullet$, $B(n,\om,\gamma)^\bullet$ and $D(m,d,\xi)^\bullet$ are not when $n,m\geq 2$.
\end{itemize}
\end{proposition}

\begin{proof}
(1) and (2) are clear by our constructions for $H^\bullet$.

(3)
This is because every $H^\bullet$ constructed is a finitely generated module (on both sides) over its noetherian subalgebra $\k[E_2]$.

(4) The regularity of $(\k\mathbb{D}_\infty)^\bullet$ is a direct consequence of the Hilbert Theorem on Syzygies (see \cite[Theorem 8.36]{Rot09} e.g.). Recall that as an algebra, $(\k\mathbb{D}_\infty)^\bullet$ is generated by $\zeta_1-\chi_1$ and $E_2$ with relations
\begin{align*}
& (\zeta_1-\chi_1)^2=1,\;\;E_2(\zeta_1-\chi_1)=(\zeta_1-\chi_1)E_2.
\end{align*}
Clearly, $(\k\mathbb{D}_\infty)^\bullet$ is the polynomial algebra over the subalgebra
$$\k\{1,\zeta_1-\chi_1\}\cong\k\Z_2$$
with the indeterminate $E_2$ commuting with all the coefficients. However, $\k\Z_2$ is semisimple and hence regular. Thus $\gldim((\k\mathbb{D}_\infty)^\bullet)=\gldim(\k\Z_2)+1<\infty$, where $\gldim$ denotes the global dimension.

Next we show that $B(n,\om,\gamma)^\bullet$ is not regular when $n\geq2$ and the non-regularity for $T_\infty(n,v,\xi)^\bullet$ can be proved similarly.
Recall that as an algebra, $B(n,\om,\gamma)^\bullet$ is generated by $\psi_{1,\gamma}$, $E_2$ and $E_1$ with relations
$$\psi_{1,\gamma}^n=1,\;\;E_1^n=0,\;\;
  E_2\psi_{1,\gamma}=\psi_{1,\gamma}E_2,\;\;
  E_1\psi_{1,\gamma}=\gamma\psi_{1,\gamma}E_1,\;\;
  E_1E_2=E_2E_1+\frac{1}{n}E_1.$$
Note in Proposition \ref{prop:bullet}(3) that
\begin{eqnarray*}
B(n,\om,\gamma)^\bullet
&=& \k\{\psi_{1,\gamma}^jE_2^sE_1^l \mid j\in n,\;s\in\N,\;l\in n\}  \\
&=& \k\{E_2^sE_1^l\psi_{1,\gamma}^j \mid s\in\N,\;l\in n,\;j\in n\}.
\end{eqnarray*}
Since $B(n,\om,\gamma)^\bullet$ is a free $\k\langle E_1\rangle$-module by  Poincar\'{e}-Birkhoff-Witt theorem, if $B(n,\om,\gamma)^\bullet$ has finite global dimension, then the projective dimension $\mathrm{prdim}_{\k\langle E_1\rangle}(\k)<\infty$. However, this is not possible since $\k\langle E_1\rangle$ is a finite-dimensional commutative local, but not a field.

Finally, let's show the non-regularity of $D(m,d,\xi)^\bullet$. By Proposition \ref{prop:bullet}(4), we find that
\begin{eqnarray*}
D(m,d,\xi)^\bullet
&=& \k\{\zeta_{1,\gamma}^jE_2^sE_1^l,\;\chi_{1,\gamma}^jE_2^sE_1^l
        \mid j\in m,\;s\in\N,\;l\in m\}  \\
&=& \k\{\zeta_{1,\gamma}^jE_2^sE_1^l\mid j\in m,\;s\in\N,\;l\in m\}  \\
& &   \oplus\;\k\{\chi_{1,\gamma}^jE_2^sE_1^l\mid j\in m,\;s\in\N,\;l\in m\}
\end{eqnarray*}
is a direct sum of ideals. The former ideal is isomorphic to $B(m,\om,\gamma)^\bullet$. and thus it is not regular as a ring when $m\geq 2$. Therefore,
$$\gldim(D(m,d,\xi)^\bullet)\geq \gldim(B(m,\om,\gamma)^\bullet)=\infty.$$
\end{proof}

Thus concluding Propositions \ref{prop:bullet} and \ref{prop:bulletproperty}, we find that
\begin{corollary}
For affine prime regular Hopf algebras $H$ of GK-dimension one, the Hopf pairings $\langle-,-\rangle:H^\bullet\otimes H\rightarrow\k$ constructed in Proposition \ref{prop:bullet} satisfy the requirements (HP1) to (HP4).
\end{corollary}

\begin{remark}
Due to Proposition \ref{prop.6.4}, if one verifies that $H^\bullet$ is exactly the (link-)indecomposable component of $H^\circ$ containing the unit element, then $H^\bullet$ would be the unique Hopf subalgebra of $H^\circ$ such that (HP1) to (HP4) hold.
\end{remark}

\begin{remark}\label{rmk:quantumgrp}
\emph{Takeuchi \cite{Tak92} defined a quantum group $G$ to be a triple $(A,U,\langle-,-\rangle)$ where $\langle-,-\rangle:A\otimes U\to \k$ is a Hopf pairing. By Proposition \ref{prop:bullet}, for each affine prime regular Hopf algebra $H$ of GK-dimension one, one can get a quantum group in Takeuchi's sense naturally. }

\emph{A classical result by Larson and Radford \cite[Theorem 3.3]{LR88} states that a finite-dimensional Hopf algebra $H$ in characteristic $0$ is semisimple, if and only if $H^\ast$ is semisimple. Note that the regularity can be regarded as an infinite-dimensional analogue for the semisimplicity. However according to Proposition \ref{prop:bulletproperty}(3), it is not true that $H$ is regular if and only if $H^\bullet$ is regular for a non-degenerate Hopf pairing $H^\bullet\otimes H\rightarrow\k$ (or a quantum group). This might be a version negating the semisimplicity result by Larson and Radford in infinite-dimensional cases.}
\end{remark}

\begin{question}
\emph{For a general infinite-dimensional Hopf algebra $H$ which is residually finite-dimensional, natural questions are: When does a minimal Hopf algebra $H^{\bullet}$ forming a non-degenerate Hopf pairing over $H$ exist? When are such minimal Hopf algebras $H^\bullet$ unique and of the same GK-dimensions with $H$? }
\end{question}

%

\section*{Acknowledgements}

The authors would like to thank Doctor Ruipeng Zhu for discussions on additional techniques to estimate the regularity for Hopf algebras appeared, as well as the condition for the existence of non-degenerate Hopf pairings.

\end{document}